\algrenewcommand\algorithmicrequire{\textbf{Precondition:}}
\algrenewcommand\algorithmicensure{\textbf{Postcondition:}}
\newcommand{\noncopynumber}[1]{
    \BeginAccSupp{method=escape,ActualText={}}
    #1
    \EndAccSupp{}
}
\lstdefinestyle{SageInput}{style=DefaultSageInput,basicstyle={\footnotesize\ttfamily}}
\newtheorem{definition}{Definition}[section]
\newtheorem{lemma}[definition]{Lemma}
\newtheorem{proposition}[definition]{Proposition}
\newtheorem{example}[definition]{Example}
\newtheorem{corollary}[definition]{Corollary}
\newtheorem{remark}[definition]{Remark}
\newtheorem{mainquestion}{Question}
\newtheorem{maintheorem}{Theorem}
\newtheorem{maincorollary}[maintheorem]{Corollary}
\newtheorem*{THEOREMI}{Theorem~\ref{thm:similar-to-itself}}
\newtheorem*{THEOREMII}{Theorem~\ref{thm:primitivity}}
\newtheorem*{THEOREMIII}{Theorem~\ref{thm:minimal}}
\newtheorem*{THEOREMIV}{Theorem~\ref{thm:n=1-equiv-Ammann}}
\numberwithin{equation}{section}
\newcommand{\N}{\mathbb{N}}
\newcommand{\Z}{\mathbb{Z}}
\newcommand{\Q}{\mathbb{Q}}
\newcommand{\R}{\mathbb{R}}
\newcommand{\ba}{{\boldsymbol{a}}}
\newcommand{\be}{{\boldsymbol{e}}}
\newcommand{\bk}{{\boldsymbol{k}}}
\newcommand{\bl}{{\boldsymbol{\ell}}}
\newcommand{\bn}{{\boldsymbol{n}}}
\newcommand{\bm}{{\boldsymbol{m}}}
\newcommand{\bp}{{\boldsymbol{p}}}
\newcommand{\bx}{{\boldsymbol{x}}}
\newcommand{\zero}{{\boldsymbol{0}}}
\newcommand{\UN}{{\boldsymbol{1}}}
\newcommand{\Acal}{\mathcal{A}}
\newcommand{\Bcal}{\mathcal{B}}
\newcommand{\Dcal}{\mathcal{D}}
\newcommand{\Fcal}{\mathcal{F}}
\newcommand{\Lcal}{\mathcal{L}}
\newcommand{\Tcal}{\mathcal{T}}
\newcommand{\Ucal}{\mathcal{U}}
\newcommand{\Xcal}{\mathcal{X}}
\newcommand{\ibar}{\overline{i}}
\newcommand{\jbar}{\overline{j}}
\newcommand{\nbar}{\overline{n}}
\newcommand{\nunder}{\underline{n}}
\newcommand{\iunder}{\underline{i}}
\newcommand{\sctop}{\textsc{Top}}
\newcommand{\scbottom}{\textsc{Bottom}}
\newcommand{\scright}{\textsc{Right}}
\newcommand{\scleft}{\textsc{Left}}
\newcommand{\shiftclosure}[1]{{\overline{#1}^{\sigma}}}
\newcommand{\scsize}{\textsc{size}}
\newcommand{\scarea}{\textsc{area}}
\newcommand{\height}{\textsc{height}}
\newcommand{\width}{\textsc{width}}
\newcommand{\RecurrentVertices}{\textsc{RecurrentVertices}}
\newcommand{\ZZrange}[2]{{\llbracket#1,#2\rrbracket}}
\newcommand{\dist}{\mathrm{dist}}
\newcommand\defn[1]{\textbf{#1}}
\newcommand\labelinside[6]{
\draw[draw] (#1,#2) rectangle (#1+\size,#2+\size);
\node[rotate=-90,black] at (#1+0.8, #2+0.5) {$#3$};
\node[rotate=0,  black] at (#1+0.5, #2+0.8) {$#4$};
\node[rotate=-90,black] at (#1+0.2, #2+0.5) {$#5$};
\node[rotate=0,  black] at (#1+0.5, #2+0.2) {$#6$};
}
\newcommand\labeloutside[6]{
\draw[draw] (#1+\size,#2) -- node[swap]{$#3$} (#1+\size,#2+\size)
                          -- node[swap]{$#4$} (#1,#2+\size)
                          -- node[swap]{$#5$} (#1,#2)
                          -- node[swap]{$#6$} (#1+\size,#2);
}
\def\dt{.25}
\newcommand\tile[7]{
\begin{scope}
\draw[draw=none,fill=#1] (#2,#3) rectangle (#2+\size,#3+\size);
\labeloutside{#2}{#3}{#4}{#5}{#6}{#7}
\end{scope}}
\newcommand\tileV[7]{
\begin{scope}[xshift=#2cm,yshift=#3cm]
\draw[draw=none,fill=#1] (#2+\dt,#3) rectangle (#2+\size-\dt,#3+\size);
\labeloutside{#2}{#3}{#4}{#5}{#6}{#7}
\end{scope}}
\newcommand\tileH[7]{
\begin{scope}[xshift=#2cm,yshift=#3cm]
\draw[draw=none,fill=#1] (#2,#3+\dt) rectangle (#2+\size,#3+\size-\dt);
\labeloutside{#2}{#3}{#4}{#5}{#6}{#7}
\end{scope}}
\def\size{1}
\def\widthYellow{.5}
\def\widthCyan{.5}
\def\greenEllipseRadius{.7}
\def\pinkEllipseRadius{.4}
\def\ourColorGreen{green!80}
\def\ourColorYellow{yellow}
\def\ourColorBlue{cyan!70}
\newcommand\tileHgreenBackground[2]{
\draw[fill=\ourColorYellow,draw=none]
({#1+\size},{#2+.5*\size+.5*\widthYellow}) arc [start angle=90, end angle=270,
x radius=\greenEllipseRadius, y radius={.5*\widthYellow}] -- cycle;
\draw[fill=\ourColorBlue,draw=none]
({#1},{#2+.5*\size+.5*\widthCyan}) arc [start angle=90, end angle=-90,
x radius=\greenEllipseRadius, y radius={.5*\widthCyan}] -- cycle;
\begin{scope}
\clip
({#1+\size},{#2+.5*\size+.5*\widthYellow}) arc [start angle=90, end angle=270,
x radius=\greenEllipseRadius, y radius={.5*\widthYellow}] -- cycle;
\draw[fill=\ourColorGreen,draw=none]
({#1},{#2+.5*\size+.5*\widthCyan}) arc [start angle=90, end angle=-90,
x radius=\greenEllipseRadius, y radius={.5*\widthCyan}] -- cycle;
\end{scope}
}
\newcommand\tileHpinkBackground[2]{
\draw[fill=\ourColorBlue,draw=none]
({#1+\size},{#2+.5*\size+.5*\widthCyan}) arc [start angle=90, end angle=270,
x radius=\pinkEllipseRadius, y radius={.5*\widthCyan}] -- cycle;
\draw[fill=\ourColorYellow,draw=none]
({#1},{#2+.5*\size+.5*\widthYellow}) arc [start angle=90, end angle=-90,
x radius=\pinkEllipseRadius, y radius={.5*\widthYellow}] -- cycle;
\begin{scope}
\clip
({#1+\size},{#2+.5*\size+.5*\widthCyan}) arc [start angle=90, end angle=270,
x radius=\pinkEllipseRadius, y radius={.5*\widthCyan}] -- cycle;
\draw[fill=pink,draw=none]
({#1},{#2+.5*\size+.5*\widthYellow}) arc [start angle=90, end angle=-90,
x radius=\pinkEllipseRadius, y radius={.5*\widthYellow}] -- cycle;
\end{scope}
}
\newcommand\tileVgreenBackground[2]{
\draw[fill=\ourColorYellow,draw=none]
({#1+.5*\size+.5*\widthYellow},{#2+\size}) arc [start angle=0, end angle=-180,
x radius={.5*\widthYellow}, y radius=\greenEllipseRadius] -- cycle;
\draw[fill=\ourColorBlue,draw=none]
({#1+.5*\size+.5*\widthCyan},{#2}) arc [start angle=0, end angle=180,
x radius={.5*\widthCyan}, y radius=\greenEllipseRadius] -- cycle;
\begin{scope}
\clip
({#1+.5*\size+.5*\widthYellow},{#2+\size}) arc [start angle=0, end angle=-180,
x radius={.5*\widthYellow}, y radius=\greenEllipseRadius] -- cycle;
\draw[fill=\ourColorGreen,draw=none]
({#1+.5*\size+.5*\widthCyan},{#2}) arc [start angle=0, end angle=180,
x radius={.5*\widthCyan}, y radius=\greenEllipseRadius] -- cycle;
\end{scope}
}
\newcommand\tileVpinkBackground[2]{
\draw[fill=\ourColorBlue,draw=none]
({#1+.5*\size+.5*\widthCyan},{#2+\size}) arc [start angle=0, end angle=-180,
x radius={.5*\widthCyan}, y radius=\pinkEllipseRadius] -- cycle;
\draw[fill=\ourColorYellow,draw=none]
({#1+.5*\size+.5*\widthYellow},{#2}) arc [start angle=0, end angle=180,
x radius={.5*\widthYellow}, y radius=\pinkEllipseRadius] -- cycle;
\begin{scope}
\clip
({#1+.5*\size+.5*\widthCyan},{#2+\size}) arc [start angle=0, end angle=-180,
x radius={.5*\widthCyan}, y radius=\pinkEllipseRadius] -- cycle;
\draw[fill=pink,draw=none]
({#1+.5*\size+.5*\widthYellow},{#2}) arc [start angle=0, end angle=180,
x radius={.5*\widthYellow}, y radius=\pinkEllipseRadius] -- cycle;
\end{scope}
}
\newcommand\tileJunctionBackground[6]{
\def\widthR{0.5}
\def\widthT{0.5}
\def\widthL{0.5}
\def\widthB{0.5}
\coordinate (R) at (#1+\size,{#2+(\size+\widthR)/2}) {};
\coordinate (T) at ({#1+(\size+\widthT)/2},#2+\size) {};
\coordinate (L) at (#1,{#2+(\size-\widthL)/2}) {};
\coordinate (B) at ({#1+(\size-\widthB)/2},#2) {};
\def\du{.02}
\draw [fill=#3,draw=none] (R) to [bend right=20] (#1+\size/2+\du,#2+\size/2-\du) -- (#1+\size,#2) -- cycle;
\draw [fill=#4,draw=none] (T) to [bend left=20]  (#1+\size/2-\du,#2+\size/2+\du) -- (#1,#2+\size) -- cycle;
\draw [fill=#5,draw=none] (L) to [bend right=20] (#1+\size/2-\du,#2+\size/2+\du) -- (#1,#2+\size) -- cycle;
\draw [fill=#6,draw=none] (B) to [bend left=20]  (#1+\size/2+\du,#2+\size/2-\du) -- (#1+\size,#2) -- cycle;
\draw[fill=white,draw=none] ({#1},{#2+\size})
-- ++ (0,{-(\size-\widthL)/2}) arc
[start angle=-90, end angle=0,
x radius={(\size-\widthT)/2}, y radius={(\size-\widthL)/2}];
\draw[fill=white,draw=none] ({#1+\size},{#2})
-- ++ (0,{(\size-\widthR)/2}) arc
[start angle=90, end angle=180,
x radius={(\size-\widthB)/2}, y radius={(\size-\widthR)/2}];
}
\newcommand\crossX[2]{
\draw[very thick] ({#1+\size*.4},{#2+\size*.4}) --
                    ({#1+\size*.6},{#2+\size*.6});
\draw[very thick] ({#1+\size*.4},{#2+\size*.6}) --
                    ({#1+\size*.6},{#2+\size*.4});
}
\newcommand\tileJunctionOOOO[6]{
\begin{scope}
\tileJunctionBackground{#1}{#2}{\ourColorBlue}{\ourColorBlue}{\ourColorBlue}{\ourColorBlue}
\labeloutside{#1}{#2}{#3}{#4}{#5}{#6}
\end{scope}}
\newcommand\tileJunctionOOOI[6]{
\begin{scope}
\tileJunctionBackground{#1}{#2}{\ourColorBlue}{\ourColorBlue}{\ourColorBlue}{\ourColorYellow}
\labeloutside{#1}{#2}{#3}{#4}{#5}{#6}
\end{scope}}
\newcommand\tileJunctionOOIO[6]{
\begin{scope}
\tileJunctionBackground{#1}{#2}{\ourColorBlue}{\ourColorBlue}{\ourColorYellow}{\ourColorBlue}
\labeloutside{#1}{#2}{#3}{#4}{#5}{#6}
\end{scope}}
\newcommand\tileJunctionOOII[6]{
\begin{scope}
\tileJunctionBackground{#1}{#2}{\ourColorBlue}{\ourColorBlue}{\ourColorYellow}{\ourColorYellow}
\labeloutside{#1}{#2}{#3}{#4}{#5}{#6}
\end{scope}}
\newcommand\tileJunctionOIIO[6]{
\begin{scope}
\tileJunctionBackground{#1}{#2}{\ourColorBlue}{\ourColorYellow}{\ourColorYellow}{\ourColorBlue}
\labeloutside{#1}{#2}{#3}{#4}{#5}{#6}
\crossX{#1}{#2}
\end{scope}}
\newcommand\tileJunctionOIII[6]{
\begin{scope}
\tileJunctionBackground{#1}{#2}{\ourColorBlue}{\ourColorYellow}{\ourColorYellow}{\ourColorYellow}
\labeloutside{#1}{#2}{#3}{#4}{#5}{#6}
\end{scope}}
\newcommand\tileJunctionIOOI[6]{
\begin{scope}
\tileJunctionBackground{#1}{#2}{\ourColorYellow}{\ourColorBlue}{\ourColorBlue}{\ourColorYellow}
\labeloutside{#1}{#2}{#3}{#4}{#5}{#6}
\crossX{#1}{#2}
\end{scope}}
\newcommand\tileJunctionIOII[6]{
\begin{scope}
\tileJunctionBackground{#1}{#2}{\ourColorYellow}{\ourColorBlue}{\ourColorYellow}{\ourColorYellow}
\labeloutside{#1}{#2}{#3}{#4}{#5}{#6}
\end{scope}}
\newcommand\tileJunctionIIII[6]{
\begin{scope}
\tileJunctionBackground{#1}{#2}{\ourColorYellow}{\ourColorYellow}{\ourColorYellow}{\ourColorYellow}
\labeloutside{#1}{#2}{#3}{#4}{#5}{#6}
\end{scope}}
\newcommand\tileJunctionGRAY[7]{
\begin{scope}
\tileJunctionBackground{#1}{#2}{#7}{#7}{#7}{#7}
\labeloutside{#1}{#2}{#3}{#4}{#5}{#6}
\end{scope}}
\newcommand\tileJunctionInsideOOOO[6]{
\begin{scope}
\tileJunctionBackground{#1}{#2}{\ourColorBlue}{\ourColorBlue}{\ourColorBlue}{\ourColorBlue}
\labelinside{#1}{#2}{#3}{#4}{#5}{#6}
\end{scope}}
\newcommand\tileJunctionInsideOOOI[6]{
\begin{scope}
\tileJunctionBackground{#1}{#2}{\ourColorBlue}{\ourColorBlue}{\ourColorBlue}{\ourColorYellow}
\labelinside{#1}{#2}{#3}{#4}{#5}{#6}
\end{scope}}
\newcommand\tileJunctionInsideOOIO[6]{
\begin{scope}
\tileJunctionBackground{#1}{#2}{\ourColorBlue}{\ourColorBlue}{\ourColorYellow}{\ourColorBlue}
\labelinside{#1}{#2}{#3}{#4}{#5}{#6}
\end{scope}}
\newcommand\tileJunctionInsideOOII[6]{
\begin{scope}
\tileJunctionBackground{#1}{#2}{\ourColorBlue}{\ourColorBlue}{\ourColorYellow}{\ourColorYellow}
\labelinside{#1}{#2}{#3}{#4}{#5}{#6}
\end{scope}}
\newcommand\tileJunctionInsideOIIO[6]{
\begin{scope}
\tileJunctionBackground{#1}{#2}{\ourColorBlue}{\ourColorYellow}{\ourColorYellow}{\ourColorBlue}
\labelinside{#1}{#2}{#3}{#4}{#5}{#6}
\crossX{#1}{#2}
\end{scope}}
\newcommand\tileJunctionInsideOIII[6]{
\begin{scope}
\tileJunctionBackground{#1}{#2}{\ourColorBlue}{\ourColorYellow}{\ourColorYellow}{\ourColorYellow}
\labelinside{#1}{#2}{#3}{#4}{#5}{#6}
\end{scope}}
\newcommand\tileJunctionInsideIIII[6]{
\begin{scope}
\tileJunctionBackground{#1}{#2}{\ourColorYellow}{\ourColorYellow}{\ourColorYellow}{\ourColorYellow}
\labelinside{#1}{#2}{#3}{#4}{#5}{#6}
\end{scope}}
\newcommand\tileJunctionInsideOXXO[8]{
\begin{scope}
\tileJunctionBackground{#1}{#2}{\ourColorBlue}{#7}{#8}{\ourColorBlue}
\labelinside{#1}{#2}{#3}{#4}{#5}{#6}
\end{scope}}
\newcommand\tileJunctionInsideOXXI[8]{
\begin{scope}
\tileJunctionBackground{#1}{#2}{\ourColorBlue}{#7}{#8}{\ourColorYellow}
\labelinside{#1}{#2}{#3}{#4}{#5}{#6}
\end{scope}}
\newcommand\tileJunctionInsideIXXI[8]{
\begin{scope}
\tileJunctionBackground{#1}{#2}{\ourColorYellow}{#7}{#8}{\ourColorYellow}
\labelinside{#1}{#2}{#3}{#4}{#5}{#6}
\end{scope}}
\newcommand\tileJunctionInsideGRAY[7]{
\begin{scope}
\tileJunctionBackground{#1}{#2}{#7}{#7}{#7}{#7}
\labelinside{#1}{#2}{#3}{#4}{#5}{#6}
\end{scope}}
\newcommand\tileHgreen[6]{
\begin{scope}
\tileHgreenBackground{#1}{#2}
\labeloutside{#1}{#2}{#3}{#4}{#5}{#6}
\end{scope}}
\newcommand\tileVgreen[6]{
\begin{scope}
\tileVgreenBackground{#1}{#2}
\labeloutside{#1}{#2}{#3}{#4}{#5}{#6}
\end{scope}}
\newcommand\tileHantigreen[6]{
\begin{scope}
\tileHpinkBackground{#1}{#2}
\labeloutside{#1}{#2}{#3}{#4}{#5}{#6}
\end{scope}}
\newcommand\tileVantigreen[6]{
\begin{scope}
\tileVpinkBackground{#1}{#2}
\labeloutside{#1}{#2}{#3}{#4}{#5}{#6}
\end{scope}}
\newcommand\tileHgreenInside[6]{
\begin{scope}
\tileHgreenBackground{#1}{#2}
\labelinside{#1}{#2}{#3}{#4}{#5}{#6}
\end{scope}}
\newcommand\tileVgreenInside[6]{
\begin{scope}
\tileVgreenBackground{#1}{#2}
\labelinside{#1}{#2}{#3}{#4}{#5}{#6}
\end{scope}}
\newcommand\tilelabelinside[7]{
\begin{scope}
\draw[draw=none,fill=#1] (#2,#3) rectangle (#2+\size,#3+\size);
\labelinside{#2}{#3}{#4}{#5}{#6}{#7}
\end{scope}}
\newcommand\tilelabelinsideV[7]{
\begin{scope}
\draw[draw=none,fill=#1] (#2+\dt,#3) rectangle (#2+\size-\dt,#3+\size);
\labelinside{#2}{#3}{#4}{#5}{#6}{#7}
\end{scope}}
\newcommand\tilelabelinsideVgreen[6]{
\begin{scope}
\tileVgreenBackground{#1}{#2}
\labelinside{#1}{#2}{#3}{#4}{#5}{#6}
\end{scope}}
\newcommand\tilelabelinsideH[7]{
\begin{scope}
\draw[draw=none,fill=#1] (#2,#3+\dt) rectangle (#2+\size,#3+\size-\dt);
\labelinside{#2}{#3}{#4}{#5}{#6}{#7}
\end{scope}}
\newcommand\tilelabelinsideHgreen[6]{
\begin{scope}
\tileHgreenBackground{#1}{#2}
\labelinside{#1}{#2}{#3}{#4}{#5}{#6}
\end{scope}}
\newcommand\tilelabelinsidescope[7]{
\begin{scope}
\tikzstyle{every node}=[font=\scriptsize]
\draw[draw=none,fill=#1] (#2,#3) rectangle (#2+\size,#3+\size);
\labelinside{#2}{#3}{#4}{#5}{#6}{#7}
\end{scope}}
\newcommand\tilelabelinsidescopeH[7]{
\begin{scope}
\tikzstyle{every node}=[font=\scriptsize]
\draw[draw=none,fill=#1] (#2,#3+\dt) rectangle (#2+\size,#3+\size-\dt);
\labelinside{#2}{#3}{#4}{#5}{#6}{#7}
\end{scope}}
\newcommand\tilelabelinsidescopeHpink[6]{
\begin{scope}
\tikzstyle{every node}=[font=\scriptsize]
\tileHpinkBackground{#1}{#2}
\labelinside{#1}{#2}{#3}{#4}{#5}{#6}
\end{scope}}
\newcommand\tilelabelinsidescopeV[7]{
\begin{scope}
\tikzstyle{every node}=[font=\scriptsize]
\draw[draw=none,fill=#1] (#2+\dt,#3) rectangle (#2+\size-\dt,#3+\size);
\labelinside{#2}{#3}{#4}{#5}{#6}{#7}
\end{scope}}
\begin{document}

\title
[Metallic mean Wang tiles I: self-similarity, aperiodicity and minimality]
{Metallic mean Wang tiles I:\\self-similarity, aperiodicity and minimality}

\author[S.~Labb\'e]{S\'ebastien Labb\'e}
\address[S.~Labb\'e]{CNRS, LaBRI, UMR 5800, F-33400 Talence, France}
\email{sebastien.labbe@labri.fr}
\urladdr{http://www.slabbe.org/}

\makeatletter
\@namedef{subjclassname@2020}{\textup{2020} Mathematics Subject Classification}
\makeatother

\keywords{Wang tiles \and aperiodic tiling \and self-similar \and metallic mean}
\subjclass[2020]{Primary 52C23; Secondary 37B51, 37B05, 11B39}

\begin{abstract}
For every positive integer $n$, we introduce a set $\mathcal{T}_n$ made of $(n+3)^2$ Wang tiles (unit squares with labeled edges). We represent a tiling by translates of these tiles as a configuration $\mathbb{Z}^2\to\mathcal{T}_n$. A configuration is valid if the common edge of adjacent tiles has the same label. For every $n\geq1$, we show that the Wang shift $\Omega_n$, defined as the set of valid configurations over the tiles $\mathcal{T}_n$, is self-similar, aperiodic and minimal for the shift action. We say that $\{\Omega_n\}_{n\geq1}$ is a family of metallic mean Wang shifts, since the inflation factor of the self-similarity of $\Omega_n$ is the positive root of the polynomial $x^2-nx-1$. This root is sometimes called the $n$-th metallic mean, and in particular, the golden mean when $n=1$, and the silver mean when $n=2$. When $n=1$, the set of Wang tiles $\mathcal{T}_1$ is equivalent to the Ammann aperiodic set of 16 Wang tiles.
\end{abstract}

\maketitle

\setcounter{tocdepth}{1}
\tableofcontents

\section{Introduction}
\label{sec:intro}

One of the most well-known aperiodic tiling was discovered by Penrose. In its original version, 
four shapes derived
from the regular pentagon can be used to tile the plane and none of the allowed 
tilings are periodic \cite{penrose_role_1974}.
Penrose tilings were soon given an equivalent description in terms of
multigrids or cut and project schemes \cite{MR609465};
see also \cite[\S 10]{MR857454} and \cite[\S 6.2]{MR3136260}.
The aperiodic structure of Penrose tilings is explained by the properties of a specific
irrational number: the positive root $\varphi$ of the polynomial $x^2-x-1$, also known as
the golden ratio or golden mean.
For example, in the kite-and-dart version of the Penrose tilings,
the ratio of kites to darts is equal to the golden ratio \cite{zbMATH03663938}.

Recently, the discovery of an aperiodic monotile \cite{smith_aperiodic_2023}
attracted a lot of attention
\cite{socolar_quasicrystalline_2023,baake_dynamics_2023,akiyama_alternative_2023}.
Smith and coauthors presented a single shape, a 13-edge
polygon called the hat, whose isometric copies tile the plane but never
periodically.
Again the golden ratio appears in tilings by the hat.
In a tiling by isometric copies of the hat, both the hat and its mirror image appear
(up to orientation preserving isometries, that is, translations and rotations).
The frequency of the hat and its mirror image in a tiling are not equal.
The ratio of the most frequent orientation of the hat to the least frequent one
is equal to the fourth power of the golden ratio.\footnote{
    The figure \cite[Fig.~2.11]{smith_aperiodic_2023}
    shows a substitution where 
    the image of a shape $H_7$ contains 5 shapes $H_8$ and 1 shape $H_7$ and
    the image of the shape $H_8$ contains 6 shapes $H_8$ and 1 shape $H_7$.
    Shape $H_7$ contains 6 hats and 1 anti-hats;
    shape $H_8$ contains 7 hats and 1 anti-hats.
    We compute that the Perron--Frobenius dominant right-eigenvector 
    $\left(\begin{smallmatrix} -3\varphi+5\\3\varphi-4 \end{smallmatrix}\right)$
    of the incidence matrix
    $\left(\begin{smallmatrix} 1&1\\ 5&6 \end{smallmatrix}\right)$
    of the substitution is mapped to
    $\left(\begin{smallmatrix} \varphi^4\\1 \end{smallmatrix}\right)$
    by the matrix
    $\left(\begin{smallmatrix} 6&7\\ 1&1 \end{smallmatrix}\right)$.}
Two months later the same authors discovered another aperiodic tile called
Spectre which does not need its mirror image to tile the plane
\cite{smith_chiral_2023}.
Tilings by the Spectre are not all combinatorially equivalent to tilings
by the hat: some are periodic (if the reflected tile is allowed). 
But every tiling by the hat tile is combinatorially equivalent
to some Spectre tiling.

Other examples of aperiodic tilings are related to the golden mean,
including Ammann A2 L-shaped tiles
\cite{MR1156132} (also studied in \cite{akiyama_note_2012,zbMATH07187340});
see Figure~\ref{fig:Ammann-A2-with-bars}.
The golden mean also appears in the description of
tilings generated by the Jeandel--Rao aperiodic set of 11 Wang tiles
\cite{zbMATH07421483}: 
the frequency of the tiles \cite{MR4213162},
the inflation factor of its self-similarity \cite{MR3978536,MR4226493},
and the slopes of its nonexpansive directions \cite{MR4730985}
are all expressed in $\Q(\phi)$.

\begin{figure}[h]
\begin{center}
\begin{tikzpicture}[auto,scale=2]
    \tikzstyle{every node}=[font=\footnotesize]
    \def\lphi{1.61}
    \begin{scope}
        \draw[very thick] (0,0) -- node[swap] {$\varphi$} (\lphi,0) 
                    -- node[swap] {$1$} (\lphi,1) 
                    -- node[swap] {$1/\varphi$} (1,1) 
                    -- node[swap] {$1/\varphi$} (1,\lphi) 
                    -- node[swap] {$1$} (0,\lphi) 
                    -- node[swap] {$\varphi$} (0,0) -- cycle;
        \draw[dashed] (0,\lphi) -- (1,0) -- (\lphi,1);
        \draw[solid] (1,1) -- (0,\lphi-1) -- (\lphi,0);
    \end{scope}
    \begin{scope}[xshift=3cm]
        \draw[very thick] (0,0) -- node {$\varphi$}   (0,\lphi) 
                                -- node {$\varphi$}   (\lphi,\lphi) 
                                -- node {$1/\varphi$} (\lphi  ,1) 
                                -- node {$1$}      (\lphi+1,1) 
                                -- node {$1$}      (\lphi+1,0) 
                                -- node {$\varphi^2$} (0,0) -- cycle;
        \draw[dashed] (0,0) -- (1,\lphi) -- (2,0) -- (\lphi+1,1);
        \draw[solid] (\lphi+1,0) -- (0,1) -- (\lphi,\lphi);
    \end{scope}
\end{tikzpicture}
\end{center}
    \caption{Two shapes belonging to the Ammann A2 family.
    The matching conditions are given by what are called Ammann bars
    appearing as dashed and solid lines in the interior of the tiles
    and which must continue straight across the edges of the tiling.
    This is a reproduction of Figure 10.4.1 from \cite{MR857454}.
    See also Figure 12 from \cite{akiyama_note_2012}.}
    \label{fig:Ammann-A2-with-bars}
\end{figure}
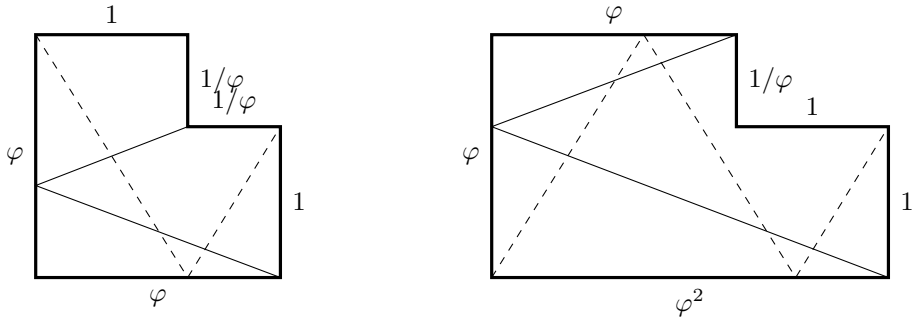

It is then natural to ask whether there are aperiodic tilings out there 
such that the ratios of tile frequencies is not in $\Q(\varphi)$.
It turns out that there are many.
Recall that the first examples of aperiodic tilings provided by Berger
\cite{MR0216954}, simplified by Knuth \cite{MR0286317} and Robinson
\cite{MR0297572}, are
described by substitutions whose inflation factor is an integer (2 in this
case). Many other substitutive and aperiodic planar tilings have integer
inflation factor and are listed in \cite[\S 6.4]{MR3136260}. 
It includes the chair tiling \cite{zbMATH01803721},
the sphinx tiling \cite{MR1452190}, 
the $(1+\varepsilon+\varepsilon^2)$-tiling \cite{zbMATH01064728} and
the Taylor and Socolar-Taylor tilings \cite{MR2834173}.

A lot of substitution tilings with non-integer inflation factor are known.
Various types of planar aperiodic substitution tilings 
with $n$-fold rotational symmetry 
involving cyclotomic numbers
were described in recent years
\cite{zbMATH06426253,zbMATH06624560,zbMATH06676130,sym9020019,zbMATH07663342};
see the sections \cite[\S 1.7]{MR3791847} and \cite[\S 7.3]{MR3136260}.
Examples of algebraic non-Pisot aperiodic tilings were portrayed in \cite[\S 6.5]{MR3136260}.
Moreover, substitution tilings with transcendental inflation factor were
recently proposed in \cite{frettloh_substitution_2022} using compact alphabets.

Closer to golden mean are other algebraic integers, starting with those of
degree two, for which aperiodic tilings exist. 
In Ammann A4 and A5 aperiodic tilings \cite{MR857454}, the ratio of frequency
of the two involved tiles is $\sqrt{2}$ \cite[p.~22]{MR1156132}.
Nowadays these tilings are known as Ammann--Beenker tilings
\cite[\S 6.1]{MR3136260}, 
since their algebraic properties were independently described
in \cite{zbMATH03867362}.
In \cite{MR1156132}, the question whether there exist sets of aperiodic prototiles
associated with irrational numbers other than $\sqrt{2}$ and the golden ratio was mentioned.
But they had ``\textit{no conjecture concerning the characterization
of all numbers that are possible for such ratios}'' of frequencies of tiles.

The inflation factor of Ammann--Beenker substitution tilings is $1+\sqrt{2}$ 
\cite[Prop.~6.2]{MR3136260}. This number is sometimes called the silver mean
because its continued fraction expansion is $[2;2,2,\dots]$
where that of the golden mean is $[1;1,1,\dots]$.
The golden mean and the silver mean belong to a larger family
made of the positive root of the polynomial $x^2-nx-1$, where $n$ is a
positive integer:
\[
    \beta_n=
    \frac{n+\sqrt{n^2+4}}{2}
    = n + \frac{\displaystyle 1}{\displaystyle n
        + \frac{\displaystyle 1}{\displaystyle n
        + \frac{\displaystyle 1}{\displaystyle \dots}}}.
\]
We refer to this root as the \defn{$n^{th}$ metallic mean} \cite{OEIS_metallic_means}.
These numbers were called silver means \cite{zbMATH00051561} and noble means in
\cite[\S~4.4]{MR3136260} (note that noble mean was already used in
\cite[Appendix B, p.~392--394]{zbMATH00051561} for a different meaning).
Observe also that the definition of metallic means from \cite{MR1729917}
is larger, as it contains all positive roots of polynomial
$x^2-px-q$, where $p$ and $q$ are positive integers.
In this contribution, we consider only the metallic means, in the sense 
of de Spinadel, which are algebraic units, that is, $p\geq1$ and $q=1$.

When a tiling space is preserved by a substitution, it is also preserved by
powers of this substitution. Since odd-powers of metallic means are metallic means,
we know substitution tilings for infinitely many other metallic means.
In particular, 
the inflation factor of the third power of the substitution for Penrose tilings
is the $4^{th}$ metallic mean $\beta_1^3 = \beta_4$.
Also, the inflation factor of the third power of the substitution for
Ammann--Beenker tilings is the cube of the silver ratio which 
is the $14^{th}$ metallic mean $\beta_2^3 = \beta_{14}$, etc.
For more information, 
we refer the reader to the OEIS \cite{OEISA352403} where
indices of metallic means that are powers of other metallic means
are listed as sequence \href{https://oeis.org/A352403}{A352403}.

In recent years, new discoveries were made in the theory of quasicrystals related
to metallic mean numbers. A self-similar hexagonal quasicrystal whose inflation
factor is the $3^{rd}$ metallic mean
(also called bronze-mean) was described in \cite{Dotera2017}.
It is given by a substitution rule involving a small and a large equilateral
triangles and a rectangle; see \cite{tiling_encyclopedia_bronze_mean}.
Their construction was further extended to every $(3n)^{th}$ metallic mean in
\cite{Nakakura2019} where $n\geq1$ is a positive integer.

\subsection*{Our contribution}
In this contribution, we introduce a new family of aperiodic tiles using the
oldest known shape for aperiodic tiles: the unit square. 
Unit squares with labeled edges and tilings of the plane by infinitely many
translated copies of them were considered by Wang \cite{wang_proving_1961} with
the condition that
adjacent tiles must share the same label on the common edge. Such tiles are
nowadays called \defn{Wang tiles}.
A set of Wang tiles is \defn{aperiodic} if it admits at least one valid tiling,
and none of them is periodic.
The first known aperiodic set of tiles was 
discovered by Berger \cite{MR0216954}: 
a set of 20426 Wang tiles. Many smaller examples were discovered thereafter, and
we refer the reader to \cite{zbMATH07421483} for an overview of these developments
leading to the discovery of the smallest possible size ($=11$) for an aperiodic
set of Wang tiles.

For every positive integer $n$, we construct a
set $\Tcal_n$ made of $(n+3)^2$ Wang tiles
and we consider the subshift $\Omega_n$ defined as the set of valid
configurations $\Z^2\to\Tcal_n$ over these tiles.
We also say that $\Omega_n$ is a \defn{Wang shift}, because it is a subshift defined
from a set of Wang tiles.
The set $\Tcal_n$ is the disjoint union of 5 sets of tiles:
\begin{itemize}
    \item $n^2$ white tiles, 
    \item $n$ yellow horizontal stripe tiles and
          $n$ yellow vertical stripe tiles, 
    \item $n$ blue horizontal stripe tiles and
          $n$ blue vertical stripe tiles, 
    \item $n+1$ green horizontal overlap tiles and
          $n+1$ green vertical overlap tiles, 
    \item $7$ junction tiles.
\end{itemize}
We observe that
the sum of cardinalities of the five subsets is $n^2+2n+2n+2(n+1)+7=(n+3)^2$.
The sets $\Tcal_n$ of Wang tiles for $n=1,2,3,4,5$ are shown in
Figure~\ref{fig:Tn-for-1-a-5}
and rectangular valid tilings
over the sets $\Tcal_n$ for $n=1,2,3,4$
are shown in
Figure~\ref{fig:T1-17x23},
Figure~\ref{fig:T2-17x23},
Figure~\ref{fig:T3-17x23} and
Figure~\ref{fig:T4-17x23}.

\begin{figure}[h]
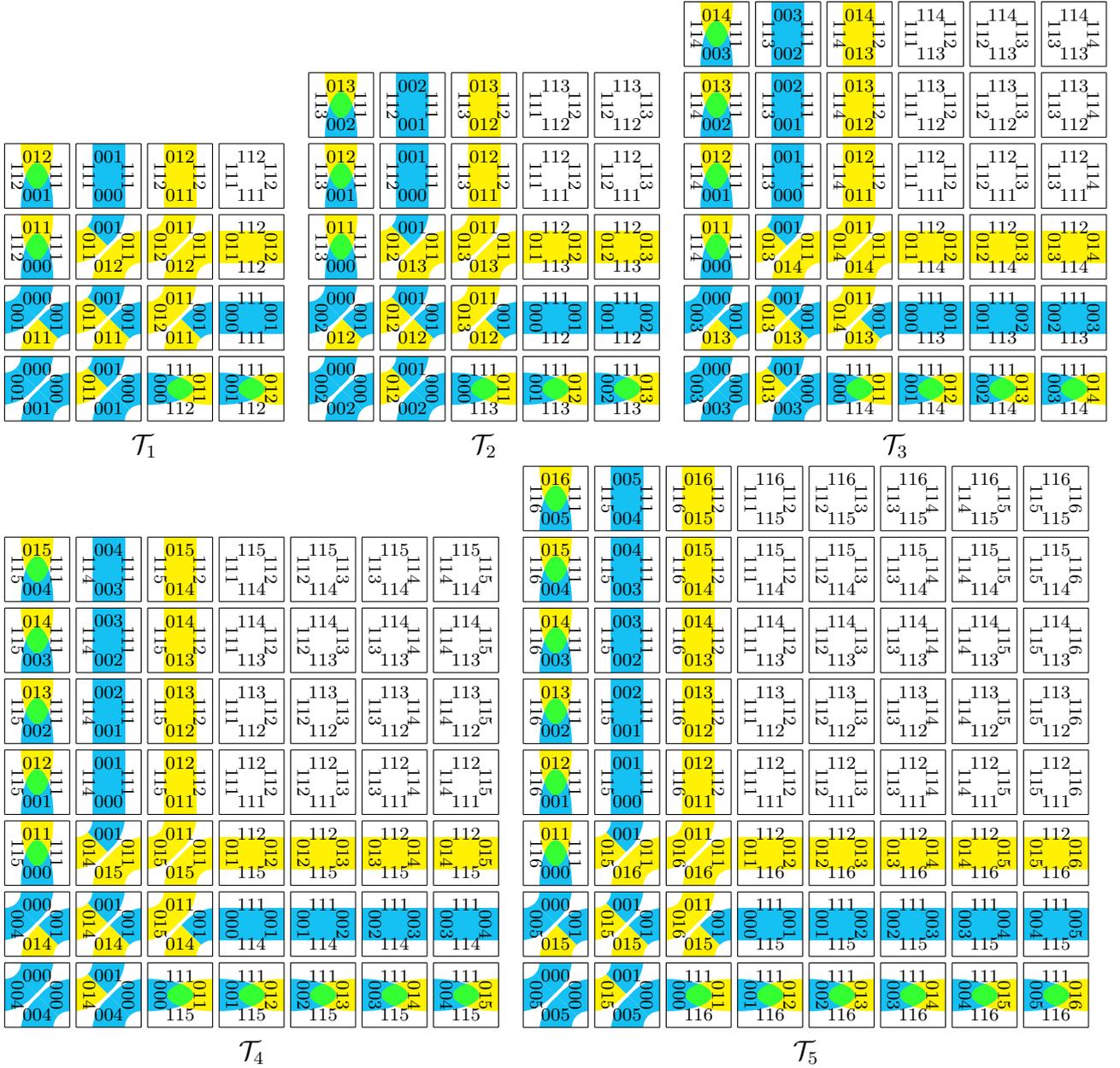

\begin{center}
    \begin{tabular}{ccc}
         \includegraphics{SAGEOUTPUT/W1_tiles.pdf}
         &\includegraphics{SAGEOUTPUT/W2_tiles.pdf}
         &\includegraphics{SAGEOUTPUT/W3_tiles.pdf}\\
        $\Tcal_1$ & $\Tcal_2$ & $\Tcal_3$\\
    \end{tabular}
    \begin{tabular}{ccc}
         \includegraphics{SAGEOUTPUT/W4_tiles.pdf}
         &\includegraphics{SAGEOUTPUT/W5_tiles.pdf}\\
        $\Tcal_4$ & $\Tcal_5$
    \end{tabular}
\end{center}
    \caption{Metallic mean Wang tile sets $\Tcal_n$ for $n=1,2,3,4,5$.}
    \label{fig:Tn-for-1-a-5}
\end{figure}

The family of Wang shift $(\Omega_n)_{n\geq1}$ has
too many nice properties to hold in one article. 
In this first article dedicated to its study, we focus on its substitutive properties.
Its dynamical properties and the consideration of $\Tcal_n$ as the set of
instances of a computer chip will be considered separately in a follow-up
contribution.

The main result of the current contribution is to prove that the Wang shift $\Omega_n$ 
is self-similar for every integer $n\geq1$. The self-similarity is given by a
2-dimensional substitution over an alphabet of size $(n+3)^2$.
The self-similarity is not a bijection, but informally
it is essentially one. This is formalized with the terminology of
recognizability (one-to-one up to a shift) and surjectivity up to a shift.
See Section~\ref{sec:prelim-wang-shifts} for the definition of Wang shifts
and Section~\ref{sec:prelim-2dsubstitutions} for the definition of
2-dimensional substitutions, self-similarity and recognizability.

\newcommand\MainTheoremI{
    For every integer $n\geq1$, the set $\Tcal_n$ containing $(n+3)^2$ Wang
    tiles defines a Wang shift $\Omega_n$ which is self-similar.
    More precisely,
    there exists an expansive and recognizable 2-dimensional substitution
    $\omega_n:\Omega_n\to\Omega_n$ 
    which is onto up to a shift, that is, such that
    $\Omega_n=\overline{\omega_n(\Omega_n)}^\sigma$.
}

\newcommand\MainTheoremII{
    For every integer $n\geq1$, the 2-dimensional substitution
    $\omega_n:\Omega_n\to\Omega_n$ is primitive.
    The Perron--Frobenius dominant eigenvalue of the incidence matrix of
    $\omega_n$ is $\beta_n^2$, the square of the
    $n^{th}$ metallic mean number,
    and the inflation factor of $\omega_n$ is $\beta_n$.
}

\newcommand\MainTheoremIII{
    For every integer $n\geq1$, the Wang shift $\Omega_n$ is minimal
    and is equal to the substitutive subshift $\Omega_n=\Xcal_{\omega_n}$.
}

\newcommand\MainTheoremIV{
    When $n=1$, the set $\Tcal_n$ is equal,
    up to symbol relabeling,
    to the Ammann set of 16 Wang tiles.
}

\begin{maintheorem}\label{thm:similar-to-itself}
    \MainTheoremI
\end{maintheorem}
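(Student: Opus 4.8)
The plan is to write down the substitution $\omega_n$ explicitly and then verify, in order, that it is a well-defined $2$-dimensional substitution mapping $\Omega_n$ into itself, that it is expansive, that it is recognizable on $\Omega_n$, and that it is onto up to a shift. The construction proceeds family by family: the image $\omega_n(t)$ of a tile $t\in\Tcal_n$ is a rectangular block whose side lengths take only finitely many values — the ``wide/narrow'' columns and ``tall/short'' rows dictated by the metallic-mean pattern of row and column sizes governed by the matrix $\bigl(\begin{smallmatrix} n & 1\\ 1 & 0\end{smallmatrix}\bigr)$ — and whose entries are tiles of $\Tcal_n$ with interior edge labels prescribed by explicit formulas in the integer indices (running over $\Zrange{n-1}$, or $\Zrange{n}$ for the green overlap tiles) that label the tiles of each family. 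Producing this map is the heart of the matter; everything after it is verification, much of it routine but voluminous because of the parameter $n$.

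To verify $\omega_n(\Omega_n)\subseteq\Omega_n$ I would reduce to a finite list of local matching checks: inside each block $\omega_n(t)$ adjacent tiles must agree on their common interior edges, and for every pair $(t,t')$ of tiles that can be horizontally (resp.\ vertically) adjacent in a valid configuration, the blocks $\omega_n(t)$ and $\omega_n(t')$ must agree along the shared boundary column (resp.\ row). Since the tiles and their labels are parametrized by $n$, these amount to a handful of elementary arithmetic identities, checked uniformly in $n$ by treating each of the five families (white, yellow stripes, blue stripes, green overlaps, junctions) in turn; the $7$ junction tiles give a genuinely finite, $n$-independent subcase that can be settled by direct enumeration (and double-checked with SageMath). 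This already yields the easy inclusion $\overline{\omega_n(\Omega_n)}^\sigma\subseteq\Omega_n$, because $\Omega_n$ is closed and shift-invariant. Expansiveness is then immediate: every block has both side lengths $\geq1$, and a direct computation shows the incidence matrix is primitive with Perron eigenvalue $\beta_n^2>1$ (this computation is recorded as Theorem~\ref{thm:primitivity}).

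For recognizability I would exhibit a \emph{marker}: a tile or a short local pattern which, in any configuration of $\overline{\omega_n(\Omega_n)}^\sigma$, can occur only at one prescribed position of a block (say, only along the left column, or only at the lower-left corner), together with enough companion markers to locate every horizontal and every vertical cutting line; the junction tiles are the natural candidates. One then checks that the positions of these markers determine the partition of $\Z^2$ into blocks uniquely, so that de-substitution is a well-defined shift-equivariant map on $\overline{\omega_n(\Omega_n)}^\sigma$, which is exactly recognizability.

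The real obstacle is the remaining inclusion $\Omega_n\subseteq\overline{\omega_n(\Omega_n)}^\sigma$: one must show that \emph{every} valid configuration over $\Tcal_n$ decomposes into translated copies of the blocks $\omega_n(t)$. I would prove this by a desubstitution argument built from a sequence of combinatorial lemmas that classify, uniformly in $n$, the possible neighbourhoods of each family of tiles — which tiles may appear to the east, north, west and south of a white tile, of a stripe tile, of a green overlap tile, and of a junction tile. These constraints are then shown to propagate: they force the alternation of wide and narrow columns, then the alternation of tall and short rows, then the exact content of each rectangle. Concretely, for $c\in\Omega_n$ this produces a configuration $c'\colon\Z^2\to\Tcal_n$ and a vector $\bv$ with $\sigma^{\bv}(\omega_n(c'))=c$; verifying that $c'$ is itself valid — its local patterns being inherited from the already-matched block boundaries — gives $c'\in\Omega_n$, hence $c\in\sigma^{\bv}(\omega_n(\Omega_n))\subseteq\overline{\omega_n(\Omega_n)}^\sigma$. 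Turning the family-by-family neighbourhood analysis into a single clean induction valid for all $n\geq1$ is where the bulk of the work lies.
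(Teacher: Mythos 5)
Your overall strategy is the same as the paper's: build $\omega_n$ explicitly as a block substitution with side lengths $n$ or $n+1$, get $\omega_n(\Omega_n)\subseteq\Omega_n$ from the boundary-label bookkeeping, obtain recognizability from the uniqueness of the cutting lines located by the junction tiles, and prove surjectivity up to a shift by a desubstitution argument in which the junction tiles force the decomposition of any configuration into rectangular blocks. Up to and including the statement ``there is a configuration $c'$ and a vector $\bv$ with $\sigma^{\bv}(\omega_n(c'))=c$'', this matches the paper's Sections 5 and 6 (return blocks to junction tiles, gaps between junction tiles equal to $n$ or $n+1$, SW-determinism forcing the interior of each block from its boundary).

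The genuine gap is in the last step, where you write that $c'$ takes values in $\Tcal_n$ and is valid, ``its local patterns being inherited from the already-matched block boundaries''. The return blocks of a configuration $c\in\Omega_n$ are, a priori, \emph{not} in bijection with $\Tcal_n$: decoding their boundary words through $\tau_n$ lands in a strictly larger tile set $\Tcal_n'$ containing, besides $\Tcal_n$, the antigreen tiles, one extra blue tile in each direction, and two extra junction tiles. A return block of $c$ may perfectly well be the image of an antigreen tile (its interior consists only of tiles of $\Tcal_n$, so nothing local rules it out), in which case there is \emph{no} $t\in\Tcal_n$ with $\omega_n(t)$ equal to that block and your $c'$ is not even defined over $\Tcal_n$. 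So the desubstituted configuration lives only in the extended shift $\Omega_n'$, and to close the argument one must prove separately that the extra tiles never occur in any valid configuration over $\Tcal_n'$, i.e.\ $\Omega_n'=\Omega_n$. This cannot be reduced to a ``handful of local matching checks'' or a finite neighbourhood classification: the antigreen tiles admit arbitrarily large valid surroundings (the paper exhibits a valid $15\times15$ pattern containing them), so their exclusion requires a global argument — in the paper, an infinite-descent on the distance from an antigreen tile to the nearest junction tile on its left in the same row, plus a width-parity contradiction for the extra junction tiles. Your proposal neither anticipates the enlarged supertile alphabet nor supplies any mechanism of this kind, and without it the inclusion $\Omega_n\subseteq\overline{\omega_n(\Omega_n)}^\sigma$ does not follow.
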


The proof of Theorem~\ref{thm:similar-to-itself} is the same for every integer $n\geq1$.
Indeed, we show that every configuration in $\Omega_n$ can be decomposed uniquely
into rectangular blocks that we call return blocks.
These return blocks and their right, top, left and bottom labels 
are in bijection with an
extended set $\Tcal_n'\supset\Tcal_n$ of Wang tiles.
Then we show that in the extended Wang shift $\Omega_n'\supseteq\Omega_n$ defined
from the extended set $\Tcal_n'$ of Wang tiles, only the tiles in $\Tcal_n$
appear. Thus, $\Omega_n'\subseteq\Omega_n$. This shows that
$\Omega_n=\Omega_n'$ and that $\Omega_n$ is self-similar.

As a corollary, we deduce that the Wang shift $\Omega_n$ is aperiodic.

\begin{maincorollary}\label{cor:aperiodic}
    For every integer $n\geq1$, the Wang shift $\Omega_n$ is aperiodic.
\end{maincorollary}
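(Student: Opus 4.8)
The statement to establish is Corollary~\ref{cor:aperiodic}, so recall that a set of Wang tiles is \emph{aperiodic} exactly when it admits at least one valid configuration and no valid configuration is fixed by a nonzero shift. Non-emptiness of $\Omega_n$ is produced by the construction in the proof of Theorem~\ref{thm:similar-to-itself} itself, which exhibits valid configurations over $\Tcal_n$ (it also follows from the primitivity of $\omega_n$, to be proved in Theorem~\ref{thm:primitivity}); hence the real content is the absence of shift-periodic configurations, and the plan is to deduce this from self-similarity by the classical mechanism: an expansive, recognizable substitution cannot fix a periodic configuration, because such a configuration would carry a canonical partition into arbitrarily large rectangular supertiles invariant under its period, which is geometrically impossible.

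In detail, I would argue by contradiction. Suppose some $x\in\Omega_n$ satisfies $\sigma^{\bp}(x)=x$ with $\bp\neq\zero$, and set $L=\|\bp\|_\infty$. Since $\omega_n$ is expansive with inflation factor $\beta_n>1$, recognizability passes to each iterate $\omega_n^{k}$, and the supertiles of $\omega_n^{k}$ are rectangles whose side lengths grow like $\beta_n^{k}$; so fix $k$ large enough that every order-$k$ supertile has both side lengths strictly greater than $L$. As $\omega_n$ is onto up to a shift, so is $\omega_n^{k}$, and because $\omega_n^{k}(\Omega_n)$ is compact and only finitely many offsets occur inside a supertile, one has $\Omega_n=\bigcup_{\bk}\sigma^{\bk}\omega_n^{k}(\Omega_n)$. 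Thus $x=\sigma^{\bk}\omega_n^{k}(y)$ for some $y\in\Omega_n$ and some offset $\bk$, and recognizability of $\omega_n^{k}$ makes the pair $(y,\bk)$ unique. Consequently $x$ carries a canonical partition $P(x)$ of $\Z^2$ into translated order-$k$ supertiles, depending equivariantly on $x$: $P(\sigma^{\bv}x)=P(x)-\bv$ for every $\bv\in\Z^2$.

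Applying equivariance with $\bv=\bp$ gives $P(x)-\bp=P(x)$, so $P(x)$ is invariant under translation by $\bp$. Pick any cell $R\in P(x)$; it is a rectangle both of whose side lengths exceed $L\ge\|\bp\|_\infty$, so $R$ and $R+\bp$ overlap, and since two cells of the partition $P(x)$ are equal or disjoint we conclude $R+\bp=R$. But a nonzero translation cannot fix a bounded subset of $\Z^2$, a contradiction. Hence no configuration of $\Omega_n$ is shift-periodic; together with non-emptiness this shows that $\Omega_n$ is aperiodic, and the argument is uniform in $n$.

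The one genuinely delicate point is the passage behind the phrase ``recognizability together with expansiveness yields a canonical, equivariant partition into arbitrarily large rectangular supertiles''. This is precisely what the definitions of recognizability and of expansiveness of a $2$-dimensional substitution from Section~\ref{sec:prelim-2dsubstitutions} are meant to supply: uniqueness of the desubstitution, stability under taking powers, and the divergence to infinity of the side lengths of order-$k$ supertiles. Once those preliminary statements are recorded, everything else above is elementary geometry of rectangles in $\Z^2$, so I expect the write-up to be short.
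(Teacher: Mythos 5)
Your argument is correct and is essentially the paper's route: the paper deduces the corollary directly from Theorem~\ref{thm:similar-to-itself} by invoking Proposition~\ref{prop:expansive-recognizable-aperiodic} (expansive $+$ recognizable $+$ onto up to a shift $\Rightarrow$ aperiodic), and what you have written is precisely the standard proof of that cited proposition, unfolded inline (recognizability of the iterates, the canonical shift-equivariant partition into large supertiles, and the impossibility of a nonzero translation fixing a bounded cell). Since the paper states that proposition in its preliminaries for exactly this purpose, you could shorten your write-up to a one-line citation, keeping only the remark on non-emptiness.
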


Our second result is that the self-similarity is primitive.
As in the one-dimensional case, we say that a two-dimensional substitution
$\omega$ is primitive if there exists $m\in\N$ such that, for every
$a,b\in\Acal$, the letter $b$ occurs in $\omega^m(a)$.

\begin{maintheorem}\label{thm:primitivity}
    \MainTheoremII
\end{maintheorem}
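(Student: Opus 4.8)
The goal is to prove Theorem~\ref{thm:primitivity}: the substitution $\omega_n$ is primitive, its incidence matrix has Perron--Frobenius eigenvalue $\beta_n^2$, and the inflation factor is $\beta_n$. Let me sketch the proof plan.

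\bigskip

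\noindent\textbf{Proof plan.}
The plan is to leverage the explicit combinatorial description of $\omega_n$ coming from the proof of Theorem~\ref{thm:similar-to-itself}: each tile of $\Tcal_n$ is sent to a rectangular return block, and the return blocks partition the five families (white, yellow stripes, blue stripes, green overlaps, junctions) into a bounded structure. First I would write down the incidence matrix $M_n$ of $\omega_n$, namely $(M_n)_{b,a}=$ the number of occurrences of tile $b$ in the block $\omega_n(a)$, organized according to the five subsets. Because the families interact in a controlled way (a white tile's image contains white tiles plus the boundary stripes; a junction's image brings in green overlaps and junctions; etc.), the matrix $M_n$ should be block-structured, and the key point is to check that it is \emph{irreducible} and that some power has all entries positive. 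Concretely, I would exhibit, for a fixed small $m$ (independent of $n$, perhaps $m=3$ or $m=4$ after inspecting the images), a chain of occurrences showing that for every pair $a,b\in\Tcal_n$ the letter $b$ appears in $\omega_n^m(a)$: one shows every tile eventually produces a junction tile in its iterated image, that a junction tile's iterated image produces at least one tile of every family, and that within each family every letter is reachable (the white tiles among themselves via the $n\times n$ block; the stripe and overlap tiles via their indexing by $\{1,\dots,n\}$ or $\{0,\dots,n\}$). Primitivity then follows.

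\bigskip

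\noindent For the eigenvalue computation, the cleanest route is geometric rather than purely linear-algebraic. By Theorem~\ref{thm:similar-to-itself}, $\omega_n$ is expansive with some expansion matrix; since the return blocks are rectangles and the self-similarity replaces each unit tile by a rectangle of a fixed integer size depending only on the tile's family, one reads off from the decomposition into return blocks that the expansion is the scalar $\beta_n$ (equivalently, the linear map is $\beta_n \cdot \mathrm{Id}$, or a diagonal matrix whose entries are forced to be $\beta_n$ by the requirement that the substituted configuration tile $\Z^2$ consistently — horizontal and vertical inflation must agree because green overlap tiles appear in both orientations). Granting that the geometric inflation factor is $\beta_n$, the area of $\omega_n(a)$ scales by $\beta_n^2$, so the all-ones-type area argument (the vector of tile areas, all equal to $1$, times $M_n$ gives $\beta_n^2$ times itself up to the boundary bookkeeping) forces the Perron--Frobenius eigenvalue of $M_n$ to be $\beta_n^2$. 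Alternatively, and this is the fallback if the geometric argument needs more care, one computes the characteristic polynomial of $M_n$ directly using the block structure: after collapsing each family to a single coordinate (legitimate because within a family the images are "balanced"), $M_n$ reduces to a small integer matrix — likely $5\times 5$ or smaller — whose dominant eigenvalue is visibly a root of $(x - \beta_n^2)$ times a lower-order factor, using $\beta_n^2 = n\beta_n + 1$ and hence $\beta_n^2$ satisfies a quadratic over $\Z$ with the companion data matching the white/stripe counts ($n^2$, $2n$, etc.).

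\bigskip

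\noindent The main obstacle I expect is the eigenvalue bookkeeping: getting the \emph{exact} integer entries of $M_n$ right from the return-block description, especially the contributions of the seven junction tiles and the $n+1$ (not $n$) green overlap tiles, and then verifying that the Perron--Frobenius eigenvalue is exactly $\beta_n^2$ and not merely something asymptotic to it. The collapsing-by-family step must be justified — one needs that $M_n$ preserves the "family subspaces" in the appropriate sense (or at least that the dominant eigenvector is constant on each family), which should follow from the symmetry of the construction under relabeling within a family, but this needs to be stated carefully. Primitivity itself is routine once the images are written out: it is a finite check of a reachability graph on five "super-nodes" plus a within-family transitivity check, all uniform in $n$.
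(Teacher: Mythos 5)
Your primitivity plan is essentially the paper's proof: Lemma~\ref{lem:primitivity} runs exactly the reachability argument you describe (every image contains a junction tile, the junction tile $j_n^{1,1,1,1}$ produces all white and all blue tiles, the blue tiles $b_n^0,\widehat{b_n^0}$ produce all yellow tiles, and so on), uniformly in $n$, ending with the conclusion that $\omega_n^7(t)$ contains every tile. One small caution there: full coverage of $W_n$ does not come from white-to-white transitivity inside the $n\times n$ block, but from the single junction tile $j_n^{1,1,1,1}$ whose image contains all of $W_n$ and all blue tiles at once; your ``within each family every letter is reachable'' step should be routed through such tiles.

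The eigenvalue part, however, has a genuine gap as written. First, the ``all-ones area argument'' is false: the Wang tiles are unit squares, so $\mathbf{1}^{T}M_n$ is the vector of combinatorial areas of the images, which are $n^2$, $n(n+1)$ or $(n+1)^2$ according to the family of the source tile --- it is not $\beta_n^2\,\mathbf{1}^{T}$, and nothing forces $\beta_n^2$ at the level of a single application. For the same reason you cannot ``read off'' an expansion by $\beta_n\cdot\mathrm{Id}$ from the return-block decomposition: the blocks have integer sides $n$ and $n+1$, and $\beta_n$ only emerges from the relative frequencies of these two side lengths. Second, your fallback collapse by the five colour families is not equitable: the number of blue, green and yellow stripe tiles occurring in $\omega_n(w_n^{i,j})$ depends on $(i,j)$ and not only on the family, and the dominant eigenvector is not constant on the colour classes, so the claimed within-family ``balance''/symmetry fails. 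What does work --- and is what the paper does in Lemma~\ref{lem:inflation-factor} --- is the coarser four-class partition junction / horizontal stripe ($B_n\cup G_n\cup Y_n$) / vertical stripe / white: this is precisely a tile-to-tile factor map $\zeta$ with $(\rho_n\times\rho_n)\circ\zeta=\zeta\circ\omega_n$, where $\rho_n:\texttt{a}\mapsto\texttt{a}\texttt{b}^{n},\ \texttt{b}\mapsto\texttt{a}\texttt{b}^{n-1}$ has incidence matrix with characteristic polynomial $x^2-nx-1$. Then, using the primitivity already proved, the Perron--Frobenius eigenvalue of $M_n$ equals $\lim_{k\to\infty}\scarea(\omega_n^k(t))^{1/k}$, and since $\zeta$ preserves areas this limit equals $\beta_n\cdot\beta_n=\beta_n^2$; the inflation factor $\beta_n$ is then that of the stone inflation of $\rho_n\times\rho_n$ with the natural (irrational) edge lengths coming from the left eigenvector, not the integer block sizes. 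So your asymptotic/growth-rate instinct is the right fix, but the unit-area eigenvector claim must be dropped and the collapse must be done with the orientation classes (equivalently, the intertwining with $\rho_n\times\rho_n$), not the colour classes.
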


Our third result is that the Wang shift $\Omega_n$ is minimal,
that is, if $X\subseteq\Omega_n$ is a nonempty closed shift-invariant subset,
then $X=\Omega_n$.
Equivalently, every shift orbit is
dense, which implies that every configuration in $\Omega_n$ is uniformly recurrent.
Every small set of aperiodic Wang tiles do not satisfy this property.
For instance, the 
Robinson Wang shift is not minimal \cite{MR2956156},
and neither is the Jeandel--Rao Wang shift \cite{MR4226493}.
The proof of minimality is based on a criterion involving the
patterns of shapes $1\times 2$, $2\times 1$ and $2\times 2$ 
and their images under the substitution;
see Lemma~\ref{lem:criterion-for-minimality}.

\begin{maintheorem}\label{thm:minimal}
    \MainTheoremIII
\end{maintheorem}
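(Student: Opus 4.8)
The plan is to deduce minimality of $\Omega_n$ as a formal consequence of the two preceding main theorems together with standard facts about primitive substitutive subshifts. Recall that Theorem~\ref{thm:similar-to-itself} gives an expansive, recognizable substitution $\omega_n:\Omega_n\to\Omega_n$ with $\Omega_n = \overline{\omega_n(\Omega_n)}^\sigma$, and Theorem~\ref{thm:primitivity} asserts that $\omega_n$ is primitive. The substitutive subshift $\Xcal_{\omega_n}$ is, by definition, the shift-closure of the set of configurations all of whose finite patterns occur in some $\omega_n^m(a)$ for a letter $a\in\Tcal_n$. The goal is to show $\Omega_n = \Xcal_{\omega_n}$ and that this common set is minimal.

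The first step is to verify $\Xcal_{\omega_n}\subseteq\Omega_n$. Since $\omega_n$ maps $\Omega_n$ into $\Omega_n$ and $\Omega_n$ is a Wang shift (hence an SFT, in particular shift-invariant and closed), every pattern appearing in $\omega_n^m(a)$ for $a\in\Tcal_n$ is a pattern of a configuration in $\Omega_n$, so it is a legal pattern of $\Omega_n$; as $\Omega_n$ is an SFT, any configuration all of whose patterns are legal lies in $\Omega_n$, and $\Omega_n$ is closed, giving $\Xcal_{\omega_n}\subseteq\Omega_n$. The second step is the reverse inclusion $\Omega_n\subseteq\Xcal_{\omega_n}$. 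Here I would use recognizability and surjectivity up to a shift: given $x\in\Omega_n$, iterating the desubstitution afforded by recognizability produces, for each $m$, a configuration $x^{(m)}\in\Omega_n$ and a vector $\bk_m\in\Z^2$ with $\sigma^{\bk_m}\omega_n^m(x^{(m)})=x$ (using $\Omega_n=\overline{\omega_n(\Omega_n)}^\sigma$ at each level). Consequently every finite pattern of $x$ occurs inside $\omega_n^m(w_m)$ for some finite pattern $w_m$ appearing in $x^{(m)}\in\Omega_n$, hence inside $\omega_n^m$ of a finite pattern of $\Omega_n$. By primitivity, there is $m_0$ such that $\omega_n^{m_0}(a)$ contains every letter of $\Tcal_n$ for each $a$; so a finite pattern $w$ of $\Omega_n$, sitting inside some $\omega_n(b)$ with $b$ a letter occurring in $\Omega_n$, has $w$ occurring in $\omega_n^{1+m_0}(a)$ for \emph{every} $a\in\Tcal_n$. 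Combining, every finite pattern of $x$ occurs in $\omega_n^{M}(a)$ for all letters $a$ and suitably large $M$, which is exactly the condition for $x\in\Xcal_{\omega_n}$.

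Minimality then follows from the classical fact (which I would cite, e.g.\ via the standard theory of primitive substitutions in higher dimensions, cf.\ the references already used in the paper for recognizable/self-similar Wang shifts) that the subshift generated by a primitive substitution is minimal: primitivity yields \emph{linear repetitivity}-type uniform recurrence, namely for every finite pattern $w$ occurring in some $\omega_n^{m}(a)$ there is $R$ such that $w$ occurs in every $\omega_n^{m+m_0}(b)$, and since every point of $\Xcal_{\omega_n}$ is covered by arbitrarily large $\omega_n$-images of single letters, $w$ recurs with bounded gaps in every configuration. Bounded-gap recurrence of every legal pattern is precisely uniform recurrence of every point, which for subshifts is equivalent to minimality. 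Alternatively, one may invoke the criterion of Lemma~\ref{lem:criterion-for-minimality}: it suffices to check that the finitely many patterns of shapes $1\times2$, $2\times1$, $2\times2$ legal in $\Omega_n$ all occur in $\omega_n^{m}(a)$ for every letter $a$ and some uniform $m$; this is a finite verification enabled by primitivity and the explicit description of $\omega_n$, and it is the route I would expect the paper to take since that lemma has just been set up for this purpose.

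The main obstacle is the inclusion $\Omega_n\subseteq\Xcal_{\omega_n}$, i.e.\ controlling the desubstitution tower: one must ensure that recognizability plus $\Omega_n=\overline{\omega_n(\Omega_n)}^\sigma$ genuinely lets every $x\in\Omega_n$ be written (up to shift) as $\omega_n^m$ of a configuration \emph{in $\Omega_n$} for all $m$, with no loss at the boundaries when passing to finite patterns. Once that tower is in place, the rest is bookkeeping: primitivity converts ``occurs in $\omega_n^m$ of some letter of $\Omega_n$'' into ``occurs in $\omega_n^{M}$ of every letter,'' which is both the defining property of $\Xcal_{\omega_n}$ and the source of uniform recurrence. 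If one instead goes through Lemma~\ref{lem:criterion-for-minimality}, the obstacle shifts to the (routine but not trivial) finite check that all legal $1\times2$, $2\times1$, $2\times2$ patterns appear in bounded-power substitution images of every tile — something that should follow uniformly in $n$ from the uniform combinatorial structure of $\omega_n$ already exploited in the proofs of Theorems~\ref{thm:similar-to-itself} and~\ref{thm:primitivity}.
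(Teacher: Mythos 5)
Your first inclusion $\Xcal_{\omega_n}\subseteq\Omega_n$ is fine, and your observation that $\Xcal_{\omega_n}$ is minimal by primitivity is standard. The gap is in the reverse inclusion $\Omega_n\subseteq\Xcal_{\omega_n}$, at the step ``a finite pattern $w$ of $\Omega_n$, sitting inside some $\omega_n(b)$ with $b$ a letter.'' A finite pattern of a configuration $x\in\Omega_n$ need not sit inside the image of a single letter at any level of the desubstitution tower: a pattern straddling the seams of the hierarchy (for instance one containing the boundary between the images of two adjacent letters of $x^{(m)}$, for every $m$) only ever sits inside $\omega_n^m(u)$ for a domino or $2\times 2$ seed $u$, and to conclude it lies in $\Lcal(\Xcal_{\omega_n})$ you would already need $u\in\Lcal(\Xcal_{\omega_n})$ --- which is exactly what is being proved. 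The paper's Example~\ref{ex:sub2d-on-abc} shows this is not a removable technicality: there is an expansive primitive $2$-dimensional substitution $\nu$ and a configuration containing a single vertical domino $\left(\begin{smallmatrix}a\\b\end{smallmatrix}\right)$ that is fixed by $\nu$ (hence desubstitutable at every level), yet this domino never occurs in any $\nu^m(c)$, so the resulting self-similar subshift strictly contains $\Xcal_\nu$ and is not minimal. Thus ``self-similar $+$ expansive $+$ primitive ($+$ recognizable)'' does not formally imply $X=\Xcal_\omega$; some genuinely combinatorial input about which straddling seeds are legal is unavoidable.

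Your fallback --- invoking Lemma~\ref{lem:criterion-for-minimality} --- is indeed the paper's route, but you understate what the check involves. The criterion asks that the \emph{legal} patterns of $\Omega_n$ among the \emph{recurrent vertices} of the seed graphs $G^{1\times2}_{\omega_n}$, $G^{2\times1}_{\omega_n}$, $G^{2\times2}_{\omega_n}$ lie in $\Lcal(\Xcal_{\omega_n})$; this is not a routine verification uniform in $n$. Two vertical dominoes, $\left(\begin{smallmatrix}j_n^{1,1,1,1}\\ \widehat{g_n^{n}}\end{smallmatrix}\right)$ and $\left(\begin{smallmatrix}j_n^{0,1,1,1}\\ \widehat{g_n^{n-1}}\end{smallmatrix}\right)$, are recurrent vertices of $G^{1\times2}_{\omega_n}$ but are \emph{not} substitutive, so the paper must prove they are illegal in $\Omega_n$ (Lemma~\ref{lem:two-illegal-vertical-dominoes}, a return-block width argument, not a mechanical check), exhibit explicit $\omega_n$-ancestries for all the remaining recurrent dominoes and $2\times2$ seeds (Lemmas~\ref{lem:language-vertical-dominoes} and~\ref{lem:language-2x2-seeds}), and treat $n=1$ separately by computer (Lemma~\ref{lem:minimality-when-n=1}), because the illegality lemma fails when $n=1$ (adjacent junction rows occur there). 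So the correct skeleton is present in your last paragraph, but the proposal as written does not contain the key combinatorial content that makes the criterion applicable.
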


In a tiling of the plane by the two shapes shown in Figure~\ref{fig:Ammann-A2-with-bars} 
respecting the matching condition, there appear what are called Ammann bars.
In this case, the slopes of the Ammann bars takes four different values:
two slope values for the dashed Ammann bars and two slope values for the solid Ammann bars.
As explained in \cite[p.594--598]{MR857454},
the solid bars can be regarded as the edges of a new tiling by rhombs and
parallelograms, for which the dashed bars can be regarded as markings on the tiles
specifying the matching conditions. Sixteen parallelogram tiles arise from this
construction which can be recoded as 16 Wang tiles. As we show in 
Theorem~\ref{thm:n=1-equiv-Ammann}, the Ammann 16 Wang tiles is equivalent to $\Tcal_1$,
the first member of the family $\Tcal_n$ when $n=1$.

\begin{maintheorem}\label{thm:n=1-equiv-Ammann}
    \MainTheoremIV
\end{maintheorem}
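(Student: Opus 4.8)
The plan is to establish a concrete dictionary between the sixteen Ammann Wang tiles and the nine tiles of $\Tcal_1$, matching them edge-label by edge-label after a relabeling of the edge alphabets. The first step is to fix an explicit presentation of the Ammann set of $16$ Wang tiles. There are (at least) two such presentations in the literature: the one obtained from the Ammann A2 bars as described in \cite[pp.~594--598]{MR857454}, and equivalent forms appearing e.g.\ in \cite{zbMATH07187340} or in the tilings encyclopedia. I would pin down one canonical list of $16$ tiles, written as quadruples (right, top, left, bottom) over some $4$-element (or few-element) edge alphabet, and reproduce it in a figure so the reader can check the correspondence visually. Then I would list the $16 = (1+3)^2$ tiles of $\Tcal_1$: the $1$ white tile, the $1+1$ yellow stripe tiles, the $1+1$ blue stripe tiles, the $2+2$ green overlap tiles, and the $7$ junction tiles.

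Next, I would produce the relabeling: an explicit bijection $\phi$ between the edge-label alphabet of the Ammann tiles and the edge-label alphabet used for $\Tcal_1$, together with a bijection $\psi$ between the two sets of $16$ tiles, and verify that $\psi$ is ``label-compatible'', i.e.\ for each Ammann tile $t$ with edge labels $(r,t_{\mathrm{op}},l,b)$, the tile $\psi(t)$ has edge labels $(\phi(r),\phi(t_{\mathrm{op}}),\phi(l),\phi(b))$. Once this is checked tile by tile, it is immediate that a configuration $\Z^2\to\Tcal_1$ is valid if and only if the corresponding configuration over the Ammann tiles is valid, because the adjacency constraint is purely local and is transported by $\phi$; hence the two Wang shifts are conjugate via the induced map, which is exactly what ``equal up to symbol relabeling'' means. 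I would phrase the final statement as: the map $\psi$ extends to a topological conjugacy between $\Omega_1$ and the Ammann Wang shift that intertwines the two shift actions.

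Since this is fundamentally a finite verification, the main obstacle is not mathematical depth but rather bookkeeping and the ambiguity of ``the Ammann set of $16$ Wang tiles'': different sources draw these tiles with different edge colorings, sometimes differing by a global symmetry of the square (rotation or reflection) applied uniformly to all tiles, or by a relabeling that is not the identity. I would therefore be careful to (i) cite precisely which presentation I take as the definition, and (ii) if my $\Tcal_1$ matches it only after applying a global dihedral symmetry, state that symmetry explicitly, noting that a global symmetry of $\Z^2$ composed with the same symmetry on each tile's edges yields a conjugate Wang shift. A secondary subtlety is that the Ammann A2 construction via bars yields parallelogram tiles whose recoding to Wang tiles involves a choice of how to read off the ``colors'' from the Ammann bar slopes; I would make that recoding explicit (following \cite{MR857454}) so the claimed equivalence is reproducible. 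The bulk of the write-up will be a table or annotated figure exhibiting $\psi$ and $\phi$, followed by the one-line observation that label-compatible tile bijections induce conjugacies of Wang shifts.
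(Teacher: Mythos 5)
Your proposal is correct and is essentially the paper's own argument: the paper proves the theorem exactly by exhibiting an explicit edge-label bijection (namely $1\mapsto 112$, $2\mapsto 111$, $3\mapsto 001$, $4\mapsto 011$, $5\mapsto 012$, $6\mapsto 000$, with the Ammann tiles taken in the presentation of \cite[p.~595]{MR857454}) and checking tile-by-tile against a figure, which is the finite verification you describe. The only blemish is the slip ``nine tiles of $\Tcal_1$'' in your opening sentence, which your own later count $16=(1+3)^2$ corrects.
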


Thus, the family $(\Tcal_n)_{n\geq1}$ can be considered as an extension 
of the Ammann set of Wang tiles to the metallic mean numbers.

\subsection*{Structure of the article}

In Section~\ref{sec:prelim-wang-shifts}, we present
preliminaries on dynamical systems, subshifts and Wang shifts.
In Section~\ref{sec:prelim-2dsubstitutions},
we recall definitions of 2-dimensional substitutions.
In Section~\ref{sec:family-Tcaln},
we introduce two Wang shifts $\Omega_n\subseteq\Omega_n'$ defined
by the sets $\Tcal_n\subseteq\Tcal_n'$ of Wang tiles.
In Section~\ref{sec:substitution},
we define two substitutions $\omega_n':\Omega_n'\to\Omega_n'$
and $\omega_n:\Omega_n\to\Omega_n$.
In Section~\ref{sec:desubstitution},
we describe the return blocks
in the Wang shifts $\Omega_n$ and $\Omega_n'$
and we prove that every configuration in
the Wang shift $\Omega_n$ can be desubstituted into a configuration from
$\Omega_n'$.
In Section~\ref{sec:Omegan'subseteqOmegan},
we prove that tiles in $\Tcal_n'\setminus\Tcal_n$ do not appear
in configurations of $\Omega_n'$. Thus,
$\Omega_n'\subseteq\Omega_n$.
Observe that
Section~\ref{sec:Omegan'subseteqOmegan}
depends on the results from
Section~\ref{sec:substitution} and
Section~\ref{sec:desubstitution}.
In Section~\ref{sec:self-similar-aperiodic},
we prove that $\Omega_n$ is self-similar and aperiodic.
In Section~\ref{sec:primitive}, we prove that
the self-similarity is primitive.
In Section~\ref{sec:minimal}, we prove that
$\Omega_n$ is minimal.
In Section~\ref{sec:question}, we state some questions raised by the current work.
The article finishes with two appendices.
Section~\ref{sec:Appendix-substitutions-1ot5} (Appendix A) gathers pictures of
the substitutions $\omega_n$ for $1\leq n\leq5$.
In Section~\ref{sec:Appendix-self-similarity-Omega2} (Appendix B),
we prove the self-similarity of $\Omega_n$ when $n=2$ using computer explorations.

\subsection*{Batteries included}
All results proved in this article are proved by hand
except the proof of Lemma~\ref{lem:minimality-when-n=1} 
and the computations performed in Appendix B
which are based on the open-source
mathematical software SageMath \cite{sagemathv10.5} and the
optional package \texttt{slabbe} \cite{labbe_slabbe_0_7_7_2024}.
All SageMath input/output blocks in this article were created using
the \texttt{sageexample} environment with 
SageTeX version \texttt{2021/10/16 v3.6}
and with the following software versions:
\begin{sagecommandline}
sage: version()
'SageMath version ..., Release Date: ...'
sage: import importlib.metadata
sage: importlib.metadata.version("slabbe")
'...'
\end{sagecommandline}
The fact that these software are open-source means that anyone
is free to use, reproduce, verify, adapt for their own needs all of the
computations performed therein according to the
GNU General Public License (version 2, 1991, \url{http://www.gnu.org/licenses/gpl.html}).

The contents of all of the \texttt{sageexample} environments from the tex
source are gathered in the file
\texttt{demos/arXiv\_2312\_03652\_doctest.sage}
autogenerated by SageTeX when running \texttt{pdflatex}.
This file is included in the \texttt{slabbe} package and available at
\url{https://gitlab.com/seblabbe/slabbe/}.
It allows to make sure that future releases of the package do not break the
code included in this article.
It is possible to reproduce all computations present in this article and 
check that all outputs are correct, by \emph{doctesting} this file, that is,
by running the command
\texttt{sage -t demos/arXiv\_2312\_03652\_doctest.sage}.
It should output \texttt{All tests passed!} and 
\texttt{[67 tests, 31.02s wall]} (most probably with a different timing).

\subsection*{Acknowledgments}
The author is thankful to the reviewers for their careful reading and remarks
which led, in particular, to an improved and more formal proof of the
self-similarity of $\Omega_n$.
The author also wants to thank Dirk Frettl\"{o}h for making him aware of other
existing aperiodic substitutive tilings involving metallic mean numbers
including the bronze mean \cite{Dotera2017}.
This work was partly funded from France's Agence Nationale de la Recherche
(ANR) project CODYS (ANR-18-CE40-0007) and project IZES (ANR-22-CE40-0011).

\begin{figure}%
\begin{center}
    \includegraphics{SAGEOUTPUT/W1_17x23tiling.pdf}
\end{center}
    \caption{A valid $17\times 23$ pattern with Wang tile set $\Tcal_1$.}
    \label{fig:T1-17x23}
\end{figure}

\begin{figure}%
\begin{center}
    \includegraphics{SAGEOUTPUT/W2_17x23tiling.pdf}
\end{center}
    \caption{A valid $17\times 23$ pattern with Wang tile set $\Tcal_2$.}
    \label{fig:T2-17x23}
\end{figure}

\begin{figure}%
\begin{center}
    \includegraphics{SAGEOUTPUT/W3_17x23tiling.pdf}
\end{center}
    \caption{A valid $17\times 23$ pattern with Wang tile set $\Tcal_3$.}
    \label{fig:T3-17x23}
\end{figure}

\begin{figure}%
\begin{center}
    \includegraphics{SAGEOUTPUT/W4_17x23tiling.pdf}
\end{center}
    \caption{A valid $17\times 23$ pattern with Wang tile set $\Tcal_4$.}
    \label{fig:T4-17x23}
\end{figure}

\section{Preliminaries on Wang shifts}
\label{sec:prelim-wang-shifts}

This section follows the preliminary section of the chapter
\cite{labbe_three_2020}.

\subsection{Topological dynamical systems}

Most of the notions introduced here can be found in \cite{MR648108}.
A \defn{dynamical system} is
a triple $(X,G,T)$, where $X$ is a topological space, $G$ is a topological
group and $T$ is a continuous function $G\times X\to X$ defining a left action
of $G$ on $X$:
if $x\in X$, $e$ is the identity element of $G$ and $g,h\in G$, then using
additive notation for the operation in $G$ we have $T(e,x)=x$
and $T(g+h,x)=T(g,T(h,x))$.
In other words, if one denotes the transformation $x\mapsto T(g,x)$
by $T^g$, then $T^{g+h}=T^g T^h$.
In this work, we consider the Abelian group $G=\Z\times\Z$.

If $Y\subset X$, let $\overline{Y}$ denote the topological closure of $Y$ and
let $\overline{Y}^T:=\cup_{g\in G}T^g(Y)$ denote the $T$-closure of $Y$.
A subset $Y\subset X$ is \defn{$T$-invariant} if $\overline{Y}^T=Y$.
A dynamical system $(X,G,T)$ is called \defn{minimal} if $X$ does
not contain any nonempty, proper, closed $T$-invariant subset.
The left action of $G$ on $X$ is \defn{free}
if $g=e$ whenever there exists $x\in X$ such that $T^g(x)=x$.

Let $(X,G,T)$ and $(Y,G,S)$ be two dynamical systems with
the same topological group $G$.
A \defn{homomorphism} $\theta:(X,G,T)\to(Y,G,S)$ is a continuous
function $\theta:X\to Y$ satisfying the commuting property
that $S^g\circ\theta=\theta\circ T^g$ for every $g\in G$.
A homomorphism $\theta:(X,G,T)\to(Y,G,S)$ is called an \defn{embedding}
if it is one-to-one, a \defn{factor map} if it is onto, and a \defn{topological
conjugacy} if it is both one-to-one and onto and its inverse map is continuous.
If $\theta:(X,G,T)\to(Y,G,S)$ is a factor map,
then $(Y,G,S)$ is called a \defn{factor} of $(X,G,T)$
and $(X,G,T)$ is called an \defn{extension} of $(Y,G,S)$.
Two dynamical systems are \defn{topologically conjugate} if there is a
topological conjugacy between them.

\subsection{Subshifts and shifts of finite type}\label{sec:subshift-SFT}

In this section, we introduce multidimensional subshifts,
a particular type of dynamical systems 
\cite[\S 13.10]{MR1369092},
\cite{MR1861953,MR2078846,MR3525488}.
Let $\Acal$ be a finite set, $d\geq 1$, and let $\Acal^{\Z^d}$ be the set of all maps
$x:\Z^d\to\Acal$, equipped with the compact product topology. 
An element $x\in\Acal^{\Z^d}$ is called \defn{configuration}
and we write it as $x=(x_\bm)=(x_\bm:\bm\in\Z^d)$,
where $x_\bm\in\Acal$ denotes the value of $x$ at $\bm$. 
The topology on $\Acal^{\Z^d}$ is compatible with the metric defined for all
configurations $x,x'\in\Acal^{\Z^d}$ by $\dist(x,x')=2^{-\min\left\{\Vert\bn\Vert\,:\,
x_\bn\neq x'_\bn\right\}}$
where $\Vert\bn\Vert = |n_1| + \dots + |n_d|$.
The \defn{shift action} $\sigma:\bn\mapsto
\sigma^\bn$ of the additive group $\Z^d$ on $\Acal^{\Z^d}$ is defined by
\begin{equation}\label{eq:shift-action}
    (\sigma^\bn(x))_\bm = x_{\bm+\bn}
\end{equation}
for every $x=(x_\bm)\in\Acal^{\Z^d}$ and $\bn\in\Z^d$. 
If $X\subset \Acal^{\Z^d}$,
let $\overline{X}$ denote the topological closure of $X$
and let $\shiftclosure{X}:=\{\sigma^\bn(x)\mid x\in X, \bn\in\Z^d\}$
denote the shift-closure of $X$.
A subset $X\subset
\Acal^{\Z^d}$ is \defn{shift-invariant} if 
$\shiftclosure{X}=X$. A closed, shift-invariant subset
$X\subset\Acal^{\Z^d}$ is a \defn{subshift}. 
If $X\subset\Acal^{\Z^d}$ is a subshift we write
$\sigma=\sigma^X$ for the restriction of the shift action
\eqref{eq:shift-action} to $X$. 
When $X$ is a subshift,
the triple $(X,\Z^d,\sigma)$ is a dynamical system
and the notions presented in the previous section hold.

A configuration $x\in X$ is \defn{periodic} if there is a nonzero vector
$\bn\in\Z^d\setminus\{\zero\}$ such that $x=\sigma^\bn(x)$,
and otherwise it is \defn{nonperiodic}.
We say that a nonempty subshift $X$ is \defn{aperiodic}
if the shift action $\sigma$ on $X$ is free.

For any subset $S\subset\Z^d$ let $\pi_S:\Acal^{\Z^d}\to\Acal^S$ denote the
projection map which restricts every $x\in\Acal^{\Z^d}$ to $S$. 
A \defn{pattern} is a function $p\in\Acal^S$ for some finite subset
$S\subset\Z^d$.
To every pattern $p\in\Acal^S$ corresponds
a subset $\pi_S^{-1}(p)\subset\Acal^{\Z^d}$ called \defn{cylinder}.
A nonempty set $X\subset\Acal^{\Z^d}$ is a
\defn{subshift} if and only if there exists a set $\Fcal$
of \defn{forbidden} patterns such that
\begin{equation}\label{eq:SFT}
    X = \{x\in\Acal^{\Z^d} \mid \pi_S\circ\sigma^\bn(x)\notin\Fcal
    \text{ for every } \bn\in\Z^d \text{ and } S\subset\Z^d\},
\end{equation}
see \cite[Prop.~9.2.4]{MR3525488}.
A subshift $X\subset\Acal^{\Z^d}$ is a 
\defn{subshift of finite type} (SFT) if there exists a finite set $\Fcal$ such that \eqref{eq:SFT} holds.
In this article, we consider shifts of finite type on $\Z\times\Z$, that is, the case
$d=2$.

\subsection{Wang shifts}

A \defn{Wang tile} 
is a tuple of four colors $(a,b,c,d)\in I\times J\times
I\times J$
where $I$
is a finite set of vertical colors
and $J$
is a finite set of horizontal colors; see
\cite{wang_proving_1961,MR0297572}.
A Wang tile is represented as a unit square with colored edges:
\begin{center}
    \raisebox{-9.5mm}{
    \begin{tikzpicture}[auto]
    \tile{white}{0}{0}{a}{b}{c}{d}
    \end{tikzpicture}}
\end{center}
For each Wang tile $\tau=(a,b,c,d)$, let
$\scright(\tau)=a$,
$\sctop(\tau)=b$,
$\scleft(\tau)=c$,
$\scbottom(\tau)=d$
denote respectively the \defn{colors} or \defn{labels} of the right, top, left
and bottom edges of $\tau$.

\begin{figure}[h]
\begin{center}
    \includegraphics{SAGEOUTPUT/Wang_1961_three_tiles.pdf}
\end{center}
    \caption{The set of 3 Wang tiles introduced
    in \cite{wang_proving_1961} using letters $\{A,B,C,D,E\}$ instead of
    numbers from the set $\{1,2,3,4,5\}$ for labeling the edges.
    Each tile is identified uniquely by an index from the
    set $\{0,1,2\}$ written at the center each tile.}
    \label{fig:wang-three-tiles}
\end{figure}

Let $\Tcal=\{t_0,\dots,t_{m-1}\}$ be a set of Wang tiles as the one shown in Figure~\ref{fig:wang-three-tiles}.
A configuration $x:\Z^2\to\{0,\dots,m-1\}$ is \defn{valid} with respect to $\Tcal$ if
it assigns a tile in $\Tcal$ to each position of $\Z^2$, so that contiguous edges
of adjacent tiles have the same color, that is,
\begin{align}
    \scright(t_{x(\bn)})&=\scleft(t_{x(\bn+\be_1)})\label{eq:validwangtiling1}\\
    \sctop(t_{x(\bn)})&=\scbottom(t_{x(\bn+\be_2)})\label{eq:validwangtiling2}
\end{align}
for every $\bn\in\Z^2$ where $\be_1=(1,0)$ and $\be_2=(0,1)$.
A finite pattern which is valid with respect to $\Ucal$ is shown in 
Figure~\ref{fig:wang-three-tiles-3x3-tiling}.

\begin{figure}[h]
\begin{center}
\begin{tikzpicture}
    \node (A) at (0,0) {$\arraycolsep=1.8pt
                        \input{SAGEOUTPUT/Wang_1961_three_tiles_tiling_config.tex}
                        $};
    \node (B) at (3,0) {\includegraphics{SAGEOUTPUT/Wang_1961_three_tiles_tiling.pdf}};
    \draw[|->] (A) to (B);
\end{tikzpicture}
\end{center}
    \caption{A finite $3\times 3$ pattern on the left is valid with respect to the Wang tiles
    since it respects Equations~\eqref{eq:validwangtiling1}
    and~\eqref{eq:validwangtiling2}. Validity can be verified on the tiling shown on
    the right.}
    \label{fig:wang-three-tiles-3x3-tiling}
\end{figure}

Let $\Omega_\Tcal\subset\{0,\dots,m-1\}^{\Z^2}$ denote the set of all valid 
configurations with respect to $\Tcal$,
called the \defn{Wang shift} of $\Tcal$. 
To a configuration $x\in\Omega_\Tcal$ corresponds a tiling of the plane $\R^2$ by
the tiles $\Tcal$ where the unit square Wang tile $t_{x(\bn)}$ is placed at position $\bn$ for every
$\bn\in\Z^2$, as in Figure~\ref{fig:wang-three-tiles-3x3-tiling}.
Together with the shift action $\sigma$ of $\Z^2$,
$\Omega_\Tcal$ is a SFT of the form \eqref{eq:SFT}
since there exists a finite set of
forbidden patterns made of all horizontal and vertical dominoes of two tiles
that do not share an edge of the same color.

A configuration $x\in\Omega_\Tcal$ is \defn{periodic} if there exists
$\bn\in\Z^2\setminus\{0\}$ such that $x=\sigma^\bn(x)$.
A set $\Tcal$ of Wang tiles is \defn{periodic} if there exists a periodic configuration
$x\in\Omega_\Tcal$. 
Originally, Wang thought that every set $\Tcal$ of Wang tiles is periodic 
as soon as $\Omega_\Tcal$ is nonempty \cite{wang_proving_1961}.
Wang noticed that if this statement were true, 
it would imply the existence of an algorithm 
solving the \emph{domino problem}, that is, taking as input a set of Wang tiles
and returning \textit{yes} or \textit{no} whether there exists a valid
configuration with these tiles. 
Berger, a student of Wang, later proved that the domino problem is undecidable
and he also provided a first example of an aperiodic set of Wang tiles
\cite{MR0216954}.
A set $\Tcal$ of Wang tiles is \defn{aperiodic} if
the Wang shift $\Omega_\Tcal$ is a nonempty aperiodic subshift.

\subsection{Directional determinism}
A set $\Tcal$ of Wang tiles is called \defn{SW-deterministic} if there do not exist two
different tiles in $\Tcal$ that would have the same colors on their bottom and left edges,
respectively \cite{MR1692474}. In other words, for all colors $C_1$ and $C_2$
there exists at most one tile in $\Tcal$ whose bottom and left edges have
colors $C_1$ and $C_2$, respectively.

Let $S=\{a_1,a_1+1,\dots,b_1\}\times\{a_2,a_2+1,\dots,b_2\}$
be a rectangular support where $a_1,b_1,a_2,b_2$ are integers such that 
$a_1\leq b_1$ and $a_2\leq b_2$.
Let $p:S\to\Tcal$ be a valid rectangular pattern over the tiles $\Tcal$.
We say that the 
\defn{bottom labels of $p$} 
and \defn{top labels of $p$} 
are, respectively, the sequences
\begin{align*}
&\scbottom(p_{a_1,a_2}),\scbottom(p_{a_1+1,a_2}),\dots,\scbottom(p_{b_1,a_2})\text{ and }\\
&\sctop(p_{a_1,b_2}),\sctop(p_{a_1+1,b_2}),\dots,\sctop(p_{b_1,b_2})
\end{align*}
read on the pattern from left to right.
Also, we say that the 
\defn{left labels of $p$} 
and \defn{right labels of $p$} 
are, respectively, the sequences
\begin{align*}
&\scleft(p_{a_1,a_2}),\scleft(p_{a_1,a_2+1}),\dots,\scleft(p_{a_1,b_2})\text{ and }\\
&\scright(p_{b_1,a_2}),\scright(p_{b_1,a_2+1}),\dots,\scright(p_{b_1,b_2})
\end{align*}
read on the pattern from bottom to top.

As shown in the next lemma, the local definition of SW-deterministic sets of Wang
tiles extends into a wider property on rectangular patterns.

\begin{lemma}\label{lem:unicity-rect-pattern}
    Let $\Tcal$ be a SW-deterministic set of Wang tiles.
    If $p$ and $q$ are two rectangular valid patterns 
    with the same shape, the same sequence of
    bottom labels and the same sequence of left labels, then $p=q$.
\end{lemma}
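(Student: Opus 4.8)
The plan is to reconstruct both patterns tile by tile, starting from the bottom-left corner and sweeping upward and to the right, using SW-determinism to force each tile from data that is already known. After applying a translation, we may assume the support is $S=\ZZrange{1}{w}\times\ZZrange{1}{h}$ for some integers $w,h\geq1$, where the first coordinate indexes columns and the second indexes rows. I would prove, by induction on the pair $(j,i)$ ordered lexicographically --- that is, row by row starting from the bottom row, and from left to right within each row --- that $p_{i,j}=q_{i,j}$ for every $(i,j)\in S$. This order is chosen precisely so that the left neighbour $(i-1,j)$ and the bottom neighbour $(i,j-1)$ of a position $(i,j)$ both precede $(i,j)$.

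For the inductive step, fix $(i,j)\in S$ and assume $p_{i',j'}=q_{i',j'}$ for every $(i',j')$ strictly preceding $(i,j)$ in this order. The key point is that the bottom and left colours of the tile $p_{i,j}$ are already pinned down by the hypotheses. Indeed, if $j=1$ then $\scbottom(p_{i,j})$ is the $i^{\text{th}}$ term of the sequence of bottom labels of $p$, which by assumption coincides with the corresponding term for $q$, hence equals $\scbottom(q_{i,j})$; and if $j>1$, then validity~\eqref{eq:validwangtiling2} gives $\scbottom(p_{i,j})=\sctop(p_{i,j-1})$, and since $(i,j-1)$ precedes $(i,j)$ the induction hypothesis yields $\sctop(p_{i,j-1})=\sctop(q_{i,j-1})=\scbottom(q_{i,j})$. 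Symmetrically, using the sequence of left labels when $i=1$, and validity~\eqref{eq:validwangtiling1} together with the induction hypothesis at $(i-1,j)$ when $i>1$, one obtains $\scleft(p_{i,j})=\scleft(q_{i,j})$. Therefore $p_{i,j}$ and $q_{i,j}$ are two tiles of $\Tcal$ with the same bottom colour and the same left colour, so SW-determinism forces $p_{i,j}=q_{i,j}$. The base case $(i,j)=(1,1)$ is just the instance of this argument in which both ``boundary'' alternatives apply and the induction hypothesis is used vacuously.

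Having shown $p_{i,j}=q_{i,j}$ for all $(i,j)\in S$, we conclude $p=q$. I do not expect a genuine obstacle here: the statement is a standard propagation-by-determinism induction. The only things that require care are fixing an induction order in which both neighbours below and to the left are already processed, and the bookkeeping that cleanly separates colours coming from the prescribed boundary sequences (the first row and the first column) from colours propagated across interior edges via the validity equations.
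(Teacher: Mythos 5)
Your proof is correct and is essentially the paper's argument: the paper runs the same propagation of SW-determinism from the prescribed bottom and left boundary labels, phrased as a minimal-counterexample argument (taking a disagreement position minimizing $\Vert k\Vert_1$) rather than your explicit lexicographic induction, but the two framings are interchangeable.
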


\begin{proof}
    By contradiction,
    suppose that there are two distinct rectangular patterns $p$ and $q$ whose
    sequence of bottom labels is $X$ 
    and sequence of left labels is $Y$. 
    Since $p$ and $q$ are distinct,
    there exists a position $k\in\N^2$ such that $p_k\neq q_k$.
    Consider such a position in the support of $p$ and $q$ which minimizes the norm $\Vert k\Vert_1$.
    Since the position is minimal, 
    every tile at position smaller in norm is the same in both patterns.
    In particular, it implies that
    $\scleft(p_k)=\scleft(q_k)$
    and
    $\scbottom(p_k)=\scbottom(q_k)$.
    The set of Wang tile $\Tcal_n$ is SW-deterministic.
    This implies that
    $\sctop(p_k)=\sctop(q_k)$ and
    $\scright(p_k)=\scright(q_k)$.
    Since the four labels of the Wang tiles are the same, we must have
    $p_k=q_k$, a contradiction.
    We conclude the uniqueness of the rectangular pattern.
\end{proof}

\defn{NW-}, \defn{NE-} and \defn{SE-deterministic} sets of Wang tiles are
defined analogously. Recall that it was shown in \cite{MR1692474} that there
exist aperiodic tile sets that are deterministic in all four directions
simultaneously.

\section{Preliminaries on 2-dimensional substitutions}
\label{sec:prelim-2dsubstitutions}

Rectangular two-dimensional substitutions and their symbolic dynamical systems were
considered in \cite{MR1014984}.
For a certain class of two-dimensional substitution systems, 
it was shown how to construct a set of Wang tiles such that the associated Wang
shift is an almost everywhere one-to-one extension of the substitution system
\cite[Theorem~4.5]{MR1014984}.
This result was generalized later for geometrical substitutions over polygonal tiles
\cite{MR1609510}.

In this section, we introduce $2$-dimensional substitutions.
Our definition and the one presented in \cite{MR1014984} are incomparable.
On the one hand, we restrict to the deterministic case (every letter has a
unique image).
On the other hand, we extend to different alphabets $\Acal$ and $\Bcal$ for the
domain and codomain.
The section follows the preliminary section of the chapter \cite{labbe_three_2020}.

\subsection{$d$-dimensional word}

We denote by
$\{\be_k|1\leq k\leq d\}$ the canonical
basis of $\Z^d$ where $d\geq1$ is an integer.
If $i\leq j$ are integers, then $\llbracket i, j\rrbracket$ denotes the
interval of integers $\{i, i+1, \dots, j\}$.
Let $\bn=(n_1,\dots,n_d)\in\N^d$ and $\Acal$ be an alphabet.
We denote by $\Acal^{\bn}$ the set of functions
\begin{equation*}
    u:
\llbracket 0,n_1-1\rrbracket
\times
\cdots
\times
\llbracket 0,n_d-1\rrbracket
\to\Acal.
\end{equation*}
An element $u\in\Acal^\bn$ is called a
\defn{$d$-dimensional word} of \defn{size} $\bn=(n_1,\dots,n_d)\in\N^d$
on the alphabet~$\Acal$.
We use the notation $\scsize(u)=\bn$ when necessary.
The set of all finite $d$-dimensional words is 
$\Acal^{*^d}=\bigcup_{\bn\in\N^d} \Acal^\bn$.
A $d$-dimensional word of size $\be_k+\sum_{i=1}^d\be_i$ is called a
\defn{domino in the direction $\be_k$}.
When the context is clear, we write $\Acal$ instead of $\Acal^{(1,\dots,1)}$.
When $d=2$, we represent a $d$-dimensional word $u$ of size $(n_1,n_2)$ as a
matrix with Cartesian coordinates:
\begin{equation*}
    u=
    \left(\begin{array}{ccc}
        u_{0,n_2-1} &\dots   & u_{n_1-1,n_2-1} \\
        \dots   &\dots   & \dots \\
        u_{0,0} &\dots   & u_{n_1-1,0}
    \end{array}\right).
\end{equation*}
Let $\bn,\bm\in\N^d$ and $u\in\Acal^\bn$ and $v\in\Acal^\bm$.
If there exists an index $i$ such that 
$n_j=m_j$ for all $j\in\{1,\dots,d\}\setminus\{i\}$,
then the \defn{concatenation} of $u$ and $v$ in the direction $\be_i$ 
is defined: it is
the 
$d$-dimensional word $u\odot^i v$ of size $(n_1,\dots,n_{i-1},n_i+m_i,n_{i+1},\dots,n_d)\in\N^d$
given as
\begin{equation*}
    (u\odot^i v) (\ba) = 
\begin{cases}
    u(\ba)          & \text{if}\quad 0 \leq a_i < n_i,\\
    v(\ba-n_i\be_i) & \text{if}\quad n_i \leq a_i < n_i+m_i.
\end{cases}
\end{equation*}
The notation $u\odot^i v$ was used in \cite{MR2579856}.

The following equation illustrates the concatenation of $2$-dimensional words 
in the direction $\be_2$:
\[
    \arraycolsep=2.5pt
\left(\begin{array}{ccccc}
4 & 5 \\
10 & 5
\end{array}\right)
\odot^2
\left(\begin{array}{ccccc}
3 & 10 \\
9 & 9 \\
0 & 0 \\
\end{array}\right)
    =
\left(\begin{array}{ccccc}
3 & 10 \\
9 & 9 \\
0 & 0 \\
4 & 5 \\
10 & 5
\end{array}\right)
\]
and in the direction $\be_1$:
\[
    \arraycolsep=2.5pt
\left(\begin{array}{ccccc}
2 & 8 & 7  \\
7 & 3 & 9 \\
1 & 1 & 0 \\
6 & 6 & 7 \\
7 & 4 & 3 
\end{array}\right)
\odot^1
\left(\begin{array}{ccccc}
3 & 10 \\
9 & 9 \\
0 & 0 \\
4 & 5 \\
10 & 5
\end{array}\right)
    =
\left(\begin{array}{ccccc}
2 & 8 & 7 & 3 & 10 \\
7 & 3 & 9 & 9 & 9 \\
1 & 1 & 0 & 0 & 0 \\
6 & 6 & 7 & 4 & 5 \\
7 & 4 & 3 & 10 & 5
\end{array}\right).
\]

Let $\bn,\bm\in\N^d$ and $u\in\Acal^\bn$ and $v\in\Acal^\bm$.
We say that $u$ \defn{occurs in $v$ at position} $\bp\in\N^d$ if
$v$ is large enough, i.e., $\bm-\bp-\bn\in\N^d$ and
\[
    v(\ba+\bp) = u(\ba)
\]
for all $\ba=(a_1,\dots,a_d)\in\N^d$ such that 
$0\leq a_i<n_i$ with $1\leq i\leq d$.
If $u$ occurs in $v$ at some position, then we say that $u$ is a
$d$-dimensional \defn{subword} or \defn{factor} of $v$.

\subsection{$d$-dimensional language}

A subset $L\subseteq\Acal^{*^d}$ is called a $d$-dimensional \defn{language}. 
A language $L\subseteq\Acal^{*^d}$ is called \defn{factorial}
if for every $v\in L$ and every $d$-dimensional subword $u$ of $v$, we have $u\in L$.
All languages considered in this contribution are factorial.
Given a configuration $x\in\Acal^{\Z^d}$, the \defn{language} $\Lcal(x)$ defined by $x$ is
\begin{equation*}
    \Lcal(x) = \{u\in\Acal^{*^d} \mid u\text{ is a $d$-dimensional subword of } x\}.
\end{equation*}
The \defn{language} of a subshift $X\subseteq\Acal^{\Z^d}$ is
    $\Lcal_X = \cup_{x\in X} \Lcal(x)$.
Conversely, given a factorial language $L\subseteq\Acal^{*^d}$ we define the subshift
\begin{equation*}
    \Xcal_L = \{x\in\Acal^{\Z^d}\mid \Lcal(x)\subseteq L\}.
\end{equation*}
A $d$-dimensional subword $u\in\Acal^{*^d}$ is \defn{legal} (or \defn{allowed}) in a 
subshift $X\subset\Acal^{\Z^d}$ if $u\in\Lcal_X$
and it is \defn{illegal} in $X$ if $u\notin\Lcal_X$
\cite{MR3136260}.
A language $L\subseteq\Acal^{*^d}$ is \defn{illegal} in a 
subshift
$X\subset\Acal^{\Z^d}$ if $L\cap\Lcal_X=\varnothing$.

\subsection{$d$-dimensional morphisms}

Let $\Acal$ and $\Bcal$ be two alphabets.
Let $L\subseteq\Acal^{*^d}$ be a factorial language.
A function $\omega:L\to\Bcal^{*^d}$ is a \defn{$d$-dimensional
morphism} if for every
$i$ with $1\leq i\leq d$,
and every $u,v\in L$ such that 
$u\odot^i v$ is defined
and
$u\odot^i v\in L$,
we have
that the concatenation $\omega(u)\odot^i \omega(v)$
in direction $\be_i$ is defined and
\begin{equation*}
    \omega(u\odot^i v) = \omega(u)\odot^i \omega(v).
\end{equation*}
Note that the left-hand side of the equation is defined since
$u\odot^i v$ belongs to the domain of $\omega$.
A $d$-dimensional morphism $L\to\Bcal^{*^d}$ is thus completely defined from the
image of the letters in $\Acal$, so we sometimes denote
a $d$-dimensional morphism as a rule $\Acal\to\Bcal^{*^d}$ 
when the language $L$ is unspecified.

As noticed by \cite[p.144]{MR1014984}, 
the images under the morphism of any two letters appearing in the same row
of a word from $L$ have the same height. Symmetrically, 
the images under the morphism of any two letters appearing in the same column
of a word from $L$ have the same width.

Let $L\subseteq\Acal^{*^d}$ be a factorial language
and $\Xcal_L\subseteq\Acal^{\Z^d}$ be the subshift generated by $L$.
A $d$-dimensional morphism 
$\omega:L \to\Bcal^{*^d}$ 
can be extended to a continuous map
$\omega:\Xcal_L\to\Bcal^{\Z^d}$ 
(over the topology of subshifts, as defined in Section~\ref{sec:subshift-SFT})
in such a way that the origin of $\omega(x)$ is at position $\zero$
in the word $\omega(x_\zero)$
for all $x\in\Xcal_L$. 
More precisely, the image
under $\omega$ of the configuration $x\in\Xcal_L$ is
\[
    \omega(x) =
    \lim_{n\to\infty}\sigma^{f(n)}\omega\left(\sigma^{-n\UN}(x|_{\llbracket-n\UN,n\UN\llbracket})\right)
    \in\Bcal^{\Z^d}
\]
where $\UN=(1,\dots,1)\in\Z^d$,
$f(n)=\scsize\left(\omega(\sigma^{-n\UN}(x|_{\llbracket-n\UN,\zero\llbracket}))\right)$
for all $n\in\N$ and
$\llbracket\bm,\bn\llbracket
=
\ZZrange{m_1}{n_1-1}
\times \cdots \times
\ZZrange{m_d}{n_d-1}$.
We say that the map $\omega:\Xcal_L\to\Bcal^{\Z^d}$ 
is a \defn{$d$-dimensional substitution}.

In general, the image of a subshift under a $d$-dimensional substitution
might not be closed under the shift.
But the closure under the shift of the image of a subshift $X\subseteq\Acal^{\Z^d}$ under $\omega$
is the subshift
\begin{equation*}
\overline{\omega(X)}^{\sigma}
    = \{\sigma^\bk\omega(x)\in\Bcal^{\Z^d} \mid \bk\in\Z^d, x\in X\}
    \subseteq \Bcal^{\Z^d}.
\end{equation*}

This motivates the following definition.

\begin{definition}
Let $X$, $Y$ be two subshifts
and $\omega:X\to Y$ be a $d$-dimensional substitution
If $Y=\overline{\omega(X)}^\sigma$, then
we say that $\omega$
is \defn{onto up to a shift}.
\end{definition}

\subsection{Self-similar subshifts}\label{chap:Labbe:subsec:self-similar-subshifts}

In this section, we consider languages and subshifts defined from morphisms
leading to self-similar structures. 
In this situation, the domain and codomain
of morphisms are defined over the same alphabet. 
Formally, we consider the case of $d$-dimensional morphisms
$\Acal\to\Bcal^{*^d}$ where $\Acal=\Bcal$.

The definition of self-similarity depends on the notion of expansiveness.
It avoids the presence of lower-dimensional self-similar structure by having
expansion in all directions.

\begin{definition}\label{def:expansive}
We say that a $d$-dimensional morphism $\omega:\Acal\to\Acal^{*^d}$ is
\defn{expansive}
if for every $a\in\Acal$ and $K\in\N$,
there exists $m\in\N$ such that $\min(\scsize(\omega^m(a)))>K$.
\end{definition}

\begin{definition}\label{def:self-similar}
A subshift $X\subseteq\Acal^{\Z^d}$ 
is \defn{self-similar}
if there exists an expansive
$d$-dimensional morphism $\omega:\Acal\to\Acal^{*^d}$ such that
$X=\overline{\omega(X)}^\sigma$.
\end{definition}

Self-similar subshifts can be constructed by iterative
application of a morphism $\omega$ starting with the letters.
The \defn{language} $\Lcal_\omega$ defined by an expansive $d$-dimensional
morphism $\omega:\Acal\to\Acal^{*^d}$ is
\begin{equation*}
    \Lcal_\omega = \{u\in\Acal^{*^d} \mid u\text{ is a $d$-dimensional subword of }
    \omega^n(a) \text{ for some } a\in\Acal\text{ and } n\in\N \}.
\end{equation*}
The \defn{substitutive shift} $\Xcal_\omega=\Xcal_{\Lcal_\omega}$
defined from the language of $\omega$ is a self-similar subshift
since $\Xcal_\omega=\overline{\omega(\Xcal_\omega)}^{\sigma}$ holds.

A $d$-dimensional morphism $\omega:\Acal\to\Acal^{*^d}$ is
\defn{primitive}
if there exists $m\in\N$ such that for every
$a,b\in\Acal$ the letter $b$ occurs in $\omega^m(a)$.
Note that if $\omega$ is primitive, then
the Perron--Frobenius theorem applies for its \defn{incidence matrix}
$M_\omega=(|\omega(a)|_b)_{(b,a)\in\Acal\times\Acal}$; see \cite{MR2590264}.

\subsection{$d$-dimensional recognizability}

The definition of recognizability dates back to the work of Host, Qu\'effelec and
Moss\'e \cite{MR1168468}. 
The definition introduced below is based on some work of Berth\'e et al.
\cite{MR4015135} on the recognizability in the case
of $S$-adic systems where more than one substitution is involved.

\begin{definition}[recognizable]
Let $X\subseteq\Acal^{\Z^d}$ and
$\omega:X\to\Bcal^{\Z^d}$ be a $d$-dimensional substitution.
If $y\in\overline{\omega(X)}^{\sigma}$, i.e.,
$y=\sigma^\bk\omega(x)$ for some $x\in X$ and $\bk\in\Z^d$, where $\sigma$ is
the $d$-dimensional shift map, we say that $(\bk,x)$ is an
\defn{$\omega$-representation of $y$}. We say that it is \defn{centered} if
$y_\zero$ lies inside of the image of $x_\zero$, i.e., if
$\zero\leq\bk<\scsize(\omega(x_\zero))$ coordinate-wise.
    We say that $\omega$ is \defn{recognizable in} $X\subseteq\Acal^{\Z^d}$
if each $y\in\Bcal^{\Z^d}$ has at most one centered $\omega$-representation 
$(\bk,x)$ with $x\in X$.
\end{definition}

The self-similarity of $\Omega_n$ allows to conclude aperiodicity of the Wang shift
using well-known arguments, see \cite{MR1637896,MR1168468}, who showed that
recognizability and aperiodicity are equivalent for primitive substitutive
sequences. 

The following statement corresponds to only one of the direction (the easy
one) of the equivalence which does not need the notion of primitivity.
It was proved for 2-dimensional substitutions in \cite{MR3978536}; see also
\cite[Proposition 3.6]{labbe_three_2020}.

\begin{proposition}\label{prop:expansive-recognizable-aperiodic}
    \cite[Proposition 6]{MR3978536}
    Let $\omega:\Acal\to\Acal^{*^d}$ be an expansive $d$-dimensional morphism.
    Let $X\subseteq\Acal^{\Z^d}$ be a self-similar subshift such that
    $\overline{\omega(X)}^\sigma=X$.
    If $\omega$ is recognizable in $X$, then $X$ is aperiodic.
\end{proposition}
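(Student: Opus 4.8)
The plan is to argue by contradiction: suppose $X$ is nonempty but not aperiodic, so there is a configuration $x\in X$ and a nonzero vector $\bp\in\Z^d\setminus\{\zero\}$ with $\sigma^\bp(x)=x$. I want to produce from this a ``smaller'' period, and iterate, contradicting the fact that periods are integer vectors of strictly positive norm. The mechanism is recognizability: since $X=\overline{\omega(X)}^\sigma$, the configuration $x$ has a centered $\omega$-representation $(\bk,x')$, that is, $x=\sigma^{-\bk}\omega(x')$ with $x'\in X$ and $\zero\le\bk<\scsize(\omega(x'_\zero))$. First I would observe that $\sigma^\bp(x)=x$ rewrites as $\sigma^{\bp-\bk}\omega(x')=\sigma^{-\bk}\omega(x')$, i.e. $\sigma^\bp\bigl(\sigma^{-\bk}\omega(x')\bigr)=\sigma^{-\bk}\omega(x')$; hence $\sigma^{-\bk}\omega(\sigma^?\,x')$ is again a centered $\omega$-representation of the same point $x$ after we translate appropriately. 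The key point: $\sigma^\bp(x)=\sigma^{-\bk}\omega(\sigma^{\bq}(x'))$ for a suitable $\bq\in\Z^d$ together with a corrected centering vector $\bk'$, where $\bq$ and $\bk'$ are obtained by Euclidean-division-type bookkeeping of $\bp$ against the block-size data of $\omega$ along the configuration $x'$. Because the images $\omega(a)$ all have sizes bounded below (expansiveness gives, for the first iterate, $\min\scsize(\omega(a))\ge1$, and along a configuration the cumulative sizes grow), the translation $\bq$ satisfies a strict inequality $\Vert\bq\Vert<\Vert\bp\Vert$ unless $\bp$ itself is too small to ``cross'' a full block, in which case one uses that $\bk,\bk'$ are pinned in the fundamental domain $\zero\le\,\cdot\,<\scsize(\omega(x'_\zero))$ to force $\bq\neq\zero$.

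Concretely, the heart of the argument is the following claim, which I would isolate as the main step: \emph{if $\sigma^\bp(x)=x$ and $(\bk,x')$ is the (unique, by recognizability) centered $\omega$-representation of $x$, then $x'$ is periodic with a period $\bq$ satisfying $\zero\neq\bq$ and $\Vert\bq\Vert_1\le\Vert\bp\Vert_1$, with equality only if $\omega$ acts as a size-$1$ map in the relevant direction — which expansiveness eventually rules out after finitely many iterates.} To prove the claim, apply $\sigma^\bp$ to the equality $x=\sigma^{-\bk}\omega(x')$: the translate $\sigma^\bp(x)$ is still an element of $X=\overline{\omega(X)}^\sigma$ and equals $x$, so it has a centered representation too; I would compute this representation explicitly. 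Writing $\bp=\bk-\bk'+(\text{sum of block sizes of }\omega\text{ over the cells of }x'\text{ between }\zero\text{ and }\bq)$, the uniqueness clause of recognizability forces the pair $(\bk',\sigma^{\bq}(x'))$ to coincide with $(\bk,x')$ — the centering vectors match because both lie in the same fundamental box — hence $\sigma^{\bq}(x')=x'$, i.e. $\bq$ is a period of $x'$. That $\bq\neq\zero$: otherwise $\bp=\bk-\bk'$ with both $\bk,\bk'$ in the box $\llbracket\zero,\scsize(\omega(x'_\zero))\llbracket$ and $\sigma^{\zero}x'=x'$ giving $\bk=\bk'$, so $\bp=\zero$, contrary to assumption.

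Having the claim, I would finish by iteration. Starting from $x_0:=x$ with period $\bp_0:=\bp\neq\zero$, the claim produces $x_1:=x'\in X$ with period $\bp_1\neq\zero$ and $\Vert\bp_1\Vert_1\le\Vert\bp_0\Vert_1$. Repeat to get a sequence $x_j\in X$ with nonzero periods $\bp_j$ and $\Vert\bp_{j+1}\Vert_1\le\Vert\bp_j\Vert_1$. Now expansiveness enters decisively: since for each letter $a$ and each $K$ some iterate $\omega^m(a)$ has all side lengths exceeding $K$, along the tower of desubstitutions the block sizes strictly dilate in every direction, so after finitely many steps the ``size-$1$'' equality case is impossible and the inequality $\Vert\bp_{j+1}\Vert_1<\Vert\bp_j\Vert_1$ becomes strict. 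A strictly decreasing sequence of positive integers $\Vert\bp_j\Vert_1$ cannot go on forever — contradiction. Therefore $X$ is aperiodic.

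\textbf{Main obstacle.} The delicate part is the explicit bookkeeping that identifies the centered $\omega$-representation of $\sigma^\bp(x)$ and shows its ``$x'$-part'' is a genuine shift $\sigma^{\bq}(x')$ of $x'$ with the stated norm bound; this requires the standard but fiddly fact that translating a substitutive configuration by $\bp$ corresponds, at the lower level, to translating by a well-defined $\bq$ obtained by subtracting off whole block-sizes, plus a residual centering correction kept inside the fundamental box. Handling the direction-by-direction nature of the block sizes (they need not be uniform — columns share widths, rows share heights, per the remark after \cite{MR1014984}) is what makes the estimate $\Vert\bq\Vert_1\le\Vert\bp\Vert_1$ require care rather than being immediate. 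Once that combinatorial lemma is in hand, recognizability does all the real work and the rest is the descent argument above.
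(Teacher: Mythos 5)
The paper does not prove this proposition itself: it is quoted from \cite[Proposition 6]{MR3978536}, and the proof there is exactly the argument you outline — by recognizability the unique centered $\omega$-representation forces the desubstituted configuration to inherit a nonzero period (with $\bk=\bk'$, so the period upstairs is a sum of block sizes over the period downstairs), and expansiveness then produces the contradiction. Your reconstruction is correct in substance; the only loose point is the justification of the ``eventual strict decrease'' of $\Vert\bp_j\Vert_1$, which is handled more cleanly by keeping the original period $\bp$ fixed and noting that, for any coordinate $i$ with $\bp^{(i)}\neq0$, $|\bp^{(i)}|$ must dominate the $i$-th coordinate of $\scsize(\omega^J(a))$ for some letter $a$ occurring at level $J$, a quantity which expansiveness (uniform over the finite alphabet) makes arbitrarily large.
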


\section{The family of metallic mean Wang tiles}
\label{sec:family-Tcaln}

For every integer $n\in\Z$, 
we write $\overline{n}$ to denote $n+1$
and $\underline{n}$ to denote $n-1$:
\begin{align*}
    \overline{n} &:= n+1,\\
    \underline{n} &:= n-1.
\end{align*}

For every Wang tile $\tau=(a,b,c,d)$, we define its
symmetric image under the positive diagonal as
 $\widehat{\tau}=(b,a,d,c)$:
\[
    \text{ if }
    \tau=
    \raisebox{-9.8mm}{
    \begin{tikzpicture}[auto]
    \tile{white}{0}{0}{a}{b}{c}{d}
    \end{tikzpicture}},
    \qquad
    \text{ then }
    \qquad
    \widehat{\tau}=
    \raisebox{-8.0mm}{
    \begin{tikzpicture}[auto]
    \tile{white}{0}{0}{b}{a}{d}{c}
    \end{tikzpicture}}.
\]

\subsection{The tiles}

For every integer $n\geq 1$, let
\[
    V_n = \{(v_0,v_1,v_2)\in\Z^3\colon 0\leq v_0\leq v_1\leq 1\text{ and } v_1\leq v_2\leq n+1\}.
\]
be a set of vectors having non-decreasing entries with an upper bound of 1 on the middle entry
and an upper bound of $n+1$ on the last entry.
The label of the edges of the Wang tiles considered in this article are vectors
in $V_n$. To lighten the figures and the presentation of the Wang tiles, it is convenient to denote
the vector $(v_0,v_1,v_2)\in V_n$ more compactly as a word $v_0v_1v_2$.
For instance the vector $(1,1,1)$ is represented as $111$.

For every integer $n\geq 1$,
we define the following set of Wang tiles whose labels belong to the set $V_n$.
We have $n^2$ white tiles whose labels all start with $11$:
\begin{align*}
W_n &= \left\{ 
        \;w_n^{i,j} := 
        \raisebox{-10mm}{
            \begin{tikzpicture}[auto]
                \tikzstyle{every node}=[font=\footnotesize]
                \tile{white}{0}{0}{11\ibar}{11\jbar}{11i}{11j}
            \end{tikzpicture}}
       \;\middle|
       \begin{array}{l}
       1\leq i\leq n,\\
       1\leq j\leq n 
       \end{array}
       \right\}
    &&(n^2\text{ white tiles}).
\end{align*}

We have horizontal stripe tiles whose top
and bottom labels all start with $11$
and whose left and right labels start with $0$.
These are divided into four sets according to the first two digits
of the left and right labels which can be $00$ (associated with color blue) or 
$01$ (associated with color yellow).
\begingroup
\allowdisplaybreaks
\begin{align*}
B_n' &= \left\{ 
        \;b_n^{i} := 
    \raisebox{-9mm}{
    \begin{tikzpicture}[auto]
        \tikzstyle{every node}=[font=\footnotesize]
        \tileH{\ourColorBlue}{0}{0}{00\ibar}{111}{00i}{11n}
    \end{tikzpicture}}
    \;\middle|\; 0\leq i \leq n \right\}
    &&(n+1\text{ horizontal blue stripe tiles}),\\
G_n &= \left\{
        \;g_n^{i} := 
    \raisebox{-9mm}{
    \begin{tikzpicture}[auto]
        \tikzstyle{every node}=[font=\footnotesize]
        \tileHgreen{0}{0}{01\ibar}{111}{00i}{11\nbar}
    \end{tikzpicture}}
    \;\middle|\; 0\leq i\leq n \right\}
    &&(n+1\text{ horizontal green stripe tiles}),\\
Y_n &= \left\{
        \;y_n^{i} := 
    \raisebox{-9mm}{
    \begin{tikzpicture}[auto]
        \tikzstyle{every node}=[font=\footnotesize]
    \tileH{\ourColorYellow}{0}{0}{01\ibar}{112}{01i}{11\nbar}
    \end{tikzpicture}}
    \;\middle|\; 1\leq i\leq n \right\}
    &&(n\text{ horizontal yellow stripe tiles}),\\
A_n &= \left\{ 
        \;a_n^{i} := 
    \raisebox{-9mm}{
    \begin{tikzpicture}[auto]
        \tikzstyle{every node}=[font=\footnotesize]
    \tileHantigreen{0}{0}{00\ibar}{112}{01i}{11n}
    \end{tikzpicture}}
    \;\middle|\; 1\leq i \leq n \right\}
    &&(n \text{ horizontal antigreen tiles}).
\end{align*}
\endgroup
The set $B_n'$ of horizontal blue tiles are those such that both left and
right labels start with $00$ and are identified with a horizontal blue
stripe.
The set $Y_n$ of horizontal yellow tiles are those such that both left and
right labels start with $01$ and are identified with a horizontal yellow
stripe.
The set $G_n$ of horizontal green tiles are those such that the left label
starts with $00$ and right label starts with $01$ and are identified with a
green region intersecting blue and yellow horizontal stripes.
The set $A_n$ of horizontal antigreen tiles contains the tiles whose left label
starts with $01$ and whose right label starts with $00$. They are identified with
non-intersecting blue and yellow horizontal stripes and no green intersecting
region.

The tiles in $A_n$ are called ``antigreen'' because they are ``against the system''
as shown later in Lemma~\ref{lem:no-antigreen-tile}.
Antigreen tiles do not appear in any valid configuration, but they are needed
as they play an important role in the description of the substitutive structure of
the valid configurations allowed by these tiles; 
see Proposition~\ref{prop:there-exists-rect-pattern-iff}
and Proposition~\ref{prop:exists-unique-preimage}.

We also have vertical stripe tiles which are the symmetric images of the horizontal
stripe tiles under a reflection over the positive diagonal:
\begingroup
\allowdisplaybreaks
\begin{align*}
    \widehat{B_n'} &= \left\{ 
        \;\widehat{b_n^{i}} := 
    \raisebox{-9mm}{
    \begin{tikzpicture}[auto]
        \tikzstyle{every node}=[font=\footnotesize]
        \tileV{\ourColorBlue}{5}{0}{111}{00\ibar}{11n}{00i}
    \end{tikzpicture}}
    \;\middle|\; 0\leq i \leq n \right\}
    &&(n+1\text{ vertical blue stripe tiles}),\\
    \widehat{G_n} &= \left\{
        \;\widehat{g_n^{i}} := 
    \raisebox{-9mm}{
    \begin{tikzpicture}[auto]
        \tikzstyle{every node}=[font=\footnotesize]
        \tileVgreen{5}{0}{111}{01\ibar}{11\nbar}{00i}
    \end{tikzpicture}}
    \;\middle|\; 0\leq i\leq n \right\}
    &&(n+1\text{ vertical green stripe tiles}),\\
    \widehat{Y_n} &= \left\{
        \;\widehat{y_n^{i}} := 
    \raisebox{-9mm}{
    \begin{tikzpicture}[auto]
        \tikzstyle{every node}=[font=\footnotesize]
        \tileV{\ourColorYellow}{5}{0}{112}{01\ibar}{11\nbar}{01i}
    \end{tikzpicture}}
    \;\middle|\; 1\leq i\leq n \right\}
    &&(n\text{ vertical yellow stripe tiles}),\\
    \widehat{A_n} &= \left\{ 
        \;\widehat{a_n^{i}} := 
    \raisebox{-9mm}{
    \begin{tikzpicture}[auto]
        \tikzstyle{every node}=[font=\footnotesize]
        \tileVantigreen{5}{0}{112}{00\ibar}{11n}{01i}
    \end{tikzpicture}}
    \;\middle|\; 1\leq i \leq n \right\}
    &&(n \text{ vertical antigreen tiles}).
\end{align*}
\endgroup

Finally, we have 9 junction tiles (the gray region is drawn in blue or
yellow color depending on the specific values of $k,\ell,r,s$):
\begingroup
\allowdisplaybreaks
\begin{align*}
J_n' &= 
    \left\{ 
        \;j_n^{k,\ell,r,s}:=
        \raisebox{-10mm}{
        \begin{tikzpicture}[auto]
        \tikzstyle{every node}=[font=\footnotesize]
            \tileJunctionGRAY{0}{2}{(0,k,\ell)}{(0,r,s)}{(0,s,r+n)}{(0,\ell,k+n)}{gray!50}
        \end{tikzpicture}}
    \;\middle|\; 
       \begin{array}{l}
        0\leq k \leq \ell \leq 1,\\
        0\leq r \leq s \leq 1
       \end{array}
    \right\}\\
    &=
    \left\{
    \raisebox{-8mm}{
    \begin{tikzpicture}[auto]
        \def\dt{2.2}
        \tikzstyle{every node}=[font=\footnotesize]
        \tileJunctionIIII{2*\dt}{0}  {}{011}{01\nbar}{}
        \draw[fill=white,draw=white] (2*\dt,0) -- ++ (1,1) -- ++ (0,-1) -- cycle;
        \draw[dashed,thick]  (2*\dt,0) -- ++ (1,1);
        \tileJunctionOOII{\dt}{0}  {}{001}{01n}{}
        \draw[fill=white,draw=white] (\dt,0) -- ++ (1,1) -- ++ (0,-1) -- cycle;
        \draw[dashed,thick]  (\dt,0) -- ++ (1,1);
        \tileJunctionOOOO{0}{0}  {}{000}{00n}{}        
        \draw[fill=white,draw=white] (0,0) -- ++ (1,1) -- ++ (0,-1) -- cycle;
        \draw[dashed,thick]  (0,0) -- ++ (1,1);
    \end{tikzpicture}}
    \right\}
    \times
    \left\{
    \raisebox{-8mm}{
    \begin{tikzpicture}[auto]
        \def\dt{2.2}
        \tikzstyle{every node}=[font=\footnotesize]
        \tileJunctionIIII{2*\dt}{0}  {011}{}{}{01\nbar}
        \draw[fill=white,draw=white] (2*\dt,0) -- ++ (1,1) -- ++ (-1,0) -- cycle;
        \draw[dashed,thick]  (2*\dt,0) -- ++ (1,1);
        \tileJunctionOOII{\dt}{0}{001}{}{}{01n}
        \draw[fill=white,draw=white] (\dt,0) -- ++ (1,1) -- ++ (-1,0) -- cycle;
        \draw[dashed,thick]  (\dt,0) -- ++ (1,1);
        \tileJunctionOOOO{0}{0}  {000}{}{}{00n}        
        \draw[fill=white,draw=white] (0,0) -- ++ (1,1) -- ++ (-1,0) -- cycle;
        \draw[dashed,thick]  (0,0) -- ++ (1,1);
    \end{tikzpicture}}
    \right\}\\
    &= 
    \left\{
    \raisebox{-33mm}{
    \begin{tikzpicture}[auto]
        \tikzstyle{every node}=[font=\footnotesize]
        \tileJunctionIOOI{0}{5}  {011}{000}{00n}{01\nbar}
        \tileJunctionIOII{3}{5}  {011}{001}{01n}{01\nbar}
        \tileJunctionIIII{6}{5}  {011}{011}{01\nbar}{01\nbar}
        \tileJunctionOOOI{0}{2.5}{001}{000}{00n}{01n}        
        \tileJunctionOOII{3}{2.5}{001}{001}{01n}{01n}
        \tileJunctionOIII{6}{2.5}{001}{011}{01\nbar}{01n}
        \tileJunctionOOOO{0}{0}  {000}{000}{00n}{00n}        
        \tileJunctionOOIO{3}{0}  {000}{001}{01n}{00n}
        \tileJunctionOIIO{6}{0}  {000}{011}{01\nbar}{00n}
    \end{tikzpicture}}
    \right\}
    \qquad\qquad(9 \text{ junction tiles}).
\end{align*}
\endgroup

We may observe that white tiles and junction tiles are closed under the reflection
over the positive diagonal:
\[
\widehat{W_n}=W_n
\text{ and }
\widehat{J_n'}=J_n'.
\]
Junction tiles play a similar role to junction tiles in \cite{MR1014984},
thus we reuse the same vocabulary.

\subsection{The extended set $\Tcal_n'$ of metallic mean Wang tiles}
\label{sec:larger-family-Tcaln'}

For every integer $n\geq1$, the
\defn{extended set of metallic mean Wang tiles} is
the union of all of the tiles defined above:
\[
\Tcal'_n = W_n
             \cup B'_n 
             \cup G_n  
             \cup Y_n  
             \cup A_n  
             \cup \widehat{B'_n} 
             \cup \widehat{G_n} 
             \cup \widehat{Y_n} 
             \cup \widehat{A_n} 
             \cup J'_n.
\]
The set $\Tcal_n'$ of tiles defines the \defn{extended metallic mean Wang shift}
\[
\Omega'_n=\Omega_{\Tcal'_n}.
\]
The set $\Tcal'_n$
contains $n^2+2(n+1+n+1+n+n)+9=n^2+8n+13$ Wang tiles.
The set $\Tcal'_n$ of Wang tiles for $n=4$ is shown in
Figure~\ref{fig:T'n-for-1-a-5}.

\begin{figure}[h]
\begin{center}
\includegraphics{SAGEOUTPUT/W4_tiles_with_extra.pdf}
\end{center}
    \caption{Extended metallic mean Wang tile sets $\Tcal'_n$ for $n=4$.
    The junction tiles in $\Dcal$ are shown with a $\times$-mark in their center.
    }
    \label{fig:T'n-for-1-a-5}
\end{figure}

\subsection{The subset $\Tcal_n$ of metallic mean Wang tiles}

We need to define an important subset of the extended
metallic mean Wang tiles $\Tcal_n'$, because 
some of the tiles are not necessary as they do not appear in any
valid configurations of $\Omega_n'$. 
For example, we can observe that no tile of $\Tcal_n'$ have label $00\nbar$ on
the left or bottom. Therefore, the last horizontal blue tile
and last vertical blue tile which use label $00\nbar$ on its top or right edge
admit no immediate surroundings with tiles in $\Tcal_n'$.
As shown in Section~\ref{sec:Omegan'subseteqOmegan}
using results proved in Section~\ref{sec:substitution} and
Section~\ref{sec:desubstitution},
other tiles from $\Tcal_n'$ do not admit arbitrarily large surroundings.
Therefore, it is convenient to remove them.

Let
\begin{align*}
    \Dcal&=
        \{b_n^n,\;
          \widehat{b_n^n},\;
          j_n^{0,0,1,1},\;
          j_n^{1,1,0,0}\}\\
        &=
    \left\{ 
    \raisebox{-9mm}{
    \begin{tikzpicture}[auto]
        \tikzstyle{every node}=[font=\footnotesize]
        \tileH{\ourColorBlue}{5}{0}{00\nbar}{111}{00n}{11n}
    \end{tikzpicture}},
    \raisebox{-9mm}{
    \begin{tikzpicture}[auto]
        \tikzstyle{every node}=[font=\footnotesize]
        \tileV{\ourColorBlue}{5}{0}{111}{00\nbar}{11n}{00n}
    \end{tikzpicture}},
    \raisebox{-9mm}{
    \begin{tikzpicture}[auto]
        \tikzstyle{every node}=[font=\footnotesize]
        \tileJunctionOIIO{6}{0}  {000}{011}{01\nbar}{00n}
    \end{tikzpicture}},
    \raisebox{-9mm}{
    \begin{tikzpicture}[auto]
        \tikzstyle{every node}=[font=\footnotesize]
        \tileJunctionIOOI{0}{5}  {011}{000}{00n}{01\nbar}
    \end{tikzpicture}}
    \right\}
\end{align*}
be the set containing the last blue horizontal and vertical tiles
as well as two of the junction tiles.
For every positive integer $n$,
we delete the four tiles of $\Dcal$ from $\Tcal_n'$
as well as all of the antigreen tiles.
We obtain the following \defn{subset of metallic mean Wang tiles}
\begin{align*}
    \Tcal_n&= \Tcal_n' \setminus 
    \left(A_n\cup\widehat{A_n} \cup \Dcal \right)\\
           &= W_n
             \cup B_n 
             \cup G_n  
             \cup Y_n  
             \cup \widehat{B_n} 
             \cup \widehat{G_n} 
             \cup \widehat{Y_n} 
             \cup J_n,
\end{align*}
where 
$B_n=B_n'\setminus\Dcal$ is the remaining set of $n$ horizontal blue stripe tiles
and
$J_n=J_n'\setminus\Dcal
    =\left\{ 
    j_n^{0,0,0,0},
    j_n^{0,1,0,0},
    j_n^{0,0,0,1},
    j_n^{0,1,0,1},
    j_n^{1,1,0,1},
    j_n^{0,1,1,1},
    j_n^{1,1,1,1}
    \right\}$ is the remaining set of $7$ junction tiles.
The set $\Tcal_n$ contains $n^2+2(n+n+1+n)+7=(n+3)^2$ Wang tiles.
It is shown in Figure~\ref{fig:Tn-for-1-a-5}
for $n=1,2,3,4,5$.

The set $\Tcal_n$ of tiles defines the \defn{Metallic mean Wang shift}
\[
    \Omega_n=\Omega_{\Tcal_n}
\]
which is a subshift of $\Omega_n'$ because $\Tcal_n\subset\Tcal_n'$.

\begin{remark}
    The reader may wonder why we need to introduce the extended set $\Tcal_n'$ if
    only the tiles in the subset $\Tcal_n$ appear in configurations of $\Omega_n'$.
    This is because the extended set is needed to describe and prove the
    self-similarity of $\Tcal_n$ in Theorem~\ref{thm:similar-to-itself}.
    In the proof
    (using the vocabulary of supertiles from the only article published by Ammann \cite{MR1156132}),
    we show that if the markings of the supertiles at level $k$ are in bijection 
    with the tiles in $\Tcal_n$,
    then the markings of the supertiles at level $k+1$ are in bijection
    with tiles in $\Tcal_n'$ (not $\Tcal_n$!).
    In other words, we cannot get rid of the ghost tiles in
    $\Tcal_n'\setminus\Tcal_n$, because they keep reappearing at the next level
    of the hierarchy in bigger sizes.
\end{remark}

\subsection{The Ammann aperiodic set of 16 Wang tiles}

A reproduction of the 
Ammann aperiodic set of 16 Wang tiles
\cite[p.595, Figure 11.1.13]{MR857454}
is shown in Figure~\ref{fig:Ammann}.
The Ammann set of 16 Wang tiles corresponds to $\Tcal_1$.

\begin{figure}[h]
\begin{center}
    \includegraphics{SAGEOUTPUT/Ammann_tiles.pdf}
    \qquad
    \includegraphics{SAGEOUTPUT/Ammann_tiles_with_bg_color.pdf}
    \qquad
    \includegraphics{SAGEOUTPUT/W1_tiles.pdf}
\end{center}
    \caption{Left: a reproduction of the 
    Ammann aperiodic set of 16 Wang tiles
    \cite[p.595, Figure 11.1.13]{MR857454}.
    Middle: the Ammann aperiodic set of 16 Wang tiles
    in the same order but
    with coloring corresponding to the white, yellow, green, blue and junction tiles
    of the set $\Tcal_1$.
    Right: The set $\Tcal_1$ of Wang tiles whose edge labels are vectors in $\N^3$.
    The sets are equivalent up to a bijection of the edge labels.}
    \label{fig:Ammann}
\end{figure}

\begin{THEOREMIV}
    \MainTheoremIV
\end{THEOREMIV}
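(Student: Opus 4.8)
The plan is to exhibit an explicit bijection between the $16$ edge labels used in the Ammann set of Figure~\ref{fig:Ammann} and the $16$ vectors in $V_1$ that actually occur as edge labels of tiles in $\Tcal_1$, and to check that this bijection carries the Ammann tiles onto the tiles of $\Tcal_1$. First I would specialize the definitions of Section~\ref{sec:family-Tcaln} to $n=1$: here $V_1=\{(v_0,v_1,v_2)\in\Z^3: 0\le v_0\le v_1\le 1,\ v_1\le v_2\le 2\}$, and one lists the members of $V_1$ together with the compact word notation $v_0v_1v_2$ (so $000,001,002,011,012,112,\dots$). Then I would enumerate $\Tcal_1$ tile by tile: the single white tile $w_1^{1,1}$, the one horizontal blue tile $b_1^{0}$ (recall $b_1^1\in\Dcal$ is deleted) and its transpose $\widehat{b_1^0}$, the two horizontal green tiles $g_1^0,g_1^1$ and their transposes, the one horizontal yellow tile $y_1^1$ and its transpose, and the seven junction tiles $j_1^{0,0,0,0},j_1^{0,1,0,0},j_1^{0,0,0,1},j_1^{0,1,0,1},j_1^{1,1,0,1},j_1^{0,1,1,1},j_1^{1,1,1,1}$ — a total of $1+2+4+2+7=16$, matching $(1+3)^2$.

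Next I would set up the correspondence on the Ammann side. Using the reproduction from \cite[p.595, Figure 11.1.13]{MR857454}, the Ammann tiles use some fixed finite set of edge markings; I would read off, for each of the $16$ Ammann tiles, its $(\scright,\sctop,\scleft,\scbottom)$ quadruple of markings. The middle picture in Figure~\ref{fig:Ammann} already organizes the Ammann tiles by the five types (white, yellow, green, blue, junction) and in the same order as $\Tcal_1$, so the type-by-type matching is essentially dictated by that figure. The substantive content is to verify that there is a \emph{single} relabeling function $\phi$ on edge markings — independent of which tile or which of the four sides we look at — such that applying $\phi$ to every edge of every Ammann tile yields exactly the tile of $\Tcal_1$ in the corresponding position. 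Concretely, I would tabulate the set of distinct markings appearing on Ammann edges, check it has the same cardinality as the set of distinct labels appearing on edges of $\Tcal_1$ (which is the relevant subset of $V_1$), and define $\phi$ by the forced assignment coming from any one tile, then confirm consistency across all $16$ tiles and both orientations.

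Two bookkeeping points deserve care. First, the Ammann tiles as drawn may be positioned/oriented by a convention (e.g.\ which corner is the origin, or whether the author's ``Right/Top/Left/Bottom'' matches ours); since $\widehat{W_n}=W_n$ and $\widehat{J_n'}=J_n'$ and the horizontal/vertical stripe tiles come in transpose pairs, the set $\Tcal_1$ is symmetric under the positive-diagonal reflection, so any orientation mismatch is harmless and I would note this explicitly. Second, I would record that the two deleted tiles of $\Dcal$ lying among blue/junction tiles for $n=1$ (namely $b_1^1,\widehat{b_1^1}$, and $j_1^{0,0,1,1},j_1^{1,1,0,0}$) have no counterpart among the $16$ Ammann tiles, consistent with $|\Tcal_1|=16$; conversely the $16$ Ammann tiles account for all of $\Tcal_1$.

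The main obstacle is not conceptual but a matter of faithfully transcribing Ammann's figure: one must make sure the quadruples $(\scright,\sctop,\scleft,\scbottom)$ are read with a consistent convention, and then that the resulting $\phi$ is genuinely well-defined (the same marking never being forced to two different $V_1$-labels by two different tiles). Because both sets have exactly $16$ tiles and the five-type structure is already aligned by Figure~\ref{fig:Ammann}, once $\phi$ is pinned down on the handful of distinct markings the verification reduces to $16$ routine equality checks, and I would present it as a table: rows indexed by the $16$ tiles, columns giving the Ammann quadruple, the image under $\phi$, and the matching $\Tcal_1$ tile. This establishes that $\Tcal_1$ equals the Ammann set up to relabeling of edges, which is the assertion of Theorem~\ref{thm:n=1-equiv-Ammann}.
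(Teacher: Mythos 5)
Your proposal follows essentially the same route as the paper: the paper's proof simply exhibits the explicit relabeling $1\mapsto 112$, $2\mapsto 111$, $3\mapsto 001$, $4\mapsto 011$, $5\mapsto 012$, $6\mapsto 000$ of the six edge markings and verifies tile-by-tile against Figure~\ref{fig:Ammann} (noting the order of tiles differs), which is exactly the bijection-plus-table check you describe. Your additional bookkeeping (the count $1+2+4+2+7=16$, the diagonal symmetry of $\Tcal_1$, and the absence of the tiles in $\Dcal$ among the Ammann tiles) is consistent with, and slightly more explicit than, what the paper records.
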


\begin{proof}
    The following is a bijection from 
    the labels of the Ammann set of 16 Wang tiles
    and 
    the labels of the tiles in $\Tcal_1$:
    \begin{align*}       
1 \mapsto 112,\quad
2 \mapsto 111,\quad
3 \mapsto 001, \quad
4 \mapsto 011, \quad
5 \mapsto 012, \quad
6 \mapsto 000. 
    \end{align*}
    See Figure~\ref{fig:Ammann} (note that the order of the tiles is not the
    same).
\end{proof}

Thus, the family $(\Tcal_n)_{n\geq1}$ can be considered as a generalization of
the Ammann aperiodic set of 16 Wang tiles.

\subsection{Symmetric properties}

The set $\Tcal_n$ has nice symmetric properties. The first being that it is
closed under the mirror image through the positive diagonal, that is,
$\widehat{\Tcal_n}=\Tcal_n$. Another less evident observation
is that the set $\Tcal_n$ is equivalent to its image under a half-turn rotation 
up to the application of an involution of $V_n\setminus\{(0,0,\nbar)\}$
applied on the edge labels of the Wang tiles.

\begin{lemma}
    Let $\sigma:(i,j,k) \mapsto (i,1+i-j,n+1+i-k)$
    (an involution on $V_n\setminus\{(0,0,\nbar)\}$).
    Then,
    \[
        \Tcal_n = 
        \left\{
    \raisebox{-8mm}{
    \begin{tikzpicture}[auto,scale=.5]
        \tikzstyle{every node}=[font=\footnotesize]
        \tile{white}{0}{0}{\sigma(u)}{\sigma(v)}{\sigma(\alpha)}{\sigma(\beta)}
    \end{tikzpicture}}
    \;\middle|\;
    \raisebox{-6mm}{
    \begin{tikzpicture}[auto,scale=.5]
        \tikzstyle{every node}=[font=\footnotesize]
        \tile{white}{0}{0}{\alpha}{\beta}{u}{v}
    \end{tikzpicture}}
    \in\Tcal_n
        \right\}
    \]
\end{lemma}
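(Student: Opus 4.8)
The plan is to recognise the operation in the statement as the composition of the half-turn rotation $R$ of a Wang tile with the entrywise application of $\sigma$ to the four edge labels, and then to verify the asserted invariance one family of tiles at a time. Write a Wang tile as $\tau=(\alpha,\beta,u,v)$, listing its right, top, left and bottom labels, and set $\tau^{\star}=(\sigma(u),\sigma(v),\sigma(\alpha),\sigma(\beta))$; the right-hand side of the claimed identity is then $\{\tau^{\star}\mid\tau\in\Tcal_n\}$. First I would record that $\sigma$ really is an involution of $V_n\setminus\{(0,0,\nbar)\}$: the identity $\sigma(\sigma(i,j,k))=(i,j,k)$ is immediate; the defining inequalities $0\le v_0\le v_1\le 1$ and $v_1\le v_2\le n+1$ are preserved by $\sigma$ except at the single point $(0,0,\nbar)$, whose image $(0,1,0)$ leaves $V_n$; and $(0,0,\nbar)$ is not itself in the range of $\sigma|_{V_n}$ since $\sigma(0,0,\nbar)=(0,1,0)\notin V_n$. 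I would then check that \emph{no} label of a tile of $\Tcal_n$ equals $(0,0,\nbar)$: white and stripe tiles carry only labels of the form $11\ast$, $00\ast$, $01\ast$ whose last coordinate is at most $n$ on the $00\ast$-labels (the tile $b_n^{n}$ with label $00\nbar$ having been deleted in $\Dcal$), and for the seven junction tiles $j_n^{k,\ell,r,s}$ the constraints $k\le\ell\le1$, $r\le s\le1$ rule out the label $(0,0,\nbar)$. Hence $\tau\mapsto\tau^{\star}$ is a well-defined involution of the finite set of all Wang tiles with labels in $V_n\setminus\{(0,0,\nbar)\}$, and $\Tcal_n$ sits inside that set.

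Because $\star$ is an involution of a finite ambient set, it suffices to prove the single inclusion $\tau^{\star}\in\Tcal_n$ for every $\tau\in\Tcal_n$: then $\star$ restricts to an injection of $\Tcal_n$, hence to a bijection of $\Tcal_n$ onto itself, which is exactly the claimed equality. To cut down the number of cases I would use that $\star$ commutes with the diagonal reflection $\widehat{\,\cdot\,}$, i.e.\ $\widehat{\tau^{\star}}=(\widehat{\tau})^{\star}$ (a direct check: rotating by a half-turn and reflecting in the positive diagonal commute, and relabelling edges commutes with permuting them). As $\widehat{\Tcal_n}=\Tcal_n$, and in particular $\widehat{W_n}=W_n$ and $\widehat{J_n}=J_n$, it is enough to verify $\tau^{\star}\in\Tcal_n$ for $\tau$ ranging over the white tiles $W_n$, over the three \emph{horizontal} stripe families $B_n$, $G_n$, $Y_n$ (their $\widehat{\,\cdot\,}$-images being the vertical families), and over the seven tiles of $J_n$.

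Each of these is a short computation: on the labels occurring in $\Tcal_n$, $\sigma$ acts by $(0,j,m)\mapsto(0,1-j,n+1-m)$ and $(1,1,m)\mapsto(1,1,n+2-m)$. From this I would get $(w_n^{i,j})^{\star}=w_n^{\,n+1-i,\,n+1-j}\in W_n$; then $(b_n^{i})^{\star}=y_n^{\,n-i}$ and $(y_n^{i})^{\star}=b_n^{\,n-i}$ (the ranges $0\le i\le n-1$ for $B_n$ and $1\le i\le n$ for $Y_n$ being exchanged by $i\mapsto n-i$, so $\star$ swaps the blue and yellow families); $(g_n^{i})^{\star}=g_n^{\,n-i}\in G_n$ (the range $0\le i\le n$ being self-exchanged); and $(j_n^{k,\ell,r,s})^{\star}=j_n^{\,1-s,\,1-r,\,1-\ell,\,1-k}$, after which it only remains to note that $(k,\ell,r,s)\mapsto(1-s,1-r,1-\ell,1-k)$ permutes the nine admissible quadruples and fixes each of the two quadruples $(0,0,1,1)$, $(1,1,0,0)$ deleted in $\Dcal$, hence permutes the seven quadruples of $J_n$. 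Together with the $\widehat{\,\cdot\,}$-reduction this exhausts $\Tcal_n=W_n\cup B_n\cup G_n\cup Y_n\cup\widehat{B_n}\cup\widehat{G_n}\cup\widehat{Y_n}\cup J_n$ and completes the proof.

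I do not expect a genuine obstacle here, since the whole argument is a finite verification; the only points that need care are the index bookkeeping — in particular that $\star$ \emph{exchanges} the blue and yellow stripe families rather than preserving each, that the index ranges still match up after the four tiles of $\Dcal$ have been removed, and that $\sigma$ never manufactures the forbidden label $(0,0,\nbar)$ on a tile of $\Tcal_n$.
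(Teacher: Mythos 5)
Your proof is correct and follows essentially the same route as the paper, whose proof is just the observation that the half-turn-plus-$\sigma$ operation sends white tiles to white tiles, green to green, junction to junction, and exchanges blue and yellow. You simply carry out that family-by-family verification in full detail (well-definedness of $\sigma$, the hat-symmetry reduction, and the explicit index computations $(w_n^{i,j})^\star=w_n^{n+1-i,n+1-j}$, $(b_n^i)^\star=y_n^{n-i}$, $(g_n^i)^\star=g_n^{n-i}$, $(j_n^{k,\ell,r,s})^\star=j_n^{1-s,1-r,1-\ell,1-k}$), all of which check out.
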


\begin{proof}
When rotating the tiles of $\Tcal_n$ by half a turn
and applying the map $\sigma$ on the resulting labels,
we may observe that
yellow tiles become blue tiles and vice versa,
white tiles are mapped to white tiles, 
junction tiles are mapped to junction tiles and
green tiles are mapped to green tiles.
\end{proof}

This translates into the existence of non trivial reflection symmetry and
rotational symmetry for the Wang shift $\Omega_n$.
As we show in this article, it has no translational symmetries.

\subsection{Directional determinism}
We show in this section that
the sets $\Tcal_n$ and $\Tcal_n'$ are SW- and NE-deterministic.

\begin{lemma}\label{lem:SW-and-NE-deterministic}
    The sets $\Tcal_n$ and $\Tcal_n'$ are SW- and NE-deterministic.
    However, the sets $\Tcal_n$ and $\Tcal_n'$ are neither NW- nor
    SE-deterministic.
\end{lemma}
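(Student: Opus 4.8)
The plan is to read SW- and NE-determinism straight off the definitions of the five families of tiles, exploiting that the \emph{leading} digits of the edge labels already record which family a tile lies in, so that the check reduces to a one-line argument inside each family; the failure of NW- and SE-determinism is then witnessed by two explicit colliding pairs that still belong to the smaller set $\Tcal_n$.

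For SW-determinism of $\Tcal_n'$ I would argue as follows. Suppose $\tau,\tau'\in\Tcal_n'$ have the same left label and the same bottom label. The pair consisting of the first digit of the left label and the first digit of the bottom label determines the family: it is $(1,1)$ exactly on white tiles, $(0,1)$ on horizontal stripe tiles, $(1,0)$ on vertical stripe tiles, and $(0,0)$ on junction tiles. So $\tau$ and $\tau'$ lie in the same family, and it remains to see that $\tau\mapsto(\scleft(\tau),\scbottom(\tau))$ is injective within each. For a white tile $w_n^{i,j}$, the left label $11i$ determines $i$ and the bottom label $11j$ determines $j$. For a horizontal stripe tile, the middle digit of the left label separates $\{B_n',G_n\}$ from $\{Y_n,A_n\}$, the last digit of the bottom label (namely $n$ or $\nbar$) separates $\{B_n',A_n\}$ from $\{G_n,Y_n\}$, and the last digit of the left label determines the superscript. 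Vertical stripe tiles are dealt with the same way after exchanging the roles of ``left'' and ``bottom'' --- equivalently, using $\widehat{\Tcal_n'}=\Tcal_n'$, which preserves SW-determinism. For a junction tile $j_n^{k,\ell,r,s}$, the left label $(0,s,r+n)$ recovers $(r,s)$ and the bottom label $(0,\ell,k+n)$ recovers $(k,\ell)$. Hence $\tau=\tau'$, so $\Tcal_n'$, and a fortiori its subset $\Tcal_n$, is SW-deterministic. For NE-determinism one runs the mirror argument with ``left, bottom'' replaced by ``right, top'': the pair of first digits of the right and top labels again selects the family; among horizontal stripe tiles the middle digit of the right label separates $\{B_n',A_n\}$ from $\{G_n,Y_n\}$ and the last digit of the top label (namely $1$ or $2$) separates $\{B_n',G_n\}$ from $\{Y_n,A_n\}$, the last digit of the right label determining the superscript; and a junction tile is recovered from its right label $(0,k,\ell)$ and top label $(0,r,s)$.

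For the failure of NW-determinism it is enough to produce two distinct tiles of $\Tcal_n$ with equal top and left labels: the first horizontal blue tile $b_n^0$ and the first horizontal green tile $g_n^0$ share the top label $111$ and the left label $000$, while differing on their right edges ($001$ versus $011$) and bottom edges ($11n$ versus $11\nbar$); both lie in $\Tcal_n$, since the tile deleted from $B_n'$ is $b_n^n$, not $b_n^0$. Reflecting through the positive diagonal, the distinct tiles $\widehat{b_n^0}$ and $\widehat{g_n^0}$ share the right label $111$ and the bottom label $000$, so $\Tcal_n$ is not SE-deterministic. Since $\Tcal_n\subset\Tcal_n'$, the set $\Tcal_n'$ is then neither NW- nor SE-deterministic either.

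I do not foresee any real obstacle: the argument is a finite inspection organised by the five families, and the only delicate point is to make sure, in the last paragraph, that the colliding pairs are not among the tiles removed from $\Tcal_n'$. If a family-uniform witness is preferred, one may instead note that the junction tiles $j_n^{0,0,0,0}$ and $j_n^{0,1,0,0}$ collide on their top and left labels, and $j_n^{0,0,0,0}$ and $j_n^{0,0,0,1}$ collide on their bottom and right labels.
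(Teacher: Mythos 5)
Your proposal is correct and follows essentially the same route as the paper: reduce SW- and NE-determinism to a case analysis in which the leading digits of the relevant edge labels single out the family (white, horizontal stripe, vertical stripe, junction) and then check injectivity within each family, and exhibit explicit colliding pairs (the paper points to the junction tiles, as in your final remark, while you also offer $b_n^0,g_n^0$ and their reflections) for the failure of NW- and SE-determinism. Your witnesses are valid and do lie in $\Tcal_n$, so no gap remains.
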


\begin{proof}
    Let us show that $\Tcal_n'$ is SW-deterministic.
    Let $s,t\in\Tcal_n'$ be such that
    $\scleft(s)=u=\scleft(t)$
    and $\scbottom(s)=v=\scbottom(t)$
    for some vectors $u=(u_0,u_1,u_2),v=(v_0,v_1,v_2)\in V_n$.
    \begin{itemize}
        \item If $u_0=0$, $v_0=0$, then $s,t\in J'_n$.
        \item If $u_0=1$, $v_0=1$, then $s,t\in W_n$.
        \item If $u_0=0$, $v_0=1$, $u_1=0$ and $v_2=n$,     then $s,t\in B'_n$.
        \item If $u_0=0$, $v_0=1$, $u_1=0$ and $v_2=\nbar$, then $s,t\in G_n$.
        \item If $u_0=0$, $v_0=1$, $u_1=1$ and $v_2=n$,     then $s,t\in A_n$.
        \item If $u_0=0$, $v_0=1$, $u_1=1$ and $v_2=\nbar$, then $s,t\in Y_n$.
        \item If $u_0=1$, $v_0=0$, $v_1=0$ and $u_2=n$,     then $s,t\in \widehat{B'_n}$.
        \item If $u_0=1$, $v_0=0$, $v_1=0$ and $u_2=\nbar$, then $s,t\in \widehat{G_n}$.
        \item If $u_0=1$, $v_0=0$, $v_1=1$ and $u_2=n$,     then $s,t\in \widehat{A_n}$.
        \item If $u_0=1$, $v_0=0$, $v_1=1$ and $u_2=\nbar$, then $s,t\in \widehat{Y_n}$.
    \end{itemize}
    One can observe that each of the subsets
    $W_n$, $B'_n$, $G_n$, $Y_n$, $A_n$, $J'_n$
    is SW-deterministic.
    By symmetry, $\widehat{B'_n}$,
             $\widehat{G_n}$,
             $\widehat{Y_n}$ and
             $\widehat{A_n}$
    are SW-deterministic.
    We conclude that $s=t$. Thus,
    $\Tcal_n'$ is SW-deterministic.
    Using a similar argument, one can observe that $\Tcal_n'$ is NE-deterministic.
    By restriction, the subset $\Tcal_n\subset\Tcal_n'$ is SW- and NE-deterministic.

    However, $\Tcal_n$ is neither NW- nor SE-deterministic, because the subset
    $J_n$ is neither NW- nor SE-deterministic.
    By extension, the extended set $\Tcal_n'$
    is neither NW- nor SE-deterministic.
\end{proof}

\section{A substitution $\Omega_n\to\Omega_n$}
\label{sec:substitution}

The goal of this section is twofold.
First, we introduce a 2-dimensional substitution $\Omega_n\to\Omega_n$
deduced from a substitution $\tau_n:V_n\to(V_n)^*$ defined on the boundary labels of
the Wang tiles.
Then, we characterize the possible valid rectangular
tilings with external labels in the image of $\tau_n$; see
Proposition~\ref{prop:there-exists-rect-pattern-iff}.

\subsection{A one-dimensional substitution for the boundary}
It is convenient to define, for every integer $n\geq1$, the following map
\[
\begin{array}{rccl}
    \tau_n:&V_n & \to & (V_n)^*\\
    &xyz & \mapsto &
\begin{cases}
    0(x{-}y{+}1)n    \cdot (11n)^{z-x-1} \cdot (11\nbar)^{n+1-z}   & \text{ if } x\neq z,\\
    0(x{-}y{+}1)\nbar\cdot (11\nbar)^{n-z}                         & \text{ if } x=z.
\end{cases}
\end{array}
\]
The above formula declines into the following five cases:
\begin{equation}\label{eq:tau_n_five_cases}
\begin{aligned}
    \tau_n(000)    &=01\nbar\cdot(11\nbar)^{n},\\
    \tau_n(111)    &=01\nbar\cdot(11\nbar)^{n-1},\\
    \tau_n(00\ibar)&=01n\cdot(11n)^{i}\cdot(11\nbar)^{n-i}   &&\qquad\text{ if }0\leq i\leq n,\\
    \tau_n(01\ibar)&=00n\cdot(11n)^{i}\cdot(11\nbar)^{n-i}   &&\qquad\text{ if }0\leq i\leq n,\\
    \tau_n(11\ibar)&=01n\cdot(11n)^{i-1}\cdot(11\nbar)^{n-i} &&\qquad\text{ if }1\leq i\leq n.
\end{aligned}
\end{equation}
For example, when $n=1$, $n=2$ or $n=4$, we have
\[
    \tau_1
    \left\{\begin{array}{l}
000\mapsto 012,112\\
001\mapsto 011,112\\
002\mapsto 011,111\\
011\mapsto 001,112\\
012\mapsto 001,111\\
111\mapsto 012\\
112\mapsto 011\\
    \end{array}\right.,
    \qquad
    \tau_2
    \left\{\begin{array}{l}
000\mapsto 013,113,113\\
001\mapsto 012,113,113\\
002\mapsto 012,112,113\\
003\mapsto 012,112,112\\
011\mapsto 002,113,113\\
012\mapsto 002,112,113\\
013\mapsto 002,112,112\\
111\mapsto 013,113\\
112\mapsto 012,113\\
113\mapsto 012,112\\
    \end{array}\right.,
    \qquad
     \tau_4
     \footnotesize
     \left\{\begin{array}{l}
 000\mapsto 015,115,115,115,115\\
 001\mapsto 014,115,115,115,115\\
 002\mapsto 014,114,115,115,115\\
 003\mapsto 014,114,114,115,115\\
 004\mapsto 014,114,114,114,115\\
 005\mapsto 014,114,114,114,114\\
 011\mapsto 004,115,115,115,115\\
 012\mapsto 004,114,115,115,115\\
 013\mapsto 004,114,114,115,115\\
 014\mapsto 004,114,114,114,115\\
 015\mapsto 004,114,114,114,114\\
 111\mapsto 015,115,115,115\\
 112\mapsto 014,115,115,115\\
 113\mapsto 014,114,115,115\\
 114\mapsto 014,114,114,115\\
 115\mapsto 014,114,114,114\\
     \end{array}\right..
\]

The map $\tau_n$ was discovered during computer explorations.
It appears naturally when searching for a self-similarity for the tilings
in $\Omega_n$; see Appendix B in Section~\ref{sec:Appendix-self-similarity-Omega2}
and in particular the output of the computation performed at line~\ref{vert_bijection}.

\begin{lemma}
    For every $(x,y,z)\in V_n$, the map $\tau_n$ satisfies:
    \begin{itemize}
        \item the length of $\tau_n(xyz)\in(V_n)^*$ is $n+1-x$;
        \item the first item of $\tau_n(xyz)$ is $0(x{-}y{+}1)n$ or
            $0(x{-}y{+}1)\nbar$;
        \item there are $z-x$ occurrences of ${*}{*}n$ in the image of $\tau_n(xyz)$.
    \end{itemize}
    In particular, $\tau_n$ is injective.
\end{lemma}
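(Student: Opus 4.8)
The claim is a small bookkeeping lemma with three assertions, plus the corollary that $\tau_n$ is injective. The natural approach is to verify each assertion by directly reading off the definition of $\tau_n$, using the five-case breakdown in \eqref{eq:tau_n_five_cases}. For the length, I would observe that in the case $x\neq z$ the image is $0(x{-}y{+}1)n\cdot(11n)^{z-x-1}\cdot(11\nbar)^{n+1-z}$, which has $1+(z-x-1)+(n+1-z)=n+1-x$ letters, and in the case $x=z$ the image is $0(x{-}y{+}1)\nbar\cdot(11\nbar)^{n-z}$, which has $1+(n-z)=n+1-x$ letters since $z=x$. So in both cases the length is $n+1-x$, as claimed.

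For the second bullet, the first letter of $\tau_n(xyz)$ is by definition $0(x{-}y{+}1)n$ when $x\neq z$ and $0(x{-}y{+}1)\nbar$ when $x=z$; this is immediate and requires only noting that $x-y+1\in\{0,1\}$ (indeed $0\le y\le 1$ and, examining $V_n$, $x\le y$, so $x-y\in\{-1,0\}$), hence the middle coordinate is a legal entry. For the third bullet, I would count the occurrences of a letter of the form ${*}{*}n$ (i.e. third coordinate equal to $n$): when $x\neq z$, the first letter $0(x{-}y{+}1)n$ contributes one, the block $(11n)^{z-x-1}$ contributes $z-x-1$, and the block $(11\nbar)^{n+1-z}$ contributes none, for a total of $z-x$; when $x=z$, the first letter is $0(x{-}y{+}1)\nbar$ and the block $(11\nbar)^{n-z}$ contains no ${*}{*}n$, giving $0=z-x$. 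So in all cases there are exactly $z-x$ occurrences of ${*}{*}n$.

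For injectivity: suppose $\tau_n(xyz)=\tau_n(x'y'z')$. By the first bullet, equal images have equal lengths, so $n+1-x=n+1-x'$, giving $x=x'$. By the third bullet, the number of ${*}{*}n$ occurrences is $z-x=z'-x'$, so $z=z'$. Finally the first letter $0(x{-}y{+}1)*$ determines $x-y+1$, and since $x=x'$ this gives $y=y'$. Hence $(x,y,z)=(x',y',z')$ and $\tau_n$ is injective.

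The one point that needs a little care rather than pure mechanical checking is making sure the various exponents $z-x-1$, $n+1-z$, $n-z$ are all nonnegative for $(x,y,z)\in V_n$, so that the concatenation powers make sense and the counts are genuinely what they look like; this follows from $x\le z\le n+1$, with $x<z$ in the first case (so $z-x-1\ge 0$), and from the fact that in the case $x=z$ we have $z=x\le 1\le n$ (so $n-z\ge 0$). I expect no real obstacle here — the lemma is designed to be routine, and the only thing to be attentive to is keeping the two cases $x\neq z$ and $x=z$ cleanly separated throughout.
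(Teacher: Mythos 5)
Your proposal is correct and follows essentially the same route as the paper: the three bullet points are read off directly from the two cases of the definition of $\tau_n$, and injectivity is deduced from exactly the same three invariants (length determines $x$, the second digit of the first item then determines $y$, and the number of occurrences of ${*}{*}n$ determines $z$), merely phrased directly rather than by contrapositive. Your extra check that the exponents $z-x-1$, $n+1-z$, $n-z$ are nonnegative on $V_n$ is a harmless refinement the paper leaves implicit.
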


\begin{proof}
    The three items follow from the definition.
    We prove that $\tau_n$ is injective.
    Assume that $xyz\neq x'y'z'$.
    We want to show that $\tau_n(xyz)\neq \tau_n(x'y'z')$.
    If $x\neq x'$, then the images are distinct because their lengths are different.
    If $x=x'$ and $y\neq y'$, then the images are distinct because
    of the second digit of their first item satisfy $x{-}y{+}1\neq x{-}y'{+}1$.
    If $x=x'$, $y=y'$ and $z\neq z'$, then the images are distinct because
    there are $z-x$ occurrences of ${*}{*}n$ in the image of $\tau_n(xyz)$.
\end{proof}

\subsection{A substitution $\omega_n'$ for the tiles in $\Tcal_n'$}

Let
\[
    Q_n=\left\{
    \raisebox{-9mm}{
    \begin{tikzpicture}[auto]
        \def\dt{2.2}
        \tikzstyle{every node}=[font=\footnotesize]
        \tileJunctionIIII{2*\dt}{0}  {011}{}{}{01\nbar}
        \draw[fill=white,draw=white] (2*\dt,0) -- ++ (1,1) -- ++ (-1,0) -- cycle;
        \draw[dashed,thick]  (2*\dt,0) -- ++ (1,1);
        \tileJunctionOOII{\dt}{0}{001}{}{}{01n}
        \draw[fill=white,draw=white] (\dt,0) -- ++ (1,1) -- ++ (-1,0) -- cycle;
        \draw[dashed,thick]  (\dt,0) -- ++ (1,1);
        \tileJunctionOOOO{0}{0}  {000}{}{}{00n}        
        \draw[fill=white,draw=white] (0,0) -- ++ (1,1) -- ++ (-1,0) -- cycle;
        \draw[dashed,thick]  (0,0) -- ++ (1,1);
    \end{tikzpicture}}
    \right\}
\]
be the set of possible values for the bottom right part of a junction
tile in $J_n'$.

\begin{lemma}\label{lem:unique-horizontal-strip}
    Let $n\geq1$ be an integer.
    For every $v\in V_n$,
    there exist
    a unique bottom right part $q\in Q_n$ and
    a unique sequence $t_1t_2\dots t_{k-1}$ of tiles in $\Tcal_n'$
    such that
    $q\,t_1t_2\dots t_{k-1}$ is a valid horizontal strip of tiles from left to right
    whose sequence of bottom labels is $\tau_n(v)$
    where $k=|\tau_n(v)|$.

    Moreover,
    if $\gamma$ is the sequence of top labels of $t_1t_2\dots t_{k-1}$ 
    and $\delta$ is its right-most right label, that is, the right label of $t_{k-1}$
    (see Figure~\ref{fig:horizontal-stripe-of-tiles}), then
    the following statements hold.
    \begin{itemize}
    \item If $v=00i$ with $0\leq i\leq\nbar$, then
        \begin{itemize}
            \item if $0\leq i\leq n$, then
                    $\gamma=(111)^i\cdot(112)^{n-i}$ and $\delta=01\nbar$,
            \item if $i=n+1$, then $\gamma=(111)^{n}$ and $\delta=00\nbar$.
        \end{itemize}
    \item If $v=01i$ with $1\leq i\leq\nbar$, then
        \begin{itemize}
            \item if $1\leq i\leq n$, then
                    $\gamma=(111)^i\cdot(112)^{n-i}$ and $\delta=01n$,
            \item if $i=n+1$, then $\gamma=(111)^{n}$ and $\delta=00n$.
        \end{itemize}
    \item If $v=11i$ with $1\leq i\leq\nbar$, then
        \begin{itemize}
            \item if $1\leq i\leq n$, then $\gamma=(111)^{i-1}\cdot(112)^{n-i}$ 
                    and $\delta=01n$,
            \item if  $i=n+1$, then $\gamma=(111)^{n-1}$ and $\delta=00n$.
        \end{itemize}
    \end{itemize}
In particular, no antigreen tiles appear in the horizontal strip. Also, if
$v\in V_n\setminus\{00\nbar\}$, then the last blue tile
does not appear in the strip.
\end{lemma}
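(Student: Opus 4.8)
The plan is to reconstruct the strip greedily from left to right: uniqueness will follow because $\Tcal_n'$ is SW-deterministic (Lemma~\ref{lem:SW-and-NE-deterministic}), so that a one-row pattern is determined by its sequence of bottom labels together with the left label of its leftmost tile (Lemma~\ref{lem:unicity-rect-pattern}); existence will follow from the explicit list of tiles.

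First I would extract from \eqref{eq:tau_n_five_cases} two facts about $\tau_n(v)$: its first entry is one of the three vectors $01\nbar$, $01n$, $00n$, and all its later entries lie in $\{11n,11\nbar\}$ with every occurrence of $11n$ preceding every occurrence of $11\nbar$. Since $01\nbar,01n,00n$ are exactly the pairwise distinct bottom labels of the three elements of $Q_n$, requiring that $q\in Q_n$ have bottom label equal to the first entry of $\tau_n(v)$ forces $q$ and hence fixes its right label to be $011$, $001$ or $000$ respectively. Next I would isolate a one-step observation: each $t_j$ has bottom label $v^{(j+1)}\in\{11n,11\nbar\}$ and left label (the right label of the tile to its left, or of $q$ when $j=1$) of the form $00m$ or $01m$; inspecting edge labels, in $\Tcal_n'$ the unique tile with left label $00m$ and bottom label $11n$ is $b_n^m$ (right label $00\overline{m}$, top label $111$), the unique one with left label $00m$ and bottom label $11\nbar$ is $g_n^m$ (right label $01\overline{m}$, top label $111$), the unique one with left label $01m$ and bottom label $11\nbar$ is $y_n^m$ (right label $01\overline{m}$, top label $112$), while the only tile with left label $01m$ and bottom label $11n$ is the antigreen tile $a_n^m$.

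These observations pin down the shape of the strip. A left label of the form $01m$ first arises either immediately after $q$ when $q$ has right label $011$ (the cases $v\in\{000,111\}$) or immediately after the single green tile; and since in $\tau_n(v)$ every $11n$ precedes every $11\nbar$, from that point on every bottom label equals $11\nbar$, so the combination (left $01m$, bottom $11n$) never occurs and no antigreen tile appears in the strip. As the right-label index of each of $b_n^m, g_n^m, y_n^m$ is $\overline{m}$, the index increases by one along the strip, so the strip consists of $q$, then a run of consecutive blue tiles (empty exactly when $q$ has right label $011$), then — when $\tau_n(v)$ contains an entry $11\nbar$ and $v\notin\{000,111\}$ — a single green tile, then a run of consecutive yellow tiles.

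It remains to read off $\gamma$ and $\delta$ from these runs in each of the five cases of \eqref{eq:tau_n_five_cases}. Each blue or green tile contributes $111$ and each yellow tile contributes $112$, so $\gamma=(111)^a(112)^b$, where $a$ is the number of entries $11n$ after the first entry of $\tau_n(v)$ increased by one if a green tile is present, and $b$ is the number of entries $11\nbar$ of $\tau_n(v)$ decreased by one if a green tile is present; and $\delta$ is the right label of the last tile placed (the right label of $q$ when $|\tau_n(v)|=1$). Matching these counts against \eqref{eq:tau_n_five_cases} case by case gives the stated formulas for $\gamma$ and $\delta$. Finally, the blue run attains index $n$, i.e.\ the tile $b_n^n$ occurs, exactly when $q$ has right label $001$ and $\tau_n(v)$ has precisely $n$ entries $11n$ after its first entry, which by \eqref{eq:tau_n_five_cases} happens only for $v=00\nbar$. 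I expect the main obstacle to be purely the bookkeeping: one must track several sub-cases (the $11\nbar$-run empty, so that $\gamma$ is a power of $111$ and $\delta\in\{00n,00\nbar\}$; degenerate small-$n$ situations; the edge case $|\tau_n(v)|=1$ with no tiles $t_j$), but each is settled by matching indices against \eqref{eq:tau_n_five_cases}.
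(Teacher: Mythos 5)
Your proof is correct and takes essentially the same route as the paper's: the strip is reconstructed from left to right, with $q$ fixed by the first bottom label and each subsequent tile forced by its left and bottom labels, yielding the blue--green--yellow run structure from which $\gamma$ and $\delta$ are read off case by case from \eqref{eq:tau_n_five_cases}. The paper organizes this as an explicit enumeration of the possible forms of $\tau_n(v)$ with the strips drawn in Figures~\ref{fig:bottom-row-if-bottom-labels-is-tau00i}, \ref{fig:bottom-row-if-bottom-labels-is-tau01i} and \ref{fig:bottom-row-if-bottom-labels-is-tau11i}, whereas you make the local forcing (hence uniqueness, the exclusion of antigreen tiles, and the degenerate case $|\tau_n(v)|=1$) explicit before doing the same bookkeeping---a presentational difference only.
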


\begin{figure}[h]
\begin{center}
    \begin{tikzpicture}[scale=1.2]
    \tileJunctionBackground{0}{0}{gray!50}{gray!50}{gray!50}{gray!50}
    \draw[fill=white,draw=white] (0,0) -- ++ (1,1) -- ++ (-1,0) -- cycle;
    \draw[dashed,thick]  (0,0) -- ++ (1,1);
    \draw (0,0) -- (1,0);
    \draw[dotted,thick] (0,0) -- (0,1) -- (1,1);
    \draw[draw=none,fill=gray!50] (1,\dt) rectangle (5,1-\dt);
    \draw (1,0) rectangle (5,1);
        \node at (.66,.33) {$q$};
    \node at (3,.5) {$t_1t_2\dots t_{k-1}$};
    \node[right] at (5,0.5) {$\delta$};
    \draw[yshift=-3mm,|-|] (0,0) -- node[fill=white] {$\tau_n(v)$}        (5,0);
    \draw[yshift=3mm,|-|] (1,1) -- node[fill=white] {$\gamma$}        (5,1);
\end{tikzpicture}
\end{center}
    \caption{A horizontal strip of tiles from $\Tcal_n'$
    made of a bottom right part $q$ of a junction tile and a sequence
    $t_1t_2\dots t_{k-1}$ of horizontal stripe tiles.
    The bottom labels of the strip is $\tau_n(v)$ for some $v\in V_n$.
    The top labels of the horizontal stripe tiles is $\gamma\in (V_n)^*$
    and its right-most right label is $\delta\in V_n$.}
    \label{fig:horizontal-stripe-of-tiles}
\end{figure}

\begin{proof}
    Assume $v=00i$ with $0\leq i\leq\nbar$. The following three cases occur.
            \begin{itemize}
                \item If $i=0$, then the sequence of bottom labels is
                $\tau_n(000) = 01\nbar\cdot (11\nbar)^n$, 
                        the sequence of top labels is $(112)^n$ 
                        and the right-most right label is $01\nbar$.
                \item If $1\leq i\leq n$, then the sequence of bottom labels is
                $\tau_n(00i) = 01n\cdot (11n)^{i-1}\cdot (11\nbar)^{n+1-i}$,
                        the sequence of top labels is $(111)^i\cdot(112)^{n-i}$ 
                        and the right-most right label is $01\nbar$.
                \item If $i=n+1$, then the sequence of bottom labels is
                    $\tau_n(00i) = 01n\cdot (11n)^{n}$, the sequence of top
                    labels is $(111)^{n}$ and the right-most right label is
                    $00\nbar$.
            \end{itemize}
        The $n+1$ tiles of the strip for the three cases are illustrated in 
        Figure~\ref{fig:bottom-row-if-bottom-labels-is-tau00i}.
        We observe that the last blue tile (the blue horizontal stripe tile
        with left label $00n$) is used in the strip only when $i=n+1$.

    Assume $v=01i$ with $1\leq i\leq\nbar$. The following two cases occur.
        \begin{itemize}
            \item If $1\leq i\leq n$, then the sequence of bottom labels is
            $\tau_n(01i) = 00n\cdot (11n)^{i-1}\cdot (11\nbar)^{n+1-i}$,
                    the sequence of top labels is $(111)^i\cdot(112)^{n-i}$ 
                    and the right-most right label is $01n$.
            \item If $i=n+1$, then the sequence of bottom labels is
            $\tau_n(00i) = 00n\cdot (11n)^{n}$,
                        the sequence of top labels is $(111)^{n}$ 
                    and the right-most right label is $00n$.
        \end{itemize}
        The $n+1$ tiles of the strip for the two cases are illustrated in 
        Figure~\ref{fig:bottom-row-if-bottom-labels-is-tau01i}.

    Assume $v=11i$ with $1\leq i\leq\nbar$. The following three cases occur.
        \begin{itemize}
            \item If $i=1$, then the sequence of bottom labels is
            $\tau_n(111) = 01\nbar\cdot (11\nbar)^{n-1}$,
                    the sequence of top labels is $(112)^{n-1}$ 
                    and the right-most right label is $01n$.
            \item If  $2\leq i\leq n$, then the sequence of bottom labels is
            $\tau_n(11i) = 01n\cdot (11n)^{i-2}\cdot (11\nbar)^{n+1-i}$,
                    the sequence of top labels is $(111)^{i-1}\cdot(112)^{n-i}$ 
                    and the right-most right label is $01n$.
            \item If  $i=n+1$, then the sequence of bottom labels is
                $\tau_n(11i) = 01n\cdot (11n)^{n-1}$, the sequence of top
                labels is $(111)^{n-1}$ and the right-most right label is
                $00n$.
        \end{itemize}
        The $n$ tiles of the strip for the three cases are illustrated in 
        Figure~\ref{fig:bottom-row-if-bottom-labels-is-tau11i}.
\end{proof}

\begin{figure}
    \begin{center}
    \begin{tikzpicture}[scale=1.25]
        \tikzstyle{every node}=[font=\footnotesize]
    \begin{scope}
        \tileJunctionInsideIXXI{0}{0}{011}{*}{*}{01\nbar}{gray!50}{gray!50}
        \draw[draw=white,fill=white]   (0,0) -- (0,1) -- (1,1) -- cycle;
        \draw[draw=black,dashed,thick] (0,0) -- (0,1) -- (1,1) -- cycle;
        \tilelabelinsidescopeH{\ourColorYellow}{1}{0}{012}{112}{011}{11\nbar}
        \tilelabelinsidescopeH{\ourColorYellow}{2}{0}{013}{112}{012}{11\nbar}
        \node at (3.5,0.5) {$\dots$};
        \tilelabelinsidescopeH{\ourColorYellow}{4}{0}{01i}{112}{01\iunder}{11\nbar}
        \tilelabelinsidescopeH{\ourColorYellow}{5}{0}{01\ibar}{112}{01i}{11\nbar}
        \tilelabelinsidescopeH{\ourColorYellow}{6}{0}{\phantom{01(i+2)}}{112}{01\ibar}{11\nbar}
        \node at (7.5,0.5) {$\dots$};
        \tilelabelinsidescopeH{\ourColorYellow}{8}{0}{01n}{112}{01\nunder}{11\nbar}
        \tilelabelinsidescopeH{\ourColorYellow}{9}{0}{01\nbar}{112}{01n}{11\nbar}
        \node[left,text width=4cm] at (-.15,.5)   {horizontal strip with\\bottom word\\
                                           $\tau_n(000) = 01\nbar\cdot (11\nbar)^n$};
        \draw[yshift=-3mm,|-|] (1,0) -- node[fill=white] {$(11\nbar)^n$}       (10,0);
        \draw[yshift=3mm,|-|] (1,1) -- node[fill=white] {$(112)^{n}$}   (10,1);
    \end{scope}
    \begin{scope}[yshift=-23mm]
        \tileJunctionInsideOXXI{0}{0}{001}{*}{*}{01n}{gray!50}{gray!50}
        \draw[draw=white,fill=white]   (0,0) -- (0,1) -- (1,1) -- cycle;
        \draw[draw=black,dashed,thick] (0,0) -- (0,1) -- (1,1) -- cycle;
        \tilelabelinsidescopeH{\ourColorBlue}{1}{0}{002}{111}{001}{11n}
        \tilelabelinsidescopeH{\ourColorBlue}{2}{0}{003}{111}{002}{11n}
        \node at (3.5,0.5) {$\dots$};
        \tilelabelinsidescopeH{\ourColorBlue}  {4}{0}{00i}{111}{01\iunder}{11n}
        \tilelabelinsideHgreen                 {5}{0}{01\ibar}{111}{00i}{11\nbar}
        \tilelabelinsidescopeH{\ourColorYellow}{6}{0}{\phantom{01(i+2)}}{112}{01\ibar}{11\nbar}
        \node at (7.5,0.5) {$\dots$};
        \tilelabelinsidescopeH{\ourColorYellow}{8}{0}{01n}{112}{01\nunder}{11\nbar}
        \tilelabelinsidescopeH{\ourColorYellow}{9}{0}{01\nbar}{112}{01n}{11\nbar}
        \node[left,text width=4cm] at (-.15,.5)   {horizontal strip with\\bottom word\\
                                                  $\tau_n(00i)$ with $1\leq i\leq n$};
        \draw[yshift=-3mm,|-|] (1,0) -- node[fill=white] {$(11n)^{i-1}$}        (5,0);
        \draw[yshift=-3mm,|-|] (5,0) -- node[fill=white] {$(11\nbar)^{n+1-i}$} (10,0);
        \draw[yshift=3mm,|-|] (1,1) -- node[fill=white] {$(111)^{i}$}        (6,1);
        \draw[yshift=3mm,|-|] (6,1) -- node[fill=white] {$(112)^{n-i}$} (10,1);
    \end{scope}
    \begin{scope}[yshift=-46mm]
        \tileJunctionInsideOXXI{0}{0}{001}{*}{*}{01n}{gray!50}{gray!50}
        \draw[draw=white,fill=white]   (0,0) -- (0,1) -- (1,1) -- cycle;
        \draw[draw=black,dashed,thick] (0,0) -- (0,1) -- (1,1) -- cycle;
        \tilelabelinsidescopeH{\ourColorBlue}{1}{0}{002}{111}{001}{11n}
        \tilelabelinsidescopeH{\ourColorBlue}{2}{0}{003}{111}{002}{11n}
        \node at (3.5,0.5) {$\dots$};
        \tilelabelinsidescopeH{\ourColorBlue}{4}{0}{00i}{111}{00\iunder}{11n}
        \tilelabelinsidescopeH{\ourColorBlue}{5}{0}{00\ibar}{111}{00i}{11n}
        \tilelabelinsidescopeH{\ourColorBlue}{6}{0}{\phantom{00(i+2)}}{111}{00\ibar}{11n}
        \node at (7.5,0.5) {$\dots$};
        \tilelabelinsidescopeH{\ourColorBlue}{8}{0}{00n}{111}{00\nunder}{11n}
        \tilelabelinsidescopeH{\ourColorBlue}{9}{0}{00\nbar}{111}{00n}{11n}
        \node[left,text width=4cm] at (-.15,.5)   {horizontal strip with\\bottom word\\
                                          $\tau_n(00\nbar) = 01n\cdot (11n)^n$};
        \draw[yshift=-3mm,|-|] (1,0) -- node[fill=white] {$(11n)^n$}       (10,0);
        \draw[yshift=3mm,|-|] (1,1) -- node[fill=white] {$(111)^{n}$}   (10,1);
    \end{scope}
    \end{tikzpicture}
    \end{center}
\caption{
Horizontal strip with bottom word $\tau_n(00i)$ with $0\leq i\leq n$.
}
\label{fig:bottom-row-if-bottom-labels-is-tau00i}
\end{figure}

\begin{figure}
    \begin{center}
    \begin{tikzpicture}[scale=1.25]
        \tikzstyle{every node}=[font=\footnotesize]
    \begin{scope}
        \tileJunctionInsideOXXO{0}{0}{000}{*}{*}{00n}{gray!50}{gray!50}
        \draw[draw=white,fill=white]   (0,0) -- (0,1) -- (1,1) -- cycle;
        \draw[draw=black,dashed,thick] (0,0) -- (0,1) -- (1,1) -- cycle;
        \tilelabelinsidescopeH{\ourColorBlue}{1}{0}{001}{111}{000}{11n}
        \tilelabelinsidescopeH{\ourColorBlue}{2}{0}{002}{111}{001}{11n}
        \node at (3.5,0.5) {$\dots$};
        \tilelabelinsidescopeH{\ourColorBlue}  {4}{0}{00\iunder}{111}{\phantom{01(i-2)}}{11n}
        \tilelabelinsideHgreen                 {5}{0}{01i}{111}{00\iunder}{11\nbar}
        \tilelabelinsidescopeH{\ourColorYellow}{6}{0}{01\ibar}{112}{01i}{11\nbar}
        \node at (7.5,0.5) {$\dots$};
        \tilelabelinsidescopeH{\ourColorYellow}{8}{0}{01\nunder}{112}{\phantom{01(n-2)}}{11\nbar}
        \tilelabelinsidescopeH{\ourColorYellow}{9}{0}{01n}{112}{01\nunder}{11\nbar}
        \node[left,text width=4cm] at (-.15,.5)   {horizontal strip with\\bottom word\\
                                                  $\tau_n(01i)$ with $1\leq i\leq\nbar$}; 
        \draw[yshift=-3mm,|-|] (1,0) -- node[fill=white] {$(11n)^{i-1}$}        (5,0);
        \draw[yshift=-3mm,|-|] (5,0) -- node[fill=white] {$(11\nbar)^{n+1-i}$} (10,0);
        \draw[yshift=3mm,|-|] (1,1) -- node[fill=white] {$(111)^{i}$}        (6,1);
        \draw[yshift=3mm,|-|] (6,1) -- node[fill=white] {$(112)^{n-i}$} (10,1);
    \end{scope}
    \begin{scope}[yshift=-23mm]
        \tileJunctionInsideOXXO{0}{0}{000}{*}{*}{00n}{gray!50}{gray!50}
        \draw[draw=white,fill=white]   (0,0) -- (0,1) -- (1,1) -- cycle;
        \draw[draw=black,dashed,thick] (0,0) -- (0,1) -- (1,1) -- cycle;
        \tilelabelinsidescopeH{\ourColorBlue}{1}{0}{001}{111}{000}{11n}
        \tilelabelinsidescopeH{\ourColorBlue}{2}{0}{002}{111}{001}{11n}
        \node at (3.5,0.5) {$\dots$};
        \tilelabelinsidescopeH{\ourColorBlue}  {4}{0}{00\iunder}{111}{\phantom{01(i-2)}}{11n}
        \tilelabelinsidescopeH{\ourColorBlue}  {5}{0}{00i}      {111}{01\iunder}{11n}
        \tilelabelinsidescopeH{\ourColorBlue}  {6}{0}{00\ibar}  {111}{01i}{11n}
        \node at (7.5,0.5) {$\dots$};
        \tilelabelinsidescopeH{\ourColorBlue}{8}{0}{00\nunder}{111}{\phantom{01(n-2)}}{11n}
        \tilelabelinsidescopeH{\ourColorBlue}{9}{0}{00n}      {111}{01\nunder}{11n}
        \node[left,text width=4cm] at (-.15,.5)   {horizontal strip with\\bottom word\\
                                          $\tau_n(01\nbar) = 00n\cdot (11n)^n$}; 
        \draw[yshift=-3mm,|-|] (1,0) -- node[fill=white] {$(11n)^n$}       (10,0);
        \draw[yshift=3mm,|-|] (1,1) -- node[fill=white] {$(111)^{n}$}   (10,1);
    \end{scope}
    \end{tikzpicture}
    \end{center}
\caption{
Horizontal strip with bottom word $\tau_n(01i)$ with $1\leq i\leq n+1$.
}
\label{fig:bottom-row-if-bottom-labels-is-tau01i}
\end{figure}

\begin{figure}
    \begin{center}
    \begin{tikzpicture}[scale=1.25]
        \tikzstyle{every node}=[font=\footnotesize]
    \begin{scope}
        \tileJunctionInsideIXXI{0}{0}{011}{*}{*}{01\nbar}{gray!50}{gray!50}
        \draw[draw=white,fill=white]   (0,0) -- (0,1) -- (1,1) -- cycle;
        \draw[draw=black,dashed,thick] (0,0) -- (0,1) -- (1,1) -- cycle;
        \tilelabelinsidescopeH{\ourColorYellow}{1}{0}{012}{112}{011}{11\nbar}
        \tilelabelinsidescopeH{\ourColorYellow}{2}{0}{013}{112}{012}{11\nbar}
        \node at (3.5,0.5) {$\dots$};
        \tilelabelinsidescopeH{\ourColorYellow}{4}{0}{01i}{112}{01\iunder}{11\nbar}
        \tilelabelinsidescopeH{\ourColorYellow}{5}{0}{01\ibar}{112}{01i}{11\nbar}
        \tilelabelinsidescopeH{\ourColorYellow}{6}{0}{\phantom{01(i+2)}}{112}{01\ibar}{11\nbar}
        \node at (7.5,0.5) {$\dots$};
        \tilelabelinsidescopeH{\ourColorYellow}{8}{0}{01n}{112}{01\nunder}{11\nbar}
        \phantom{\tilelabelinsidescopeH{\ourColorYellow}{9}{0}{01\nbar}{112}{01n}{11\nbar}}
        \node[left,text width=4cm] at (-.15,.5)   {horizontal strip with\\bottom word\\
                                            $\tau_n(111)=01\nbar\cdot(11\nbar)^{n-1}$};
        \draw[yshift=-3mm,|-|] (1,0) -- node[fill=white] {$(11\nbar)^{n-1}$} (9,0);
        \draw[yshift=3mm,|-|] (1,1) -- node[fill=white] {$(112)^{n-1}$}   (9,1);
    \end{scope}
    \begin{scope}[yshift=-23mm]
        \tileJunctionInsideOXXI{0}{0}{001}{*}{*}{01n}{gray!50}{gray!50}
        \draw[draw=white,fill=white]   (0,0) -- (0,1) -- (1,1) -- cycle;
        \draw[draw=black,dashed,thick] (0,0) -- (0,1) -- (1,1) -- cycle;
        \tilelabelinsidescopeH{\ourColorBlue}{1}{0}{002}{111}{001}{11n}
        \node at (2.5,0.5) {$\dots$};
        \tilelabelinsidescopeH{\ourColorBlue}  {3}{0}{00\iunder}{111}{\phantom{01(i-2)}}{11n}
        \tilelabelinsideHgreen                 {4}{0}{01i}{111}{00\iunder}{11\nbar}
        \tilelabelinsidescopeH{\ourColorYellow}{5}{0}{01\ibar}{112}{01i}{11\nbar}
        \node at (7.0,0.5) {$\dots$};
        \tilelabelinsidescopeH{\ourColorYellow}{8}{0}{01n}{112}{01\nunder}{11\nbar}
        \phantom{\tilelabelinsidescopeH{\ourColorYellow}{9}{0}{01\nbar}{112}{01n}{11\nbar}}
        \node[left,text width=4cm] at (-.15,.5)   {horizontal strip with\\bottom word\\
                                $\tau_n(11i)$ with $2\leq i\leq n$};
        \draw[yshift=-3mm,|-|] (1,0) -- node[fill=white] {$(11n)^{i-2}$}        (4,0);
        \draw[yshift=-3mm,|-|] (4,0) -- node[fill=white] {$(11\nbar)^{n+1-i}$}  (9,0);
        \draw[yshift=3mm,|-|] (1,1) -- node[fill=white] {$(111)^{i-1}$}           (5,1);
        \draw[yshift=3mm,|-|] (5,1) -- node[fill=white] {$(112)^{n-i}$}         (9,1);
    \end{scope}
    \begin{scope}[yshift=-46mm]
        \tileJunctionInsideOXXI{0}{0}{001}{*}{*}{01n}{gray!50}{gray!50}
        \draw[draw=white,fill=white]   (0,0) -- (0,1) -- (1,1) -- cycle;
        \draw[draw=black,dashed,thick] (0,0) -- (0,1) -- (1,1) -- cycle;
        \tilelabelinsidescopeH{\ourColorBlue}{1}{0}{002}{111}{001}{11n}
        \tilelabelinsidescopeH{\ourColorBlue}{2}{0}{003}{111}{002}{11n}
        \node at (3.5,0.5) {$\dots$};
        \tilelabelinsidescopeH{\ourColorBlue}{4}{0}{00i}{111}{00\iunder}{11n}
        \tilelabelinsidescopeH{\ourColorBlue}{5}{0}{00\ibar}{111}{00i}{11n}
        \tilelabelinsidescopeH{\ourColorBlue}{6}{0}{\phantom{00(i+2)}}{111}{00\ibar}{11n}
        \node at (7.5,0.5) {$\dots$};
        \tilelabelinsidescopeH{\ourColorBlue}{8}{0}{00n}{111}{00\nunder}{11n}
        \phantom{\tilelabelinsidescopeH{\ourColorBlue}{9}{0}{00\nbar}{111}{00n}{11n}}
        \node[left,text width=4cm] at (-.15,.5)   {horizontal strip with\\bottom word\\
                                          $\tau_n(11\nbar) = 01n\cdot (11n)^{n-1}$};
        \draw[yshift=-3mm,|-|] (1,0) -- node[fill=white] {$(11n)^{n-1}$} (9,0);
        \draw[yshift=3mm,|-|]  (1,1) -- node[fill=white] {$(111)^{n-1}$}   (9,1);
    \end{scope}
    \end{tikzpicture}
    \end{center}
\caption{
Horizontal strip with bottom word $\tau_n(11i)$ with $1\leq i\leq n+1$.
}
\label{fig:bottom-row-if-bottom-labels-is-tau11i}
\end{figure}

Since $\Tcal_n=\widehat{\Tcal_n}$, 
Lemma~\ref{lem:unique-horizontal-strip} has a symmetric version
describing the vertical strip of tiles from $\Tcal_n'$ with left labels equal
to $\tau_n(u)$ for some $u\in V_n$.
Lemma~\ref{lem:unique-horizontal-strip} and its symmetric version
can be used together to construct valid rectangular patterns with
external boundaries given by the images under the map $\tau_n$;
see Figure~\ref{fig:desusbsitut-to-antigreen}.

\begin{lemma}\label{lem:Tcaln'-there-exists-rect-pattern}
    Let $\alpha,\beta,u,v\in V_n$.
    If
    $\raisebox{-6mm}{
    \begin{tikzpicture}[auto,scale=.5]
        \tikzstyle{every node}=[font=\footnotesize]
        \tile{white}{0}{0}{\alpha}{\beta}{u}{v}
    \end{tikzpicture}}
    \in\Tcal_n'$,
    then there exists a unique valid rectangular pattern with tiles
    in $\Tcal_n'$
    whose right, top, left and bottom labels
    are respectively
    $\tau_n(\alpha)$, $\tau_n(\beta)$, $\tau_n(u)$ and $\tau_n(v)$.
\end{lemma}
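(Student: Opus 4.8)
The plan is to separate the statement into uniqueness, which is essentially automatic, and existence, which is an explicit assembly followed by a finite verification. For uniqueness, recall from Lemma~\ref{lem:SW-and-NE-deterministic} that $\Tcal_n'$ is SW-deterministic, so by Lemma~\ref{lem:unicity-rect-pattern} a valid rectangular pattern over $\Tcal_n'$ is determined by its shape together with its sequences of bottom labels and of left labels. Here the bottom labels must be $\tau_n(v)$ and the left labels $\tau_n(u)$, whence the shape must be $|\tau_n(v)|\times|\tau_n(u)|=(n+1-v_0)\times(n+1-u_0)$, using that $|\tau_n(xyz)|=n+1-x$. A one-line inspection of the families $W_n$, $B_n'$, $G_n$, $Y_n$, $A_n$, $\widehat{B_n'}$, $\widehat{G_n}$, $\widehat{Y_n}$, $\widehat{A_n}$, $J_n'$ shows that every tile $(\alpha,\beta,u,v)\in\Tcal_n'$ has $\scleft$ and $\scright$ sharing their first entry, and likewise $\scbottom$ and $\sctop$; hence $|\tau_n(\alpha)|=|\tau_n(u)|$ and $|\tau_n(\beta)|=|\tau_n(v)|$, so the four prescribed words do surround a rectangle of that shape, and at most one filling of it exists.

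For existence I would build the pattern from a frame and a white interior. Apply Lemma~\ref{lem:unique-horizontal-strip} to $v$ to get the bottom row as a valid horizontal strip: a junction tile on the left (completing a bottom-right part $q_v\in Q_n$) followed by horizontal stripe tiles, with bottom labels $\tau_n(v)$; the lemma also records its top labels (a word over $\{111,112\}$) and its right-most right label. Using the symmetric version of Lemma~\ref{lem:unique-horizontal-strip} (valid since $\widehat{\Tcal_n'}=\Tcal_n'$) applied to $u$, get the left column as a valid vertical strip: a junction tile at the bottom (completing a top-left part $\widehat{q_u}$) followed by vertical stripe tiles, with left labels $\tau_n(u)$. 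The two strips must agree on the bottom-left corner tile $C$, whose bottom-right part is $q_v$ and whose top-left part is $\widehat{q_u}$; since $J_n'$ is exactly the set of all pairings of the three admissible top-left parts with the three admissible bottom-right parts, such a $C\in J_n'\subseteq\Tcal_n'$ exists. This also pins down the top labels of the bottom row and the right labels of the left column, all of which, away from $C$, begin with $11$. The remaining $(|\tau_n(v)|-1)\times(|\tau_n(u)|-1)$ block is then filled, uniquely and validly, by white tiles only: $W_n$ behaves like a grid, with $w_n^{i,j}$ carrying left label $11i$ to right label $11\ibar$ and bottom label $11j$ to top label $11\jbar$, so the block is covered by a staircase of white tiles whose indices stay in $\{1,\dots,n\}$ precisely because $u$ and $v$ arise as edge labels of a tile of $\Tcal_n'$. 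Gluing the bottom row, the left column and this block produces a valid rectangular pattern over $\Tcal_n'$ with bottom labels $\tau_n(v)$ and left labels $\tau_n(u)$.

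It then remains to verify that this pattern has top labels $\tau_n(\beta)$ and right labels $\tau_n(\alpha)$, and I expect this bookkeeping to be the only substantial obstacle. It is a finite case analysis on the type of $(\alpha,\beta,u,v)$ in which one reads off the top edge of the assembled pattern (the top label of the topmost tile of the left column, followed by the top labels of the uppermost white row, which come out to a power of $11\nbar$) and compares it with the five-case formula for $\tau_n(\beta)$, and symmetrically the right edge against $\tau_n(\alpha)$; all the input data needed are the $\gamma$ and $\delta$ entries of Lemma~\ref{lem:unique-horizontal-strip} evaluated at $u$ and at $v$, and Figure~\ref{fig:desusbsitut-to-antigreen} illustrates the resulting patterns (including the antigreen case). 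Note that SW-determinism makes it unnecessary to build the top row and the right column ``from a lemma'': they emerge from the propagation inside the assembled pattern, and only their outer labels need to be matched.
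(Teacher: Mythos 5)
Your skeleton coincides with the paper's proof: split the hypothetical pattern into a junction tile at the bottom-left corner (one of the nine tiles of $J_n'$, pairing the bottom-right part forced by $\tau_n(v)$ with the top-left part forced by $\tau_n(u)$), a bottom row and a left column supplied by Lemma~\ref{lem:unique-horizontal-strip} and its reflected version, and a white interior; uniqueness via SW-determinism through Lemmas~\ref{lem:unicity-rect-pattern} and~\ref{lem:SW-and-NE-deterministic}. That part of your argument is correct and is exactly the paper's route.

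The gap is that the actual content of the statement --- that the assembled pattern's right and top boundary words are precisely $\tau_n(\alpha)$ and $\tau_n(\beta)$ --- is the part you declare to be bookkeeping and do not carry out, and the one concrete prediction you make about it is false. When $t=w_n^{i,j}$ is a white tile, the bottom row has top labels $(111)^{i-1}(112)^{n-i}$ and the white interior has height $n-1$, so the uppermost white row has top labels $(11n)^{i-1}(11\nbar)^{n-i}$, not a power of $11\nbar$; only after prepending the topmost top label $01n$ of the left column does one recognize $\tau_n(11\ibar)=\tau_n(\beta)$, and the analogous thing happens on the right edge for white and vertical-stripe tiles. So the comparison with the five-case formula for $\tau_n$ genuinely has to be run case by case (junction tile, white tile, horizontal stripe tile, vertical stripe tile by symmetry), keeping track of both the $\gamma,\delta$ data of Lemma~\ref{lem:unique-horizontal-strip} \emph{and} the height and width of the white block; this same case analysis is also what substantiates your asserted but unproved claim that the white interior can be filled at all, i.e.\ that the indices stay in $\{1,\dots,n\}$ as one climbs --- a point that is delicate, since (as Lemma~\ref{lem:other-cases-exists-pattern} and Proposition~\ref{prop:image-of-a-tile} later show) for some pairs $(u,v)$ not coming from a tile of $\Tcal_n'$ the filling exists but its boundary words are not in $\tau_n(V_n)$, while for others no filling exists because the heights are incompatible. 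In the paper this verification is the body of the proof; as written, your argument establishes a valid pattern with the prescribed bottom and left words, but not that its top and right words are $\tau_n(\beta)$ and $\tau_n(\alpha)$.
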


\begin{proof}
    Let $u,v\in V_n$.
    For every tile in $\Tcal_n'$, the left label starts with $0$ if and
    only if the right label starts with 0 and, symmetrically,
    the bottom label starts with 0 if and only if the top label starts with 0.
    Since we have $\tau_n(V_n)\subset\{00n,01n,01\nbar\}\cdot\{11n,11\nbar\}^*$,
    any valid rectangular pattern with tiles in $\Tcal_n'$
    whose sequence of bottom labels is $\tau_n(v)$
    and sequence of left labels is $\tau_n(u)$
    can be split into four disjoint regions:
    a junction tile at the bottom left corner,
    a row of horizontal stripe tiles at the bottom,
    a column of vertical stripe tiles on the left and a rectangular pattern
    of white tiles for the remaining rectangle;
    see Figure~\ref{fig:return-block-to-junction-tile-copy}.
\begin{figure}[h]
\begin{center}
    \begin{tikzpicture}[scale=1.2]
    \tileJunctionBackground{0}{0}{gray!50}{gray!50}{gray!50}{gray!50}
    \draw[draw=none,fill=gray!50] (\dt,1) rectangle (1-\dt,5);
    \draw[draw=none,fill=gray!50] (1,\dt) rectangle (5,1-\dt);
    \draw (0,0) rectangle (5,5);
    \draw (1,0) -- (1,5);
    \draw (0,1) -- (5,1);
    \node[align=center,text width=3cm] at (3,3) {\textbf{white}\\\textbf{tiles}};
    \node[align=center,text width=3cm] at (3,.5) {\textbf{bottom row}};
    \node[rotate=90,align=center,text width=3cm] at (.5,3) {\textbf{left column}};
    \draw (0.33,.67) -- ++ (-.7,.3) node[left] {\textbf{junction tile}};
    \node[left] at (0,2.5) {$\tau_n(u)$};
    \node[below] at (2.5,0) {$\tau_n(v)$};
    \node[right] at (5,2.5) {$\tau_n(\alpha)$};
    \node[above] at (2.5,5) {$\tau_n(\beta)$};
    \node[rotate=90,below] at (1,3) {$(111)^*(112)^*$};
    \node[above] at (3,1) {$(111)^*(112)^*$};
    \draw[dashed,thick] (0,0) -- (1,1);
\end{tikzpicture}
\end{center}
    \caption{The global shape of a rectangular pattern whose
    sequence of bottom labels is $\tau_n(v)$
    and sequence of left labels is $\tau_n(u)$.
    The pattern is split into four disjoint parts: the junction tile,
    the left column, the bottom row and the white tiles.
    }
    \label{fig:return-block-to-junction-tile-copy}
\end{figure}

    (Existence)
    Let
    $u,v,\alpha,\beta\in V_n$ be such that
    $t=\raisebox{-6mm}{
    \begin{tikzpicture}[auto,scale=.5]
        \tikzstyle{every node}=[font=\footnotesize]
        \tile{white}{0}{0}{\alpha}{\beta}{u}{v}
    \end{tikzpicture}}
    \in\Tcal_n'$.
    First, we show that the junction tile at the bottom left corner
    of the rectangular pattern with bottom labels $\tau_n(v)$
    and left labels $\tau_n(u)$
    is one of the 9 junction tile in $\Tcal_n'$. 
    For every $u,v\in V_n$, we have
    $\tau_n(u),\tau_n(v)\in\{00n,01n,01\nbar\}\cdot(V_n)^*$.
    For every $x,y\in\{00n,01n,01\nbar\}$, there exists
    a unique junction tile in $\Tcal_n'$ with bottom label $x$ and left label $y$.

    It remains to show the existence of tiles from $\Tcal_n'$ to cover
    the bottom row, the left column and the region of white tiles
    while respecting the label constraints;
    see Figure~\ref{fig:return-block-to-junction-tile-copy}.
    Again, we proceed case by case.

    Suppose that $t$ is a junction tile in $\Tcal_n'$,
    that is $u,v\in\{00n,01n,01\nbar\}$.
    We have $|\tau_n(u)|=|\tau_n(v)|=n+1$.
    In order to formalize the argument that follows, it is practical to define
    the following two maps on the subset $\{00n,01n,01\nbar\}\subset V_n$:
    \[
        \begin{array}{rccl}
            \sigma:&\{00n,01n,01\nbar\}&\to& V_n\\
            &00n    &\mapsto& 01\nbar,\\ 
            &01n    &\mapsto& 01n,\\ 
            &01\nbar&\mapsto& 00n,
        \end{array}
        \quad
        \text{ and }
        \quad
        \begin{array}{rccl}
            \mu:&\{00n,01n,01\nbar\}&\to& V_n\\
            &00n    &\mapsto& 000,\\ 
            &01n    &\mapsto& 001,\\ 
            &01\nbar&\mapsto& 011.
        \end{array}
    \]
    Notice that $\alpha=\mu(v)$ and $\beta=\mu(u)$
    and $\sigma$ is an involution.
    Also, if $v\in \{00n,01n,01\nbar\}$, then
    $\tau_n\circ\mu(v)=\sigma(v)\cdot(11\nbar)^n$.
    From Lemma~\ref{lem:unique-horizontal-strip},
    there exists a unique choice of tiles for the bottom row whose sequence of
    top labels is $(111)^n$ and right-most right label is 
    $01\nbar$ if $v=00n$,
    $01n$ if $v=01n$,
    $00n$ if $v=01\nbar$.
    In other words, the right-most right label of the bottom row is $\sigma(v)$.
    Symmetrically,
    there exists a unique choice of tiles for the left column whose sequence of
    right labels is $(111)^n$ and top-most top label is $\sigma(u)$.
    Since the bottom row is of length $n$,
    and white tiles increase the last digit by one,
    the remaining region can be uniquely filled with white tiles
    such that the sequence of right labels of the rectangular
    pattern is $\sigma(v)\cdot (11\nbar)^n=\tau_n\circ\mu(v)=\tau_n(\alpha)$.
    Symmetrically, the sequence of top labels of the rectangular pattern is
    $\sigma(u)\cdot (11\nbar)^n=\tau_n\circ\mu(u)=\tau_n(\beta)$.

    Suppose that
    $t=\raisebox{-6mm}{
    \begin{tikzpicture}[auto,scale=.5]
        \tikzstyle{every node}=[font=\footnotesize]
        \tile{white}{0}{0}{\alpha}{\beta}{u}{v}
    \end{tikzpicture}}
    =
    \raisebox{-6mm}{
    \begin{tikzpicture}[auto,scale=.5]
        \tikzstyle{every node}=[font=\footnotesize]
        \tile{white}{0}{0}{11\jbar}{11\ibar}{11j}{11i}
    \end{tikzpicture}}$ 
    is a white tile in $\Tcal_n'$,
    that is, $u=11j$ with $1\leq j\leq n$
    and $v=11i$ with $1\leq i\leq n$.
    Also $\alpha=11\jbar$ and $\beta=11\ibar$.
    We have $|\tau_n(u)|=|\tau_n(v)|=n$.
        From
            Lemma~\ref{lem:unique-horizontal-strip}, there exists a unique
            choice of tiles for the bottom row whose sequence of top labels is
            $(111)^{i-1}\cdot(112)^{n-i}$ and right-most right label is $01n$.
        From
            a symmetric version of Lemma~\ref{lem:unique-horizontal-strip},
            there exists a unique choice of tiles for the left column whose sequence of
            right labels is $(111)^{j-1}\cdot(112)^{n-j}$
            and top-most top label is $01n$.
    The remaining region can be uniquely filled with white tiles.
    In this case, the sequence of right labels of the rectangular
    pattern is $01n\cdot(11n)^{j-1}\cdot(11\nbar)^{n-j}=\tau_n(11\jbar)=\tau_n(\alpha)$.
    Symmetrically, the sequence of top labels of the rectangular
    pattern is $01n\cdot(11n)^{i-1}\cdot(11\nbar)^{n-i}=\tau_n(11\ibar)=\tau_n(\beta)$.

    Suppose that $t$ is a horizontal stripe tile in $\Tcal_n'$.
    We have
    $u=00j\text{ with }0\leq j\leq n$
    or $u=01j\text{ with }1\leq j\leq n$.
    Also $v\in \{11n,11\nbar\}$.
    Let
    \[
        \beta=
        \begin{cases}
            111 & \text{ if } u=00j\text{ with }0\leq j\leq n,\\
            112 & \text{ if } u=01j\text{ with }1\leq j\leq n,
        \end{cases}
        \quad \text{ and } \quad
        \alpha=
        \begin{cases}
        00\jbar & \text{ if } v=11n,\\
        01\jbar & \text{ if } v=11\nbar.
        \end{cases}
    \]
    Also, $|\tau_n(u)|=n+1$ and $|\tau_n(v)|=n$.
    There are two cases for $v$:
    \begin{itemize}
        \item If $v=11n$, then from Lemma~\ref{lem:unique-horizontal-strip},
            there exists a unique choice of tiles for the bottom row whose
            sequence of top labels is $(111)^{n-1}$ and right-most right label
            is $01n$.
        \item If $v=11\nbar$, then from
            Lemma~\ref{lem:unique-horizontal-strip}, there exists a unique
            choice of tiles for the bottom row whose sequence of top labels is
            $(111)^{n-1}$ and right-most right label is $00n$.
    \end{itemize}
    There are two cases for $u$:
    \begin{itemize}
        \item If $u=00j$ with $0\leq j\leq n$, then from the symmetric version of
            Lemma~\ref{lem:unique-horizontal-strip}, there exists a unique
            choice of tiles for the left column whose sequence of right labels
            is $(111)^{j}\cdot(112)^{n-j}$ and top-most top label is $01\nbar$.
        \item If $u=01j$ with $1\leq j\leq n$, then from the symmetric version
            of Lemma~\ref{lem:unique-horizontal-strip}, there exists a unique
            choice of tiles for the left column whose sequence of right labels
            is $(111)^{j}\cdot(112)^{n-j}$ and top-most top label is $01n$.
    \end{itemize}
    Thus, the remaining region can be uniquely filled with white tiles
    and the sequence of right labels of the rectangular
    pattern is 
    \[
        \tau_n(\alpha)=
        \begin{cases}
        01n\cdot(11n)^{j}\cdot(11\nbar)^{n-j}=\tau_n(00\jbar)
            & \text{ if } v=11n,\\
        00n\cdot(11n)^{j}\cdot(11\nbar)^{n-j}=\tau_n(01\jbar)
            & \text{ if } v=11\nbar.
        \end{cases}
    \]
    Symmetrically, the sequence of top labels of the rectangular
    pattern is 
    \[
        \tau_n(\beta)=
        \begin{cases}
    01\nbar\cdot(11\nbar)^{n-1}=\tau_n(111)
            & \text{ if } u=00j \text{ with } 0\leq j\leq n,\\
    01n\cdot(11\nbar)^{n-1}=\tau_n(112) & \text{ if } u=01j\text{ with }1\leq j\leq n.
        \end{cases}
    \]

    Suppose that $t$ is a vertical stripe tile in $\Tcal_n'$.
    A rectangular pattern respecting the constraints can be obtained by taking
    the image under reflection of the rectangular pattern constructed above for
    when $t$ is a horizontal stripe tile in $\Tcal_n'$.

    (Uniqueness) 
    Uniqueness follows from Lemma~\ref{lem:unicity-rect-pattern}
    and Lemma~\ref{lem:SW-and-NE-deterministic}.
\end{proof}

\begin{figure}[h]
\begin{center}
\includegraphics{Figures/desubstitute_to_antigreen.pdf}
\end{center}
    \caption{
        Left: some antigren tile in $\Tcal_4'$.
        Middle: the images under $\tau_4$ of the labels of the tile
        form the boundary labels of a rectangle.
        Right: there is a unique rectangular pattern with such boundary words
        and tiles in $\Tcal_4'$.
        As shown in Lemma~\ref{lem:Tcaln'-there-exists-rect-pattern}, this
        holds for every $n\geq1$ and for every tile in $\Tcal_n'$
        allowing to define the map $\omega_n'$.
    }
    \label{fig:desusbsitut-to-antigreen}
\end{figure}

Following Lemma~\ref{lem:Tcaln'-there-exists-rect-pattern},
we define the following map:
\begin{equation}\label{eq:defintion-omegan'}
    \begin{array}{rccc}
        \omega_n':&\Tcal_n'&\to&(\Tcal_n')^{*2}\\
        &\raisebox{-6mm}{
        \begin{tikzpicture}[auto,scale=.5]
            \tikzstyle{every node}=[font=\footnotesize]
            \tile{white}{0}{0}{\alpha}{\beta}{u}{v}
        \end{tikzpicture}}
        &\mapsto&
        \begin{array}{l}
            \text{the unique rectangular }\\
            \text{pattern with external labels}
        \end{array}
        \raisebox{-10mm}{
        \begin{tikzpicture}[auto]
            \tikzstyle{every node}=[font=\footnotesize]
            \tile{white}{0}{0}{\tau_n(\alpha)}{\tau_n(\beta)}{\tau_n(u)}{\tau_n(v)}
        \end{tikzpicture}}.
    \end{array}
\end{equation}
For example, the map $\omega_1'$ is illustrated in Figure~\ref{fig:omega1'}.

\begin{figure}[h]
\begin{center}
    \includegraphics[width=.9\linewidth]{SAGEOUTPUT/W1_sub_extended.pdf}
\end{center}
    \caption{
        The substitution $\omega_1'$.
        An $\times$-mark indicates the tiles in $J_1'\setminus J_1$.
    }
    \label{fig:omega1'}
\end{figure}

\begin{lemma}\label{lem:omegan'-is-2-dim-substitution}
    The map $\omega_n'$ defines a $2$-dimensional substitution
    $\omega_n':\Omega_n'\to\Omega_n'$.
\end{lemma}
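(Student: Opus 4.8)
The plan is to check the two conditions from Section~\ref{sec:prelim-2dsubstitutions}: that $\omega_n'$ extends to a $2$-dimensional morphism on the language $\Lcal_{\Omega_n'}$, and that the induced continuous map sends $\Omega_n'$ into $\Omega_n'$. By Lemma~\ref{lem:Tcaln'-there-exists-rect-pattern} the map $\omega_n'$ is already unambiguously defined on each single tile of $\Tcal_n'$: its image is the unique valid rectangular pattern over $\Tcal_n'$ whose right, top, left and bottom labels are $\tau_n(\alpha)$, $\tau_n(\beta)$, $\tau_n(u)$, $\tau_n(v)$ when the tile has right, top, left, bottom labels $\alpha,\beta,u,v$. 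The work is to control how these rectangular images fit together.

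First I would record the dimension identities. Using the length formula $|\tau_n(xyz)|=n+1-x$ established earlier, the image $\omega_n'(t)$ of a tile $t$ with left label $u$, right label $\alpha$, bottom label $v$ and top label $\beta$ has height $|\tau_n(u)|=|\tau_n(\alpha)|$ and width $|\tau_n(v)|=|\tau_n(\beta)|$, the equalities holding because --- as observed in the proof of Lemma~\ref{lem:Tcaln'-there-exists-rect-pattern} --- every tile of $\Tcal_n'$ has left and right labels with the same first coordinate, and bottom and top labels with the same first coordinate. Hence the height of $\omega_n'(t)$ depends only on the first coordinate of $\scleft(t)$ (equivalently of $\scright(t)$), and its width only on the first coordinate of $\scbottom(t)$ (equivalently of $\sctop(t)$). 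Consequently, if $s\odot^1 t$ is a legal horizontal domino in $\Omega_n'$, so $\scright(s)=\scleft(t)$, then $\omega_n'(s)$ and $\omega_n'(t)$ have equal height, the concatenation $\omega_n'(s)\odot^1\omega_n'(t)$ is defined, and it is a valid pattern over $\Tcal_n'$ since the right labels of $\omega_n'(s)$ form $\tau_n(\scright(s))=\tau_n(\scleft(t))$, which are the left labels of $\omega_n'(t)$. The symmetric statement holds for legal vertical dominoes $s\odot^2 t$, using widths and $\tau_n(\sctop(s))=\tau_n(\scbottom(t))$. Defining $\omega_n'$ on an arbitrary legal word $w\in\Lcal_{\Omega_n'}$ by placing the images of its constituent tiles on the grid whose row-heights and column-widths are prescribed as above --- this grid being coherent, since the dimension identities together with the edge-matching inside $w$ force all tiles in a common row of $w$ to have images of equal height and all tiles in a common column to have images of equal width, in accordance with the remark following the definition of a morphism --- one obtains a well-defined map $\Lcal_{\Omega_n'}\to(\Tcal_n')^{*^2}$ satisfying $\omega_n'(u\odot^i v)=\omega_n'(u)\odot^i\omega_n'(v)$ by construction; that is, a $2$-dimensional morphism, which extends to a $2$-dimensional substitution $\omega_n':\Omega_n'\to(\Tcal_n')^{\Z^2}$.

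Then I would check that $\omega_n'(\Omega_n')\subseteq\Omega_n'$. Let $x\in\Omega_n'$. Propagating along the edge-matching constraints the equality of the first coordinates of left and right labels, all tiles in any row of $x$ have images of the same height, and similarly all tiles in any column have images of the same width; hence the images $\omega_n'(x_{\bm})$ tile $\Z^2$ along a genuine (row- and column-consistent) grid and $\omega_n'(x)$ is a well-defined configuration, with values in $\Tcal_n'$ because each $\omega_n'(t)\in(\Tcal_n')^{*^2}$. For validity, a horizontal domino of $\omega_n'(x)$ either lies inside a single image $\omega_n'(x_{\bm})$, valid by Lemma~\ref{lem:Tcaln'-there-exists-rect-pattern}, or straddles the seam between $\omega_n'(x_{\bm})$ and $\omega_n'(x_{\bm+\be_1})$, where it is valid because the right labels of $\omega_n'(x_{\bm})$, namely $\tau_n(\scright(x_{\bm}))=\tau_n(\scleft(x_{\bm+\be_1}))$, coincide with the left labels of $\omega_n'(x_{\bm+\be_1})$; vertical dominoes are handled symmetrically. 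Thus $\omega_n'(x)\in\Omega_n'$.

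The one point requiring genuine care is the bookkeeping showing that $\tau_n$ transports the edge-matching of $\Tcal_n'$ to edge-matching of entire rows and columns of images, and that the resulting irregular grid of images is globally coherent; once the height and width identities of the second paragraph are in place, everything else is formal, the substantive content being Lemma~\ref{lem:Tcaln'-there-exists-rect-pattern}.
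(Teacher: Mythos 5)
Your proof is correct and follows essentially the same route as the paper: the substantive input is Lemma~\ref{lem:Tcaln'-there-exists-rect-pattern}, and the rest is the observation that images of tiles sharing an edge label concatenate validly because their common boundary is $\tau_n$ of the shared label, hence images of valid dominoes and configurations are valid. You merely spell out the height/width bookkeeping and seam-matching that the paper's proof leaves implicit.
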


\begin{proof}
    From Lemma~\ref{lem:Tcaln'-there-exists-rect-pattern},
    for every tile $t\in\Tcal_n'$,
    the image $\omega_n'(t)$ is a valid rectangular pattern
    over the Wang tiles $\Tcal_n'$.
    Moreover, if $s\odot^1t\in(\Tcal_n')^{*^2}$ is a valid horizontal domino,
    then $\omega_n'(s\odot^1t)$ is a valid rectangular pattern
    over the Wang tiles $\Tcal_n'$.
    Similarly, if $s\odot^2t\in(\Tcal_n')^{*^2}$ is a valid vertical domino,
    then $\omega_n'(s\odot^2t)$ is a valid rectangular pattern
    over the Wang tiles $\Tcal_n'$.
    Thus, if $y\in\Omega_n'$ is a valid configuration, then
    $\omega_n'(y)$ is also a valid configuration.
    Therefore, $\omega_n'(y)\in\Omega_n'$.
\end{proof}

\subsection{A substitution $\omega_n$ for the tiles in $\Tcal_n$}
Not all tiles of $\Tcal_n'$ appear in the image of a tile under the
substitution $\omega_n'$. For example, it follows from
Lemma~\ref{lem:unique-horizontal-strip} that antigreen stripe tiles do not
appear in the images of tiles under $\omega_n'$.
Therefore, the substitution $\omega_n'$ is not primitive.

As it can be observed in Figure~\ref{fig:omega1'}, 
some tiles in $\Tcal_1'\setminus\Tcal_1$ appear in the images of $\omega_1'$.
Namely, the images of the antigreen tiles under $\omega_1'$ contain
junction tiles in $J_1'\setminus J_1=\{j_1^{0,0,1,1},j_1^{1,1,0,0}\}$.
As shown in the next lemma, this is the only exception.

\begin{lemma}\label{lem:Tcaln-there-exists-rect-pattern}
    Let $n\geq1$ be an integer and $t\in\Tcal_n'$.
    The pattern $\omega_n'(t)$ contains a tile in $\Tcal_n'\setminus\Tcal_n$
    if and only if
    $n=1$ and $t$ is an antigreen tile.
\end{lemma}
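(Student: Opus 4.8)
The plan is to read $\omega_n'(t)$ off the decomposition established in the proof of Lemma~\ref{lem:Tcaln'-there-exists-rect-pattern}: for a tile $t\in\Tcal_n'$, writing $u=\scleft(t)$ and $v=\scbottom(t)$, the pattern $\omega_n'(t)$ consists of a junction tile in its bottom-left corner, a bottom row of horizontal stripe tiles which is the strip of Lemma~\ref{lem:unique-horizontal-strip} with bottom word $\tau_n(v)$, a left column of vertical stripe tiles which is its symmetric analogue with left word $\tau_n(u)$, and a rectangular block of white tiles. Since $\Tcal_n'\setminus\Tcal_n = A_n\cup\widehat{A_n}\cup\Dcal$ with $\Dcal = \{b_n^n,\widehat{b_n^n},j_n^{0,0,1,1},j_n^{1,1,0,0}\}$, I would treat the antigreen tiles together with the last blue tiles $b_n^n,\widehat{b_n^n}$ first, and then analyze separately when the corner junction tile can be $j_n^{0,0,1,1}$ or $j_n^{1,1,0,0}$.

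For the first part: white tiles and junction tiles are never antigreen and never equal to $b_n^n$ or $\widehat{b_n^n}$, so such a tile could appear in $\omega_n'(t)$ only in the bottom row or the left column. Lemma~\ref{lem:unique-horizontal-strip} states that the strip with bottom word $\tau_n(v)$ contains no antigreen tile for any $v\in V_n$, and contains no last blue tile $b_n^n$ as long as $v\neq 00\nbar$; and a scan of the tile list of $\Tcal_n'$ shows that no tile of $\Tcal_n'$ carries the label $00\nbar$ on its bottom edge, so $v=\scbottom(t)\neq 00\nbar$. By the symmetric statement applied with left word $\tau_n(u)$ (and $u=\scleft(t)\neq 00\nbar$), the left column likewise contains no antigreen tile and not $\widehat{b_n^n}$. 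Hence for every $n\geq1$ and every $t\in\Tcal_n'$, the pattern $\omega_n'(t)$ contains no tile of $A_n\cup\widehat{A_n}\cup\{b_n^n,\widehat{b_n^n}\}$.

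For the second part, by Lemma~\ref{lem:Tcaln'-there-exists-rect-pattern} the corner junction tile of $\omega_n'(t)$ is the unique junction tile of $\Tcal_n'$ whose bottom label is the first entry of $\tau_n(v)$ and whose left label is the first entry of $\tau_n(u)$. I would use the description of the first entry of $\tau_n(xyz)$ as $0(x{-}y{+}1)n$ or $0(x{-}y{+}1)\nbar$ to see, from the ranges $0\leq x\leq y\leq 1$ and $y\leq z\leq n+1$, that this first entry equals $00n$ exactly when $xyz$ has the form $01z$, and equals $01\nbar$ exactly when $xyz\in\{000,111\}$. Since $j_n^{0,0,1,1}$ has bottom label $00n$ and left label $01\nbar$, the corner tile of $\omega_n'(t)$ equals $j_n^{0,0,1,1}$ iff $v$ has the form $01z$ and $u\in\{000,111\}$; symmetrically, it equals $j_n^{1,1,0,0}$ iff $u$ has the form $01z$ and $v\in\{000,111\}$.

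Finally I would scan $\Tcal_n'$ for tiles satisfying the condition producing $j_n^{0,0,1,1}$. The tiles with left label $000$ are exactly $b_n^0$ and $g_n^0$, whose bottom labels $11n$ and $11\nbar$ are not of the form $01z$. The tiles with left label $111$ are the white tiles $w_n^{1,j}$ for every $n\geq1$, and, only when $n=1$, additionally $\widehat{b_1^0}$, $\widehat{b_1^1}$ and the vertical antigreen tile $\widehat{a_1^1}$; among all of these, only $\widehat{a_1^1}$ has a bottom label ($011$) of the form $01z$. Hence the corner tile of $\omega_n'(t)$ is $j_n^{0,0,1,1}$ iff $n=1$ and $t=\widehat{a_1^1}$, and by the reflection symmetry $\omega_n'(\widehat t)=\widehat{\omega_n'(t)}$ (a consequence of $\widehat{\Tcal_n'}=\Tcal_n'$ and the uniqueness in Lemma~\ref{lem:Tcaln'-there-exists-rect-pattern}) together with $\widehat{j_n^{0,0,1,1}}=j_n^{1,1,0,0}$, the corner tile is $j_n^{1,1,0,0}$ iff $n=1$ and $t=a_1^1$. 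Since $A_1=\{a_1^1\}$ and $\widehat{A_1}=\{\widehat{a_1^1}\}$, this is precisely the asserted equivalence. The only real obstacle is the bookkeeping: making the case analysis of which tiles carry a prescribed left or bottom label exhaustive, and in particular noticing that for $n=1$ the vertical blue and antigreen stripe tiles also have left label $111$ — this is exactly the source of the $n=1$ exception, whereas for $n\geq2$ those tiles carry left label $11n\neq 111$ and the constraint has no antigreen solution.
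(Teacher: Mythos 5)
Your proof is correct and follows essentially the same route as the paper: both use the decomposition of $\omega_n'(t)$ into a corner junction tile, a bottom row and left column governed by Lemma~\ref{lem:unique-horizontal-strip} (which excludes antigreen and last-blue tiles since $u,v\neq 00\nbar$), and then reduce the question to whether the corner junction tile lies in $J_n'\setminus J_n$, settled by a short enumeration of the tiles of $\Tcal_n'$ with the prescribed left and bottom labels and by the diagonal symmetry. The only cosmetic difference is that you characterize the corner tile via the first letters of $\tau_n(u)$ and $\tau_n(v)$, which gives both directions of the equivalence at once, whereas the paper verifies the ``if'' direction by inspecting Figure~\ref{fig:omega1'} and organizes its case analysis by the bottom label instead of the left label.
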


\begin{proof}
    Let $n\geq1$ be an integer.
    ($\impliedby$) 
    If $n=1$, the set of antigreen tiles in $\Tcal_1'$ 
    is $A_1\cup\widehat{A_1}=\{a_1^1,\widehat{a_1^1}\}$.
    In Figure~\ref{fig:omega1'}, we observe that 
    $\omega_1'(a_1^1)$ contains the junction tile
    $j_1^{1,1,0,0}\in\Tcal_1'\setminus\Tcal_1$ and 
    $\omega_1'(\widehat{a_1^1})$ contains the junction tile
    $j_1^{0,0,1,1}\in\Tcal_1'\setminus\Tcal_1$.

    ($\implies$)
    Let $t=\raisebox{-6mm}{
        \begin{tikzpicture}[auto,scale=.5]
            \tikzstyle{every node}=[font=\footnotesize]
            \tile{white}{0}{0}{\alpha}{\beta}{u}{v}
        \end{tikzpicture}}\in\Tcal_n'$.
    The labels of the boundary of $\omega_n'(t)$ are
    $\raisebox{-10mm}{
    \begin{tikzpicture}[auto]
        \tikzstyle{every node}=[font=\footnotesize]
        \tile{white}{0}{0}{\tau_n(\alpha)}{\tau_n(\beta)}{\tau_n(u)}{\tau_n(v)}
    \end{tikzpicture}}$.
    Suppose that the pattern $\omega_n'(t)$ contains a tile in $\Tcal_n'\setminus\Tcal_n$.
    We have $v\in V_n\setminus\{00\nbar\}$.
    From Lemma~\ref{lem:unique-horizontal-strip},
    the bottom row of the pattern $\omega_n'(t)$ does not contain the last blue tile.
    Also, $u\in V_n\setminus\{00\nbar\}$.
    From the symmetric version of Lemma~\ref{lem:unique-horizontal-strip},
    the left column of the pattern $\omega_n'(t)$ does not contain the last blue tile.
    From Lemma~\ref{lem:unique-horizontal-strip}, the 
    pattern $\omega_n'(t)$ does not contain any antigreen (vertical or
    horizontal) stripe tile.
    Therefore, the pattern $\omega_n'(t)$ must contain a junction tile in 
    $J_n'\setminus J_n=\{j_n^{1,1,0,0},j_n^{0,0,1,1}\}$.

    Suppose that $\omega_n'(t)$ contains the junction tile $j_n^{1,1,0,0}$.
    Therefore, we must have $\tau_n(v)\in 01\nbar\cdot(V_n)^*$ and
    $\tau_n(u)\in 00n\cdot(V_n)^*$.
    Thus, $v\in\{000,111\}$ and
    $u=01j$ with $1\leq j\leq n$.
    We proceed case by case.
    \begin{itemize}
        \item Assume $v=000$. 
            The only tiles $t\in\Tcal_n'$ with bottom label $v=000$ is a 
            blue or green vertical stripe tile whose left label is $u=11n$ or
            $u=11\nbar$, a contradiction.
        \item Assume $v=111$ and $n>1$.
            The only tiles $t\in\Tcal_n'$ with bottom label $v=111$ is a 
            white tile whose left label is $u=11i$ 
            with $1\leq i\leq n$, a contradiction.
        \item Assume $v=111$ and $n=1$.
            The only tiles $t\in\Tcal_1'$ with bottom label $v=111$ is a 
            white tile whose left label is $u=111$,
            a blue horizontal stripe tile
            whose left label is $000$ or $001$,
            or an antigreen tile $a_1^1$ whose left label is $011$.
            Only the antigreen tile does not yield a contradiction with the
            value of $u$ given above. Thus, $t=a_1^1$.
    \end{itemize}
    Symmetrically, if $\omega_n'(t)$ contains the junction tile $j_n^{0,0,1,1}$,
    we conclude that $n=1$ and $t=\widehat{a_1^1}$.
\end{proof}

A consequence of Lemma~\ref{lem:Tcaln-there-exists-rect-pattern}
is that if $n\geq2$ and $t\in\Tcal_n'$,
then the pattern $\omega_n'(t)$ contains only tiles from $\Tcal_n$.
Also for every $n\geq1$ and $t\in\Tcal_n$, 
the pattern $\omega_n'(t)$ contains only tiles from $\Tcal_n$.
Thus, it becomes natural to restrict the substitution $\omega_n'$
to the set $\Tcal_n$.
We obtain the following map $\omega_n=\omega_n'|_{\Tcal_n}$:
\begin{equation}\label{eq:defintion-omegan}
    \begin{array}{rccc}
        \omega_n:&\Tcal_n&\to&(\Tcal_n)^{*2}\\
        &\raisebox{-6mm}{
        \begin{tikzpicture}[auto,scale=.5]
            \tikzstyle{every node}=[font=\footnotesize]
            \tile{white}{0}{0}{\alpha}{\beta}{u}{v}
        \end{tikzpicture}}
        &\mapsto&
        \begin{array}{l}
            \text{the unique rectangular }\\
            \text{pattern with external labels}
        \end{array}
        \raisebox{-10mm}{
        \begin{tikzpicture}[auto]
            \tikzstyle{every node}=[font=\footnotesize]
            \tile{white}{0}{0}{\tau_n(\alpha)}{\tau_n(\beta)}{\tau_n(u)}{\tau_n(v)}
        \end{tikzpicture}}
    \end{array}
\end{equation}

The substitutions $\omega_n$ for $n=1,\dots,5$ are illustrated
in
Figure~\ref{fig:W1_sub},
Figure~\ref{fig:W2_sub},
Figure~\ref{fig:W3_sub},
Figure~\ref{fig:W4_sub} and
Figure~\ref{fig:W5_sub}.

\begin{lemma}\label{lem:omegan-is-2-dim-substitution}
    The map $\omega_n$ defines a $2$-dimensional substitution
    $\omega_n:\Omega_n\to\Omega_n$ such that
    $\overline{\omega_n(\Omega_n)}^\sigma\subset\Omega_n$.
\end{lemma}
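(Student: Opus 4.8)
The plan is to obtain $\omega_n$ with essentially no new work, by presenting it as the restriction of the already-constructed substitution $\omega_n'$ to the sub-alphabet $\Tcal_n\subseteq\Tcal_n'$ and then checking that this restriction keeps everything inside $\Omega_n$. First I would observe that $\omega_n=\omega_n'|_{\Tcal_n}$ is a $2$-dimensional morphism $\Tcal_n\to(\Tcal_n)^{*2}$. The concatenation identities $\omega_n(u\odot^i v)=\omega_n(u)\odot^i\omega_n(v)$ are inherited verbatim from Lemma~\ref{lem:omegan'-is-2-dim-substitution}, since restricting the domain of a morphism to the factorial sublanguage $\Lcal_{\Omega_n}\subseteq\Lcal_{\Omega_n'}$ cannot destroy them and introduces no new tiles at the seam between two images. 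The only genuinely new point is that the \emph{codomain} can be taken to be $(\Tcal_n)^{*2}$ rather than $(\Tcal_n')^{*2}$: this is exactly Lemma~\ref{lem:Tcaln-there-exists-rect-pattern}, which (for $n\geq1$) says that when $t\in\Tcal_n$ — so $t$ is never an antigreen tile — the rectangular pattern $\omega_n'(t)$ uses only tiles of $\Tcal_n$.

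Next I would extend $\omega_n$ from the language to configurations. Using the canonical extension of a $d$-dimensional morphism recalled in Section~\ref{sec:prelim-2dsubstitutions}, for which one needs that tiles in a common row (resp.\ column) of a valid pattern have images of equal height (resp.\ width); this is immediate here because the height of $\omega_n(t)$ equals $|\tau_n(\scleft(t))|=|\tau_n(\scright(t))|$ and horizontally adjacent tiles share exactly these labels (symmetrically for widths, via Lemma~\ref{lem:unique-horizontal-strip}). This produces a continuous map $\omega_n:\Xcal_{\Lcal_{\Omega_n}}\to(\Tcal_n)^{\Z^2}$. Since $\Omega_n$ is a subshift, $\Omega_n=\Xcal_{\Lcal_{\Omega_n}}$, so this is a $2$-dimensional substitution $\omega_n:\Omega_n\to(\Tcal_n)^{\Z^2}$.

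Finally I would show $\omega_n(\Omega_n)\subseteq\Omega_n$, from which $\overline{\omega_n(\Omega_n)}^\sigma\subseteq\Omega_n$ follows because $\Omega_n$ is closed and shift-invariant. For $y\in\Omega_n\subseteq\Omega_n'$ one has $\omega_n(y)=\omega_n'(y)$, which by Lemma~\ref{lem:omegan'-is-2-dim-substitution} is a valid configuration, i.e.\ lies in $\Omega_n'$; moreover every tile occurring in $\omega_n'(y)$ lies in some $\omega_n'(y_\bn)$ with $y_\bn\in\Tcal_n$, hence belongs to $\Tcal_n$ by Lemma~\ref{lem:Tcaln-there-exists-rect-pattern}. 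A configuration that is valid and uses only tiles of $\Tcal_n$ is, by the purely local nature of the matching condition \eqref{eq:validwangtiling1}--\eqref{eq:validwangtiling2}, an element of $\Omega_{\Tcal_n}=\Omega_n$. This completes the argument.

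I do not expect a real obstacle here: the substantive content has been front-loaded into Lemma~\ref{lem:omegan'-is-2-dim-substitution} (the morphism/substitution structure of $\omega_n'$) and Lemma~\ref{lem:Tcaln-there-exists-rect-pattern} (the image of a $\Tcal_n$-tile stays in $\Tcal_n$). The one point meriting care is that the codomain really shrinks to $(\Tcal_n)^{*2}$ \emph{uniformly} — no tile of $\Tcal_n'\setminus\Tcal_n$ may appear, either inside a single image or across the seam between two images; the first is Lemma~\ref{lem:Tcaln-there-exists-rect-pattern} and the second is ruled out because morphisms glue by pure concatenation. This is precisely why the antigreen tiles and the tiles of $\Dcal$ were excluded from $\Tcal_n$ and why Lemma~\ref{lem:Tcaln-there-exists-rect-pattern} was proved before this lemma.
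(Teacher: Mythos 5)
Your proposal is correct and follows essentially the same route as the paper: view $\omega_n$ as the restriction of $\omega_n'$ to $\Omega_n\subseteq\Omega_n'$, invoke Lemma~\ref{lem:Tcaln-there-exists-rect-pattern} to see that images of $\Tcal_n$-tiles (and hence of $\Tcal_n$-configurations) use only tiles of $\Tcal_n$, and conclude with validity from Lemma~\ref{lem:omegan'-is-2-dim-substitution} plus the fact that $\Omega_n$ is closed under the shift. The extra checks you add (matching heights/widths across rows and columns, and that no illegal tile can arise at a seam) are harmless elaborations of the same argument.
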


\begin{proof}
    From Lemma~\ref{lem:omegan'-is-2-dim-substitution},
    the map $\omega_n'$ defines a $2$-dimensional substitution $\Omega_n'\to\Omega_n'$.
    From Lemma~\ref{lem:Tcaln-there-exists-rect-pattern},
    $\omega_n'(x)\in\Omega_n$ for every configuration $x\in\Omega_n$.
    Thus, $\omega_n'(\Omega_n)\subseteq\Omega_n$.
    The restriction of $\omega_n'$ to $\Omega_n$ is $\omega_n$,
    so that $\omega_n(\Omega_n)\subseteq\Omega_n$.
    Since $\Omega_n$ is a subshift, it is closed under the shift.
    Therefore,
    $\overline{\omega_n(\Omega_n)}^\sigma\subset\Omega_n$.
\end{proof}

The goal of the next sections is to show that
$\Omega_n=\overline{\omega_n(\Omega_n)}^\sigma$,
namely that every configuration in $\Omega_n$
can be desubstituted using $\omega_n$.
The proof of this is completed in Section~\ref{sec:self-similar-aperiodic}.
Following the above discussion,
the $2$-dimensional substitution $\omega_n'$ is not primitive,
but we show in Section~\ref{sec:primitive} that
the substitution $\omega_n$ is primitive.

\subsection{A sufficient and necessary condition}

The goal of this section is to show that
the sufficiency in the statement of Lemma~\ref{lem:Tcaln'-there-exists-rect-pattern}
is also a necessity, namely that
every rectangular pattern, with external boundary
labeled by images under $\tau_n$, is obtained from a tile in $\Tcal_n'$.
The precise statement is given in Proposition~\ref{prop:there-exists-rect-pattern-iff}.

For every integer $n\geq1$, let 
\[
    Z_n=\{v_0v_1v_2\in V_n \mid v_0 = 0\}
\]
be the set of vectors of $V_n$ such that the first entry is zero
and let
\[
    M_n=\{v_0v_1v_2\in V_n \mid v_2\geq n\}
\]
be the set of vectors of $V_n$ such that the last entry is $n$ or $\nbar$.

\begin{lemma}\label{lem:other-cases-exists-pattern}
    If
    \begin{equation}\label{eq:complement-to-Q-in-P}
    \begin{aligned}
        (u,v) &\in 
        (\{11\nbar\}\times V_n\setminus Z_n)
            \cup(V_n\setminus Z_n\times \{11\nbar\})\\
            &\qquad\cup(\{00\nbar\}\times M_n\cap Z_n)
                   \cup(M_n\cap Z_n\times \{00\nbar\})\\
            &\qquad\cup(\{00\nbar,01\nbar\}\times M_n\setminus Z_n)
            \cup(M_n\setminus Z_n\times \{00\nbar,01\nbar\}),
    \end{aligned}
    \end{equation}
    then there exists
    a unique valid rectangular pattern with tiles
    in $\Tcal_n'$
    whose right, top, left and bottom labels
    are respectively
    $R$, $T$, $\tau_n(u)$ and $\tau_n(v)$
    for some $R,T\in(V_n)^*$,
    and there is no $(\alpha,\beta)\in V_n\times V_n$ such that
    $R=\tau_n(\alpha)$
    and $T=\tau_n(\beta)$.
\end{lemma}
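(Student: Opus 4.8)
The statement is a companion to Lemma~\ref{lem:Tcaln'-there-exists-rect-pattern}: in that lemma one constructs a valid rectangular pattern whose four boundary words are images under $\tau_n$, starting from a tile $t\in\Tcal_n'$. Here the pairs $(u,v)$ do \emph{not} come from a tile of $\Tcal_n'$ (one checks directly from the list of tiles that no tile has, say, bottom label $11\nbar$ together with a left label in $V_n\setminus Z_n$, etc.), yet one still wants a unique rectangular pattern with bottom word $\tau_n(v)$ and left word $\tau_n(u)$, and one wants to record that its right word $R$ and top word $T$ are \emph{not} of the form $\tau_n(\alpha),\tau_n(\beta)$. The plan is to reuse the global structural decomposition from the proof of Lemma~\ref{lem:Tcaln'-there-exists-rect-pattern} (Figure~\ref{fig:return-block-to-junction-tile-copy}): since $\tau_n(V_n)\subset\{00n,01n,01\nbar\}\cdot\{11n,11\nbar\}^*$, any valid pattern with bottom word $\tau_n(v)$ and left word $\tau_n(u)$ splits into a junction tile at the corner, a bottom row of horizontal stripe tiles, a left column of vertical stripe tiles, and a rectangle of white tiles; the junction tile is the unique one of $J_n'$ with the prescribed first letters of $\tau_n(u),\tau_n(v)$, and then Lemma~\ref{lem:unique-horizontal-strip} (and its transpose) determine the rest uniquely. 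Uniqueness then follows from Lemma~\ref{lem:unicity-rect-pattern} together with SW-determinism (Lemma~\ref{lem:SW-and-NE-deterministic}), exactly as before.

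The substance is therefore a finite case analysis over the six families of pairs in \eqref{eq:complement-to-Q-in-P}. By the symmetry $\widehat{\Tcal_n}=\Tcal_n$ (so that everything transposes), it suffices to treat the three families in which the second coordinate is one of $\{11\nbar\}$, $\{00\nbar\}$ (with $u\in M_n\cap Z_n$), $\{00\nbar,01\nbar\}$ (with $u\in M_n\setminus Z_n$); I would fix $v$ in each case and walk through the possible $u$. For $v=11\nbar$: $\tau_n(11\nbar)=01n\cdot(11n)^{n-1}$, so the bottom row is the strip of Lemma~\ref{lem:unique-horizontal-strip} for $v=11\nbar$, giving top-labels $(111)^{n-1}$ and right-most right label $00n$; the left column is determined by $\tau_n(u)$ for $u\in V_n\setminus Z_n$, i.e. $u=11j$, giving right-labels $(111)^{j-1}(112)^{n-j}$; filling the white rectangle, the resulting right word is $00n\cdot(11n)^{j-1}\cdot(11\nbar)^{n-j}$. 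The analogous computations for $v=00\nbar$ ($\tau_n(00\nbar)=01n\cdot(11n)^n$) and $v=01\nbar$ ($\tau_n(01\nbar)=00n\cdot(11n)^n$) produce right words beginning with $00n$ or $01n$ followed by a block of $(11n)$'s of the appropriate length and a block of $(11\nbar)$'s; the top word is obtained symmetrically from $u$.

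The last clause — ``no $(\alpha,\beta)$ with $R=\tau_n(\alpha)$, $T=\tau_n(\beta)$'' — is then a short arithmetic check: every word in $\tau_n(V_n)$ has a very restricted shape, and in particular $\tau_n(xyz)$ has length $n+1-x\le n+1$ and, by the lemma following the definition of $\tau_n$, has exactly $z-x$ occurrences of ${*}{*}n$, with first letter $0(x{-}y{+}1)n$ or $0(x{-}y{+}1)\nbar$. I would compare the word $R$ produced above against these constraints and derive a contradiction: either $R$ is too long, or it contains too many ${*}{*}n$ blocks relative to the position of its first $01\nbar$ letter, or the required value of $z$ would exceed $n+1$ — this is precisely why the pairs in \eqref{eq:complement-to-Q-in-P} are exactly the ``overflow'' pairs, pushing $z$ to $n+1$ on a side where the tile would need $z=n+2$. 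The same argument, transposed, handles $T$.

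\textbf{Main obstacle.} The proof itself is routine once the decomposition of Figure~\ref{fig:return-block-to-junction-tile-copy} is in hand; the only real work is bookkeeping — enumerating the six families of \eqref{eq:complement-to-Q-in-P} (reduced to three by transposition), reading off $R$ and $T$ from Lemma~\ref{lem:unique-horizontal-strip} in each sub-case, and then checking against the structural constraints on $\tau_n(V_n)$ that neither $R$ nor $T$ lies in the image of $\tau_n$. I expect the fiddliest point to be keeping the index ranges on $u$ (e.g. distinguishing $u\in Z_n$ from $u\notin Z_n$, and within $M_n$ the cases $u_2=n$ vs. $u_2=n+1$) aligned with the correct strip from Lemma~\ref{lem:unique-horizontal-strip}, and then phrasing the final ``not in $\tau_n(V_n)$'' contradiction uniformly across cases rather than case-by-case.
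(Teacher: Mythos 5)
Your proposal follows essentially the same route as the paper's proof: the same junction/bottom-row/left-column/white-block decomposition, Lemma~\ref{lem:unique-horizontal-strip} and its symmetric version for existence, Lemma~\ref{lem:unicity-rect-pattern} together with SW-determinism for uniqueness, the six families of \eqref{eq:complement-to-Q-in-P} reduced to three by the diagonal symmetry, and a first-letter/length comparison against the structure of $\tau_n(V_n)$ at the end. Two details of your sketch need correcting when written out, though neither changes the argument: for $v=00\nbar$ the right word begins with $00\nbar$ (and for $v=01\nbar$ with $00n$), not ``$00n$ or $01n$''; and in your three families only the \emph{right} word fails to lie in $\tau_n(V_n)$ (the top word can genuinely be a $\tau_n$-image, e.g.\ it equals $\tau_n(000)$ when $u=00n$ and $v=00\nbar$), which is all the lemma requires, since its conclusion only excludes both boundary words being $\tau_n$-images simultaneously, with the transposed families handled by the reflection symmetry.
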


\begin{proof}
    Suppose that $u\in V_n\setminus Z_n$ and $v=11\nbar$.
    We have $|\tau_n(u)|=|\tau_n(v)|=n$.
    Since $v=11\nbar$, then from
    Lemma~\ref{lem:unique-horizontal-strip}, there exists a unique
    choice of tiles for the bottom row whose sequence of top labels is
    $(111)^{n-1}$ and right-most right label is $00n$.
    There are two cases to consider for $u$:
    \begin{itemize}
        \item If $u=11j$ with $1\leq j\leq n$, then from
            a symmetric version of Lemma~\ref{lem:unique-horizontal-strip},
            there exists a unique choice of tiles for the left column whose sequence of
            right labels is $(111)^{j-1}\cdot(112)^{n-j}$.
        \item If $u=11\nbar$, then from a symmetric version of
            Lemma~\ref{lem:unique-horizontal-strip}, there exists a unique
            choice of tiles for the left column whose sequence of right labels is
            $(111)^{n-1}$. %
    \end{itemize}
    In both cases, the remaining region of the rectangular pattern can be
    uniquely filled with white tiles.  
    The sequence of right labels
    of the rectangular pattern 
    starts with $00n$. Such a sequence cannot be written as 
    an image under the map $\tau_n$,
    because there is no $\alpha\in V_n$ such that $\tau_n(\alpha)$ starts with
    $00n$ and is of length $n$.

    Suppose that $u\in M_n\cap Z_n=\{00n,00\nbar,01n,01\nbar\}$ and $v=00\nbar$.
    We have $|\tau_n(u)|=|\tau_n(v)|=n+1$.
    From Lemma~\ref{lem:unique-horizontal-strip},
    there exists a unique choice of tiles for the bottom row whose sequence of
    top labels is $(111)^n$.
    Since $v=00\nbar$, 
    from a symmetric version of Lemma~\ref{lem:unique-horizontal-strip},
    there exists a unique choice of tiles for the left column whose sequence of
    right labels is $(111)^n$ and top-most top label is $00\nbar$.
    Since the bottom row is of length $n$,
    and white tiles increase the last digit by one,
    the remaining region can be uniquely filled with white tiles.
    Since the sequence of top
    labels of the rectangular pattern starts with $00\nbar$,
    it cannot be written as an image under the map $\tau_n$.

    Suppose that $u\in\{00\nbar,01\nbar\}$ and $v\in M_n\setminus Z_n=\{11n,11\nbar\}$.
    We have $|\tau_n(u)|=n+1$ and $|\tau_n(v)|=n$.
    From Lemma~\ref{lem:unique-horizontal-strip},
    there exists a unique choice of tiles for the bottom row whose sequence of
    top labels is $(111)^n$. %
    Symmetrically,
    there exists a unique choice of tiles for the left column whose sequence of
    right labels is $(111)^n$ and top-most top label is in $\{00n,00\nbar\}$.
    The remaining region can be uniquely filled with white tiles.
    The sequence of top labels is in $\{00n,00\nbar\}\cdot(11\nbar)^{n-1}$.
    Such a sequence cannot be written as 
    an image under the map $\tau_n$,
    because there is no $\alpha\in V_n$ such that $\tau_n(\alpha)$ starts with
    $00n$ or $00\nbar$ and is of length $n$.

    Suppose that $u=11\nbar$ and $v\in V_n\setminus Z_n$,
    or $u=00\nbar$ and $v\in M_n\cap Z_n$,
    or $u\in M_n\setminus Z_n$ and $v\in\{00\nbar,01\nbar\}$.
    A rectangular pattern respecting the constraints can be obtained by taking
    the image under reflection of the rectangular pattern constructed above.
\end{proof}

\begin{proposition}\label{prop:image-of-a-tile}
    Let $u,v\in V_n$.
    There exists a valid rectangular pattern of tiles in $\Tcal_n'$
    whose 
    sequence of bottom labels is $\tau_n(v)$
    and
    sequence of left labels is $\tau_n(u)$
    if and only if
    \begin{equation}\label{eq:condition_return_block}
        (u,v) \in (V_n\setminus Z_n\times V_n\setminus Z_n)
            \cup(M_n\cap Z_n\times M_n\cap Z_n)
            \cup(M_n\setminus Z_n\times Z_n)
            \cup(Z_n\times M_n\setminus Z_n).
    \end{equation}
\end{proposition}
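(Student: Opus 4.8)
The plan is to prove the two implications separately: sufficiency by a bookkeeping argument resting on the two existence lemmas already available, and necessity by a forced decomposition of the pattern. For sufficiency, first I would record which pairs $(\scleft(t),\scbottom(t))$ occur among $t\in\Tcal_n'$: the white tiles give $(11i,11j)$ with $1\le i,j\le n$; the nine junction tiles give all of $\{00n,01n,01\nbar\}^2$; and the eight stripe families give $(Z_n\setminus\{00\nbar,01\nbar\})\times(M_n\setminus Z_n)$ together with its transpose. Adding the set of \eqref{eq:complement-to-Q-in-P} from Lemma~\ref{lem:other-cases-exists-pattern} restores exactly the missing boundary vectors ($11\nbar$ in the first set, $00\nbar$ in the second, $00\nbar,01\nbar$ in the two products), turning the whole union into precisely the set of \eqref{eq:condition_return_block}. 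Granting that, sufficiency is immediate: if $(u,v)$ satisfies \eqref{eq:condition_return_block}, then either there is a tile $t\in\Tcal_n'$ with $\scleft(t)=u$ and $\scbottom(t)=v$, and Lemma~\ref{lem:Tcaln'-there-exists-rect-pattern} produces the rectangular pattern, or $(u,v)$ lies in the set of \eqref{eq:complement-to-Q-in-P}, and Lemma~\ref{lem:other-cases-exists-pattern} does.

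For necessity, suppose $P$ is a valid rectangular pattern over $\Tcal_n'$ with bottom labels $\tau_n(v)$ and left labels $\tau_n(u)$. Since $\tau_n(V_n)\subseteq\{00n,01n,01\nbar\}\cdot\{11n,11\nbar\}^*$, the decomposition used at the start of the proof of Lemma~\ref{lem:Tcaln'-there-exists-rect-pattern} --- a tile has left label starting with $0$ iff its right label does, and likewise for bottom versus top --- shows that $P$ splits into a junction corner, a bottom row of $n-v_0$ horizontal stripe tiles, a left column of $n-u_0$ vertical stripe tiles, and an inner $(n-v_0)\times(n-u_0)$ rectangle consisting only of white tiles. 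By Lemma~\ref{lem:unique-horizontal-strip} the bottom row is forced by $v$, and I would extract from its case list the one fact I need: the sequence of top labels $\gamma$ lies in $\{111,112\}^{n-v_0}$, and $\gamma=(111)^{n-v_0}$ if and only if $v\in M_n$ --- because $112$ occurs in $\gamma$ exactly when $v_2<n$. The reflected lemma does the same for the left column and $u$: its sequence of right labels $\gamma'$ equals $(111)^{n-u_0}$ if and only if $u\in M_n$.

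Then I would use that a white tile $w_n^{i,j}$ has $1\le i\le n$ and carries on its right edge the left-edge vector with last coordinate increased by one; so in any row of the inner rectangle whose leftmost left label is $11c$ the left labels read $11c,11(c{+}1),\dots,11(c{+}(n-v_0){-}1)$, forcing $c\le v_0+1$. Since the leftmost left labels of the inner rectangle, read bottom to top, form exactly $\gamma'$, the case $v_0=0$ forces $c=1$ for every entry of $\gamma'$, i.e. $\gamma'=(111)^{n-u_0}$, hence $u\in M_n$ (the inner rectangle is nonempty in this case unless $n=1$ and $u_0=1$, where $u\in\{111,112\}\subseteq M_1$ makes the conclusion hold anyway). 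By the symmetric argument with the columns, $u_0=0$ forces $v\in M_n$. Finally a four-way split on $(u_0,v_0)\in\{0,1\}^2$, using $u_0=0\iff u\in Z_n$ and $M_n\cap(V_n\setminus Z_n)=M_n\setminus Z_n$, places $(u,v)$ into $(V_n\setminus Z_n)^2$ when $u_0=v_0=1$, into $(M_n\setminus Z_n)\times Z_n$ when $u_0=1,v_0=0$, into $Z_n\times(M_n\setminus Z_n)$ when $u_0=0,v_0=1$, and into $(M_n\cap Z_n)^2$ when $u_0=v_0=0$; in every case $(u,v)$ satisfies \eqref{eq:condition_return_block}.

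I expect the fiddly part to be combining the length bookkeeping (the bottom row has $n-v_0$ tiles, one fewer than the length $n+1-v_0$ of $\tau_n(v)$) with the small-$n$ degeneracies in which a row, a column, or the inner rectangle is empty; none of this is deep but it invites off-by-one slips. The two genuinely load-bearing facts are that $\gamma$ is all-$111$ precisely on $M_n$, and that the inner rectangle consists only of white tiles; both follow from Lemma~\ref{lem:unique-horizontal-strip} together with the ``starts-with-$0$'' dichotomy, but they are the steps on which the whole argument rests.
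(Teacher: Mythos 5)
Your proposal is correct and takes essentially the paper's route: the ($\impliedby$) direction is exactly the paper's argument (decompose the condition set into the pairs realizable as $(\scleft(t),\scbottom(t))$ for $t\in\Tcal_n'$, handled by Lemma~\ref{lem:Tcaln'-there-exists-rect-pattern}, plus the residual set \eqref{eq:complement-to-Q-in-P} handled by Lemma~\ref{lem:other-cases-exists-pattern}), and your ($\implies$) direction, though written directly instead of as the paper's contrapositive, rests on the same ingredients — the junction/bottom-row/left-column/white-block decomposition, the forced strips of Lemma~\ref{lem:unique-horizontal-strip}, and the unit increments of white tiles. Your width-forcing step ($v\in Z_n$ forces the left column's right labels to be $(111)^{\ast}$, hence $u\in M_n$) is just the diagonal-symmetric twin of the paper's height-bound contradiction illustrated in Figure~\ref{fig:vertical-sequence-white-tiles}, so the two arguments have identical logical content.
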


\begin{proof}
    Let $u,v\in V_n$.
    ($\implies$)
    We show the contrapositive, namely that if \eqref{eq:condition_return_block}
    does not hold, then there is no rectangular pattern
    with $\tau_n(u)$ as the sequence of labels on the left
    and $\tau_n(v)$ as the sequence of labels at the bottom.
    If \eqref{eq:condition_return_block} does not hold, then
    \begin{align*}
        (u,v)\in
        &\big((Z_n \times V_n\setminus Z_n)\cup
    (V_n\setminus Z_n \times Z_n)\cup
        (Z_n \times Z_n)\big)\\
        &\qquad\qquad\setminus \big(
                (M_n\cap Z_n\times M_n\cap Z_n)
            \cup(M_n\setminus Z_n\times Z_n)
            \cup(Z_n\times M_n\setminus Z_n)
            \big) \\
    &=(Z_n \times V_n\setminus (M_n\cup Z_n))\cup
    (V_n\setminus (M_n\cup Z_n) \times Z_n)
    \cup(Z_n \times Z_n\setminus M_n)
    \cup(Z_n\setminus M_n \times Z_n).
    \end{align*}
    There are four cases to consider:
    \begin{itemize}
        \item Assume $u\in Z_n$ and $v\in V_n\setminus (M_n\cup Z_n)$.
            We have $v=11j$ with $1\leq j<n$.
            From Lemma~\ref{lem:unique-horizontal-strip}, the bottom row of
            the rectangular pattern has at least one label $112$ on its top.
            Since the difference between 
            the last digit of the top label and
            the last digit of the bottom label 
            of a white tile is 1 and the maximal last digit of a white tile in
            $\Tcal_n$ is $\nbar$, the height of the white tile region is at
            most $n-1$; see Figure~\ref{fig:vertical-sequence-white-tiles}. 
            Thus, $|\tau_n(u)|\leq n$. 
            This is incompatible with $u\in Z_n$, 
            because $u\in Z_n$ implies that $|\tau_n(u)|=n+1$.
        \item Assume $u\in V_n\setminus (M_n\cup Z_n)$ and $v\in Z_n$.
            This case also leads to a contradiction following an argument
            symmetric to the previous one.
        \item Assume $u\in Z_n$ and $v\in Z_n\setminus M_n$.
            We have $v=00j$ with $0\leq j<n$
            or $v=01j$ with $1\leq j<n$.
            In both cases, we have
            from Lemma~\ref{lem:unique-horizontal-strip} that the bottom row of
            the rectangular pattern has at least one label $112$ on its top.
            For the same reason as in the first item,
            the height of the rectangular pattern is $|\tau_n(u)|\leq n$. 
            This is incompatible with $u\in Z_n$, 
            because $u\in Z_n$ implies that $|\tau_n(u)|=n+1$.
        \item Assume $u\in Z_n\setminus M_n$ and $v\in Z_n$.
            This case also leads to a contradiction following an argument
            symmetric to the previous one.
    \end{itemize}
\begin{figure}[h]
    \begin{center}
    \begin{tikzpicture}[scale=1.3]
        \begin{scope}
            \tikzstyle{every node}=[font=\footnotesize]
            \tilelabelinsidescope{white}{0}{0}{*}{113}{*}{112}
            \tilelabelinsidescope{white}{0}{1}{*}{114}{*}{113}
            \node at (0.5,2.5) {$\vdots$};
            \tilelabelinsidescope{white}{0}{3}{*}{11\nbar}{*}{11n}
            \draw [decorate,decoration={brace,amplitude=5pt,raise=4ex}]
                (1,4) -- (1,0) node[midway,xshift=5em]{height $n-1$};
        \end{scope}
        \begin{scope}[xshift=5cm]
            \tikzstyle{every node}=[font=\footnotesize]
            \tilelabelinsidescope{white}{0}{0}{*}{112}{*}{111}
            \tilelabelinsidescope{white}{0}{1}{*}{113}{*}{112}
            \node at (0.5,2.5) {$\vdots$};
            \tilelabelinsidescope{white}{0}{3}{*}{11n}{*}{11\nunder}
            \draw [decorate,decoration={brace,amplitude=5pt,raise=4ex}]
                (1,4) -- (1,0) node[midway,xshift=5em]{height $n-1$};
        \end{scope}
    \end{tikzpicture}
    \end{center}
\caption{
    The height of a valid vertical column made entirely of white tiles 
    from $\Tcal_n$ is at most $n-1$ 
    if the bottom label of the bottom-most tile is $112$
    or if the top label of the top-most tile is $11n$.}
    \label{fig:vertical-sequence-white-tiles}
\end{figure}

    ($\impliedby$)
    Let
    \begin{align*}
        P &= (V_n\setminus Z_n\times V_n\setminus Z_n)
            \cup(M_n\cap Z_n\times M_n\cap Z_n)
            \cup(M_n\setminus Z_n\times Z_n)
            \cup(Z_n\times M_n\setminus Z_n),\\
        Q &= \left\{(u,v)\in V_n\times V_n\;\middle|\; \text{ there exists } \alpha,\beta\in V_n
                            \text{ such that } 
                    \raisebox{-6mm}{
                    \begin{tikzpicture}[auto,scale=.5]
                        \tikzstyle{every node}=[font=\footnotesize]
                        \tile{white}{0}{0}{\alpha}{\beta}{u}{v}
                    \end{tikzpicture}}
                \in\Tcal_n'\right\}.
    \end{align*}
    Notice that $Q\subset P$ and
    \begin{equation}\label{eq:P-minus-Q}
    \begin{aligned}
        P\setminus Q &= 
        (\{11\nbar\}\times V_n\setminus Z_n)
            \cup(V_n\setminus Z_n\times \{11\nbar\})\\
            &\qquad\cup(\{00\nbar\}\times M_n\cap Z_n)
                   \cup(M_n\cap Z_n\times \{00\nbar\})\\
            &\qquad\cup(\{00\nbar,01\nbar\}\times M_n\setminus Z_n)
            \cup(M_n\setminus Z_n\times \{00\nbar,01\nbar\}).
    \end{aligned}
    \end{equation}
    We assume that \eqref{eq:condition_return_block} holds,
    that is, $(u,v)\in P$.
    There are two cases to consider.
    \begin{itemize}
        \item If $(u,v)\in Q$, then,
            from Lemma~\ref{lem:Tcaln'-there-exists-rect-pattern},
            there exists a valid rectangular pattern with tiles
            in $\Tcal_n'$
            whose left and bottom labels
            are respectively $\tau_n(u)$ and $\tau_n(v)$.
        \item If $(u,v)\in P\setminus Q$, then
            using \eqref{eq:P-minus-Q} and 
            Lemma~\ref{lem:other-cases-exists-pattern},
            there exists a valid rectangular pattern with tiles
            in $\Tcal_n'$
            whose left and bottom labels
            are respectively
            $\tau_n(u)$ and $\tau_n(v)$.\qedhere
    \end{itemize}
\end{proof}

\begin{proposition}\label{prop:there-exists-rect-pattern-iff}
    Let $\alpha,\beta,u,v\in V_n$.
    There exists a valid rectangular pattern with tiles
    in $\Tcal_n'$
    whose right, top, left and bottom labels
    are respectively
    $\tau_n(\alpha)$, $\tau_n(\beta)$, $\tau_n(u)$ and $\tau_n(v)$
    if and only if
    $\raisebox{-6mm}{
    \begin{tikzpicture}[auto,scale=.5]
        \tikzstyle{every node}=[font=\footnotesize]
        \tile{white}{0}{0}{\alpha}{\beta}{u}{v}
    \end{tikzpicture}}
    \in\Tcal_n'$.
\end{proposition}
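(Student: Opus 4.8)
The plan is to establish the nontrivial implication ($\Longrightarrow$), since the converse is precisely Lemma~\ref{lem:Tcaln'-there-exists-rect-pattern}. Assume there is a valid rectangular pattern $p$ over $\Tcal_n'$ whose right, top, left and bottom labels are $\tau_n(\alpha)$, $\tau_n(\beta)$, $\tau_n(u)$ and $\tau_n(v)$. First I would invoke Proposition~\ref{prop:image-of-a-tile}: the bare existence of $p$ forces $(u,v)$ to lie in the set $P$ appearing on the right-hand side of~\eqref{eq:condition_return_block}. Writing $Q=\{(u,v)\in V_n\times V_n\mid$ there exist $\alpha',\beta'\in V_n$ such that $(\alpha',\beta',u,v)\in\Tcal_n'\}$, I would split $P=Q\cup(P\setminus Q)$, with $P\setminus Q$ given by~\eqref{eq:P-minus-Q}, and treat the two cases separately. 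Throughout, the key auxiliary fact is uniqueness: by Lemma~\ref{lem:SW-and-NE-deterministic} the set $\Tcal_n'$ is SW-deterministic, so by Lemma~\ref{lem:unicity-rect-pattern} a valid rectangular pattern over $\Tcal_n'$ of a given shape is determined by its bottom and left labels; in particular $p$ is \emph{the} valid rectangular pattern over $\Tcal_n'$ with bottom labels $\tau_n(v)$ and left labels $\tau_n(u)$.

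If $(u,v)\in P\setminus Q$, then by~\eqref{eq:P-minus-Q} we are exactly in the hypothesis of Lemma~\ref{lem:other-cases-exists-pattern}, which says that the unique such pattern has right label $R$ and top label $T$ that are \emph{not} of the form $\tau_n(\alpha')$, $\tau_n(\beta')$ for any $\alpha',\beta'\in V_n$. By the uniqueness remark this pattern coincides with $p$, so $R=\tau_n(\alpha)$ and $T=\tau_n(\beta)$, a contradiction. Hence this case cannot occur.

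If $(u,v)\in Q$, choose a tile $t=(\alpha',\beta',u,v)\in\Tcal_n'$. By Lemma~\ref{lem:Tcaln'-there-exists-rect-pattern} the pattern $\omega_n'(t)$ is a valid rectangular pattern over $\Tcal_n'$ with right, top, left and bottom labels $\tau_n(\alpha')$, $\tau_n(\beta')$, $\tau_n(u)$, $\tau_n(v)$; it has the same shape, bottom labels and left labels as $p$, so $p=\omega_n'(t)$ by the uniqueness remark. Comparing right and top labels yields $\tau_n(\alpha)=\tau_n(\alpha')$ and $\tau_n(\beta)=\tau_n(\beta')$, and since $\tau_n$ is injective we get $\alpha=\alpha'$ and $\beta=\beta'$. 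Thus the tile $(\alpha,\beta,u,v)=t$ lies in $\Tcal_n'$, which is what we wanted.

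I do not expect a genuine obstacle here: all the combinatorics has already been packaged into Lemma~\ref{lem:unique-horizontal-strip}, Lemma~\ref{lem:Tcaln'-there-exists-rect-pattern}, Lemma~\ref{lem:other-cases-exists-pattern} and Proposition~\ref{prop:image-of-a-tile}. The one point to be careful about is matching shapes, namely that the width of a rectangular pattern equals the length of its bottom label word and its height equals the length of its left label word; this is what lets the uniqueness clause of Lemma~\ref{lem:unicity-rect-pattern} apply simultaneously to $p$ and to $\omega_n'(t)$, and it is automatic from the definitions of $\tau_n$ and of $\omega_n'$.
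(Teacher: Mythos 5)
Your proposal is correct and follows essentially the same route as the paper: Proposition~\ref{prop:image-of-a-tile} to place $(u,v)$ in $P$, Lemma~\ref{lem:other-cases-exists-pattern} together with \eqref{eq:P-minus-Q} to rule out $P\setminus Q$, then Lemma~\ref{lem:Tcaln'-there-exists-rect-pattern}, the uniqueness from Lemma~\ref{lem:unicity-rect-pattern} and Lemma~\ref{lem:SW-and-NE-deterministic}, and the injectivity of $\tau_n$ to identify $\alpha$ and $\beta$. Your only (welcome) addition is making explicit the uniqueness step that drives the contradiction in the $P\setminus Q$ case, which the paper leaves implicit.
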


\begin{proof}
    Let $\alpha,\beta,u,v\in V_n$.
    ($\impliedby$)
    The existence of the rectangular pattern was proved
    in Lemma~\ref{lem:Tcaln'-there-exists-rect-pattern}.

    ($\implies$)
    Suppose that there exists a valid rectangular pattern with tiles
    in $\Tcal_n'$
    whose right, top, left and bottom labels
    are respectively
    $\tau_n(\alpha)$, $\tau_n(\beta)$, $\tau_n(u)$ and $\tau_n(v)$.
    From Proposition~\ref{prop:image-of-a-tile},
    $(u,v)$ satisfies \eqref{eq:condition_return_block},
    that is $(u,v)\in P$.
    From Lemma~\ref{lem:other-cases-exists-pattern},
    $(u,v)$ does not satisfies \eqref{eq:complement-to-Q-in-P}
    because all boundary words can be written as the image of $\tau_n$.
    Thus, $(u,v)\notin P\setminus Q$ using \eqref{eq:P-minus-Q}.
    We conclude that $(u,v)\in Q$.
    Thus, there exists $\alpha',\beta'\in V_n$ such that
    $\raisebox{-6mm}{
        \begin{tikzpicture}[auto,scale=.5]
            \tikzstyle{every node}=[font=\footnotesize]
            \tile{white}{0}{0}{\alpha'}{\beta'}{u}{v}
        \end{tikzpicture}}
    \in\Tcal_n'$.
    From Lemma~\ref{lem:Tcaln'-there-exists-rect-pattern}.
    there exists a valid rectangular pattern with tiles
    in $\Tcal_n'$
    whose right, top, left and bottom labels
    are respectively
    $\tau_n(\alpha')$, $\tau_n(\beta')$, $\tau_n(u)$ and $\tau_n(v)$.
    From Lemma~\ref{lem:unicity-rect-pattern}, we must
    have
    $\tau_n(\alpha')=\tau_n(\alpha)$
    and
    $\tau_n(\beta')=\tau_n(\beta)$ because
    $\Tcal_n'$ is SW-deterministic
    from Lemma~\ref{lem:SW-and-NE-deterministic}.
    Since $\tau_n$ is injective over the set $V_n$,
    we have $\alpha=\alpha'$ and $\beta=\beta'$.
\end{proof}

Proposition~\ref{prop:there-exists-rect-pattern-iff}
is used in Lemma~\ref{lem:return-word-description}
in order to desubstitute configurations in $\Omega_n$
over tiles in $\Tcal_n$.
Nevertheless, considering tiles in $\Tcal_n'$ is necessary for
Proposition~\ref{prop:there-exists-rect-pattern-iff}
to hold for every integer $n\geq1$.
Following Lemma~\ref{lem:Tcaln-there-exists-rect-pattern},
Proposition~\ref{prop:there-exists-rect-pattern-iff}
can be restated as follows when $n\geq2$.

\begin{corollary}\label{cor:there-exists-rect-pattern-iff}
    Suppose that $n\geq2$ is an integer and let $\alpha,\beta,u,v\in V_n$.
    There exists a valid rectangular pattern with tiles
    in $\Tcal_n$
    whose right, top, left and bottom labels
    are respectively
    $\tau_n(\alpha)$, $\tau_n(\beta)$, $\tau_n(u)$ and $\tau_n(v)$
    if and only if
    $\raisebox{-6mm}{
    \begin{tikzpicture}[auto,scale=.5]
        \tikzstyle{every node}=[font=\footnotesize]
        \tile{white}{0}{0}{\alpha}{\beta}{u}{v}
    \end{tikzpicture}}
    \in\Tcal_n'$.
\end{corollary}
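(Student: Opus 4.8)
The plan is to obtain Corollary~\ref{cor:there-exists-rect-pattern-iff} as an immediate consequence of Proposition~\ref{prop:there-exists-rect-pattern-iff} together with Lemma~\ref{lem:Tcaln-there-exists-rect-pattern}, using the extra hypothesis $n\geq2$ only in the backward direction. For the forward implication, I would argue as follows: if there is a valid rectangular pattern with tiles in $\Tcal_n$ whose right, top, left and bottom labels are $\tau_n(\alpha)$, $\tau_n(\beta)$, $\tau_n(u)$, $\tau_n(v)$, then, since $\Tcal_n\subseteq\Tcal_n'$, the very same pattern is also a valid rectangular pattern with tiles in $\Tcal_n'$ carrying those boundary labels; Proposition~\ref{prop:there-exists-rect-pattern-iff} then yields that the Wang tile $(\alpha,\beta,u,v)$ belongs to $\Tcal_n'$. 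No use of $n\geq2$ is needed here.

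For the backward implication, suppose $(\alpha,\beta,u,v)\in\Tcal_n'$. Lemma~\ref{lem:Tcaln'-there-exists-rect-pattern} provides a (unique) valid rectangular pattern over $\Tcal_n'$ with the prescribed boundary words, and by the definition~\eqref{eq:defintion-omegan'} of $\omega_n'$ this pattern is precisely $\omega_n'\bigl((\alpha,\beta,u,v)\bigr)$. Now I invoke Lemma~\ref{lem:Tcaln-there-exists-rect-pattern}: the pattern $\omega_n'(t)$ can contain a tile of $\Tcal_n'\setminus\Tcal_n$ only when $n=1$ and $t$ is antigreen. Since $n\geq2$, the pattern $\omega_n'\bigl((\alpha,\beta,u,v)\bigr)$ uses only tiles from $\Tcal_n$, so it is a valid rectangular pattern over $\Tcal_n$ with right, top, left and bottom labels $\tau_n(\alpha)$, $\tau_n(\beta)$, $\tau_n(u)$, $\tau_n(v)$, as required.

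There is essentially no obstacle beyond carefully tracking which tile set the rectangular pattern is required to live in; the real content is that the hypothesis $n\geq2$ is exactly what excludes the one exceptional family — the antigreen tiles of $\Tcal_1'$, whose images under $\omega_1'$ contain junction tiles of $J_1'\setminus J_1$ (see Figure~\ref{fig:omega1'}) — that makes the statement fail for $n=1$. If one wished to make the argument completely self-contained one could re-derive uniqueness of the rectangular pattern from Lemma~\ref{lem:unicity-rect-pattern} and Lemma~\ref{lem:SW-and-NE-deterministic}, but the corollary as stated asserts only existence, so this extra step is unnecessary.
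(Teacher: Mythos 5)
Your proof is correct and follows essentially the same route as the paper: the forward direction via $\Tcal_n\subseteq\Tcal_n'$ and Proposition~\ref{prop:there-exists-rect-pattern-iff}, and the backward direction by noting that for $n\geq2$ the pattern $\omega_n'(t)$ from Lemma~\ref{lem:Tcaln'-there-exists-rect-pattern} contains only tiles of $\Tcal_n$ by Lemma~\ref{lem:Tcaln-there-exists-rect-pattern}. Your version is just slightly more explicit than the paper's two-line argument about where the pattern with the prescribed boundary comes from, which is a fine addition.
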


\begin{proof}
    Let $\alpha,\beta,u,v\in V_n$.
    ($\implies$)
    Follows from Proposition~\ref{prop:there-exists-rect-pattern-iff}
    since $\Tcal_n\subset\Tcal_n'$.

    ($\impliedby$) 
    From Lemma~\ref{lem:Tcaln-there-exists-rect-pattern},
    for every tile $t\in\Tcal_n'$,
    the rectangular pattern $\omega_n'(t)$ satisfies
    the boundary conditions and it contains only the tiles from the set $\Tcal_n$.
\end{proof}

\section{A desubstitution $\Omega_n\leftarrow\Omega'_n$}
\label{sec:desubstitution}

In this section, we decompose configurations 
in $\Omega_n'$ and in $\Omega_n$
into rectangular blocks called return blocks. 
The external boundary labels of
the return blocks within a configuration in $\Omega_n$ 
behave like a new set $\Tcal_n'$ of Wang tiles
which contains $\Tcal_n$ as a subset.

\subsection{Return blocks in the Wang shift $\Omega_n'$}
\label{sec:return-blocks}

In this section, we study some properties of the Wang shift
$\Omega_n'$ defined by the Wang tiles $\Tcal_n'$. 
Since $\Tcal_n \subset \Tcal_n'$ for every $n\geq1$,
we have $\Omega_n\subset\Omega_n'$.
Thus, the properties shown for $\Omega_n'$ also hold for $\Omega_n$.

\begin{figure}[h]
\begin{center}
    \includegraphics{SAGEOUTPUT/W4_15x15tiling_with_extra_forced_tiles.pdf}
\end{center}
    \caption{A valid $15\times 15$ pattern using the extended set $\Tcal_4'$ of Wang tiles.
    Note that it contains some antigreen tiles.}
    \label{fig:T4'-15x15}
\end{figure}

A tiling with the set $\Tcal'_4$ is shown in Figure~\ref{fig:T4'-15x15}.
We observe the presence of rows and columns of colored tiles. At the intersection
of these colored rows and columns are junction tiles.
In other words, the set of positions of junction tiles in the figure is the
Cartesian product of two sets. Also, the distance between two consecutive
junction tiles in the same row or column is 4 or 5. In the following lemmas, we
prove that these observations hold in general.

\begin{lemma} \label{lem:distance-junction-in-a-row}
    Let $n\geq1$ be an integer.
    For every valid configuration $c\in\Omega_n'$, 
    the distance between two consecutive occurrences of junction tiles in
    the same row is $n$, $n+1$ or $n+2$.

    Also, the sequence of bottom labels of the tiles between two consecutive junction tiles
    (including the left junction tile but not the right one)
    belongs to $\{00n,01n,01\nbar\} \cdot \{11n,11\nbar\}^*$.
\end{lemma}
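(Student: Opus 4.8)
The plan is to exploit how the leading and trailing entries of the edge labels propagate along a row of a valid configuration in $\Omega_n'$. First I would recall the elementary dichotomy among the tiles of $\Tcal_n'$ (already used in the proof of Lemma~\ref{lem:Tcaln'-there-exists-rect-pattern}): the left edge label of a tile starts with the entry $0$ if and only if the tile is a junction tile or a horizontal stripe tile (blue, green, yellow or antigreen), and in that case the right edge label also starts with $0$; white tiles and vertical stripe tiles instead have left and right labels starting with $11$. Hence, starting from an occurrence of a junction tile and walking rightward in the same row, the matching condition~\eqref{eq:validwangtiling1} forces every tile encountered to have a left label starting with $0$, so it is a junction or a horizontal stripe tile. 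Consequently, every tile lying strictly between two consecutive junction tiles of the same row is a horizontal stripe tile.

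Next I would track the last entry of the left labels along the row. Inspecting the definitions of $B_n'$, $G_n$, $Y_n$ and $A_n$, in each of the four families the left label ends in $i$ and the right label ends in $i+1$, so passing through any horizontal stripe tile increases by exactly $1$ the last entry of the vertical-edge label read across it. On the other hand, the right label of a junction tile $j_n^{k,\ell,r,s}$ is $(0,k,\ell)$, with last entry $\ell\in\{0,1\}$, and its left label is $(0,s,r+n)$, with last entry $r+n\in\{n,n+1\}$. Therefore, if $m$ denotes the number of horizontal stripe tiles between two consecutive junction tiles of a row, the last entries of the shared vertical edges, read from the right edge of the left junction tile to the left edge of the right junction tile, form the arithmetic progression $\ell,\ell+1,\dots,\ell+m$; equating $\ell+m$ with the last entry $r+n$ of the left label of the right junction gives $m=n+r-\ell\in\{n-1,n,n+1\}$. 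Hence the distance between the two consecutive junction tiles is $m+1\in\{n,n+1,n+2\}$.

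For the statement about bottom labels I would simply read them off along the row: a junction tile $j_n^{k,\ell,r,s}$ has bottom label $(0,\ell,k+n)\in\{00n,01n,01\nbar\}$, whereas every horizontal stripe tile has bottom label $11n$ (blue and antigreen) or $11\nbar$ (green and yellow). So the sequence of bottom labels of the tiles from a junction tile (included) up to, but not including, the next junction tile in its row lies in $\{00n,01n,01\nbar\}\cdot\{11n,11\nbar\}^{\ast}$, as desired.

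I do not anticipate a genuine obstacle; the only point demanding care is the bookkeeping of the first and last entries of the labels across the four horizontal stripe families and the nine junction tiles, together with the small case $n=1$, where $m=0$ is possible and two junction tiles may be adjacent --- which is consistent with the claimed distance being $n=1$.
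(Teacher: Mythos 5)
Your proof is correct and follows essentially the same route as the paper: the paper also tracks the last digit of the vertical edge labels, which jumps from $\{0,1\}$ (right label of a junction tile) to $\{n,n+1\}$ (left label of the next junction tile) while increasing by exactly $1$ across each intermediate horizontal stripe tile, and then reads off the bottom labels tile by tile. The only cosmetic difference is that the paper organizes the bookkeeping via a horizontal Rauzy graph figure, whereas you make the propagation of the leading $0$ along the row explicit; the underlying argument is identical.
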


\begin{proof}
    The horizontal Rauzy graph restricted to tiles whose vertical edge labels
    are starting with zero is shown in 
    Figure~\ref{fig:horizontal_rauzy_graph_n=5_junction_to_junction}.
    An arc in the horizontal Rauzy graph links two tiles $s\to t$ if and only
    if the right label of tile $s$ is equal to the left label of tile $t$.
    The graph allows to visualize the combinatorial structure between two
    consecutive junction tiles on the same horizontal row within a
    configuration of $\Omega_n'$.

    The right label of a junction tile is $000$, $001$ or $011$,
    which implies that the last digit of the right label of a junction tile is $0$ or $1$.
    The left label of a junction tile is $00n$, $01n$ or $01\nbar$,
    which implies that the last digit of the left label of a junction tile is $n$ or $\nbar$.
    Since the last digit increases by 1 from the left label to the right label
    of every intermediate tile (a tile appearing in between two consecutive
    junction tile in the same row), the number of tiles in between two
    consecutive junction tiles on the same row is at least $n-1$ and at most
    $\nbar-0=n+1$.
    We conclude that the distance (number of edges in the Rauzy graph) between
    two consecutive junction tiles in the same row is $n$, $n+1$ or $n+2$. In
    particular, it is at least $n$.
\begin{figure}[h]
\begin{center}
    \includegraphics{Figures/junction_to_junction.pdf}
\end{center}
\caption{Combinatorial structure between two consecutive junction tiles on the same horizontal row 
    within a configuration of $\Omega_n'$.
    The nodes of the graph are placed such that any two tiles appearing in the same
    column have the same last digit for its left or right labels.
    The length of a path from a junction tile to a junction tile is $n$, $n+1$ or $n+2$.}
\label{fig:horizontal_rauzy_graph_n=5_junction_to_junction}
\end{figure}

    The bottom label of a junction tile is in the set $\{00n,01n,01\nbar\}$.
    The bottom label of every intermediate tile is $11n$ or $11\nbar$.
    Therefore, the sequence of bottom labels of the tiles between two consecutive junction tiles
    (including the left junction tile but not the right one)
    belongs to $\{00n,01n,01\nbar\} \cdot \{11n,11\nbar\}^*$;
    see Figure~\ref{fig:horizontal_rauzy_graph_n=5_junction_to_junction}.
\end{proof}

\begin{lemma} \label{lem:distance-Vstrip-in-a-row}
    Let $n\geq1$ be an integer.
    For every valid configuration $c\in\Omega_n'$, 
    The distance between two consecutive occurrences of a vertical stripe tile
    (blue, green, yellow or antigreen) in the same row is $n-1$, $n$ or $n+1$.
\end{lemma}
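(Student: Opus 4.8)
The plan is to argue exactly as in the proof of Lemma~\ref{lem:distance-junction-in-a-row}, but with the horizontal Rauzy graph restricted to the tiles of $\Tcal_n'$ whose left and right labels begin with $11$. First I would check, going through the five families of tiles, that such tiles are precisely the white tiles $W_n$ together with the vertical stripe tiles $\widehat{B_n'}\cup\widehat{G_n}\cup\widehat{Y_n}\cup\widehat{A_n}$: each horizontal stripe tile has left and right labels beginning with $0$, and each junction tile $j_n^{k,\ell,r,s}$ has left label $(0,s,r+n)$ and right label $(0,k,\ell)$, both beginning with $0$. Moreover, if a tile has left label beginning with $11$ then it is white or a vertical stripe tile, and in either case its right label also begins with $11$. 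Since validity forces the left label of any tile in a row to equal the right label of its left neighbour, an induction in both horizontal directions shows that any row of a configuration $c\in\Omega_n'$ containing a vertical stripe tile is made up entirely of white and vertical stripe tiles; in particular, every tile strictly between two consecutive vertical stripe tiles of such a row is white.

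Next I would track the last coordinate of the horizontal labels along that row. A vertical stripe tile has right label $111$ when it lies in $\widehat{B_n'}\cup\widehat{G_n}$ and $112$ when it lies in $\widehat{Y_n}\cup\widehat{A_n}$, so the last coordinate of its right label is some $k_1\in\{1,2\}$; and it has left label $11n$ when it lies in $\widehat{B_n'}\cup\widehat{A_n}$ and $11\nbar$ when it lies in $\widehat{G_n}\cup\widehat{Y_n}$, so the last coordinate of its left label is some $k_2\in\{n,n+1\}$. A white tile $w_n^{i,j}$ has left label $11i$ and right label $11\ibar$, hence increases the last coordinate of the horizontal label by exactly $1$. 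Consequently, if two consecutive vertical stripe tiles of a common row are at distance $d$, then the $d-1$ white tiles between them carry the last coordinate from $k_1$ (the right label of the left tile) to $k_2$ (the left label of the right tile) in $d-1$ unit steps, so $k_1+(d-1)=k_2$, that is, $d=k_2-k_1+1\in\{n-1,n,n+1\}$. The same formula covers the degenerate case $d=1$ (two adjacent vertical stripe tiles, forcing $k_1=k_2$, which occurs only for small $n$, consistently with the value $n-1=0$ being vacuous when $n=1$).

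The only slightly delicate point is the exhaustive verification that no family of tiles other than $W_n$ and the vertical stripe tiles has horizontal labels beginning with $11$, since this is what restricts the tiles between consecutive vertical stripe tiles to be white; everything else is a direct count. The symmetric statement bounding the distance between two consecutive horizontal stripe tiles in a common column follows from the same reasoning via $\widehat{\Tcal_n'}=\Tcal_n'$.
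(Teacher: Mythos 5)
Your proof is correct and follows essentially the same route as the paper: the paper also bounds the distance by noting that the right label of a vertical stripe tile ends in $1$ or $2$, its left label ends in $n$ or $n+1$, and each intermediate tile increases the last digit of the horizontal labels by one (the paper conveys the fact that the intermediate tiles are white via the horizontal Rauzy graph restricted to labels starting with $1$, which you instead verify explicitly by inspecting the tile families). The extra explicitness about why only white tiles can sit between two vertical stripe tiles is a fine, slightly more detailed rendering of the same argument.
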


\begin{proof}
    The horizontal Rauzy graph restricted to vertical edge labels starting with $1$
    is shown in Figure~\ref{fig:horizontal_rauzy_graph_n=5_BGY_to_BGY}.
    An arc in the horizontal Rauzy graph links two tiles $s\to t$ if and only
    if the right label of tile $s$ is equal to the left label of tile $t$.
    The graph allows to visualize the combinatorial structure between two
    consecutive vertical stripe tile on the same horizontal row within a
    configuration of $\Omega_n'$.

    The right label of a vertical stripe tile is $111$ or $112$,
    which implies that the last digit of the right label of a vertical stripe tile is $1$ or $2$.
    The left label of a vertical stripe tile is $11n$ or $11\nbar$,
    which implies that the last digit of the left label of a vertical stripe tile is $n$ or $\nbar$.
    Since the last digit increases by 1 from the left label to the right label
    of every intermediate tile (a tile appearing in between two consecutive
    vertical stripe tile in the same row),
    the number of tiles in between two consecutive vertical stripe tiles on
    the same row is at least $n-2$ and at most $\nbar-1=n$.
    We conclude that the distance (number of edges in the Rauzy graph) between
    two consecutive vertical stripe tiles in the same row is $n-1$, $n$ or $n+1$. In
    particular, it is at most $n+1$.
\begin{figure}[h]
\begin{center}
    \includegraphics{Figures/Vstrip_to_Vstrip.pdf}
\end{center}
\caption{Combinatorial structure between two consecutive vertical stripe tile
    on the same horizontal row within a configuration of $\Omega_n'$.
    The length of a path from a vertical stripe tile to a vertical stripe tile
    is $n-1$, $n$ or $n+1$.}
\label{fig:horizontal_rauzy_graph_n=5_BGY_to_BGY}
\end{figure}
\end{proof}

\begin{lemma} \label{lem:return-time-to-junction-tiles}
    Let $n\geq1$ be an integer.
    For every valid configuration $c\in\Omega_n'$, 
    there exist two strictly increasing sequences $A,B:\Z\to\Z$ such that
    the following hold.
    \begin{enumerate}
        \item The set of positions of junction tiles in the configuration $c$
            is the Cartesian product $c^{-1}(J_n') = A(\Z)\times B(\Z)$.
        \item The distance between two consecutive occurrences of junction tiles in
            the same row is $n$ or $n+1$, that is, 
                $A(k+1)-A(k)\in\{n,n+1\}$
            for every $k\in\Z$.
        \item The distance between two consecutive occurrences of junction tiles in
            the same column is $n$ or $n+1$, that is, 
                $B(k+1)-B(k)\in\{n,n+1\}$
            for every $k\in\Z$.
    \end{enumerate}
\end{lemma}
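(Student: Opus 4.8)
The plan is to recognise the set of junction–tile positions in a configuration $c\in\Omega_n'$ as a combinatorial rectangular grid, and then to read the two gap conditions off Lemma~\ref{lem:distance-junction-in-a-row} and Lemma~\ref{lem:distance-Vstrip-in-a-row} applied along both axes. First I would record a \emph{row/column dichotomy}: for every tile $t\in\Tcal_n'$ the first coordinate of $\scleft(t)$ equals that of $\scright(t)$, this common digit being $0$ for junction and horizontal stripe tiles and $1$ for white and vertical stripe tiles; symmetrically the first coordinate of $\scbottom(t)$ equals that of $\sctop(t)$, being $0$ for junction and vertical stripe tiles and $1$ for white and horizontal stripe tiles. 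Hence in $c$ this first coordinate is constant along each row and along each column; call a row \emph{horizontal} when the digit is $0$ (so it carries only junction and horizontal stripe tiles) and \emph{white} otherwise, and likewise for columns. A tile lying simultaneously in a horizontal row and a vertical column must be a junction tile, and conversely every junction tile lies in a horizontal row and a vertical column. So, writing $\mathcal{C}$ for the set of abscissas of vertical columns and $\mathcal{R}$ for the set of ordinates of horizontal rows, one gets $c^{-1}(J_n')=\mathcal{C}\times\mathcal{R}$.

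Next I would produce the sequences $A$ and $B$ by a boundedness argument. Along any column the last coordinate of the bottom labels is a function $\Z\to\{0,1,\dots,n+1\}$ that increases by exactly $1$ at each white and vertical stripe tile; since its range has only $n+2$ values, any maximal run of white and vertical stripe tiles in a column has length at most $n+1$, so horizontal stripe or junction tiles occur in every column at arbitrarily large and arbitrarily small heights. In particular $\mathcal{R}\neq\varnothing$, and the symmetric statement along rows gives $\mathcal{C}\neq\varnothing$. Fixing $x_0\in\mathcal{C}$, the column $x_0$ carries no horizontal stripe tile, so its junction tiles are exactly its resetting tiles; they sit at precisely the heights of $\mathcal{R}$ and are cofinal and coinitial, so $\mathcal{R}$ is unbounded in both directions and hence equals $B(\Z)$ for a strictly increasing $B:\Z\to\Z$. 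Symmetrically $\mathcal{C}=A(\Z)$ for a strictly increasing $A:\Z\to\Z$, which is item (1).

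For items (2) and (3): by Lemma~\ref{lem:distance-junction-in-a-row} and its diagonal mirror, consecutive elements of $\mathcal{C}$ and of $\mathcal{R}$ differ by at least $n$, which gives the lower bounds. For the upper bounds, by the row/column dichotomy and the product structure, along any white row the vertical stripe tiles occupy exactly the abscissas in $\mathcal{C}$ and along any white column the horizontal stripe tiles occupy exactly the heights in $\mathcal{R}$; so a white row makes Lemma~\ref{lem:distance-Vstrip-in-a-row} bound consecutive elements of $\mathcal{C}$ by $n+1$, and a white column does the same for $\mathcal{R}$ by the symmetric version. Thus it suffices to show $\mathcal{R}\neq\Z$ and $\mathcal{C}\neq\Z$. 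If $\mathcal{R}=\Z$, every tile of $c$ is a junction or a horizontal stripe tile, a vertical column exists and carries only junction tiles, and two vertically adjacent junction tiles would force the last coordinate of a bottom label, which lies in $\{n,\nbar\}$, to equal that of a top label, which lies in $\{0,1\}$ --- impossible for $n\geq2$; symmetrically for $\mathcal{C}=\Z$.

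The main obstacle I anticipate is the residual case $n=1$, where this counting no longer closes: in $\Omega_1'$ one cannot rule out $\mathcal{R}=\Z$ by a mere parity count, and a gap of size $3$ between consecutive horizontal rows has to be excluded by hand. I expect this to be carried out by propagating the tiles such a gap forces: the two rows flanking the gap are horizontal and the two inner rows white, the horizontal rows turn out to be a single junction tile repeated, this forces each inner white row to be (essentially) a single vertical stripe tile repeated, and one of the two resulting inner rows would then contain two horizontally adjacent copies of a vertical stripe tile that is not self-compatible, a contradiction. Once $\mathcal{R}\neq\Z$ and $\mathcal{C}\neq\Z$ are in hand, combining Lemma~\ref{lem:distance-junction-in-a-row} and Lemma~\ref{lem:distance-Vstrip-in-a-row} with their mirrors gives $A(k+1)-A(k)\in\{n,n+1\}$ and $B(k+1)-B(k)\in\{n,n+1\}$, which are (2) and (3).
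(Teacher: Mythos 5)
Your proof is correct and follows essentially the same route as the paper: your row/column dichotomy on the first digit of the edge labels is exactly the paper's decomposition via the sets $E$ and $F$ giving $c^{-1}(J_n')=A(\Z)\times B(\Z)$, and the gap bounds come, as in the paper, from Lemma~\ref{lem:distance-junction-in-a-row} (lower bound) and Lemma~\ref{lem:distance-Vstrip-in-a-row} (upper bound) read along the two kinds of rows and columns. Your extra care about nonemptiness, cofinality and the degenerate cases $\mathcal{R}=\Z$, $\mathcal{C}=\Z$ — points the paper passes over silently — is sound, and the propagation you sketch for the residual case $n=1$ does close (below an all-junction row one is forced to an all-$\widehat{y_1^1}$ row and then an all-$\widehat{g_1^0}$ row, and $\widehat{g_1^0}$ is not horizontally self-compatible), so nothing essential is missing.
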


\begin{proof}
    (1)
    Let
    \begin{align*}
        E &= \{(\alpha_1\alpha_2\alpha_3,
                 \beta_1 \beta_2 \beta_3,
                \gamma_1\gamma_2\gamma_3,
                \delta_1\delta_2\delta_3)
                \in\Tcal_n'\mid \alpha_1=0\}\subset \Tcal_n',\\
        F &= \{(\alpha_1\alpha_2\alpha_3,
                 \beta_1 \beta_2 \beta_3,
                \gamma_1\gamma_2\gamma_3,
                \delta_1\delta_2\delta_3)
                \in\Tcal_n'\mid \beta_1=0\}\subset \Tcal_n'.
    \end{align*}
    Tiles in $E$ have zero as first coordinate of their right and left edge labels
    since $\alpha_1=\gamma_1$.
    Tiles in $F$ have zero as first coordinate of their top and bottom edge labels
    since $\beta_1=\delta_1$.
    Notice that we have $E\cup F \subset 
          Y_n\cup\widehat{Y_n}
     \cup G_n\cup\widehat{G_n} 
     \cup B_n'\cup\widehat{B_n'}\cup J_n'
     \cup A_n\cup\widehat{A_n}$ 
    and $E\cap F = J_n'$.
    Let $c\in\Omega_n'$ be a valid configuration.
    The positions of tiles from $E$ in $c$ are contiguous rows,
    that is, there exists $B\subset\Z$ such that $c^{-1}(E)=\Z\times B$.
    The positions of tiles from $F$ in $c$ are contiguous columns,
    that is, there exists $A\subset\Z$ such that $c^{-1}(F)=A\times\Z$.
    Therefore, the set of positions of junction tiles in $c$ is
    given by the Cartesian product of $A$ and $B$:
    \[
        c^{-1}(J_n')
        = c^{-1}(E \cap F)
        = c^{-1}(E) \cap c^{-1}(F)
        = (\Z\times B) \cap (A\times\Z)
        = A\times B.
    \]
    The fact that the sets $A$ and $B$ are the images of increasing maps $\Z\to\Z$
    follows from observations (2) and (3) proved below.

    (2) 
    From Lemma~\ref{lem:distance-junction-in-a-row},
    the distance between two consecutive occurrences of junction tiles in the
    same row is $n$, $n+1$ or $n+2$.
    From Lemma~\ref{lem:distance-Vstrip-in-a-row},
    the distance between two consecutive occurrences of a vertical stripe tile
    (blue, green, yellow or antigreen) in the same row is $n-1$, $n$ or $n+1$.
    Since vertical strips and junction tiles are vertically aligned,
    the difference between two consecutive elements of $A\subset\Z$ is $n$ or $n+1$.
    Also, if $a\in A$, then $a+n\in A$ or $a+n+1\in A$.
    Also $a-n\in A$ or $a-n-1\in A$.
    Thus, $A$ is the image of an increasing map $A:\Z\to\Z$
    such that $A(k+1)-A(k)\in\{n,n+1\}$ for every $k\in\Z$.

    (3)
    From the symmetry of the set $\Tcal_n'$ of tiles,
    the same observation holds for the distance between consecutive junction tiles
    in the same column.
\end{proof}

Lemma~\ref{lem:return-time-to-junction-tiles} 
means that we can subdivide valid configurations in $\Omega_n'$ by rectangular
patterns containing a unique junction tiles at their bottom left corner;
see Figure~\ref{fig:return-blocks}.

\begin{figure}[h]
    \begin{tabular}{cc}
    \includegraphics{SAGEOUTPUT/W4_15x15tiling_extracted_return_block3.pdf} \quad&\quad
    \includegraphics{SAGEOUTPUT/W4_15x15tiling_extracted_return_block4.pdf} \\[5mm]
    \includegraphics{SAGEOUTPUT/W4_15x15tiling_extracted_return_block1.pdf} \quad&\quad
    \includegraphics{SAGEOUTPUT/W4_15x15tiling_extracted_return_block2.pdf}
    \end{tabular}
    \caption{Return blocks appearing in Figure~\ref{fig:T4'-15x15}.
    Each return block contains a unique junction tile at its bottom left corner.}
    \label{fig:return-blocks}
\end{figure}

\begin{proposition}\label{prop:return-blocks}
    Every configuration in $\Omega_n'$ can be divided uniquely into rectangular blocks
    of sizes
    $n\times n$, $n\times \nbar$, $\nbar\times n$ and $\nbar\times \nbar$ with
    a unique junction tile at their bottom left corner.
\end{proposition}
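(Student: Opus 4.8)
The plan is to derive both the existence and the uniqueness of the decomposition directly from Lemma~\ref{lem:return-time-to-junction-tiles}. Fix $c\in\Omega_n'$ and let $A,B\colon\Z\to\Z$ be the strictly increasing sequences provided by that lemma, so that $c^{-1}(J_n')=A(\Z)\times B(\Z)$ and consecutive gaps of $A$ and of $B$ lie in $\{n,n+1\}=\{n,\nbar\}$. For existence, put $R_{k,\ell}=\ZZrange{A(k)}{A(k{+}1)-1}\times\ZZrange{B(\ell)}{B(\ell{+}1)-1}$ for every $(k,\ell)\in\Z^2$. Since the gaps lie in $\{n,\nbar\}$, in particular they are positive and bounded, so $A(k),B(k)\to\pm\infty$ as $k\to\pm\infty$; hence the half-open intervals $\ZZrange{A(k)}{A(k{+}1)-1}$ partition $\Z$, likewise for $B$, and therefore the blocks $R_{k,\ell}$ partition $\Z^2$. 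Each $R_{k,\ell}$ has width $A(k{+}1)-A(k)\in\{n,\nbar\}$ and height $B(\ell{+}1)-B(\ell)\in\{n,\nbar\}$, so it is of one of the four prescribed sizes, and since $A(\Z)\cap\ZZrange{A(k)}{A(k{+}1)-1}=\{A(k)\}$ and similarly for $B$, the only junction tile in $R_{k,\ell}$ is the one at its bottom-left corner $(A(k),B(\ell))$. This settles existence.

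For uniqueness, let $\{S_i\}$ be any division of $\Z^2$ into rectangular blocks of the four sizes, each containing a unique junction tile, located at its bottom-left corner. First, the map sending a block to its bottom-left corner is a bijection onto $c^{-1}(J_n')=A(\Z)\times B(\Z)$: it is injective because the blocks are pairwise disjoint, and every junction tile lies in a unique block and, being that block's only junction tile, must coincide with its bottom-left corner. Write $S_{k,\ell}$ for the block with bottom-left corner $(A(k),B(\ell))$, of width $w_{k,\ell}$ and height $h_{k,\ell}$. I would then prove $w_{k,\ell}=A(k{+}1)-A(k)$ and, symmetrically, $h_{k,\ell}=B(\ell{+}1)-B(\ell)$, which forces $S_{k,\ell}=R_{k,\ell}$ and hence uniqueness. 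If $w_{k,\ell}>A(k{+}1)-A(k)$, then $S_{k,\ell}$ would also contain the junction tile at $(A(k{+}1),B(\ell))$, contradicting the one-junction-tile property. If $w_{k,\ell}<A(k{+}1)-A(k)$, then the cell immediately to the right of $S_{k,\ell}$ in row $B(\ell)$ has column index strictly between $A(k)$ and $A(k{+}1)$, hence not in $A(\Z)$; the block covering it must therefore begin in a strictly earlier column, and a short counting argument, using the size upper bounds $w,h\le n+1$ together with the gap lower bounds $\geq n$, forces that block to contain a junction tile distinct from its own corner — again a contradiction. By symmetry $h_{k,\ell}=B(\ell{+}1)-B(\ell)$.

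The crux is the ``$w_{k,\ell}$ too small'' case. One separates according to whether the offending neighbouring block starts in the same column $A(k)$ or in an earlier column $A(k')$ with $k'<k$: in the first case, to reach row $B(\ell)$ its height must exceed $B(\ell)-B(\ell{-}1)\geq n$ and hence equal $n{+}1$, which makes it tall enough to engulf the junction tile $(A(k),B(\ell))$ on top of its own corner; in the second case the width needed exceeds $2n$, impossible since widths are at most $n{+}1$ and $n\geq1$. The genuinely delicate instance is $n=1$, where a block of width $\nbar=2$ is just large enough to bridge an $A$-gap of length $n=1$; it is precisely the one-junction-tile-per-block constraint that rules this out, and the argument should be phrased so that it applies uniformly to all $n\geq1$.
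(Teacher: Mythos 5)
Your construction is exactly the paper's: both proofs take the increasing sequences $A,B$ from Lemma~\ref{lem:return-time-to-junction-tiles} and cut the configuration along consecutive junction columns and rows, so the existence part coincides with the published argument. The only difference is that you make the uniqueness of the decomposition explicit (the bijection between blocks and junction positions, then the too-wide/too-narrow dichotomy using the gap bounds $\{n,n+1\}$), which the paper leaves implicit; that extra argument is correct as stated.
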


\begin{proof}
    Let $c\in\Omega_n'$ be a configuration.
    Let $A,B:\Z\to\Z$ be the two increasing maps
    from Lemma~\ref{lem:return-time-to-junction-tiles} 
    such that $c^{-1}(J_n')=A(\Z)\times B(\Z)$.
    For every $\bl=(\ell_1,\ell_2)\in\Z^2$, 
    the pattern appearing in $c$ at support 
    $[A(\ell_1),A(\ell_1+1)-1]\times [B(\ell_2),B(\ell_2+1)-1]$
    is a rectangular pattern containing a unique junction tile at its bottom left corner.
\end{proof}

We call such a rectangular pattern 
described in Proposition~\ref{prop:return-blocks}
a \defn{return block} (to a junction tile), see Figure~\ref{fig:return-block-to-junction-tile},
following the terminology of return words in combinatorics on words
\cite{MR1489074,MR1808196}.
While the classical notion of return word is to a single pattern,
here the notion of return block is to a subset of tiles, namely, the junction tiles.
From Proposition~\ref{prop:return-blocks},
the \defn{width} (and \defn{height}) of these blocks is $n$ or $n+1$.
On the right of the junction tile within a return block
is the \defn{bottom row} where horizontal blue, green, yellow or antigreen tiles appear.
Similarly, above the junction tile within a return block,
is the \defn{left column} where vertical blue, green, yellow or antigreen tiles appear.

\begin{figure}[h]
\begin{center}
    \begin{tikzpicture}[scale=1.2]

    \tileJunctionBackground{0}{0}{gray!50}{gray!50}{gray!50}{gray!50}

    \draw[draw=none,fill=\ourColorBlue] (\dt,1) rectangle (1-\dt,2.5);
    \tilelabelinsideVgreen {0}{2.5}{}{}{}{};
    \draw[draw=none,fill=\ourColorYellow] (\dt,3.5) rectangle (1-\dt,5);

    \draw[draw=none,fill=\ourColorBlue] (1,\dt) rectangle (3,1-\dt);
    \tilelabelinsideHgreen {3}{0}{}{}{}{};
    \draw[draw=none,fill=\ourColorYellow] (4,\dt) rectangle (5,1-\dt);

    \draw (0,0) rectangle (5,5);
    \draw (1,0) -- (1,5);
    \draw (0,1) -- (5,1);
        \node[align=center,text width=3cm] at (3,3) {\textbf{white}\\\textbf{tiles}};
        \node[align=center,text width=3cm] at (3.5,.5) {\textbf{bottom row}};
        \node[rotate=-90,align=center,text width=3cm] at (.5,3) {\textbf{left column}};
    \draw [decorate,decoration={brace,amplitude=5pt,mirror,raise=4ex}]
        (0,0) -- (5,0) node[midway,yshift=-3em]{width $W$};
    \draw [decorate,decoration={brace,amplitude=5pt,raise=4ex}]
        (0,0) -- (0,5) node[midway,xshift=-5em]{height $H$};
    \node[rotate=-90,above] at (1,4.3) {$112\cdot\cdot 112$};
    \node[rotate=-90,above] at (1,2.5) {$111\cdots 111$};
    \node[rotate=-90,below] at (0,1.8) {$11n\cdot\cdot 11n$};
    \node[rotate=-90,below] at (0,3.8) {$11\nbar\cdots\cdots 11\nbar$};
    \node[above] at (4.5,1) {$112$};
    \node[above] at (3.5,1) {$111$};
    \node[above] at (2,1)   {$111\cdots111$};
    \node[below] at (2,0)   {$11n\cdots11n$};
    \node[below] at (4,0)   {$11\nbar\cdots11\nbar$};

\end{tikzpicture}
\end{center}
    \caption{A return block is split into four disjoint parts: the junction
    tile, the left column, the bottom row and the white tiles.
    Both its width $W$ and its height $H$ take values in the set $\{n,n+1\}$.
    }
    \label{fig:return-block-to-junction-tile}
\end{figure}

We may observe that the sequences of bottom labels of a return block made of
tiles in $\Tcal_4'$ appearing completely in Figure~\ref{fig:T4'-15x15} and in
Figure~\ref{fig:return-blocks} are in the set
\begin{equation}\label{eq:not_in_taun_Vn}
    \left\{
    \begin{array}{l}
    004\cdot 114\cdot 115\cdot 114\cdot 115,\\
    004\cdot 115\cdot 114\cdot 115\cdot 115,\\
    015\cdot 114\cdot 115\cdot 115\cdot 115,\\
    014\cdot 114\cdot 115\cdot 115,\\
    014\cdot 115\cdot 115\cdot 115,\\
    015\cdot 115\cdot 115\cdot 115
    \end{array}
    \right\}
\not\subset \tau_n(V_n).
\end{equation}
In particular, $004\cdot 114\cdot 115\cdot 114\cdot 115$
does not belong to the image of $\tau_n$ when $n=4$.
But the sequence of bottom labels of a return block
has a particular structure for configurations in $\Omega_n$.
This is the subject of the next section.

\subsection{Return blocks in the Wang shift $\Omega_n$}

When considering configurations in $\Omega_n$ instead of $\Omega_n'$,
there are no antigreen tiles in the row between two consecutive junction tiles.
Thus,
Figure~\ref{fig:horizontal_rauzy_graph_n=5_junction_to_junction} simplifies
to Figure~\ref{fig:horizontal_rauzy_graph_n=5_junction_to_junction_inOmegan}.
In particular, in the bottom row of a return block within a
configuration in $\Omega_n$, the horizontal blue, green and yellow stripes
appear in this order (when they appear). The same observation holds for the
left column of a return block ordered from bottom to top.

\begin{figure}[h]
\begin{center}
    \includegraphics{Figures/junction_to_junction_inOmegan.pdf}
\end{center}
\caption{Combinatorial structure between two consecutive junction tiles on the
    same horizontal row within a configuration of $\Omega_n$.
    The nodes of the graph are placed such that any two tiles appearing in the same
    column have the same last digit for its left or right labels.}
\label{fig:horizontal_rauzy_graph_n=5_junction_to_junction_inOmegan}
\end{figure}

Surprisingly, when the tiles are restricted to the set $\Tcal_n$, the boundary of the return blocks
can be decoded using the map $\tau_n$ defined in Section~\ref{sec:substitution}.

\begin{lemma}\label{lem:set-of-boundary-words}
    Let $r$ be a return block appearing in a configuration
    $c\in\Omega_n$.
    The sequences of bottom labels of tiles in the bottom row 
    of the return block $r$ (from left to right) belong to the set
    \begin{align*}
        \tau_n(V_n)=\;&    \{01\nbar\cdot (11\nbar)^{i} \mid n-1\leq i\leq n\}\\
        &\cup\{01n    \cdot (11n)^{i}(11\nbar)^{j} \mid i,j\geq0, n-1\leq i+j\leq n\}\\
        &\cup\{00n    \cdot (11n)^{i}(11\nbar)^{j} \mid i,j\geq0, i+j=n \}.
    \end{align*}
\end{lemma}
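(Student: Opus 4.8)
The plan is to combine the coarse structure of return blocks (Proposition~\ref{prop:return-blocks}) with the refined description, valid in $\Omega_n$, of the horizontal strip joining two consecutive junction tiles (Figure~\ref{fig:horizontal_rauzy_graph_n=5_junction_to_junction_inOmegan}), and to read off the bottom word by tracking the last coordinate of edge labels. As a preliminary I would record the explicit shape of the target set: unwinding the five cases of \eqref{eq:tau_n_five_cases} shows that $\tau_n(V_n)$ is exactly the union of the three displayed families — $\tau_n(000),\tau_n(111)$ give $01\nbar(11\nbar)^i$ with $i\in\{n-1,n\}$; $\tau_n(00\ibar)$ (for $0\le i\le n$) and $\tau_n(11\ibar)$ (for $1\le i\le n$) give $01n(11n)^i(11\nbar)^j$ with $n-1\le i+j\le n$; and $\tau_n(01\ibar)$ (for $0\le i\le n$) gives $00n(11n)^i(11\nbar)^j$ with $i+j=n$. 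This is a one-line check, and it also settles the ``$=$'' in the statement.

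Now let $r$ be a return block of $c\in\Omega_n$. By Proposition~\ref{prop:return-blocks} its width $W$ is $n$ or $n+1$, and its bottom row is the junction tile at the bottom-left corner followed by $W-1$ further tiles. The first of these has left label equal to the right label of the junction tile, which starts with $0$, so it is neither a white nor a vertical stripe tile (left labels beginning with $11$) nor — being inside the block — a junction tile; hence it is a horizontal stripe tile. Iterating and using that $\Tcal_n$ contains no antigreen tile, one obtains the pattern ``blue$^{*}$, at most one green, yellow$^{*}$'' of Figure~\ref{fig:horizontal_rauzy_graph_n=5_junction_to_junction_inOmegan}: among the horizontal stripe tiles of $\Tcal_n$, green is the only one whose left label begins with $00$ and right label with $01$, and yellow is the only one whose left label begins with $01$. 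Since blue tiles carry bottom label $11n$ and green and yellow tiles carry bottom label $11\nbar$, the bottom word of $r$ equals $w_0\cdot(11n)^i\cdot(11\nbar)^j$ with $i,j\ge0$ and $i+j=W-1$, where $w_0\in\{00n,01n,01\nbar\}$ is the bottom label of the junction tile (the only three bottom labels occurring among the seven tiles of $J_n$). When $w_0=01\nbar$ the junction tile's right label is $011$, so the first stripe tile has left label beginning with $01$, hence is yellow and so are all the following ones; then $i=0$.

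It remains to determine $W-1$. Each horizontal stripe tile increases by one the last coordinate of its label from left to right (blue $00i\mapsto 00\ibar$, green $00i\mapsto 01\ibar$, yellow $01i\mapsto 01\ibar$), while the right label of the junction tile is $000$, $001$ or $011$, with last coordinate $d_0\in\{0,1\}$, and $d_0=0$ exactly when $w_0=00n$. Hence the rightmost tile of the bottom row has right label with last coordinate $d_0+(W-1)$; since this right label is the left label of the next junction tile — one of $00n,01n,01\nbar$ — we get $d_0+(W-1)\in\{n,n+1\}$, so $W-1\in\{n-d_0,\,n+1-d_0\}$. Intersecting with $W-1\in\{n-1,n\}$ from Proposition~\ref{prop:return-blocks} gives $W-1=n$ when $w_0=00n$ and $W-1\in\{n-1,n\}$ when $w_0\in\{01n,01\nbar\}$. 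Together with the previous paragraph, the bottom word of $r$ is $00n(11n)^i(11\nbar)^j$ with $i+j=n$, or $01n(11n)^i(11\nbar)^j$ with $i+j\in\{n-1,n\}$, or $01\nbar(11\nbar)^j$ with $j\in\{n-1,n\}$; each lies in the three families computed above, so the bottom word belongs to $\tau_n(V_n)$.

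The only delicate point I foresee is this last-coordinate bookkeeping — getting the ``$+1$ per horizontal stripe tile'' count right, correctly listing the admissible last coordinates of junction left and right labels, and combining these with the width bound so that the degenerate possibility $W-1=n-1$ for $w_0=00n$ is excluded (and the corner case $W=1$, which can occur only for $n=1$ with $w_0\in\{01n,01\nbar\}$, is absorbed by the same formula). The rest follows directly from Figure~\ref{fig:horizontal_rauzy_graph_n=5_junction_to_junction_inOmegan} and Proposition~\ref{prop:return-blocks}.
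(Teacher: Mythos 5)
Your proposal is correct and follows essentially the same route as the paper: it combines the junction-plus-horizontal-stripe structure of the bottom row (the content of Lemma~\ref{lem:distance-junction-in-a-row} and Figure~\ref{fig:horizontal_rauzy_graph_n=5_junction_to_junction_inOmegan}, which you re-derive directly from the tile labels), the width bound from Proposition~\ref{prop:return-blocks}, and the two extra constraints (length $n+1$ when the word starts with $00n$, no $11n$ after $01\nbar$) before matching with $\tau_n(V_n)$. Your last-digit bookkeeping $d_0+(W-1)\in\{n,n+1\}$ is just an explicit rephrasing of the path-length argument the paper cites, so the two proofs differ only in presentation.
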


\begin{proof}
    From Lemma~\ref{lem:distance-junction-in-a-row},
    the sequence of bottom labels of the tiles between two consecutive junction tiles
    (including the left junction tile but not the right one)
    belongs to $\{00n,01n,01\nbar\} \cdot \{11n,11\nbar\}^*$.

    In the bottom row of every return block within a configuration in
    $\Omega_n$, 
    there is no antigreen stripe tile and
    the horizontal blue, green and yellow stripe tiles appear in this
    order: blue $\to$ green $\to$ yellow.
    Since the bottom label of a blue horizontal stripe tile is $11n$
    and
    the bottom label of a green or yellow horizontal stripe tile is $11\nbar$,
    the sequence of bottom labels of tiles in a horizontal row starting from a
    junction tile and ending before the next occurrence of a junction tile is in
    the set
    \[
        \{00n,01n,01\nbar\} \cdot (11n)^*(11\nbar)^*.
    \]
    Some more restrictions are imposed:
    \begin{itemize}
        \item If it starts with $00n$, the length of the sequence is $n+1$.
              Indeed, if the bottom label of a junction tile is $00n$,
              then its right label is $000$ with last digit $0$.
            From Figure~\ref{fig:horizontal_rauzy_graph_n=5_junction_to_junction_inOmegan},
            the width of the return block containing this junction tile must be $n+1$ or $n+2$.
            A return block of width $W=n+2$ is impossible from Proposition~\ref{prop:return-blocks}.
            Thus, the width of the return bock is $W=n+1$.

        \item
    Also, if it starts with $01\nbar$, the next label is not $11n$ and has to be $11\nbar$.
    Indeed $01\nbar$ is the bottom label of a junction tile with right label $011$,
    and $011$ must be the left label of a yellow horizontal stripe tile with
    bottom label $11\nbar$; 
    see Figure~\ref{fig:horizontal_rauzy_graph_n=5_junction_to_junction_inOmegan}.
    \end{itemize}
    Restricting the sequences to those of lengths $n$ or $\nbar$, 
    we have that the sequence of bottom labels of tiles in the bottom row 
    of the return block $r$ (from left to right) belongs to the set
    \begingroup
    \allowdisplaybreaks
    \begin{align*}
        &    \{01\nbar\cdot (11\nbar)^{i} \mid n-1\leq i\leq n\}\\
        &\cup\{01n    \cdot (11n)^{i}(11\nbar)^{j} \mid i,j\geq0, n-1\leq i+j\leq n\}\\
        &\cup\{00n    \cdot (11n)^{i}(11\nbar)^{j} \mid i,j\geq0, i+j=n \}\\
       =\;&\{\tau_n(111), \tau_n(000)\}\\
          &\cup\{\tau_n(00i)\mid 1\leq i \leq n+1\}
           \cup\{\tau_n(11i)\mid 2\leq i \leq n+1\}\\
          &\cup\{\tau_n(01i)\mid 1\leq i \leq n+1\}\\
        =\;&\tau(V_n).\qedhere
    \end{align*}
    \endgroup
\end{proof}

\subsection{Desubstitution $\Omega_n\leftarrow\Omega'_n$}

In this section, we prove that every valid configuration with the tiles
$\Tcal_n$ can be desubstituted into a valid configuration over $\Tcal'_n$
using the substitution $\omega_n'$.
It is based on the following lemma which relates return blocks in $\Omega_n$
to tiles of $\Tcal_n'$.

\begin{lemma}\label{lem:return-word-description}
    Let $y\in\Omega_n$ be a configuration.
    For every return block $r$
    appearing in $y$, there exists a unique tile
    $t=\raisebox{-7mm}{
    \begin{tikzpicture}[auto,scale=.5]
        \tikzstyle{every node}=[font=\footnotesize]
        \tile{white}{0}{0}{\alpha}{\beta}{\gamma}{\delta}
    \end{tikzpicture}}
    \in\Tcal'_n$
    such that 
    $r=\omega_n'(t)$
    with external labels
    $\raisebox{-7mm}{
    \begin{tikzpicture}[auto,scale=.5]
        \tikzstyle{every node}=[font=\footnotesize]
        \tile{white}{0}{0}{\tau_n(\alpha)}{\tau_n(\beta)}{\tau_n(\gamma)}{\tau_n(\delta)}
    \end{tikzpicture}}$.
\end{lemma}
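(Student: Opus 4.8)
The plan is to take an arbitrary return block $r$ in a configuration $y\in\Omega_n$ and read off its four boundary label sequences, then recognize those sequences as images under $\tau_n$ of the labels of a unique tile in $\Tcal_n'$, and finally invoke the rectangular-pattern machinery already developed to conclude $r=\omega_n'(t)$. First I would recall from Proposition~\ref{prop:return-blocks} that $r$ is a rectangular pattern of width $W\in\{n,\nbar\}$ and height $H\in\{n,\nbar\}$ with a unique junction tile at its bottom-left corner. By Lemma~\ref{lem:set-of-boundary-words}, the sequence $\delta'$ of bottom labels of $r$ lies in $\tau_n(V_n)$, so since $\tau_n$ is injective (shown in the lemma after \eqref{eq:tau_n_five_cases}) there is a unique $\delta\in V_n$ with $\tau_n(\delta)=\delta'$; applying the symmetric version of Lemma~\ref{lem:set-of-boundary-words} to the left column (using $\Tcal_n=\widehat{\Tcal_n}$), there is likewise a unique $\gamma\in V_n$ with $\tau_n(\gamma)$ equal to the sequence of left labels of $r$.

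Next I would determine the right and top boundary label sequences $R,T$ of $r$. Here the point is that $r$, being a valid rectangular pattern over $\Tcal_n\subset\Tcal_n'$ whose bottom labels are $\tau_n(\delta)$ and whose left labels are $\tau_n(\gamma)$, falls under the scope of Proposition~\ref{prop:image-of-a-tile}: the existence of such a pattern forces $(\gamma,\delta)\in P$, i.e.\ condition~\eqref{eq:condition_return_block}. I then need to rule out the possibility $(\gamma,\delta)\in P\setminus Q$. This is exactly where Lemma~\ref{lem:other-cases-exists-pattern} does the work: in each of those cases the resulting right or top boundary sequence begins with $00n$ or $00\nbar$ and has the wrong length to be an image of $\tau_n$ — but the boundary of a return block over $\Tcal_n$ cannot contain such a sequence on its top or right, because (as one checks from the list in Lemma~\ref{lem:set-of-boundary-words}, applied now to the return block immediately to the right, resp.\ above, which shares that boundary) every boundary sequence of a return block in $\Omega_n$ lies in $\tau_n(V_n)$. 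Hence $(\gamma,\delta)\in Q$, so there exist $\alpha,\beta\in V_n$ with $\raisebox{-6mm}{\begin{tikzpicture}[auto,scale=.5]\tikzstyle{every node}=[font=\footnotesize]\tile{white}{0}{0}{\alpha}{\beta}{\gamma}{\delta}\end{tikzpicture}}\in\Tcal_n'$.

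It then remains to identify $r$ with $\omega_n'(t)$ for $t$ this tile. By Lemma~\ref{lem:Tcaln'-there-exists-rect-pattern} there is a unique valid rectangular pattern over $\Tcal_n'$ with boundary labels $\tau_n(\alpha),\tau_n(\beta),\tau_n(\gamma),\tau_n(\delta)$, and this pattern is by definition $\omega_n'(t)$; in particular its right and top labels are $\tau_n(\alpha)$ and $\tau_n(\beta)$. On the other hand $r$ itself is a valid rectangular pattern over $\Tcal_n'$ with bottom labels $\tau_n(\delta)$ and left labels $\tau_n(\gamma)$, and with the same shape (the shape is determined by $|\tau_n(\gamma)|=\nbar-\gamma_0$ and $|\tau_n(\delta)|=\nbar-\delta_0$). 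By Lemma~\ref{lem:unicity-rect-pattern} together with the SW-determinism of $\Tcal_n'$ from Lemma~\ref{lem:SW-and-NE-deterministic}, a valid rectangular pattern over $\Tcal_n'$ is determined by its shape, bottom labels and left labels; hence $r=\omega_n'(t)$, and in particular the right and top labels of $r$ are $\tau_n(\alpha)$ and $\tau_n(\beta)$ as claimed. Uniqueness of $t$ follows because $\tau_n$ is injective, so $(\alpha,\beta,\gamma,\delta)$ is uniquely recovered from the four boundary sequences of $r$, and a Wang tile is determined by its four labels. The main obstacle I anticipate is the bookkeeping in the middle step — namely making airtight the claim that the top and right boundary sequences of a return block in $\Omega_n$ cannot be one of the "exceptional" words of Lemma~\ref{lem:other-cases-exists-pattern}; this needs the observation that those sequences are shared with the neighbouring return block (which exists by Lemma~\ref{lem:return-time-to-junction-tiles}) and therefore must themselves lie in $\tau_n(V_n)$ by Lemma~\ref{lem:set-of-boundary-words}.
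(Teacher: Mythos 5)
Your proposal is correct and takes essentially the same route as the paper: you show the boundary words of the return block lie in $\tau_n(V_n)$ (Lemma~\ref{lem:set-of-boundary-words}, with the top and right sides handled via the neighbouring return blocks), recognize the corresponding tile of $\Tcal_n'$, and identify $r$ with $\omega_n'(t)$ by uniqueness of the rectangular pattern. The only difference is presentational: the paper invokes Proposition~\ref{prop:there-exists-rect-pattern-iff} directly, whereas you re-derive its forward direction inline from Proposition~\ref{prop:image-of-a-tile} and Lemma~\ref{lem:other-cases-exists-pattern} and make the final identification explicit via Lemma~\ref{lem:unicity-rect-pattern} and the SW-determinism of Lemma~\ref{lem:SW-and-NE-deterministic}.
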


\begin{proof}
    Let $y\in\Omega_n$ be a configuration.
    From Proposition~\ref{prop:return-blocks},
    the configuration $y$
    can be divided into return blocks,
    that is,
    rectangular blocks of sizes
    $n\times n$, $n\times \nbar$, $\nbar\times n$ or $\nbar\times \nbar$ with
    a unique junction tiles at the bottom left corner;
    see Figure~\ref{fig:return-block-to-junction-tile}.

    Let $r$ be a return block appearing in $y$.
    From Lemma~\ref{lem:set-of-boundary-words},
    the sequences of bottom labels of tiles in the bottom row 
    of the return block $r$ (from left to right) belong to the set
    $\tau_n(V_n)$.
    By symmetry and since $r$ is surrounded by returns blocks, 
    this also holds for the right, top and left labels of $r$.
    Therefore, let $\alpha,\beta,\gamma,\delta\in V_n$
    such that
    the right, top, left and bottom labels
    of the return block $r$ are respectively
    $\tau_n(\alpha)$, $\tau_n(\beta)$, $\tau_n(\gamma)$ and $\tau_n(\delta)$.
    From Proposition~\ref{prop:there-exists-rect-pattern-iff},
    $t=\raisebox{-7mm}{
    \begin{tikzpicture}[auto,scale=.5]
        \tikzstyle{every node}=[font=\footnotesize]
        \tile{white}{0}{0}{\alpha}{\beta}{\gamma}{\delta}
    \end{tikzpicture}}
    \in\Tcal'_n$.
    From Lemma~\ref{lem:Tcaln'-there-exists-rect-pattern},
    there exists a unique rectangular pattern with these external labels.
    Thus, $r=\omega_n'(t)$.
\end{proof}

\begin{proposition}\label{prop:exists-unique-preimage}
    Let $n\geq1$ be an integer.
    For every configuration $y\in\Omega_n$,
    there exist a unique configuration $x\in\Omega'_n$
    and a unique vector $\bk\in\{0,1,\dots,n\}^2$
    such that
    $y=\sigma^\bk(\omega_n'(x))$.
\end{proposition}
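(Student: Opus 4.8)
The plan is to extract the desubstituted configuration directly from the return-block decomposition of $y$, and then to exploit the rigidity of that decomposition for uniqueness. Since $\Omega_n\subseteq\Omega_n'$, Proposition~\ref{prop:return-blocks} applies to $y$: it splits uniquely into return blocks, and by Lemma~\ref{lem:return-time-to-junction-tiles} the junction tiles of $y$ occupy a product grid $A(\Z)\times B(\Z)$ whose consecutive steps in each direction lie in $\{n,n+1\}$. First I would normalise the indexing so that $A(0)\leq0<A(1)$ and $B(0)\leq0<B(1)$, which pins down $A$ and $B$ uniquely; for each $\bl=(\ell_1,\ell_2)\in\Z^2$ write $r_\bl$ for the return block with support $\ZZrange{A(\ell_1)}{A(\ell_1{+}1){-}1}\times\ZZrange{B(\ell_2)}{B(\ell_2{+}1){-}1}$. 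By Lemma~\ref{lem:return-word-description}, each $r_\bl$ equals $\omega_n'(x_\bl)$ for a unique tile $x_\bl\in\Tcal_n'$ whose $\scright,\sctop,\scleft,\scbottom$ labels become, under $\tau_n$, the corresponding label sequences of $r_\bl$; this produces a map $x=(x_\bl)\colon\Z^2\to\Tcal_n'$.

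Next I would check that $x\in\Omega_n'$. Two horizontally adjacent return blocks $r_\bl$ and $r_{\bl+\be_1}$ of $y$ share their common vertical edge, so the right label sequence of $r_\bl$ equals the left label sequence of $r_{\bl+\be_1}$; hence $\tau_n(\scright(x_\bl))=\tau_n(\scleft(x_{\bl+\be_1}))$ and, by injectivity of $\tau_n$ (Section~\ref{sec:substitution}), $\scright(x_\bl)=\scleft(x_{\bl+\be_1})$. The same argument in the $\be_2$ direction gives $x\in\Omega_n'$. I would then set $\bk=(-A(0),-B(0))$: from $0<A(1)\leq A(0)+(n+1)$ we get $-n\leq A(0)\leq0$, and similarly for $B$, so $\bk\in\{0,1,\dots,n\}^2$. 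To verify $y=\sigma^\bk(\omega_n'(x))$, I would observe that the width of $\omega_n'(x_{(\ell_1,\cdot)})$ equals the width $A(\ell_1{+}1)-A(\ell_1)$ of $r_\bl$, independently of $\ell_2$ (and similarly for heights), so that with the placement convention of Section~\ref{sec:prelim-2dsubstitutions} the block $\omega_n'(x_\bl)$ inside $w:=\omega_n'(x)$ occupies the support $\ZZrange{A(\ell_1){-}A(0)}{A(\ell_1{+}1){-}A(0){-}1}\times\ZZrange{B(\ell_2){-}B(0)}{B(\ell_2{+}1){-}B(0){-}1}$; translating by $\bk$ carries it exactly onto the support of $r_\bl$, where $\sigma^\bk(w)$ and $y$ agree by construction, and since the supports of the $r_\bl$ tile $\Z^2$ this yields $y=\sigma^\bk(\omega_n'(x))$. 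Since $A(0)\leq0<A(1)$ and $B(0)\leq0<B(1)$, the origin lies in the support of $\omega_n'(x_\zero)$, so this is the centered $\omega_n'$-representation of $y$.

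For uniqueness I would take any representation $y=\sigma^{\bk^*}(\omega_n'(x^*))$ with $x^*\in\Omega_n'$, $\bk^*\in\{0,\dots,n\}^2$, and the origin inside $\omega_n'(x^*_\zero)$. The structural fact I would rely on is that for every $t\in\Tcal_n'$ the pattern $\omega_n'(t)$ contains exactly one junction tile, placed at its bottom-left corner, with horizontal stripe tiles along the bottom row, vertical stripe tiles along the left column, and white tiles elsewhere — precisely the shape established in the proof of Lemma~\ref{lem:Tcaln'-there-exists-rect-pattern}. Hence the junction tiles of $\omega_n'(x^*)$ sit exactly at the corners of the blocks $\omega_n'(x^*_\bl)$, so the partition of $\omega_n'(x^*)$ into these blocks is its return-block decomposition; applying $\sigma^{\bk^*}$ and using the uniqueness in Proposition~\ref{prop:return-blocks}, there is $\boldsymbol{c}\in\Z^2$ with (the translate of) $\omega_n'(x^*_\bl)=r_{\bl+\boldsymbol{c}}=\omega_n'(x_{\bl+\boldsymbol{c}})$ as a pattern, for all $\bl$. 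Now $\omega_n'$ is injective on $\Tcal_n'$ (its image determines the four $\tau_n$-images of the boundary labels, hence by injectivity of $\tau_n$ the four labels, hence the tile), so $x^*_\bl=x_{\bl+\boldsymbol{c}}$, i.e.\ $x^*=\sigma^{\boldsymbol{c}}(x)$. Matching the support of $\omega_n'(x^*_\zero)$ in $y$ with that of $r_{\boldsymbol{c}}$ yields $\bk^*=(-A(c_1),-B(c_2))$ with $A(c_1)\leq0<A(c_1{+}1)$ and $B(c_2)\leq0<B(c_2{+}1)$; as $A$ and $B$ are strictly increasing with $A(0)\leq0<A(1)$ and $B(0)\leq0<B(1)$, this forces $\boldsymbol{c}=\zero$, hence $x^*=x$ and $\bk^*=\bk$.

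The step I expect to be the main obstacle is the verification $y=\sigma^\bk(\omega_n'(x))$: it is where one must reconcile the limit-formula definition of $\omega_n'$ acting on configurations with the concrete image ``$\Z^2$ tiled by the rectangular blocks $\omega_n'(x_\bl)$'', and keep careful track of where the origin falls so that $\bk$ lands in $\{0,\dots,n\}^2$ and is uniquely determined — together with the analogous bookkeeping in the uniqueness step that makes the index shift $\boldsymbol{c}$ vanish.
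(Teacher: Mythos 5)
Your construction is the same as the paper's: decompose $y$ into return blocks (Proposition~\ref{prop:return-blocks}, Lemma~\ref{lem:return-time-to-junction-tiles}), convert each block into a tile of $\Tcal_n'$ via Lemma~\ref{lem:return-word-description}, normalise the junction grid at the origin, and read off $\bk$. Your bookkeeping is in fact tighter than the paper's: checking $x\in\Omega_n'$ through injectivity of $\tau_n$, and choosing the normalisation $A(0)\leq 0<A(1)$ with $\bk=(-A(0),-B(0))$, which really does land in $\{0,\dots,n\}^2$ (the paper's own choice $\bk=(-A(-1),-B(-1))$ with its normalisation can equal $n+1$). You also supply an explicit uniqueness argument (one junction tile per image, placed at the bottom-left corner, plus uniqueness of the return-block decomposition and injectivity of $\omega_n'$ on tiles), which the paper leaves implicit; that argument is sound.

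The one point to flag is that your uniqueness step proves uniqueness of the \emph{centered} representation: you assume the origin lies inside $\omega_n'(x^*_\zero)$, which is strictly stronger than the stated hypothesis $\bk^*\in\{0,\dots,n\}^2$. This is not a cosmetic difference: if $0\in A(\Z)$ and the return block immediately to the left of the origin has width exactly $n$, then besides your pair $(x,\bk)$ with $k_1=0$, the pair $\bigl(\sigma^{-\be_1}(x),\,\bk+(n,0)\bigr)$ also satisfies $y=\sigma^{\bk^*}(\omega_n'(x^*))$ with $\bk^*\in\{0,\dots,n\}^2$ (shifting the preimage by one block index shifts the image by that block's width), so uniqueness under the literal hypothesis fails in this edge case; only the centered representation is unique. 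So your extra hypothesis is actually what makes the uniqueness true, and it is the version needed later for recognizability; you should state it explicitly (or note that the proposition's ``unique $\bk\in\{0,1,\dots,n\}^2$'' must be read as ``unique centered $\omega_n'$-representation,'' since $\zero\leq\bk<\scsize(\omega_n'(x_\zero))$ implies $\bk\in\{0,\dots,n\}^2$ but not conversely). Apart from this clarification, your proof is correct and follows the paper's route.
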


\begin{proof}
    Let $\omega_n':\Omega'_n\to\Omega_n'$ 
    be the 2-dimensional substitution defined in
    \eqref{eq:defintion-omegan'}.

    Let $y\in\Omega_n$ be a configuration.
    From Lemma~\ref{lem:return-time-to-junction-tiles}
    there exist two strictly increasing sequences $A,B:\Z\to\Z$ such that
    the set of positions of junction tiles in the configuration $y$ is the Cartesian product 
    $A(\Z)\times B(\Z)$.
    Also the distance between two consecutive occurrences of junction tiles in
    the same row or the same column is $n$ or $n+1$, that is, 
    $A(\ell+1)-A(\ell)\in\{n,n+1\}$
    and $B(\ell+1)-B(\ell)\in\{n,n+1\}$ for every $\ell\in\Z$.
    We may suppose without loss of generality that the sequences $A$ and $B$ 
    are defined in such a way that the sequences take nonnegative values
    for nonnegative indices exclusively.
    In other words, 
    $A(\ell)\geq0$ if and only if $\ell\geq0$
    and
    $B(\ell)\geq0$ if and only if $\ell\geq0$.

    For every $\bl=(\ell_1,\ell_2)\in\Z^2$, consider the return block
    $y|_{S_\bl}$
    of support $S_\bl=[A(\ell_1),A(\ell_1+1)-1]\times [B(\ell_2),B(\ell_2+1)-1]$.
    From Lemma~\ref{lem:return-word-description}
    there exists a unique tile $x_\bl\in\Tcal_n'$ such that
    $y|_{S_\bl}=\omega_n'(x_\bl)$.
    Let $\bk=(-A(-1),-B(-1))$.
    The configuration $\sigma^{-\bk}(y)$ has a junction tile at the origin $(0,0)$.
    The configuration $x=(x_\bl)_{\bl\in\Z^2}$ 
    belongs to $\Omega_n'$ and
    satisfies
    that $\omega_n'(x)=\sigma^{-\bk}(y)$.
    Thus, $y=\sigma^{\bk}\omega_n'(x)$.
\end{proof}

\begin{proposition}\label{prop:desubstitute-to-Omegan'}
    For every integer $n\geq1$,
    the 2-dimensional substitution
    $\omega_n':\Omega'_n\to\Omega_n'$ 
    satisfies
    $\Omega_n\subseteq\overline{\omega_n'(\Omega'_n)}^\sigma$.
\end{proposition}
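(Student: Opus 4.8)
The plan is to deduce the statement immediately from Proposition~\ref{prop:exists-unique-preimage}, which already contains all the substantive work. Recall that, by the definition of the shift-closure of the image of a subshift under a $d$-dimensional substitution,
\[
    \overline{\omega_n'(\Omega_n')}^\sigma = \{\sigma^\bk\omega_n'(x)\mid \bk\in\Z^2,\ x\in\Omega_n'\}.
\]
So it suffices to show that every configuration $y\in\Omega_n$ can be written in the form $\sigma^\bk\omega_n'(x)$ for some $x\in\Omega_n'$ and some $\bk\in\Z^2$.

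First I would fix an arbitrary $y\in\Omega_n$ and invoke Proposition~\ref{prop:exists-unique-preimage}: it produces a configuration $x\in\Omega_n'$ and a vector $\bk\in\{0,1,\dots,n\}^2\subseteq\Z^2$ with $y=\sigma^\bk(\omega_n'(x))$. This exhibits $y$ as a member of $\overline{\omega_n'(\Omega_n')}^\sigma$. Since $y$ was arbitrary, this gives $\Omega_n\subseteq\overline{\omega_n'(\Omega_n')}^\sigma$, which is the desired conclusion.

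I do not expect any genuine obstacle at this stage. The hard part --- partitioning any configuration of $\Omega_n$ into return blocks (Proposition~\ref{prop:return-blocks} via Lemma~\ref{lem:return-time-to-junction-tiles}), recognizing each return block as $\omega_n'(t)$ for a unique tile $t\in\Tcal_n'$ (Lemma~\ref{lem:return-word-description}, which in turn relies on Lemma~\ref{lem:set-of-boundary-words} and Proposition~\ref{prop:there-exists-rect-pattern-iff}), and gluing these tiles into a configuration of $\Omega_n'$ --- has all been carried out in the preceding sections. The only point requiring care is matching the formal definition of $\overline{\,\cdot\,}^\sigma$, so that the single shifted preimage delivered by Proposition~\ref{prop:exists-unique-preimage} literally lies in the set defining $\overline{\omega_n'(\Omega_n')}^\sigma$; this is immediate once the definitions are unpacked, and in particular one needs only the existence part of Proposition~\ref{prop:exists-unique-preimage}, not its uniqueness part.
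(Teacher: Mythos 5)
Your proposal is correct and is essentially identical to the paper's proof: the paper also deduces Proposition~\ref{prop:desubstitute-to-Omegan'} directly from the existence part of Proposition~\ref{prop:exists-unique-preimage} and the definition of the shift-closure. Nothing further is needed.
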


\begin{proof}
    From Proposition~\ref{prop:exists-unique-preimage},
    for every configuration $y\in\Omega_n$,
    there exist a unique configuration $x\in\Omega'_n$
    and a unique vector $\bk\in\{0,1,\dots,n\}^2$
    such that $y=\sigma^\bk(\omega_n'(x))$.
    Therefore, 
    $\Omega_n\subseteq\overline{\omega_n'(\Omega'_n)}^\sigma$.
\end{proof}

\section{Tiles in $\Tcal_n'\setminus\Tcal_n$ are illegal so that $\Omega_n'=\Omega_n$}
\label{sec:Omegan'subseteqOmegan}

By definition $\Tcal_n\subset\Tcal_n'$, so that $\Omega_n\subseteq\Omega_n'$.
In this section, we prove that in every configuration of the Wang shift
$\Omega'_n$ defined from the set $\Tcal'_n$, only the tiles from $\Tcal_n$
appear, that is,
$\Omega'_n\subseteq\Omega_n$.

\subsection{Illegal tiles}
Recall that the additional tiles are
\[
    \Tcal'_n\setminus \Tcal_n = A_n\cup\widehat{A_n} \cup 
        \{j_n^{0,0,1,1},
          j_n^{1,1,0,0}\}
    \cup \{b_n^n,\widehat{b_n^n}\}.
\]
The proof that these tiles do not appear in any configuration in $\Omega_n'$
follows from the following lemmas.
The easiest is to show that no configuration contain the last blue tile because
the argument is very local.

\begin{lemma}\label{lem:no-blue-tile}
    A valid configuration in $\Omega'_n$ contains no blue tile 
    in $\{b_n^n,\widehat{b_n^n}\}$.
\end{lemma}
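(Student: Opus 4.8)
The plan is to use exactly the locality that the paper advertises. The tile $b_n^n$ carries the label $00\nbar$ (i.e.\ the vector $(0,0,n+1)\in V_n$) on its right edge, and its diagonal mirror $\widehat{b_n^n}$ carries $00\nbar$ on its top edge. So it suffices to prove that $00\nbar$ never occurs as a left label, nor as a bottom label, of a tile in $\Tcal_n'$: since every configuration in $\Omega_n'$ is a total function $\Z^2\to\Tcal_n'$, a tile $b_n^n$ (resp.\ $\widehat{b_n^n}$) sitting at some position $\bn$ would then have no admissible eastern (resp.\ northern) neighbour, which is absurd.

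First I would go through the ten families making up $\Tcal_n'$ and read off their left labels. For a white tile $w_n^{i,j}$ the left label is $11i$, and for the vertical stripe tiles $\widehat{b_n^i},\widehat{g_n^i},\widehat{y_n^i},\widehat{a_n^i}$ it is $11n$ or $11\nbar$; all of these begin with $11$. For the horizontal blue and green tiles $b_n^i$, $g_n^i$ the left label is $00i$ with $0\le i\le n$, so it is at most $00n$ and never $00\nbar$; for the horizontal yellow and antigreen tiles $y_n^i$, $a_n^i$ the left label is $01i$, beginning with $01$. Finally, for a junction tile $j_n^{k,\ell,r,s}$ the left label is $(0,s,r+n)$ with $0\le r\le s\le 1$; if this equalled $(0,0,n+1)$ we would need $s=0$ and $r=1$, contradicting $r\le s$. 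Hence $00\nbar$ is not a left label of any tile of $\Tcal_n'$. Invoking the symmetry $\widehat{\Tcal_n'}=\Tcal_n'$ (which follows from $\widehat{W_n}=W_n$ and $\widehat{J_n'}=J_n'$) together with the fact that $\tau\mapsto\widehat{\tau}$ interchanges left and bottom labels, $00\nbar$ is not a bottom label of any tile of $\Tcal_n'$ either.

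To finish: if some $x\in\Omega_n'$ had $x(\bn)=b_n^n$ for some $\bn\in\Z^2$, then~\eqref{eq:validwangtiling1} would force $\scleft(x(\bn+\be_1))=\scright(b_n^n)=00\nbar$, contradicting the previous paragraph; likewise $x(\bn)=\widehat{b_n^n}$ would force, via~\eqref{eq:validwangtiling2}, $\scbottom(x(\bn+\be_2))=\sctop(\widehat{b_n^n})=00\nbar$, the same contradiction. Thus no valid configuration in $\Omega_n'$ uses $b_n^n$ or $\widehat{b_n^n}$. I do not expect a genuine obstacle here — the argument is a short inspection of edge labels, which is exactly why it is listed first among the ``illegal tile'' lemmas; the only point needing a line of computation rather than a glance is the junction family, whose labels are given by the formula $(0,k,\ell),(0,r,s),(0,s,r+n),(0,\ell,k+n)$ instead of a fixed digit pattern.
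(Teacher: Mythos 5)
Your proof is correct and follows essentially the same route as the paper: the right label $00\nbar$ of $b_n^n$ (resp.\ the top label of $\widehat{b_n^n}$) matches no left (resp.\ bottom) label in $\Tcal_n'$, so the eastern (resp.\ northern) neighbour cannot exist; the paper simply asserts this label check while you carry out the family-by-family verification, including the junction-tile computation and the diagonal symmetry reducing the bottom-label case to the left-label case.
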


\begin{proof}
    Let $c\in\Omega'_n$ be a valid configuration.
    The configuration $c$ does not contain the tile
    $b_n^n = 
    \raisebox{-9mm}{
    \begin{tikzpicture}[auto]
        \tikzstyle{every node}=[font=\footnotesize]
        \tileH{\ourColorBlue}{0}{0}{00\nbar}{111}{00n}{11n}
    \end{tikzpicture}}$, because
    no tile from $\Tcal'_n$ has left label $00\nbar$.
    Similarly,
    the configuration $c$ does not contain the tile
    $\widehat{b_n^n}$, because
    no tile from $\Tcal'_n$ has bottom label $00\nbar$.
\end{proof}

Then, we show to no configuration of $\Omega_n'$ contains any antigreen tile.
The argument is more difficult, because antigreen tiles admit large surroundings;
see Figure~\ref{fig:T4'-15x15}. As seen in the figure and proved in the next
lemma, the presence of an antigreen tile forces the presence of another
antigreen tile a few rows below that is closer to the left to a junction tile.

\begin{lemma}\label{lem:no-antigreen-tile}
    A valid configuration in $\Omega'_n$ contains no antigreen tile from the
    set $A_n\cup\widehat{A_n}$.
\end{lemma}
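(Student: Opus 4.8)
Looking at Lemma~\ref{lem:no-antigreen-tile}, I need to show no valid configuration in $\Omega_n'$ contains an antigreen tile. The paper's hint (Figure~\ref{fig:T4'-15x15} and the surrounding discussion) is that an antigreen tile forces another antigreen tile a few rows below, closer to a junction tile on its left — this should give an infinite descent.

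\begin{proof}
Suppose, for contradiction, that $c\in\Omega_n'$ contains an antigreen tile. By the symmetry $\widehat{\Tcal_n'}=\Tcal_n'$ we may assume it is a horizontal antigreen tile $a_n^i\in A_n$ at some position $\bm$. The plan is to exhibit, from this occurrence, another horizontal antigreen tile strictly closer (in horizontal distance) to the junction tile to its left, which produces an infinite descent and hence a contradiction.

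First I would set up the local picture. By Lemma~\ref{lem:return-time-to-junction-tiles}, the junction tiles of $c$ form a Cartesian product $A(\Z)\times B(\Z)$, and every tile lies in a well-defined return block of width and height in $\{n,n+1\}$ (Proposition~\ref{prop:return-blocks}). An antigreen tile $a_n^i$ has left label $00\ibar$ and right label $01i$ — it is a horizontal stripe tile, so it lies in the bottom row of its return block (the row immediately above a junction tile, reading the left labels $00*/01*$). Let $j$ be the junction tile at the bottom-left corner of this return block, sitting one row below $\bm$ and $d\geq0$ columns to the left of $\bm$, where $0\le d\le n$. The key structural input is the horizontal Rauzy graph of Figure~\ref{fig:horizontal_rauzy_graph_n=5_junction_to_junction}: the bottom-row tiles between two consecutive junction tiles, read left to right, have their ``last digit'' increasing by $1$ at each step, from $n$ or $\nbar$ (left label of a junction tile) down through $0$ or $1$ (right label of a junction tile). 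An antigreen tile is precisely the place where the first digit of the vertical edge label toggles from the blue pattern $00*$ to the yellow pattern $01*$ \emph{against} the allowed blue$\to$green$\to$yellow order — so a green tile $g_n^k$ or an antigreen tile $a_n^k$ must sit somewhere in the bottom row; since we are in $\Omega_n'$ and assumed an antigreen tile occurs, I would track exactly where the ``wrong'' transition happens.

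The heart of the argument is a descent. Look one row below the antigreen tile, i.e.\ at the top edges of the junction tile $j$ and of the bottom-row tiles to its right. I would show: the top label of the antigreen tile $a_n^i$ is $112$ (its top starts with $11$ and its second/third digits force a $112$, since antigreen is ``against the system''), while the top label of the preceding bottom-row tiles (blue $b_n^k$ have top $111$; the antigreen's presence blocks a green) creates, together with the junction tile's top label, a vertical-edge mismatch unless the \emph{row below-left}, namely the return block whose top row abuts the junction tile $j$ from below, also contains a stripe tile whose vertical labels start $01$ where $00$ is needed — i.e.\ an antigreen tile — and this one is at horizontal distance $d' < d$ from its own junction tile. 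Concretely: the junction tile $j$ has left/bottom labels among $\{00n,01n,01\nbar\}$; having an antigreen tile at distance $d$ to its right in the bottom row constrains its bottom label (via the Rauzy graph) to start with $01$ rather than $00$, which by the list of $9$ junction tiles forces its \emph{left} label, hence the right label of the previous junction tile in the same row, hence — going down one return block using the symmetric vertical Rauzy graph — the appearance of a vertical antigreen tile in the left column of the return block below, at height distance strictly smaller than $d$. Iterating (alternating between horizontal and vertical, using $\widehat{\Tcal_n'}=\Tcal_n'$) strictly decreases a nonnegative integer parameter, which is impossible.

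The main obstacle I expect is bookkeeping the exact propagation: pinning down which of the three junction-tile bottom labels ($00n$, $01n$, $01\nbar$) is forced by an antigreen occurrence at a given offset, and verifying that the forced configuration one return block over/down genuinely contains a new antigreen tile at a strictly smaller offset rather than, say, a green tile (which would be legal and stop the descent). This requires carefully reading off Figure~\ref{fig:horizontal_rauzy_graph_n=5_junction_to_junction} and the $9$-junction-tile table, and checking the boundary cases $d=0$ (antigreen immediately adjacent to its junction tile) and $d=n$ separately — the $d=0$ case should be where the contradiction becomes purely local, giving the base of the induction. Once the descent step is established for all offsets $d$ with $1\le d\le n$ and the $d=0$ case is shown to be immediately impossible, the infinite-descent (equivalently, minimal-counterexample) argument closes the proof.
\end{proof}
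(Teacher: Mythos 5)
Your overall strategy is the same as the paper's: pick an occurrence of a horizontal antigreen tile minimizing the horizontal offset $d$ to the junction tile on its left, and show that it forces either a local impossibility or a closer antigreen occurrence. But the two steps that actually constitute the proof are missing or wrong in your plan. The descent step is mis-described: the correct mechanism (the paper's) is that the left neighbour of $a_n^i$ is a stripe tile with bottom label $11\nbar$ while $a_n^i$ has bottom label $11n$; following the two columns of white tiles down through the return block below, the last digits drop by one per row, so the two tiles in the bottom row of that block carry top labels $112$ (left column) and $111$ (right column); a horizontal stripe tile with top $112$ is yellow or antigreen, its right neighbour with top $111$ is blue or green with left label $00{*}$, and only the antigreen right label $00\ibar$ can match ($01{*}$ of a yellow cannot) --- hence a \emph{horizontal} antigreen tile appears in the bottom row of the block below, one column to the left, at offset $d-1$. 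Your version instead asserts, without verification, that one obtains a \emph{vertical} antigreen tile in the left column of the block below at smaller ``height distance'', with the descent alternating orientations via $\widehat{\Tcal_n'}=\Tcal_n'$; that is not what the tiles force, and the chain ``bottom label starts with $01$ $\Rightarrow$ left label of the junction $\Rightarrow$ right label of the previous junction $\Rightarrow$ vertical antigreen below'' is never argued. Worse, you flag as an expected ``obstacle'' exactly the point on which everything hinges: excluding that the forced tile below is a legal green tile. Without that label-chasing, the descent is a conjecture, not a proof.

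The base case is also misidentified and not carried out. Since the top and bottom labels of an antigreen tile start with $1$, its column contains no junction tile, so $d\geq 1$ always; your case $d=0$ does not occur, and the case that genuinely needs separate treatment is $d=1$, where the left neighbour is the junction tile itself (its right label must be $01i\in\{000,001,011\}$, forcing $i=1$ and a junction tile with right label $011$ and bottom label $01\nbar$). The paper kills this case by descending the two columns into the block below and observing that the junction tile required there would need top label $0{*}2$, which exists for no junction tile --- this is not ``purely local'' in your sense and you do not do it. Finally, the tiles $a_n^n$ and $\widehat{a_n^n}$ must be excluded by the separate one-line observation that their right (resp.\ top) label $00\nbar$ matches no tile of $\Tcal_n'$, which your plan also omits. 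So the approach is the right one, but the decisive verifications are absent.
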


\begin{proof}
    Let $c\in\Omega'_n$ be a valid configuration.
    Recall that
    $a_n^i = 
    \raisebox{-9mm}{
    \begin{tikzpicture}[auto]
        \tikzstyle{every node}=[font=\footnotesize]
    \tileHantigreen{0}{0}{00\ibar}{112}{01i}{11n}
    \end{tikzpicture}}$.
    The configuration $c$ does not contain the tile
    $a_n^n$, because
    $a_n^n$ has left label $00\nbar$
    but no tile from $\Tcal'_n$ has left label $00\nbar$.
    Similarly,
    the configuration $c$ does not contain the tile
    $\widehat{a_n^n}$, because
    $\widehat{a_n^n}$ has top label $00\nbar$
    but no tile from $\Tcal'_n$ has top label $00\nbar$.

    Suppose by contradiction
    that $a_n^i$ appears in the configuration $c$
    for some integer $i$ such that $1\leq i\leq n-1$.
    Let $A,B:\Z\to\Z$ be the two increasing maps
    from Lemma~\ref{lem:return-time-to-junction-tiles} 
    such that $c^{-1}(J_n')=A(\Z)\times B(\Z)$.
    Suppose that $a_n^i$ appears at position $\bl=(\ell_1,\ell_2)\in\Z^2$.
    Let $\bk=(k_1,k_2)\in\Z^2$ be such that $A(k_1)\leq \ell_1<A(k_1+1)$ and
                                            $B(k_2)\leq \ell_2<B(k_2+1)$.
    Note that we must have $B(k_2)=\ell_2$.
    Suppose that the occurrence $\bl$ is chosen
    such that $\ell_1-A(k_1)$ is the minimum among all occurrences of the tile $a_n^i$ in $c$.
    In other words, such that the distance to the nearest junction tile to its left
    on the same row is minimal.
    Since the bottom and top labels of $a_n^i$ 
    start with $1$, the column $\ell_1$ in the configuration $c$ contains no junction tile,
    thus $A(k_1)\neq\ell_1$ and $\ell_1-A(k_1)\geq1$. There are two cases to consider.

    \textsc{Case} $\ell_1-A(k_1)=1$.
    In this case the tile at position $(A(k_1),B(k_2))$ is a junction tile
    with right label $011$ and bottom label $01\nbar$.
    Also the antigreen tile at position $(\ell_1,\ell_2)$ is $a_n^1$.
    Below the antigreen tile are white tiles and below the junction tile is
    a yellow or green tile that we show in gray in
    Figure~\ref{fig:presence_an1_leads_contradiction}.
    \begin{figure}[h]
    \begin{center}
    \begin{tikzpicture}[scale=1.3]
        \tikzstyle{every node}=[font=\footnotesize]
        \def\t{.1}
        \def\d{-.5}
        \draw[->] (\d,0) -- (\d,5.5);
        \draw (\d+\t,0.5) -- (\d-\t,0.5) node[left] {$B(k_2-1)$};
        \draw (\d+\t,1.5) -- (\d-\t,1.5);
        \draw (\d+\t,3.5) -- (\d-\t,3.5);
        \draw (\d+\t,4.5) -- (\d-\t,4.5) node[left] {$B(k_2)=\ell_2$};
        \draw[->] (0,\d) -- (2.5,\d);
        \draw (0.5,\d+\t) -- (0.5,\d-\t) node[below] {$A(k_1)$};
        \draw (1.5,\d+\t) -- (1.5,\d-\t) node[below] {$\ell_1$};
        \tileJunctionInsideIXXI{0}{4}{011}{*}{*}{01\nbar}{gray!50}{gray!50}
        \tilelabelinsidescopeHpink{1}{4}{002}{112}{011}{11n}
        \tilelabelinsidescopeV{gray!50}{0}{3}{11*}{01\nbar}{11\nbar}{0*n}
        \tilelabelinsidescope{white}{1}{3}{*}{11n}{*}{11(n-1)}

        \node at (0.5,2.5) {$\vdots$};
        \node at (1.5,2.5) {$\vdots$};

        \tilelabelinsidescopeV{gray!50}{0}{1}{*}{0*3}{*}{0{*}2}
        \tilelabelinsidescope{white}{1}{1}{*}{112}{*}{111}
        \tileJunctionInsideOXXO{0}{0}{00*}{0{*}2}{*}{*}{gray!50}{gray!50}
        \tilelabelinsidescopeH{gray!50}{1}{0}{*}{111}{00*}{*}
    \end{tikzpicture}
    \end{center}
        \caption{The presence of the antigreen $a_n^1$ leads to a contradiction.}
        \label{fig:presence_an1_leads_contradiction}
    \end{figure}
    So the unit parts of horizontal edge labels decrease by one at each level from top to bottom
    until we reach the white tile at position $(\ell_1,B(k_2-1)+1)$ with bottom label 111
    and a tile at position $(A(k_1),B(k_2-1)+1)$ with bottom label $0{*}2$.
    The tile at position $(\ell_1,B(k_2-1))$ must be a green or blue tile with
    left label $00{*}$. The tile at position $(A(k_1),B(k_2-1)$ must be a junction tile,
    but there are no junction tile with top label $0{*}2$.
    So this case leads to a contradiction.

    \textsc{Case} $\ell_1-A(k_1)>1$. This means that tiles in the column to the left of $a_n^i$
    do not contain junction tiles. 
    On Figure~\ref{fig:horizontal_rauzy_graph_n=5_junction_to_junction},
    we observe that
    only the yellow tile $y_n^{i-1}$ has right label $01i$.
    Thus, the tile to the left of $a_n^i$ at position $(\ell_1-1,\ell_2)$
    needs to be the yellow tile $y_n^{i-1}$.
    For every integer $j$ such that $B(k_2-1)<j<B(k_2)$ the tiles
    at positions $(\ell_1-1,j)$ and $(\ell_1,j)$ are white tiles.
    So the unit parts of the horizontal edge labels decrease by one at each level
    from top to bottom.
    Thus, the tile at position $(\ell_1-1,B(k_2-1))$ has top label $112$
    and the tile at position $(\ell_1,B(k_2-1))$ has top label $111$.
    The situation is illustrated in Figure~\ref{fig:presence_ani_leads_contradiction}.
    \begin{figure}[h]
    \begin{center}
    \begin{tikzpicture}[scale=1.3]
        \tikzstyle{every node}=[font=\footnotesize]
        \def\t{.1}
        \def\d{-.5}
        \draw[->] (\d,0) -- (\d,5.5);
        \draw (\d+\t,0.5) -- (\d-\t,0.5) node[left] {$B(k_2-1)$};
        \draw (\d+\t,1.5) -- (\d-\t,1.5);
        \draw (\d+\t,3.5) -- (\d-\t,3.5);
        \draw (\d+\t,4.5) -- (\d-\t,4.5) node[left] {$B(k_2)=\ell_2$};
        \draw[->] (0,\d) -- (4.5,\d);
        \draw (0.5,\d+\t) -- (0.5,\d-\t) node[below] {$A(k_1)$};
        \draw (2.5,\d+\t) -- (2.5,\d-\t) node[below] {$\ell_1-1$};
        \draw (3.5,\d+\t) -- (3.5,\d-\t) node[below] {$\ell_1$};
        \tilelabelinsidescope{white}{2}{3}{*}{11\nbar}{*}{11n}
        \tilelabelinsidescope{white}{3}{3}{*}{11n}{*}{11(n-1)}
        \tilelabelinsidescopeH{\ourColorYellow}{2}{4}{01i}{112}{01(i-1)}{11\nbar}
        \tilelabelinsidescopeHpink{3}{4}{00\ibar}{112}{01i}{11n}
        \tileJunctionInsideGRAY{0}{4}{*}{*}{*}{*}{gray!50}

        \node at (2.5,2.5) {$\vdots$};
        \node at (3.5,2.5) {$\vdots$};
        \node at (1.5,0.5) {$\dots$};
        \node at (1.5,4.5) {$\dots$};

        \tilelabelinsidescope{white}{2}{1}{*}{113}{*}{112}
        \tilelabelinsidescope{white}{3}{1}{*}{112}{*}{111}
        \tileJunctionInsideGRAY{0}{0}{*}{*}{*}{*}{gray!50}
        \tilelabelinsidescopeHpink{2}{0}{*}{112}{*}{11n}
    \end{tikzpicture}
    \end{center}
        \caption{The presence of the antigreen $a_n^i$ leads to a contradiction.}
        \label{fig:presence_ani_leads_contradiction}
    \end{figure}
    Since $112$ and $111$ are the labels of consecutive horizontal edges,
    we deduce from Figure~\ref{fig:horizontal_rauzy_graph_n=5_junction_to_junction} that
    the tile at position $(\ell_1-1,B(k_2-1))$ must be an antigreen tile as well.
    We observe that this antigreen tile is closer in distance to a junction tile to its
    left on the same row.
    This is a contradiction with the minimality of $\ell_1-A(k_1)$.
    Thus, the configuration $c$ does not contain the antigreen tile $a_n^i$.

    Finally, by contradiction, suppose that the tile $\widehat{a_n^i}$ appears in the
    configuration $c$.
    Since $\Tcal_n'$ is symmetric, that is $\widehat{\Tcal_n'}=\Tcal_n'$,
    the symmetric configuration $\widehat{c}$ is also a valid configuration in $\Omega_n'$.
    Thus, the configuration $c$ contains the tile $a_n^i$ which contradicts the
    conclusion of the previous paragraph.
\end{proof}

The previous lemma implies that the pattern shown in Figure~\ref{fig:T4'-15x15}
cannot be extended to a valid configuration in $\Omega_n'$.

\begin{lemma}\label{lem:no-orange-tile}
    A valid configuration in $\Omega'_n$ contains no junction tile from 
    the set $\{j_n^{0,0,1,1},j_n^{1,1,0,0}\}$.
\end{lemma}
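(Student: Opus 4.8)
The plan is to reduce to a single junction tile by symmetry, and then push an occurrence of that tile up one level of the substitution hierarchy into a configuration of $\Omega_n'$ where it would come from an antigreen tile, contradicting Lemma~\ref{lem:no-antigreen-tile}. First I would note that $\widehat{\Tcal_n'}=\Tcal_n'$, so $\widehat{c}\in\Omega_n'$ whenever $c\in\Omega_n'$, and that $\widehat{j_n^{1,1,0,0}}=j_n^{0,0,1,1}$; hence it suffices to rule out $j_n^{1,1,0,0}$. So suppose, for contradiction, that $j_n^{1,1,0,0}$ occurs in some $c\in\Omega_n'$. By Lemma~\ref{lem:no-blue-tile} and Lemma~\ref{lem:no-antigreen-tile}, the configuration $c$ uses only tiles from $\Tcal_n\cup\{j_n^{0,0,1,1},j_n^{1,1,0,0}\}$; in particular, no antigreen stripe tile appears anywhere in $c$.

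The main step is to apply the desubstitution of Section~\ref{sec:desubstitution} to $c$. Although Lemma~\ref{lem:set-of-boundary-words}, Lemma~\ref{lem:return-word-description} and Proposition~\ref{prop:exists-unique-preimage} are stated for configurations in $\Omega_n$, their proofs use only the absence of antigreen (and last-blue) tiles, together with facts valid for all of $\Omega_n'$ — notably the Rauzy graph of Figure~\ref{fig:horizontal_rauzy_graph_n=5_junction_to_junction_inOmegan}, which describes precisely a row of a configuration of $\Omega_n'$ once antigreen tiles have been excluded. One checks directly that the two extra junction tiles fit that description: $j_n^{1,1,0,0}$ has right label $011$ and bottom label $01\nbar$, and $j_n^{0,0,1,1}$ has right label $000$ and bottom label $00n$, so when either of them sits at the bottom-left corner of a return block the bottom labels of that block still lie in $\tau_n(V_n)$, and likewise for the other three sides. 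Therefore, exactly as in the proofs of Lemma~\ref{lem:return-word-description} and Proposition~\ref{prop:exists-unique-preimage}, every return block of $c$ equals $\omega_n'(t)$ for a unique $t\in\Tcal_n'$, and (using the injectivity of $\tau_n$ on $V_n$) these tiles assemble into a configuration $x\in\Omega_n'$ with $c=\sigma^{\bk}\omega_n'(x)$ for some $\bk\in\{0,\dots,n\}^2$.

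To conclude, the occurrence of the junction tile $j_n^{1,1,0,0}$ in $c$ lies, by Proposition~\ref{prop:return-blocks}, at the bottom-left corner of some return block $r=\omega_n'(t)$, so $j_n^{1,1,0,0}$ occurs in $\omega_n'(t)$ for a tile $t$ of $x$. Since $j_n^{1,1,0,0}\in\Dcal\subseteq\Tcal_n'\setminus\Tcal_n$, Lemma~\ref{lem:Tcaln-there-exists-rect-pattern} forces $n=1$ and $t$ to be an antigreen tile. But $x\in\Omega_1'$, which contains no antigreen tile by Lemma~\ref{lem:no-antigreen-tile}, a contradiction. Hence no valid configuration in $\Omega_n'$ contains $j_n^{1,1,0,0}$, and by the symmetry above none contains $j_n^{0,0,1,1}$.

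The step I expect to be the main obstacle is the bookkeeping in the second paragraph: one has to verify that the return-block decomposition and its decoding through $\tau_n$ survive the presence of $j_n^{0,0,1,1}$ and $j_n^{1,1,0,0}$ in $c$, i.e. that the length constraints and the order blue $\to$ green $\to$ yellow used in Lemma~\ref{lem:set-of-boundary-words} still apply when one of these two tiles occupies the bottom-left corner of a block. This is routine but should be written out, since those are exactly the two tiles not covered verbatim by the $\Omega_n$ analysis. An alternative that avoids re-invoking the desubstitution would be to argue locally, following an occurrence of $j_n^{1,1,0,0}$ downward along its column — its bottom label $01\nbar$ forces $\widehat{g_n^n}$ below it, then $\widehat{b_n^{n-1}},\widehat{b_n^{n-2}},\dots$ — until a junction tile is forced at a position incompatible with Lemma~\ref{lem:return-time-to-junction-tiles}; but this route is more case-heavy near $n=1$ and $n=2$, and I would prefer the desubstitution argument above.
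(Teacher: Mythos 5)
Your proof is correct, but it takes a genuinely different route from the paper. The paper's own proof is a short local argument: assuming $j_n^{0,0,1,1}$ occurs, its bottom label $00n$ and right label $000$ force, via Figure~\ref{fig:horizontal_rauzy_graph_n=5_junction_to_junction} and Proposition~\ref{prop:return-blocks}, the width of its return block to be $n+1$, while for $n>1$ the vertical stripe tile with right label $112$ forced directly above it bounds that same width by $n$ (Figure~\ref{fig:vertical-sequence-white-tiles}), a contradiction; for $n=1$ one checks that no tile of $\Tcal_1'$ has left label $001$ and bottom label $111$, so the cell diagonally adjacent to the junction tile cannot be filled, and symmetry then disposes of $j_n^{1,1,0,0}$. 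This is essentially the ``alternative'' you set aside at the end as more case-heavy. Your route instead pushes the occurrence one level up the hierarchy: after Lemma~\ref{lem:no-blue-tile} and Lemma~\ref{lem:no-antigreen-tile} the configuration only uses $\Tcal_n\cup\{j_n^{0,0,1,1},j_n^{1,1,0,0}\}$, you extend the desubstitution machinery of Section~\ref{sec:desubstitution} to such configurations, and then Lemma~\ref{lem:Tcaln-there-exists-rect-pattern} either gives an immediate contradiction ($n\geq2$, since images of tiles contain only tiles of $\Tcal_n$) or forces the decoded supertile to be antigreen ($n=1$), contradicting Lemma~\ref{lem:no-antigreen-tile} at the level of the preimage configuration. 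There is no circularity (none of the ingredients you invoke depends on the present lemma), and the bookkeeping you flag does go through: the two extra junction tiles carry the same boundary-label pattern as the junction tiles of $\Tcal_n$ (right labels in $\{000,011\}$, left labels in $\{00n,01\nbar\}$), so the proofs of Lemma~\ref{lem:set-of-boundary-words}, Lemma~\ref{lem:return-word-description} and Proposition~\ref{prop:exists-unique-preimage} apply once antigreen and last-blue tiles are excluded, including the width-one blocks with bottom word $\tau_1(111)$ that only arise when $n=1$. The trade-off is clear: your approach requires restating and reproving for this intermediate class several results the paper states only for $\Omega_n$, which is more text than the paper's half-page local argument, but it buys a conceptual explanation of the lemma — the forbidden junction tiles can only occur as the corner of the image of an antigreen supertile, so excluding them is exactly excluding antigreen tiles one level up.
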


\begin{proof}
    Recall that
\[
    j_n^{0,0,1,1} = 
    \raisebox{-9mm}{
    \begin{tikzpicture}[auto]
        \tikzstyle{every node}=[font=\footnotesize]
        \tileJunctionOIIO{0}{0}{000}{011}{01\nbar}{00n}
    \end{tikzpicture}}
\qquad
\text{ and }
\qquad
    \widehat{j_n^{0,0,1,1}} = 
    j_n^{1,1,0,0} = 
    \raisebox{-9mm}{
    \begin{tikzpicture}[auto]
        \tikzstyle{every node}=[font=\footnotesize]
        \tileJunctionIOOI{3}{0}{011}{000}{00n}{01\nbar}
    \end{tikzpicture}}.
\]
    Let $x\in\Omega'_n$ be a valid configuration.
    We first prove that $x$ does not contain the tile $j_n^{0,0,1,1}$.
    By contradiction, suppose that the tile $j_n^{0,0,1,1}$ appears in the configuration $x$ 
    at some position $\bl\in\Z^2$.
    Consider the return block containing this junction tile and let $W$ be its width
    and $H$ be its height.

    The bottom label of the junction tile $j_n^{0,0,1,1}$ is $00n$
    and its right label is $000$ with last digit $0$.
    From Figure~\ref{fig:horizontal_rauzy_graph_n=5_junction_to_junction},
    the width of the return block containing this junction tile must be $n+1$ or $n+2$.
    A return block of width $W=n+2$ is impossible from Proposition~\ref{prop:return-blocks}.
    Thus, the width of the return bock is $W=n+1$.

    If $n>1$, then
    we have $W=n$,
    which is a contradiction.
    Indeed,
    the tile appearing above the junction tile $j_n^{0,0,1,1}$ must be a 
    vertical stripe tile with right label $112$, either yellow or antigreen.
    From the observation made in Figure~\ref{fig:vertical-sequence-white-tiles},
    the width of this return block is $W=n$.

    If $n=1$, three different junction tile can appear on top of $j_n^{0,0,1,1}$.
    All of them have right label $001$.
    On the right of $j_n^{0,0,1,1}$, there may be a green or a blue tile,
    both of them having top label $111$.
    We get the following picture where we illustrate the blue or green tile in gray.
    \begin{center}
    \begin{tikzpicture}[scale=1.3]
        \tikzstyle{every node}=[font=\footnotesize]
        \tileJunctionInsideOXXI{0}{1}{001}{*}{*}{011}{gray!50}{gray!50}
        \tileJunctionInsideOIIO{0}{0}{000}{011}{01\nbar}{00n}
        \tilelabelinsidescopeH{gray!50}{1}{0}{*}{111}{000}{*}
    \end{tikzpicture}
    \end{center}
    But no tile from $\Tcal_1'$ have left label $001$
    and bottom label $111$; see Figure~\ref{fig:T'n-for-n=1}.
    Thus, no tile can be placed at position $\bl+(1,1)$.
    This is a contradiction.
\begin{figure}[h]
\begin{center}
\includegraphics{SAGEOUTPUT/W1_tiles_with_extra.pdf}
\end{center}
    \caption{Extended metallic mean Wang tile sets $\Tcal'_n$ for $n=1$.}
    \label{fig:T'n-for-n=1}
\end{figure}

    Finally, by contradiction, suppose that the tile 
    $j_n^{1,1,0,0}=\widehat{j_n^{0,0,1,1}}$ appears in the
    configuration $x$.
    Since $\Tcal_n'$ is symmetric, that is $\widehat{\Tcal_n'}=\Tcal_n'$,
    the symmetric configuration $\widehat{x}$ is also a valid configuration in $\Omega_n'$.
    Thus, the configuration $x$ contains the tile $j_n^{0,0,1,1}$ which contradicts the
    first part of the proof.
\end{proof}

We may now prove the following result.

\begin{proposition}\label{prop:Omegan'=Omegan}
    For every integer $n\geq1$, $\Omega_n'=\Omega_n$.
\end{proposition}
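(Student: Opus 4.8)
The plan is to assemble the three illegality lemmas proved above into a single chain of inclusions, with essentially no new work. Since $\Tcal_n\subset\Tcal_n'$ by construction, every valid configuration over $\Tcal_n$ is a fortiori valid over $\Tcal_n'$, so $\Omega_n\subseteq\Omega_n'$; this direction is immediate.

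For the reverse inclusion I would take an arbitrary configuration $c\in\Omega_n'$ and argue that it never uses a tile from the difference set
\[
    \Tcal_n'\setminus\Tcal_n = A_n\cup\widehat{A_n}\cup\{j_n^{0,0,1,1},j_n^{1,1,0,0}\}\cup\{b_n^n,\widehat{b_n^n}\}.
\]
By Lemma~\ref{lem:no-blue-tile}, no tile of $\{b_n^n,\widehat{b_n^n}\}$ occurs in $c$; by Lemma~\ref{lem:no-antigreen-tile}, no antigreen tile of $A_n\cup\widehat{A_n}$ occurs in $c$; and by Lemma~\ref{lem:no-orange-tile}, neither of the junction tiles $j_n^{0,0,1,1}$, $j_n^{1,1,0,0}$ occurs in $c$. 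Hence the image of $c$ is contained in $\Tcal_n$, so $c$ is a valid configuration over the Wang tiles $\Tcal_n$, i.e. $c\in\Omega_n$. This yields $\Omega_n'\subseteq\Omega_n$, and combined with the first inclusion we conclude $\Omega_n'=\Omega_n$.

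The proposition itself is thus mainly bookkeeping: the substantive content has already been done in the three lemmas. The genuinely delicate ingredient is Lemma~\ref{lem:no-antigreen-tile}, whose proof rests on the return-block decomposition of Section~\ref{sec:desubstitution} (in particular Proposition~\ref{prop:return-blocks}) together with an extremal argument on the minimal horizontal distance from an antigreen tile to a junction tile; Lemma~\ref{lem:no-orange-tile} also leans on Proposition~\ref{prop:return-blocks} and on the width constraint of Figure~\ref{fig:vertical-sequence-white-tiles}, while Lemma~\ref{lem:no-blue-tile} is purely local. I do not expect any further obstacle in proving the proposition, but it is worth stating explicitly that this is the step making the extended tile set $\Tcal_n'$ "collapse" onto $\Tcal_n$, so that from here on $\Omega_n$ and $\Omega_n'$ can be used interchangeably when proving self-similarity in Section~\ref{sec:self-similar-aperiodic}.
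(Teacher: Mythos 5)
Your proposal is correct and follows exactly the paper's own argument: the trivial inclusion $\Omega_n\subseteq\Omega_n'$ from $\Tcal_n\subset\Tcal_n'$, then Lemmas~\ref{lem:no-blue-tile}, \ref{lem:no-antigreen-tile} and \ref{lem:no-orange-tile} to exclude every tile of $\Tcal_n'\setminus\Tcal_n$ from any configuration of $\Omega_n'$, giving the reverse inclusion. Nothing is missing; the proposition is indeed just the bookkeeping step once those three lemmas are in place.
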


\begin{proof}
    Since $\Tcal_n\subseteq\Tcal_n'$, we have $\Omega_n\subseteq\Omega_n'$.

    Let $c\in\Omega'_n$ be a valid configuration.
From Lemma~\ref{lem:no-blue-tile}, the configuration $c$ contains no blue
    tile in $\{b_n^n,\widehat{b_n^n}\}$.
From Lemma~\ref{lem:no-antigreen-tile},
    the configuration $c$ contains no antigreen tile from $A_n\cup\widehat{A_n}$.
From Lemma~\ref{lem:no-orange-tile}
the configuration $c$ contains no junction tile from
the set $\{j_n^{0,0,1,1},j_n^{1,1,0,0}\}$.
    Thus, the range of $c$ is $c(\Z^2)\subset\Tcal_n$.
    Thus, $c\in\Omega_n$, from which we conclude that
    $\Omega_n'\subseteq\Omega_n$.
\end{proof}

\section{$\Omega_n$ is self-similar and aperiodic}
\label{sec:self-similar-aperiodic}

In this section, we show that $\Omega_n$ is self-similar and aperiodic.
We prove Theorem~\ref{thm:similar-to-itself} below after recalling
its statement.

\begin{THEOREMI}
    \MainTheoremI
\end{THEOREMI}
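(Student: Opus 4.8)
The plan is to deduce Theorem~\ref{thm:similar-to-itself} from the material of Sections~\ref{sec:substitution}--\ref{sec:Omegan'subseteqOmegan}, which has already set up all of the moving parts. By Lemma~\ref{lem:omegan-is-2-dim-substitution}, $\omega_n\colon\Omega_n\to\Omega_n$ is a $2$-dimensional substitution with $\overline{\omega_n(\Omega_n)}^\sigma\subseteq\Omega_n$; moreover $\omega_n=\omega_n'|_{\Tcal_n}$ and $\Omega_n=\Omega_n'$ by Proposition~\ref{prop:Omegan'=Omegan}, so on $\Omega_n=\Omega_n'$ the maps $\omega_n$ and $\omega_n'$ coincide and $\omega_n(\Omega_n)=\omega_n'(\Omega_n')$. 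What remains is to check, in order: surjectivity up to a shift, recognizability, and expansiveness.

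For surjectivity up to a shift, I would simply combine Proposition~\ref{prop:desubstitute-to-Omegan'}, which gives $\Omega_n\subseteq\overline{\omega_n'(\Omega_n')}^\sigma=\overline{\omega_n(\Omega_n)}^\sigma$, with the reverse inclusion $\overline{\omega_n(\Omega_n)}^\sigma\subseteq\Omega_n$ contained in Lemma~\ref{lem:omegan-is-2-dim-substitution}; this yields $\Omega_n=\overline{\omega_n(\Omega_n)}^\sigma$, i.e.\ $\omega_n$ is onto up to a shift.

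For recognizability, the plan is to read off the statement from Proposition~\ref{prop:exists-unique-preimage}. Suppose $y\in\Tcal_n^{\Z^2}$ has a centered $\omega_n$-representation $(\bk,x)$ with $x\in\Omega_n$, so $y=\sigma^\bk\omega_n(x)$ and $\zero\leq\bk<\scsize(\omega_n(x_\zero))$; since $\omega_n(\Omega_n)\subseteq\Omega_n$ is shift-invariant, $y\in\Omega_n$. By \eqref{eq:defintion-omegan} and \eqref{eq:tau_n_five_cases} the block $\omega_n(x_\zero)$ has width $|\tau_n(\scbottom(x_\zero))|\in\{n,n+1\}$ and height $|\tau_n(\scleft(x_\zero))|\in\{n,n+1\}$, whence $\bk\in\{0,1,\dots,n\}^2$; and $x\in\Omega_n=\Omega_n'$ with $\omega_n(x)=\omega_n'(x)$. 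So $(\bk,x)$ is a pair as in Proposition~\ref{prop:exists-unique-preimage}, which asserts it is unique. Hence $y$ has at most one centered $\omega_n$-representation with second coordinate in $\Omega_n$, so $\omega_n$ is recognizable in $\Omega_n$.

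For expansiveness, note first from \eqref{eq:defintion-omegan} and \eqref{eq:tau_n_five_cases} that the image $\omega_n(t)$ of any tile is a rectangle with both side lengths in $\{n,n+1\}$, so $\min(\scsize(\omega_n^m(a)))\geq n^m$ and the case $n\geq2$ is immediate. For $n=1$ I would argue that the dimensions of $\omega_1^m(a)$ strictly increase in $m$: the first letter of every $\tau_n(xyz)$ starts with $0$, so the bottom-left tile of $\omega_n(t)$ has left and bottom labels starting with $0$ and is a junction tile (by the case distinction in the proof of Lemma~\ref{lem:SW-and-NE-deterministic}); hence for $m\geq1$ the bottom row of $\omega_1^m(a)$ contains a junction tile, whose left label lies in $\{001,011,012\}$ and whose $\omega_1$-image therefore has width $2$, while every other tile contributes width $\geq1$; summing widths along that row gives $\mathrm{width}(\omega_1^{m+1}(a))\geq\mathrm{width}(\omega_1^{m}(a))+1$, and symmetrically for the height. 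Thus $\min(\scsize(\omega_1^m(a)))\to\infty$, so $\omega_n$ is expansive for all $n\geq1$. Together with the two previous points, Definition~\ref{def:self-similar} then gives that $\Omega_n$ is self-similar. The step that will need the most care is the recognizability bookkeeping, making sure the ``unique $\omega_n'$-preimage in $\Omega_n'$'' of Proposition~\ref{prop:exists-unique-preimage} translates faithfully into recognizability of $\omega_n$ in $\Omega_n$ through the identity $\Omega_n=\Omega_n'$; the $n=1$ case of expansiveness is the only really new computation.
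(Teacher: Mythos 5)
Your proposal is correct and takes essentially the same route as the paper: surjectivity up to a shift from Proposition~\ref{prop:desubstitute-to-Omegan'} together with Proposition~\ref{prop:Omegan'=Omegan} and Lemma~\ref{lem:omegan-is-2-dim-substitution}, recognizability from the unique decomposition into return blocks (you cite Proposition~\ref{prop:exists-unique-preimage}, the paper cites Proposition~\ref{prop:return-blocks}, but the former is derived from the latter), and expansiveness via junction tiles, where your $n=1$ argument is actually more detailed than the paper's one-line parenthetical. The only tiny slip is that the width of a tile's image is governed by its bottom label rather than its left label, but since both labels of a junction tile lie in $\{00n,01n,01\nbar\}$ the conclusion is unaffected.
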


\begin{proof} %
    Let $n\geq1$ be an integer.
    From Proposition~\ref{prop:desubstitute-to-Omegan'},
    the 2-dimensional substitution
    $\omega_n':\Omega_n'\to\Omega_n'$ 
    defined in \eqref{eq:defintion-omegan'}
    satisfies
    $\Omega_n\subseteq\overline{\omega_n'(\Omega'_n)}^\sigma$.
    From Proposition~\ref{prop:Omegan'=Omegan},
    we have $\Omega_n'=\Omega_n$.
    The restriction of $\omega_n'$
    to $\Omega_n$ is the 2-dimensional substitution $\omega_n:\Omega_n\to\Omega_n$
    defined in \eqref{eq:defintion-omegan}.
    From Lemma~\ref{lem:omegan-is-2-dim-substitution},
    $\omega_n(\Omega_n)\subset\Omega_n$.
    Therefore, we have
    \[
        \Omega_n
        \subseteq\overline{\omega_n'(\Omega'_n)}^\sigma
        =\overline{\omega_n'(\Omega_n)}^\sigma
        =\overline{\omega_n(\Omega_n)}^\sigma
        \subseteq\Omega_n.
    \]
    Therefore, $\omega_n$ is in fact a
    2-dimensional substitution
    $\Omega_n\to\Omega_n$
    satisfying
    $\Omega_n=\overline{\omega_n(\Omega_n)}^\sigma$.
    The 2-dimensional substitution $\omega_n$ is recognizable 
    following Proposition~\ref{prop:return-blocks}, since
    every configuration in $\Omega_n$ can be uniquely divided into return blocks.
    The 2-dimensional substitution $\omega_n$ is expansive (the image of every
    tile contains a junction tile and the image of every junction tile has
    a height and width at least 2).
    Hence the Wang shift $\Omega_n$ is self-similar with respect to the
    substitution $\omega_n$.
\end{proof}

\begin{proof}[Proof of Corollary~\ref{cor:aperiodic}]
    From Theorem~\ref{thm:similar-to-itself},
    we have that the Wang shift $\Omega_n$ is self-similar satisfying
    $\Omega_n=\overline{\omega_n(\Omega_n)}^\sigma$.
    Since the substitution $\omega_n$ is expansive and recognizable,
    it follows from Proposition~\ref{prop:expansive-recognizable-aperiodic}
    that $\Omega_n$ is aperiodic.
\end{proof}

\section{The self-similarity is primitive}
\label{sec:primitive}

Substitutive shifts obtained from expansive and primitive morphisms are
interesting for their properties.
As in the one-dimensional case, we say that $\omega$ is \defn{primitive}
if there exists $m\in\N$ such that
for every $a,b\in\Acal$ the letter $b$ occurs in $\omega^m(a)$.
In this section, we show that the 2-dimensional substitution $\omega_n$ is
primitive.

\begin{figure}[h]
\begin{center}
    \begin{tikzpicture}[>=latex,yscale=1.1]
    \def\x{1}
    \node (0) at (4.5,13) {$\{t\}$, for some $t\in\Tcal_n$};

    \node (A0) at (1.5,12) {$\{t\}$, for some $t\in\Tcal_n$};
    \node (A1) at (7,12)   {$\{t\}$, for some $t\in J_n$};

    \node (B0) at (1.5,11) {$\{t\}$, for some $t\in J_n$};
    \node (B1) at (8.5,11) {$\{w_n^{1,1}\}$};

    \node (C0) at (-2.5, 10)  {$\{w_n^{1,1}\}$};
    \node (C1) at (4.5,  10) {$\{t\}$, for some $t\in J_n$};
    \node (C2) at (8.5, 10)   {$\{j_n^{1,1,1,1}\}$};

    \node (D0) at (-2.5, 9)   {$\{j_n^{1,1,1,1}\}$};
    \node (D1) at (4.5,  9)   {$\{w_n^{1,1}\}$};
    \node (D2) at (8.5,  9)  {$\{b_n^0,\widehat{b_n^0}\}$};

    \node (E0) at (-4.5, 8)   {$\{w_n^{1,1}\}$};
    \node (E1) at (-0.5,  8)   {$\{b_n^0,\widehat{b_n^0}\}$};
    \node (E2) at (4.5,  8)   {$\{j_n^{1,1,1,1}\}$};
    \node (E3) at (8.5,  8)  {$\{\widehat{y_n^1},y_n^1\}$};

    \node (F0) at (-4.5, 7)    {$\{j_n^{1,1,1,1}\}$};
    \node (F1) at (-0.5,  7)    {$Y_n\cup\widehat{Y_n}$};
    \node (F2) at (2.8,  7)    {$\{j_n^{0,0,0,0}\}$};
    \node (F3) at (4.5,  7)  {$\{w_n^{1,1}\}$};
    \node (F4) at (6.0,  7)  {$\{b_n^0,\widehat{b_n^0}\}$};
    \node (F5) at (8.5,  7)  {$\{g_n^0,\widehat{g_n^0}\}$};

    \node (G0) at (-6,   6) {$\{j_n^{0,0,0,0}\}$};
    \node (G1) at (-4.8,   6) {$W_n$};
    \node (G2) at (-3.6, 6) {$B_n\cup\widehat{B_n}$};
    \node (G3) at (-1.2, 6) {$\{j_n^{0,1,0,0},j_n^{0,0,0,1}\}$};
    \node (G4) at (1.0,  6) {$G_n\cup\widehat{G_n}$};
    \node (G5) at (2.8,    6) {$\{j_n^{0,1,0,1}\}$};
    \node (G6) at (4.5,    6) {$\{j_n^{1,1,1,1}\}$};
    \node (G7) at (6.2,    6) {$Y_n\cup\widehat{Y_n}$};
    \node (G8) at (8.5,    6) {$\{j_n^{0,1,1,1},j_n^{1,1,0,1}\}$};
    \draw[->] (0) -- (A0);
    \draw[->] (0) -- (A1);
    \draw[->] (A0) -- (B0);
    \draw[->] (A1) -- (B1);
    \draw[->] (B0) -- (C0);
    \draw[->] (B0) -- (C1);
    \draw[->] (B1) -- (C2);
    \draw[->] (C0) -- (D0);
    \draw[->] (C1) -- (D1);
    \draw[->] (C2) -- (D2);
    \draw[->] (D0) -- (E0);
    \draw[->] (D0) -- (E1);
    \draw[->] (D1) -- (E2);
    \draw[->] (D2) -- (E3);
    \draw[->] (E0) -- (F0);
    \draw[->] (E1) -- (F1);
    \draw[->] (E2) -- (F2);
    \draw[->] (E2) -- (F3);
    \draw[->] (E2) -- (F4);
    \draw[->] (E3) -- (F5);
    \draw[->] (F0) -- (G0);
    \draw[->] (F0) -- (G1);
    \draw[->] (F0) -- (G2);
    \draw[->] (F1) -- (G3);
    \draw[->] (F1) -- (G4);
    \draw[->] (F2) -- (G4);
    \draw[->] (F2) -- (G5);
    \draw[->] (F3) -- (G6);
    \draw[->] (F4) -- (G7);
    \draw[->] (F5) -- (G8);
\end{tikzpicture}
\end{center}
\caption{
    When an arrow appears linking sets of tiles $S\to T$ and vertex $T$ has in-degree one, 
    it means that 
    $T\subseteq \bigcup_{s\in S}\{t\in\Tcal_n\mid t\text{ occurs in }\omega_n(s)\}$,
    that is,
    every tile $t\in T$ appears in the image of some tile $s\in
    S$ under the substitution $\omega_n$.
    When two arrows $S\to T$ and $S'\to T$ appear,
    it means that every tile $t\in T$ appears in the image of some tile $s\in
    S\cup S'$ under the substitution $\omega_n$.
    The figure illustrates that for every tile $t\in\Tcal_n$ 
    the pattern $(\omega_n)^7(t)$ contains every tile of $\Tcal_n$.
    This shows the primitivity of the substitution $\omega_n$.
}
\label{fig:primitivity}
\end{figure}

\begin{lemma}\label{lem:primitivity}
    For every integer $n\geq1$, the 2-dimensional substitution
    $\omega_n:\Omega_n\to\Omega_n$ is primitive.
\end{lemma}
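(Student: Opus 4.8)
The plan is to prove primitivity with the explicit exponent $m=7$, i.e.\ to show that for every tile $t\in\Tcal_n$ the pattern $\omega_n^7(t)$ already contains every tile of $\Tcal_n$; this is exactly the statement recorded by the tree in Figure~\ref{fig:primitivity}, so the proof reduces to verifying that tree. The single structural input needed is the description of $\omega_n$ coming from Lemma~\ref{lem:Tcaln'-there-exists-rect-pattern} and its proof: writing $t=(\alpha,\beta,u,v)$, the rectangular pattern $\omega_n(t)$ splits into one junction tile at its bottom-left corner, a bottom row of horizontal stripe tiles with bottom labels $\tau_n(v)$, a left column of vertical stripe tiles with left labels $\tau_n(u)$, and an inner block of white tiles; and exactly which tiles occur in each of these four regions is read off from $\tau_n(u)$ and $\tau_n(v)$ by means of the five cases \eqref{eq:tau_n_five_cases}, Lemma~\ref{lem:unique-horizontal-strip}, its symmetric version, and Figures~\ref{fig:bottom-row-if-bottom-labels-is-tau00i}--\ref{fig:bottom-row-if-bottom-labels-is-tau11i}. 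In particular $\omega_n(t)$ always contains exactly one junction tile.

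Using this dictionary I would perform a breadth-first search of $\Tcal_n$ along the relation ``$b$ occurs in $\omega_n(a)$'', organised exactly as in Figure~\ref{fig:primitivity}. From an arbitrary root $t$ one step produces some junction tile; the image of any junction tile contains the white tile $w_n^{1,1}$, whose image in turn contains $j_n^{1,1,1,1}$, whose image contains $j_n^{0,0,0,0}$ and the extreme blue tiles $b_n^0,\widehat{b_n^0}$; and from $b_n^0,\widehat{b_n^0}$ one reaches the extreme yellow tiles $y_n^1,\widehat{y_n^1}$ and then the extreme green tiles $g_n^0,\widehat{g_n^0}$. Feeding these hub tiles once more through $\omega_n$ produces everything: the image of $j_n^{1,1,1,1}$ contains the whole $n\times n$ white block $W_n$ and all blue tiles $B_n\cup\widehat{B_n}$; the images of $b_n^0$ and $\widehat{b_n^0}$ contain all yellow tiles $Y_n\cup\widehat{Y_n}$; the images of the yellow tiles together with that of $j_n^{0,0,0,0}$ contain all green tiles $G_n\cup\widehat{G_n}$; and the images of the green and yellow tiles and of $j_n^{0,0,0,0}$ contain the remaining junction tiles of $J_n$. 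Tracking the tree to depth $7$, the tile families reached from any root $t$ together exhaust $W_n\cup B_n\cup\widehat{B_n}\cup G_n\cup\widehat{G_n}\cup Y_n\cup\widehat{Y_n}\cup J_n=\Tcal_n$, so $b$ occurs in $\omega_n^7(a)$ for all $a,b\in\Tcal_n$, which is primitivity with $m=7$.

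Each individual arrow $S\to T$ in Figure~\ref{fig:primitivity} is a routine check: for $s\in S$ one reads its boundary labels, applies $\tau_n$ through \eqref{eq:tau_n_five_cases}, and then reads the tiles occurring in the bottom row, the left column and the white block of $\omega_n(s)$ from Lemma~\ref{lem:unique-horizontal-strip} and Figures~\ref{fig:bottom-row-if-bottom-labels-is-tau00i}--\ref{fig:bottom-row-if-bottom-labels-is-tau11i} (the generic behaviour being illustrated by $\omega_1'$ in Figure~\ref{fig:omega1'}). There is no conceptual difficulty here; the work is entirely the bookkeeping of these finitely many image computations, and the one point requiring genuine care is that the same chain of implications, hence the same exponent $m=7$, goes through uniformly for every $n\geq1$, including the small case $n=1$ in which $W_n$, $B_n$ and $Y_n$ degenerate to singletons.
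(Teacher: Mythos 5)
Your proposal is correct and follows essentially the same route as the paper: the paper's proof of Lemma~\ref{lem:primitivity} consists precisely of the chain of occurrence observations you describe (every image contains a junction tile, every junction image contains $w_n^{1,1}$, then $j_n^{1,1,1,1}$, then $j_n^{0,0,0,0}$ and $b_n^0,\widehat{b_n^0}$, etc.), each verified via Lemma~\ref{lem:unique-horizontal-strip} and Figures~\ref{fig:bottom-row-if-bottom-labels-is-tau00i}--\ref{fig:bottom-row-if-bottom-labels-is-tau01i}, and assembled into the depth-$7$ tree of Figure~\ref{fig:primitivity} to conclude that $\omega_n^7(t)$ contains every tile. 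Your remark about uniformity in $n$, including the degenerate case $n=1$, is consistent with the paper, which only notes afterwards that the exponent $7$ is not sharp.
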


\begin{proof}
    The proof follows from the following observations about the substitution $\omega_n$:
    \begin{itemize}
        \item in the image of every tile in $\Tcal_n$ under $\omega_n$, there is some junction tile;
        \item in the image of every junction tile, there is a white tile $w_{n}^{1,1}$;
\item in the image of the white tile $w_{n}^{1,1}$, there is
    the junction tile $j_n^{1,1,1,1}$;
\item in the image of the junction tile $j_n^{1,1,1,1}$, there are
    the junction tile $j_n^{0,0,0,0}$,
    all white tiles $W_n$ including the white tile $w_{n}^{1,1}$,
    and all blue tiles $B_n\cup\widehat{B_n}$ including the blue tiles
            $\{b_{n}^0,\widehat{b_{n}^0}\}$ 
            (all blue tiles appear in the image because 
            the left and bottom label of $j_n^{1,1,1,1}$ is $01\nbar$,
            see Lemma~\ref{lem:unique-horizontal-strip}
            and Figure~\ref{fig:bottom-row-if-bottom-labels-is-tau01i});
\item in the image of the blue tiles $\{b_{n}^0,\widehat{b_{n}^0}\}$,
    there are all yellow tiles $Y_n\cup\widehat{Y_n}$ including the yellow
            tiles $\{y_{n}^1,\widehat{y_{n}^1}\}$
            (all yellow tiles appear in the images because 
            the left label of $b_n^0$ is $000$
            and the bottom label of $\widehat{b_n^0}$ is $000$,
            see Lemma~\ref{lem:unique-horizontal-strip}
            and Figure~\ref{fig:bottom-row-if-bottom-labels-is-tau00i});
\item in the image of yellow tiles $Y_n\cup\widehat{Y_n}$,
    there are the junction tiles $\{j_n^{0,1,0,0},j_n^{0,0,0,1}\}$;
\item in the image of $Y_n\cup\widehat{Y_n}\cup\{j_n^{0,0,0,0}\}$,
    there are all green tiles $G_n\cup\widehat{G_n}$:
            \begin{itemize}
                \item green tiles $g_n^n$ and $\widehat{g_n^n}$ appear in the image of
            $j_n^{0,0,0,0}$ because 
            the left and bottom label of $j_n^{0,0,0,0}$ is $00n$,
            see Lemma~\ref{lem:unique-horizontal-strip}
            and Figure~\ref{fig:bottom-row-if-bottom-labels-is-tau00i};
                \item green tiles $g_n^i$ and $\widehat{g_n^i}$ for $0\leq i < n$
            appear in the images of the yellow tiles because 
            the bottom label of $\widehat{y_n^{\ibar}}$ is $01\ibar$,
            see Lemma~\ref{lem:unique-horizontal-strip}
            and Figure~\ref{fig:bottom-row-if-bottom-labels-is-tau01i});
            \end{itemize}
\item in the image of $j_n^{0,0,0,0}$,
    there is the junction tile $j_n^{0,1,0,1}$;
\item in the image of the blue tiles $\{y_{n}^1,\widehat{y_{n}^1}\}$,
    there are the green tiles $\{g_{n}^0,\widehat{g_{n}^0}\}$;
\item in the image of the green tiles $\{g_{n}^0,\widehat{g_{n}^0}\}$,
    there are the junction tiles $\{j_n^{0,1,1,1},j_n^{1,1,0,1}\}$.
\end{itemize}
    The tiles that can be obtained from the successive application of the
    substitution $\omega_n$ are shown in Figure~\ref{fig:primitivity}.
    The graph in the figure shows that every tile appears at distance 7 of
    every tile in $\Tcal_n$.
    Thus, for every tile $t\in\Tcal_n$ 
    the pattern $(\omega_n)^7(t)$ contains all tiles of $\Tcal_n$.
    Therefore, we conclude that $\omega_n$ is primitive.
\end{proof}

The exponent $7$ deduced in the previous proof is not sharp
as computations illustrate that for every integer $n\geq2$, the incidence
matrix of $(\omega_n)^4$ is already positive, while
the incidence matrix of $(\omega_1)^5$ is positive.

\begin{lemma}\label{lem:inflation-factor}
    The Perron--Frobenius dominant eigenvalue of the incidence matrix of
    $\omega_n$ is $\beta_n^2$, the square of the
    $n^{th}$ metallic mean number,
    and the inflation factor of $\omega_n$ is $\beta_n$.
\end{lemma}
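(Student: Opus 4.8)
The plan is to extract the information we need about the incidence matrix $M_{\omega_n}$ from the combinatorial description of $\omega_n$ already established, without computing the whole $(n+3)^2 \times (n+3)^2$ matrix. First I would observe that, since $\omega_n$ is primitive by Lemma~\ref{lem:primitivity}, the Perron--Frobenius theorem applies: $M_{\omega_n}$ has a unique dominant eigenvalue $\lambda > 0$, simple, with strictly positive left and right eigenvectors. The right eigenvector counts tiles, and the dominant eigenvalue is the common area-scaling factor of the substitution, i.e. the number $\lambda$ such that $\operatorname{area}(\omega_n(t)) \sim \lambda \cdot \operatorname{area}(t)$ in the sense that the total area of $\omega_n^k(t)$ grows like $\lambda^k$. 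So the key is to compute the area-scaling factor.

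The main step is to package the geometry through the one-dimensional substitution $\tau_n : V_n \to (V_n)^*$ on the boundary labels. By definition of $\omega_n$ in \eqref{eq:defintion-omegan}, the rectangular pattern $\omega_n(t)$ for a tile $t$ with bottom label $v$ and left label $u$ has width $|\tau_n(v)|$ and height $|\tau_n(u)|$. By the lemma following \eqref{eq:tau_n_five_cases}, $|\tau_n(xyz)| = n+1-x$, so the width of $\omega_n(t)$ depends only on the first digit of the bottom label of $t$, and likewise the height depends only on the first digit of the left label of $t$. Introduce the abelianization map $\ell : V_n \to \Z^{V_n}$; then $\tau_n$ has an incidence matrix $M_{\tau_n}$, and I claim its dominant eigenvalue is $\beta_n$. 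Indeed, from \eqref{eq:tau_n_five_cases} one reads off that $\tau_n$ maps a letter with first digit $0$ (length $n+1$) to a word containing one letter with first digit $0$ (the first item $00n$, $01n$, or $01\nbar$, all of which start with $0$) and $n$ letters with first digit $1$; and a letter with first digit $1$ (length $n$) to a word containing one letter with first digit $0$ and $n-1$ letters with first digit $1$. Collapsing $V_n$ to the two classes ``first digit $0$'' and ``first digit $1$'' gives the $2\times2$ matrix $\left(\begin{smallmatrix} 1 & 1 \\ n & n-1 \end{smallmatrix}\right)$, whose characteristic polynomial is $x^2 - n x - 1$; hence its dominant eigenvalue is $\beta_n$. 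Since this $2\times2$ matrix is a quotient of $M_{\tau_n}$ by a surjection intertwining the actions and $\tau_n$ is (by its five explicit cases) primitive on $V_n$, the dominant eigenvalue of $M_{\tau_n}$ is also $\beta_n$, with Perron eigenvector proportional to the pullback of $\left(\begin{smallmatrix} \beta_n \\ n \end{smallmatrix}\right)$.

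Next I would transfer this to $\omega_n$. The length-scaling in the horizontal direction under one application of $\omega_n$ is governed by $\tau_n$ applied to bottom labels; more precisely, the sequence of bottom labels of $\omega_n^k(t)$ is $\tau_n^k(v)$ where $v$ is the bottom label of $t$, so the width of $\omega_n^k(t)$ equals $|\tau_n^k(v)|$, which grows like $\beta_n^k$. Symmetrically, since $\Tcal_n = \widehat{\Tcal_n}$ and $\omega_n$ commutes with the diagonal reflection, the height of $\omega_n^k(t)$ also grows like $\beta_n^k$. Therefore $\operatorname{area}(\omega_n^k(t))$ grows like $\beta_n^{2k}$, and the number of tiles in $\omega_n^k(t)$ — which is exactly the sum of the $t$-column of $M_{\omega_n}^k$ — grows like $\beta_n^{2k}$. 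By Perron--Frobenius this forces the dominant eigenvalue of $M_{\omega_n}$ to be $\beta_n^2$. To make this rigorous rather than asymptotic, I would note that the common height of the images of all letters appearing in a fixed row (respectively, the common width for a fixed column), as remarked after the definition of $d$-dimensional morphisms, makes $\omega_n$ ``rectangular'' in the sense of \cite{MR1014984}; then the vector assigning to each tile the product (width)$\times$(height) of its $\omega_n$-image is, up to the rigid structure, compatible with $M_{\omega_n}$, and one checks directly that the vector $\bigl(|\tau_n(v)|\cdot|\tau_n(u)|\bigr)_{t=(\alpha,\beta,u,v)\in\Tcal_n}$ together with the Perron data of $M_{\tau_n}$ produces a positive eigenvector of $M_{\omega_n}$ with eigenvalue $\beta_n^2$; uniqueness of the Perron eigenvalue finishes the argument. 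Finally, the inflation factor of a rectangular substitution is by definition the linear expansion factor, i.e. the square root of the area-scaling factor $\beta_n^2$, which is $\beta_n$ since $\beta_n > 0$.

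The step I expect to be the main obstacle is the rigorous reduction from the asymptotic area statement to an exact eigenvalue identity: one must argue carefully that ``total area scales by $\beta_n^2$'' is literally the statement ``the column sums of $M_{\omega_n}^k$ scale by $\beta_n^{2k}$'', and that this pins down the Perron eigenvalue and not merely an upper/lower bound. The cleanest route is probably to exhibit an explicit positive eigenvector of $M_{\omega_n}$ for the eigenvalue $\beta_n^2$, built from the Perron eigenvector of the $2\times2$ matrix $\left(\begin{smallmatrix} 1 & 1 \\ n & n-1 \end{smallmatrix}\right)$ by the recipe $t=(\alpha,\beta,u,v)\mapsto f(u_0)\,f(v_0)$ where $f(0),f(1)$ are the two coordinates of that eigenvector, and then verify the eigenvector equation using Lemma~\ref{lem:Tcaln'-there-exists-rect-pattern} to enumerate which tiles occur in $\omega_n(t)$; combined with primitivity (Lemma~\ref{lem:primitivity}) and Perron--Frobenius, this identifies $\beta_n^2$ as the dominant eigenvalue and $\beta_n$ as the inflation factor.
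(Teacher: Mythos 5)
Your argument is correct and is essentially the paper's own: the paper likewise reduces everything to the $2\times2$ matrix $\left(\begin{smallmatrix}1&1\\ n&n-1\end{smallmatrix}\right)$ with Perron root $\beta_n$ (packaged there as the one-dimensional substitution $\rho_n:\texttt{a}\mapsto\texttt{a}\texttt{b}^{n},\ \texttt{b}\mapsto\texttt{a}\texttt{b}^{n-1}$ together with a tile-to-tile factor map $\zeta$ of $\omega_n$ onto $\rho_n\times\rho_n$, which is exactly your collapse of the boundary labels by their first digit), and then uses primitivity of $\omega_n$ to identify the Perron eigenvalue of its incidence matrix with the growth rate of $\scarea(\omega_n^k(t))=\text{width}\times\text{height}\sim\beta_n^{2k}$, giving eigenvalue $\beta_n^2$ and inflation factor $\beta_n$. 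Two minor slips that do not affect the argument: $\tau_n$ is not primitive on $V_n$ (most letters of $V_n$ never occur in images; only the exact two-class length recursion is needed), and $\left(\begin{smallmatrix}\beta_n\\ n\end{smallmatrix}\right)$ is not a Perron eigenvector of that $2\times2$ matrix for $n\geq2$ — the relevant left eigenvector is $(n,\beta_n-1)$, with which your weight $t\mapsto f(u_0)f(v_0)$ does satisfy the eigenvector equation, since the first digit of the bottom (resp.\ left) label is constant along columns (resp.\ rows) of any valid pattern.
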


\begin{proof} 
    We may deduce the dominant eigenvalue of the incidence matrix of
    $\omega_n$ from that of a simpler substitution.
    For every integer $n\geq1$, let $\rho_n$
    be the following 1-dimensional substitution:
    \[
    \begin{array}{rccl}
    \rho_n:&\{\texttt{a},\texttt{b}\}^* & \to & \{\texttt{a},\texttt{b}\}^*\\
        &  \texttt{a}&\mapsto&\texttt{a}\texttt{b}^n\\
        &  \texttt{b}&\mapsto&\texttt{a}\texttt{b}^{n-1}
    \end{array}.
    \]
    The incidence matrix of $\rho_n$ is
    \[
        \left(
    \begin{array}{cc}
        1 & 1\\
        n & n-1
    \end{array}
        \right)
    \]
    whose characteristic polynomial is $x^2-nx-1$.
    The Perron--Frobenius dominant eigenvalue of the incidence matrix of $\rho_n$
    is the positive root $\beta_n$ of the polynomial $x^2-nx-1$.
    Since $\rho_n$ is primitive, the growth rate of $|\rho_n^k(u)|$
    is independent of $u\in\{\texttt{a},\texttt{b}\}$ and is equal to $\beta_n$
    \cite[Corollary 5.2]{MR2590264}.
    In other words, for every $u\in\{\texttt{a},\texttt{b}\}$, we have
    \begin{equation}\label{eq:primitive-growth-rate}
        \lim_{k\to\infty} |\rho_n^k(u)|^{\frac{1}{k}} = \beta_n.
    \end{equation}

    We observe that the $2$-dimensional substitution $\omega_n$ 
    is an extension of the direct product $\rho_n\times \rho_n$ of the 
    one-dimensional substitution $\rho_n$ with itself.
    By extension, we mean the existence of a map 
    \[
        \begin{array}{rccl}
            \zeta:&\Tcal_n&\to&\{\texttt{a},\texttt{b}\}\times\{\texttt{a},\texttt{b}\}\\
            &t&\mapsto&
            \begin{cases}
                (\texttt{a}, \texttt{a}) & \text{ if } t \in J_n,\\
                (\texttt{b}, \texttt{a}) & \text{ if } t \in B_n\cup Y_n\cup G_n,\\
                (\texttt{a}, \texttt{b}) & \text{ if } t \in \widehat{B_n}\cup \widehat{Y_n}\cup \widehat{G_n},\\
                (\texttt{b}, \texttt{b}) & \text{ if } t \in W_n,\\
            \end{cases}
        \end{array}
    \]
    such that $(\rho_n\times \rho_n)\circ\zeta=\zeta\circ\omega_n$.

    Since $\omega_n$ is primitive,
    the dominant eigenvalue $\lambda$ of the incidence matrix of the substitution $\omega_n$
    is equal to the growth rate of $\scarea(\omega_n^k(t))$
    as $k\to\infty$, 
    where $t\in\Tcal_n$ is any tile
    and $\scarea(p)$ denotes
    the cardinality of the support of a rectangular pattern $p\in(\Tcal_n)^{*^2}$.
    Let $t\in\Tcal_n$ such that $\zeta(t)=(t_1,t_2)$ for some $t_1,t_2\in\{\texttt{a},\texttt{b}\}$.
    Since $\zeta$ is a tile to tile map, it preserves the area.
    Thus, we have
    \begin{align*}
        \lambda 
        &= \lim_{k\to\infty} \scarea(\omega_n^k(t))^{\frac{1}{k}}
        = \lim_{k\to\infty} \scarea(\zeta\circ\omega_n^k(t))^{\frac{1}{k}}
        = \lim_{k\to\infty} \scarea((\rho_n\times \rho_n)^k\circ\zeta(t))^{\frac{1}{k}}\\
        &= \lim_{k\to\infty} \scarea((\rho_n\times \rho_n)^k(\zeta(t)))^{\frac{1}{k}}
        = \lim_{k\to\infty} \scarea((\rho_n\times \rho_n)^k(t_1,t_2))^{\frac{1}{k}}\\
        &= \lim_{k\to\infty} \left( |\rho_n^k(t_1)|\cdot |\rho_n^k(t_2)| \right)^{\frac{1}{k}}
        = \lim_{k\to\infty} |\rho_n^k(t_1)|^{\frac{1}{k}} \cdot
          \lim_{k\to\infty} |\rho_n^k(t_2)|^{\frac{1}{k}}
        \overset{\eqref{eq:primitive-growth-rate}}{=} \beta_n \cdot \beta_n
        = \beta_n^2.
    \end{align*}
    Therefore, the incidence matrices of the substitutions $\omega_n$ and
    $\rho_n\times \rho_n$ have the same Perron-Frobenius dominant eigenvalue,
    and it is equal to $\beta_n^2$.

    The inflation factor is the factor of the homogeneous dilation associated with
    the stone inflation constructed from the direct product $\rho_n\times \rho_n$
    \cite[\S~5.6]{MR3136260} (for example, a stone inflation for $\rho_4\times
    \rho_4$ is shown in Figure~\ref{fig:global-stone-inflation} when $n=4$).
    The inflation factor 
    of the stone inflation of $\rho_n\times \rho_n$
    is $\beta_n$ as it multiplies distances between
    points by $\beta_n$ and the areas by $\beta_n^2$.
\end{proof}

\begin{THEOREMII}
    \MainTheoremII
\end{THEOREMII}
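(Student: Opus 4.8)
The plan is to obtain Theorem~\ref{thm:primitivity} by assembling the two lemmas that precede it, since all the substantive work has already been carried out there. First I would invoke Lemma~\ref{lem:primitivity}, which establishes that the $2$-dimensional substitution $\omega_n:\Omega_n\to\Omega_n$ is primitive for every integer $n\geq1$; concretely, the reachability diagram in Figure~\ref{fig:primitivity} shows that for each tile $t\in\Tcal_n$ the pattern $(\omega_n)^7(t)$ contains every tile of $\Tcal_n$, so the constant $m=7$ witnesses primitivity in the sense of the definition. In particular the incidence matrix $M_{\omega_n}=(|\omega_n(a)|_b)_{(b,a)}$ is a nonnegative matrix some power of which is strictly positive, so the Perron--Frobenius theorem applies and $M_{\omega_n}$ has a well-defined simple dominant eigenvalue.

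Second I would invoke Lemma~\ref{lem:inflation-factor}, which identifies that dominant eigenvalue as $\beta_n^2$ and the inflation factor of $\omega_n$ as $\beta_n$, the $n^{th}$ metallic mean. The key mechanism there is that $\omega_n$ is an extension of the direct product $\rho_n\times\rho_n$ of the one-dimensional primitive substitution $\rho_n:\texttt{a}\mapsto\texttt{a}\texttt{b}^n,\ \texttt{b}\mapsto\texttt{a}\texttt{b}^{n-1}$ through the area-preserving colour map $\zeta:\Tcal_n\to\{\texttt{a},\texttt{b}\}^2$ satisfying $(\rho_n\times\rho_n)\circ\zeta=\zeta\circ\omega_n$, combined with the growth-rate identity $\lim_{k\to\infty}|\rho_n^k(u)|^{1/k}=\beta_n$ valid for primitive $\rho_n$; taking $k$-th roots of areas then yields $\beta_n\cdot\beta_n=\beta_n^2$ for the growth rate of $\scarea(\omega_n^k(t))$, hence for the dominant eigenvalue, and $\beta_n$ for the length-scaling factor of the associated stone inflation.

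Putting the two together gives verbatim the three assertions of the theorem. I do not anticipate any real obstacle at this stage of the argument: the one point worth a line of care is purely terminological, namely confirming that the \emph{inflation factor} referred to in the theorem is the scaling constant of the stone inflation of $\rho_n\times\rho_n$ (which multiplies lengths by $\beta_n$ and areas by $\beta_n^2$), exactly as computed in Lemma~\ref{lem:inflation-factor}, and not the eigenvalue $\beta_n^2$ itself. Beyond that, the proof of Theorem~\ref{thm:primitivity} is essentially the single sentence: combine Lemma~\ref{lem:primitivity} and Lemma~\ref{lem:inflation-factor}.
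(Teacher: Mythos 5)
Your proposal is correct and matches the paper's own proof, which is exactly the two-line argument you describe: primitivity comes from Lemma~\ref{lem:primitivity} and the dominant eigenvalue $\beta_n^2$ together with the inflation factor $\beta_n$ come from Lemma~\ref{lem:inflation-factor}. Your additional remarks about the reachability graph, the factor map $\zeta$ onto $\rho_n\times\rho_n$, and the distinction between the eigenvalue and the inflation factor are accurate restatements of the content of those lemmas rather than a different route.
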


\begin{proof} 
    From Lemma~\ref{lem:primitivity}, $\omega_n$ is primitive.
    The Perron--Frobenius dominant eigenvalue of the incidence matrix of
    $\omega_n$ and its inflation factor are computed in
    Lemma~\ref{lem:inflation-factor}.
\end{proof} 

From Perron--Frobenius theorem, the primitivity of the substitution $\omega_n$
implies that every Wang tile in $\Tcal_n$ appears with positive frequency in a
configuration in the substitutive subshift $\Xcal_{\omega_n}$ generated by the
substitution $\omega_n$.  The frequencies of the tiles is given by the entries of 
the right-eigenvector of the incidence matrix of $\omega_n$ normalized so that
the sum of its entries is 1.

\section{$\Omega_n$ is minimal}
\label{sec:minimal}

The goal of this section is to prove that $\Omega_n$ is minimal.
To prove minimality, we need more notions.
We use the method proposed in \cite[\S 3.3]{labbe_three_2020}.

\subsection{A criterion for minimality of a self-similar subshift}

Recall that a subshift $X$ is self-similar if $X=\overline{\omega(X)}^\sigma$
for some expansive $d$-dimensional substitution; 
see Definition~\ref{def:expansive} and Definition~\ref{def:self-similar}.
First we recall Lemma 3.8 from \cite{labbe_three_2020}.

\begin{lemma}\label{lem:substitutive-contains-self-similar-part}
    Let $\omega:\Acal\to\Acal^{*^d}$ be an expansive and primitive $d$-dimensional
    morphism. Let $X\subseteq\Acal^{\Z^d}$ be a nonempty subshift 
    such that $X=\overline{\omega(X)}^{\sigma}$. Then 
    $\Xcal_\omega\subseteq X$.
\end{lemma}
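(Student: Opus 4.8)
The plan is to show that the language $\Lcal_\omega$ of the substitution is contained in $\Lcal_X$, which immediately gives $\Xcal_\omega = \Xcal_{\Lcal_\omega} \subseteq \Xcal_{\Lcal_X} = X$ (using that $X$ is a subshift, hence $X = \Xcal_{\Lcal_X}$). Since $\Lcal_\omega$ is generated by the subwords of $\omega^n(a)$ for $a \in \Acal$ and $n \in \N$, and $\Lcal_X$ is factorial, it suffices to prove that for every letter $a \in \Acal$ and every $n \in \N$, the $d$-dimensional word $\omega^n(a)$ is a subword of some configuration in $X$.

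First I would observe that $X$ is nonempty, so fix some $x \in X$. Because $X = \overline{\omega(X)}^\sigma$, iterating gives $X = \overline{\omega^n(X)}^\sigma$ for every $n$, so every configuration of $X$ — in particular $x$ — contains $\omega^n(x_{\bm})$ as a subword for each $\bm \in \Z^d$; concretely $x = \sigma^{\bk}\omega^n(x')$ for some $x' \in X$ and $\bk \in \Z^d$, and then $\omega^n(x'_{\zero})$ occurs in $x$. Thus $\omega^n(b)$ is a subword of some element of $X$ for at least one letter $b$ (namely any letter $b = x'_{\zero}$ that occurs in a configuration of $X$; since $X$ is nonempty at least one such letter exists). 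Now I invoke primitivity: there is $m \in \N$ such that for every $a, b \in \Acal$ the letter $b$ occurs in $\omega^m(a)$. Hence $\omega^m(a)$ contains $b$, and applying $\omega^n$ (as a $d$-dimensional morphism, so it respects concatenation and therefore maps subwords to subwords) shows that $\omega^n(b)$ is a subword of $\omega^{n+m}(a)$. Since $X = \overline{\omega^{n+m}(X)}^\sigma$, the word $\omega^{n+m}(a)$ occurs in some configuration of $X$ (because $b$ occurs in some configuration, hence the image under $\omega^{n+m}$ of a suitable letter does too — more directly, pick $y \in X$ containing $b$, then $\omega^{n+m}(y)$ lies in $X$ and contains $\omega^{n+m}(b) \supseteq$... actually cleanest: take $x' \in X$ with $x'_{\zero} = a$ if such exists, else use primitivity from whatever letter does occur). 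Let me streamline: fix any $x \in X$ and let $c = x_{\zero}$; by primitivity $a$ occurs in $\omega^m(c)$ for all $a$, and $\omega^{n+m}(x) \in X$ contains $\omega^{n+m}(c) = $ a word in which $\omega^n(\omega^m(c))$ appears, and $\omega^m(c)$ contains every letter $a$, so $\omega^n(a)$ is a subword of $\omega^{n+m}(c)$, which is a subword of a configuration in $X$.

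Assembling: for every $a \in \Acal$ and $n \in \N$, $\omega^n(a)$ is a $d$-dimensional subword of some configuration in $X$, so $\omega^n(a) \in \Lcal_X$. Every element of $\Lcal_\omega$ is a subword of some $\omega^n(a)$, and $\Lcal_X$ is factorial, so $\Lcal_\omega \subseteq \Lcal_X$. Therefore $\Xcal_\omega = \Xcal_{\Lcal_\omega} \subseteq \Xcal_{\Lcal_X} = X$, which is the claim.

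I do not expect a genuine obstacle here; the only point requiring a little care is the bookkeeping that a $d$-dimensional morphism genuinely carries subwords to subwords (so that $b$ occurring in $\omega^m(c)$ forces $\omega^n(b)$ to occur in $\omega^{n+m}(c)$) and that $X = \overline{\omega(X)}^\sigma$ iterates to $X = \overline{\omega^k(X)}^\sigma$ for all $k$, which in turn guarantees that images $\omega^k(\cdot)$ of letters appearing in $X$ actually materialize inside configurations of $X$ rather than merely in the formal language of $\omega$. Expansiveness is used implicitly only to know that $\omega^n(a)$ grows — it is not strictly needed for the inclusion itself, but it is harmless to keep it in the hypotheses since the lemma is stated in that generality.
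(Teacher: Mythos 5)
Your proof is correct and follows essentially the same route as the paper: both reduce the claim to showing $\Lcal_\omega\subseteq\Lcal_X$, using that the self-similarity $X=\overline{\omega(X)}^\sigma$ iterates (so images $\omega^k$ of letters occurring in $X$ occur in configurations of $X$) and that primitivity propagates this from one letter to every letter, then conclude via factoriality of the languages. The only cosmetic difference is that you argue at the level of configurations ($\omega^{n+m}(x)\in X$ contains $\omega^n(a)$), whereas the paper phrases the same step as the identity $\Lcal(X)=\Lcal(\omega^m(\Lcal(X)))$; your closing observation that expansiveness is not needed for the inclusion is also accurate.
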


\begin{proof}
    The language of $X$ is also self-similar satisfying
    $\Lcal(X)=\Lcal(\omega(\Lcal(X)))$.
    Recursively, $\Lcal(X)=\Lcal(\omega^m(\Lcal(X)))$
    for every $m\geq1$. Since $X$ is nonempty, there exists a letter
    $a\in\Acal$ such that for all $m\geq1$, the $d$-dimensional word $\omega^m(a)$ is in the
    language $\Lcal(X)$.
    From the primitivity of $\omega$, there exists $m\geq1$ such that
    $\omega^m(a)$ contains an occurrence of every letter of the alphabet $\Acal$.
    Therefore, every letter is in $\Lcal(X)$ and
    the $d$-dimensional word $\omega^m(a)$ is in the
    language $\Lcal(X)$ for all letters $a\in\Acal$ and all $m\geq1$.
    So we conclude that 
    $\Lcal(\Xcal_\omega)\subseteq\Lcal(X)$ and
    $\Xcal_\omega\subseteq X$.
\end{proof}

Proving that a self-similar $d$-dimensional subshift $X$ satisfying
$X=\overline{\omega(X)}^{\sigma}$ is equal to $\Xcal_\omega$ can be tricky.
As illustrated in the following example, it depends on the combinatorics of the
substitution.

\begin{example}\label{ex:sub2d-on-abc}
Consider the following 2-dimensional substitution $\nu$ over alphabet $\{a,b,c\}$:
\[
\nu:
a\mapsto \left(\begin{array}{ccccc}
				c&c&c&c&c\\
				c&c&c&c&c\\
				c&c&a&c&c\\
        \end{array}\right),\quad
b\mapsto \left(\begin{array}{ccccc}
				c&c&b&c&a\\
				c&c&c&c&c\\
				c&c&c&c&c\\
        \end{array}\right),\quad
c\mapsto \left(\begin{array}{ccccc}
				c&c&a&c&c\\
				c&c&c&b&c\\
				c&c&c&c&c\\
        \end{array}\right).
\]
We may observe that
the vertical domino $\left(\begin{smallmatrix}a\\b\end{smallmatrix}\right)$
does not belong to the language of the substitutive subshift $\Xcal_\nu$,
since it does not appear in any of the $k$-th image of any letter under the substitution.
But one can see that the vertical domino 
$\left(\begin{smallmatrix}a\\b\end{smallmatrix}\right)$
is preserved by the substitution. 
Therefore, there exists a configuration $x$ containing a single
vertical domino 
$\left(\begin{smallmatrix}a\\b\end{smallmatrix}\right)$
which is fixed by the substitution. 
Thus, we have
\[
    \varnothing\neq \Xcal_\nu\subsetneq 
    \Xcal_\nu\cup\{\sigma^n(x)\,|\,n\in\Z^2\}.
\] 
The subshift $\Xcal_\nu\cup\{\sigma^n(x)\,|\,n\in\Z^2\}$ is self-similar, but it is not
minimal because it contains a proper nonempty subshift.
\end{example}

Therefore, to conclude that we have the equality
$\Xcal_\omega=X$ for a self-similar subshift $X$, it is convenient to
consider 
the domino patterns of size $1\times 2$ and $2\times 1$
straddling the images of the two letters of a domino
as well as 
the $2\times 2$ patterns 
straddling the images of the four letters of $2\times 2$ pattern.
More precisely, we need to consider the following directed graphs:
\begin{itemize}
    \item
Let $G_\omega^{2\times2}=(V_\omega^{2\times2},E_\omega^{2\times2})$ be the
directed graph whose vertices and edges are
\begin{align*}
    V_\omega^{2\times2} &= \left\{
        \left(\begin{smallmatrix}
            a&b\\
            c&d
        \end{smallmatrix}\right)
        \in\Acal^{2\times 2}
        \mid
        a\equiv_1 b,
        c\equiv_1 d,
        a\equiv_2 c,
        b\equiv_2 d
        \right\},\\
    E_\omega^{2\times2} &= \left\{
        \left(\begin{smallmatrix}
            e&f\\
            g&h
        \end{smallmatrix}\right)
        \to
        \left(\begin{smallmatrix}
            a&b\\
            c&d
        \end{smallmatrix}\right)
        \middle|
        \begin{array}{l}
        a \text{ is the bottom right letter of } \omega(e),\\
        b \text{ is the bottom left letter of }  \omega(f),\\
        c \text{ is the top right letter of }   \omega(g),\\
        d \text{ is the top left letter of }    \omega(h)\\
        \end{array}
        \right\}.
\end{align*}
    \item
Let $G_\omega^{2\times1}=(V_\omega^{2\times1},E_\omega^{2\times1})$ be the
directed graph whose vertices and edges are 
\begin{align*}
    V_\omega^{2\times1} &= \left\{
        \left(\begin{smallmatrix}
            a&b
        \end{smallmatrix}\right)
        \in\Acal^{2\times1}
        \mid
        a\equiv_1 b
        \right\},\\
    E_\omega^{2\times1} &= \left\{
        \left(\begin{smallmatrix}
            e&f
        \end{smallmatrix}\right)
        \to
        \left(\begin{smallmatrix}
            a&b
        \end{smallmatrix}\right)
        \middle|
        \begin{array}{l}
        \text{there exists an integer $j$ such that $0\leq j < \height(\omega(e))$ and}\\
        a \text{ is the letter in the $j$-th row in the right-most column of } \omega(e),\\
        b \text{ is the letter in the $j$-th row in the left-most column of } \omega(f)\\
        \end{array}
        \right\}.
\end{align*}
    \item
Let $G_\omega^{1\times2}=(V_\omega^{1\times2},E_\omega^{1\times2})$ be the
directed graph whose vertices and edges are 
\begin{align*}
    V_\omega^{1\times2} &= \left\{
        \left(\begin{smallmatrix}
            a\\
            c
        \end{smallmatrix}\right)
        \in\Acal^{1\times2}
        \mid
        a\equiv_2 c
        \right\},\\
    E_\omega^{1\times2} &= \left\{
        \left(\begin{smallmatrix}
            e\\
            g
        \end{smallmatrix}\right)
        \to
        \left(\begin{smallmatrix}
            a\\
            c
        \end{smallmatrix}\right)
        \middle|
        \begin{array}{l}
        \text{there exists an integer $i$ such that $0\leq i < \width(\omega(e))$ and}\\
        a \text{ is the letter in the $i$-th column in the bottom-most row of } \omega(e),\\
        c \text{ is the letter in the $i$-th column in the top-most row of } \omega(g)\\
        \end{array}
        \right\}.
\end{align*}
\end{itemize}
    Finally, for every directed graph $G=(V,E)$, we define
    the set of \defn{recurrent} vertices, that is, those belonging to a cycle
    of the graph:
    \[
        \RecurrentVertices(G)
            =
            \{v\in V
            \mid
            v \text{ belongs to a cycle of }G\}.
    \]

    \begin{figure}[h]
	\setlength\tabcolsep{15pt}
        \begin{tabular}{ccc}
	\begin{tikzpicture}[>=latex,line join=bevel,]
    \tikzset{every loop/.style={min distance=5mm,looseness=5}}
		\node (node_0) at (0,2)  {$\left(\begin{array}{rr}c & c \\a & c\end{array}\right)$};
		\node (node_1) at (0,0)  {$\left(\begin{array}{rr}c & c \\c & c\end{array}\right)$};
            \draw[->] (node_1) edge [loop left] ();
        \draw [black,->] (node_0) -- (node_1);
	\end{tikzpicture}&
	\begin{tikzpicture}[>=latex,line join=bevel,]
    \tikzset{every loop/.style={min distance=5mm,looseness=5}}
		\node (node_0) at (0,2)  {$\left(\begin{array}{rr}a & c\end{array}\right)$};
		\node (node_1) at (0,0)  {$\left(\begin{array}{rr}c & c\end{array}\right)$};
		\draw[->] (node_1) edge [loop left] ();
        \draw [black,->] (node_0) -- (node_1);
	\end{tikzpicture}&
	\begin{tikzpicture}[>=latex,line join=bevel,scale=1.5]
    \tikzset{every loop/.style={min distance=5mm,looseness=5}}
	\node (ab) at (0,1.3)   {$\left(\begin{array}{c}a\\b \end{array}\right)$};
	\node (aa) at (4,1.3)   {$\left(\begin{array}{c}a\\a \end{array}\right)$};
	\node (ac) at (4,0)   {$\left(\begin{array}{c}a\\c \end{array}\right)$};
	\node (cb) at (0,0)     {$\left(\begin{array}{c}c\\b \end{array}\right)$};
	\node (cc) at (2,1.3)     {$\left(\begin{array}{c}c\\c \end{array}\right)$};
	\node (ca) at (2,0)     {$\left(\begin{array}{c}c\\a \end{array}\right)$};
    \draw[->] (ab) edge [loop left] ();
	\draw[->] (cb) edge [loop left] ();
	\draw[->] (cc) edge [loop above] ();
	\draw [black,->] (aa) -- (cc);
	\draw [black,->] (ab) -- (cc);
	\draw [black,->] (ab) -- (ca);
	\draw [black,->] (ac) -- (cc);
	\draw [black,->] (cb) -- (ca);
	\draw [black,->] (cb) -- (cc);
	\draw [black,->,bend left=10] (aa) to (ac);
	\draw [black,->,bend left=10] (ac) to (aa);
	\draw [black,->,bend left=10] (cc) to (ca);
	\draw [black,->,bend left=10] (ca) to  (cc);
	\end{tikzpicture}
            \\
            $G_\nu^{2\times2}$ &
            $G_\nu^{2\times1}$ &
            $G_\nu^{1\times2}$ \\
        \end{tabular}
\caption{The graphs $G_\nu^{2\times2}$, $G_\nu^{2\times1}$ and $G_\nu^{1\times2}$
    for the substitution $\nu$.}
\label{fig:the-three-seeds-graphs}
\end{figure}

\begin{example}
    The graphs
    $G_\nu^{2\times2}$,
    $G_\nu^{2\times1}$ and
    $G_\nu^{1\times2}$
    for the 2-dimensional substitution $\nu$ defined in
    Example~\ref{ex:sub2d-on-abc}
    are shown in Figure~\ref{fig:the-three-seeds-graphs}.
    The recurrent vertices of the graphs are:
\begin{align*}
        \RecurrentVertices(G_\nu^{2\times2}) &=
\left\{ \left(\begin{smallmatrix}c&c \\c&c \end{smallmatrix}\right)\right\}\\
        \RecurrentVertices(G_\nu^{2\times1}) &= 
\left\{ \left(\begin{smallmatrix}c&c \end{smallmatrix}\right)\right\}\\
        \RecurrentVertices(G_\nu^{1\times2}) &= 
\left\{
\left(\begin{smallmatrix}a\\b \end{smallmatrix}\right),
\left(\begin{smallmatrix}c\\c \end{smallmatrix}\right),
\left(\begin{smallmatrix}a\\a \end{smallmatrix}\right),
\left(\begin{smallmatrix}c\\b \end{smallmatrix}\right),
\left(\begin{smallmatrix}c\\a \end{smallmatrix}\right),
\left(\begin{smallmatrix}a\\c \end{smallmatrix}\right)\right\}
\end{align*}
In particular, we observe that
the vertical domino
$\left(\begin{smallmatrix}a\\b \end{smallmatrix}\right)$
    belongs to a cycle of $G_\nu^{1\times2}$,
    even though it is not in the language $\Lcal(\Xcal_\nu)$.
\end{example}

    The recurrent vertices of the three graphs
    $G_\omega^{2\times2}$,
    $G_\omega^{2\times1}$ and
    $G_\omega^{1\times2}$
    provide a criteria for the minimality of a self-similar subshift
    $X=\shiftclosure{\omega(X)}$.
    Lemma 3.7 and Lemma 3.9 from \cite{labbe_three_2020}
    gave hypothesis under which an expansive and primitive 2-dimensional
    substitution has a unique nonempty self-similar subshift.
    The following lemma is a relaxed version which
    allows to conclude that a self-similar subshift is minimal 
    even when the 2-dimensional substitution admits more than one self-similar
    subshift (some made of configurations which are not uniformly recurrent).

\begin{lemma}\label{lem:criterion-for-minimality}
    Let $X=\shiftclosure{\omega(X)}$ be a nonempty self-similar subshift
    where $\omega:\Acal\to\Acal^{*^d}$ is an expansive and primitive $2$-dimensional
    morphism.
    The following are equivalent:
    \begin{enumerate}[(i)]
        \item $\Lcal(X)\cap\RecurrentVertices(G^s_\omega)\subset\Lcal(\Xcal_\omega)$
              for every size $s\in\{2\times2, 2\times1, 1\times2\}$,
    \item $X=\Xcal_\omega$,
    \item $X$ is minimal.
    \end{enumerate}
\end{lemma}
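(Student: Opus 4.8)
The strategy is to establish $(ii)\Rightarrow(iii)$, $(iii)\Rightarrow(ii)$, $(ii)\Rightarrow(i)$ and $(i)\Rightarrow(ii)$. First recall that Lemma~\ref{lem:substitutive-contains-self-similar-part} gives $\Xcal_\omega\subseteq X$, that $\Xcal_\omega$ is a nonempty subshift (nonemptiness comes from expansiveness, which forces $\Lcal_\omega$ to contain arbitrarily large patterns, together with compactness of $\Acal^{\Z^d}$), and that primitivity of $\omega$ yields $\Lcal_\omega=\Lcal(\Xcal_\omega)$ as well as the standard fact that the substitutive subshift $\Xcal_\omega$ is minimal: every configuration of $\Xcal_\omega$ is tiled by the $\omega^j$-images of the letters of some configuration, and by primitivity a fixed power $\omega^m$ maps every letter to a pattern containing every letter, so each pattern of $\Lcal_\omega$ recurs within bounded gaps in every configuration of $\Xcal_\omega$. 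This gives $(ii)\Rightarrow(iii)$. Conversely, if $X$ is minimal, then the nonempty closed shift-invariant subset $\Xcal_\omega\subseteq X$ must be all of $X$, which is $(iii)\Rightarrow(ii)$. The implication $(ii)\Rightarrow(i)$ is immediate: if $X=\Xcal_\omega$ then $\Lcal(X)=\Lcal(\Xcal_\omega)$, so the inclusion in $(i)$ holds for every size $s$.

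It remains to prove $(i)\Rightarrow(ii)$, and since $\Xcal_\omega\subseteq X$ it suffices to show $\Lcal(X)\subseteq\Lcal(\Xcal_\omega)$. The first step is a reduction: it is enough to prove that every pattern of $\Lcal(X)$ of shape $1\times1$, $2\times1$, $1\times2$ or $2\times2$ lies in $\Lcal(\Xcal_\omega)$. Indeed, let $P\in\Lcal(X)$ have width $w$ and height $h$ and occur in some $x\in X$. By expansiveness and finiteness of $\Acal$, choose $m$ with $\min(\scsize(\omega^m(a)))>\max(w,h)$ for all $a\in\Acal$, and, iterating $X=\shiftclosure{\omega(X)}$, write $x=\sigma^{\bk}\omega^m(x')$ for some $x'\in X$. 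In the tiling of $x$ by the $\omega^m$-images of the letters of $x'$, the pattern $P$ is narrower and shorter than one image, so it meets at most two images in each direction; hence $P$ occurs inside $\omega^m(Q)$ for some block $Q$ of letters occurring in $x'$ whose shape is at most $2\times2$. If $Q\in\Lcal(\Xcal_\omega)$, then $\omega^m(Q)\in\Lcal(\Xcal_\omega)$ and therefore $P\in\Lcal(\Xcal_\omega)$.

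So fix $Q\in\Lcal(X)$ of one of the four shapes. If $Q$ is a single letter it lies in $\Lcal(\Xcal_\omega)$ by primitivity. Otherwise we desubstitute one step at a time to build a backward walk in the graph $G^s_\omega$ of the corresponding size $s\in\{2\times1,1\times2,2\times2\}$: given the current block $Q_j\in\Lcal(X)$ occurring in a configuration $x^{(j)}\in X$, write $x^{(j)}=\sigma^{\bk_j}\omega(x^{(j+1)})$ with $x^{(j+1)}\in X$ and examine how the cells of $Q_j$ meet the $\omega$-images coming from $x^{(j+1)}$. Either all cells of $Q_j$ lie inside the image $\omega(e)$ of a single letter $e$ of $x^{(j+1)}$, in which case $Q_j\subseteq\omega(e)\in\Lcal(\Xcal_\omega)$ and the walk stops; or $Q_j$ straddles the seam(s) between the images of a block $Q_{j+1}$ of letters of $x^{(j+1)}$ of shape $2\times1$, $1\times2$ or $2\times2$, so $Q_{j+1}\in\Lcal(X)$ and, matching the geometry against the edge definitions, $Q_{j+1}\to Q_j$ is an edge of the graph of the corresponding size. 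When $s=2\times2$ the block $Q_{j+1}$ may be a domino; such a block is treated by the domino instance of the argument, which is self-contained, so the $2\times2$ recursion only ever reduces to the domino recursions and never the reverse. If the walk terminates, propagating forward from its endpoint to $Q_0=Q$ yields $Q\in\Lcal(\Xcal_\omega)$, using at each step that an edge $v\to v'$ of $G^s_\omega$ satisfies $v\in\Lcal(\Xcal_\omega)\Rightarrow v'\in\Lcal(\Xcal_\omega)$ (apply $\omega$ to an occurrence of $v$ in some $\omega^k(g)$). If the walk never terminates it is infinite; since $G^s_\omega$ has finitely many vertices it revisits a vertex $v$, which therefore lies on a cycle, so $v\in\RecurrentVertices(G^s_\omega)$; as $v\in\Lcal(X)$ too, hypothesis $(i)$ gives $v\in\Lcal(\Xcal_\omega)$, and propagating forward from $v$ to $Q_0=Q$ again yields $Q\in\Lcal(\Xcal_\omega)$. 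Hence $\Lcal(X)\subseteq\Lcal(\Xcal_\omega)$ and $X=\Xcal_\omega$.

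The main obstacle is the bookkeeping in $(i)\Rightarrow(ii)$: one must check carefully that a pattern of bounded size, after enough desubstitutions, is covered by a block of letters of shape at most $2\times2$ in the desubstituted configuration; that the seam-straddling relation between two consecutive blocks coincides exactly with an edge of the relevant graph $G^s_\omega$ (the equal-height and equal-width constraints built into the vertex sets of these graphs are precisely what makes $\omega$ applicable to the blocks); and that the $2\times2$ recursion and the two domino recursions do not feed back into one another.
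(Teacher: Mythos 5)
Your proof is correct and follows essentially the same route as the paper: the easy implications via Lemma~\ref{lem:substitutive-contains-self-similar-part} and primitivity, and for (i)$\implies$(ii) the reduction by expansiveness to patterns of shape at most $2\times2$, followed by a backward desubstitution walk in $G^s_\omega$, a pigeonhole argument producing a vertex of $\RecurrentVertices(G^s_\omega)\cap\Lcal(X)$, and forward propagation using hypothesis (i) and primitivity. The only difference is that you spell out the degenerate cases (the walk terminating inside the image of a single letter, and a $2\times2$ block desubstituting to a domino), which the paper handles implicitly by choosing $u$ of smallest possible size and asserting a left-infinite path in $G^s_\omega$.
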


An element $u\in\Acal^\bn$ is called a
\defn{$d$-dimensional word} of \defn{size} $\bn=(n_1,\dots,n_d)\in\N^d$
on the alphabet~$\Acal$.
We use the notation $\scsize(u)=\bn$ when necessary.

\begin{proof}
    Assume that $X=\overline{\omega(X)}^{\sigma}$ for some $\varnothing\neq X\subseteq\Acal^{\Z^d}$.

    (i) $\implies$ (ii)
    From Lemma~\ref{lem:substitutive-contains-self-similar-part},
    we have $\Xcal_\omega\subseteq X$.
    Let $z\in\Lcal(X)$. 
    We want to show that $z\in\Lcal(\Xcal_\omega)$. 
    Since $\omega$ is expansive,
    let $m\in\N$ such that the image of every letter
    $a\in\Acal$ by $\omega^m$ is larger than $z$, that is, 
    $\scsize(\omega^m(a))\geq\scsize(z)$ for all $a\in\Acal$.
    We have $z\in\Lcal(X)=\Lcal\left(\omega^m(\Lcal(X))\right)$.
    By the choice of $m$, $z$ cannot overlap more than two blocks
    $\omega^m(a)$ in the same direction. Thus, there exists a word $u\in\Lcal(X)$ of
    size 
    $1\times 1$,
    $2\times 1$,
    $1\times 2$ or
    $2\times 2$ such that $z$ is a subword of $\omega^m(u)$.
    If $u$ is of size $1\times 1$, then $z\in\Lcal(\Xcal_\omega)$.
    We may assume that the word $u$ has the smallest possible rectangular size
    $s\in \{2\times 1, 1\times 2, 2\times 2\}$.

    We have $u\in V_\omega^{s}$. Since $u\in\Lcal(X)$ and $X$ is self-similar,
    there exists a sequence $(u_k)_{k\in\N}$ with $u_k\in V_\omega^{s}\cap\Lcal(X)$ 
    for all $k\in\N$
    such that
    \[
        \cdots\rightarrow
        u_{k+1} \rightarrow 
        u_k \rightarrow 
        \cdots\rightarrow
        u_1 \rightarrow u_0 = u
    \]
    is a left-infinite path in the graph $G_\omega^{s}$.
    Since $V_\omega^{s}$ is finite, there exist some $k,k'\in\N$ with $k<k'$
    such that $u_k=u_{k'}$.
    Thus, $u_k\in\RecurrentVertices(G^s_\omega)$
    and $u$ is a subword of $\omega^k(u_k)$.
    From the hypothesis, we have $u_k\in\Lcal(\Xcal_\omega)$.
    Since $\omega$ is primitive, there exists $\ell$ such that 
    $u_k$ is a subword of $\omega^\ell(a)$ for every $a\in\Acal$.
    Therefore, $z$ is a subword of $\omega^{m+k+\ell}(a)$ for every $a\in\Acal$.
    Then $z\in\Lcal(\Xcal_\omega)$ and $\Lcal(X)\subseteq\Lcal(\Xcal_\omega)$.
    Thus, $X\subseteq\Xcal_\omega$ and $X=\Xcal_\omega$.

    (ii) $\implies$ (i)
    If $X=\Xcal_\omega$,
    then $\Lcal(X)=\Lcal(\Xcal_\omega)$.
    Thus, $\Lcal(X)\cap\RecurrentVertices(G^s_\omega)\subset\Lcal(X)=\Lcal(\Xcal_\omega)$
    for every size $s\in\{2\times2, 2\times1, 1\times2\}$.

    (ii) $\implies$ (iii)
    The substitutive shift of $\omega$ is well-defined since $\omega$ is expansive
    and it is minimal since $\omega$ is primitive 
    using standard arguments \cite[\S 5.2]{MR2590264}.

    (iii) $\implies$ (ii)
    From Lemma~\ref{lem:substitutive-contains-self-similar-part},
    we have $\Xcal_\omega\subseteq X$.
    Since $X$ is minimal, we conclude that $\Xcal_\omega=X$.
\end{proof}

\subsection{The Wang shift $\Omega_n$ is minimal when $n\geq2$}

The proof that the Wang shift $\Omega_n$ is minimal needs to be split into two cases.
When $n=1$, configurations in $\Omega_1$ have consecutive rows containing junction tiles
whereas this does not happen when $n\geq2$. This affects the language of patterns
of vertical domino support. In particular, a vertical domino made of two
junction tiles may appear in the language of $\Omega_n$ when $n=1$.
In this section, we consider the case $n\geq2$.

\begin{lemma}\label{lem:language-vertical-dominoes}
    Let $n\geq 2$ be an integer.
    The following vertical dominoes appear in the language of the substitutive
    subshift $\Xcal_{\omega_n}$:
\begingroup
\allowdisplaybreaks
    \begin{align*}
        \Lcal_{1\times 2}(\Xcal_{\omega_n})
    &\supseteq
    \left\{
        \left(\begin{array}{c} j_n^{0,1,0,0}\\ \widehat{g_n^{n-1}} \end{array}\right),
        \left(\begin{array}{c} j_n^{0,1,0,1}\\ \widehat{g_n^{n-1}} \end{array}\right),
        \left(\begin{array}{c} j_n^{0,1,0,0}\\ \widehat{y_n^{n-1}} \end{array}\right),
        \left(\begin{array}{c} j_n^{0,1,0,1}\\ \widehat{y_n^{n-1}} \end{array}\right),
        \left(\begin{array}{c} j_n^{0,1,1,1}\\ \widehat{y_n^{n-1}} \end{array}\right)
    \right\}\\
    &\quad\cup
    \left\{
        \left(\begin{array}{c} j_n^{1,1,0,1}\\ \widehat{g_n^{n}} \end{array}\right),
        \left(\begin{array}{c} j_n^{1,1,0,1}\\ \widehat{y_n^{n}} \end{array}\right),
        \left(\begin{array}{c} j_n^{1,1,1,1}\\ \widehat{y_n^{n}} \end{array}\right),
        \left(\begin{array}{c} j_n^{0,0,0,0}\\ \widehat{b_n^{n-1}} \end{array}\right),
        \left(\begin{array}{c} j_n^{0,0,0,1}\\ \widehat{b_n^{n-1}} \end{array}\right)
    \right\}\\
    &\quad\cup
    \left\{
        \left(\begin{array}{c}
            g_n^{i-1}\\
            w_n^{i,n}
        \end{array}\right),
        \left(\begin{array}{c}
            g_n^{i}\\
            w_n^{i,n}
        \end{array}\right)
        \middle|
        \;
        1\leq i\leq n
    \right\}\\
    &\quad\cup
    \left\{
        \left(\begin{array}{c}
            b_n^{i-1}\\
            w_n^{i,n-1}
        \end{array}\right)
        \middle|
        \;
        1\leq i\leq n
    \right\}
    \cup
    \left\{
        \left(\begin{array}{c}
            b_n^{i}\\
            w_n^{i,n-1}
        \end{array}\right)
        \middle|
        \;
        1\leq i\leq n-1
    \right\}\\
    &\quad\cup
    \left\{
        \left(\begin{array}{c}
            y_n^{i-1}\\
            w_n^{i,n}
        \end{array}\right)
        \middle|
        \;
        2\leq i\leq n
    \right\}
    \cup
    \left\{
        \left(\begin{array}{c}
            y_n^{i}\\
            w_n^{i,n}
        \end{array}\right)
        \middle|
        \;
        1\leq i\leq n
    \right\}.
    \end{align*}
\endgroup
\end{lemma}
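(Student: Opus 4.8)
The plan is to verify directly that each listed vertical domino occurs inside the image under some power of $\omega_n$ of a single tile, which immediately places it in $\Lcal_{1\times 2}(\Xcal_{\omega_n})$. The key computational input is Lemma~\ref{lem:unique-horizontal-strip} (together with its symmetric version for vertical strips) and the explicit structure of a return block described in Figure~\ref{fig:return-block-to-junction-tile} and in the proof of Lemma~\ref{lem:Tcaln'-there-exists-rect-pattern}: the image $\omega_n(t)$ of a tile $t$ with boundary labels $(\alpha,\beta,u,v)$ is a rectangular block split into a junction tile at the bottom-left corner, a bottom row of horizontal stripe tiles (blue, then green, then yellow, in that order, with no antigreen by Lemma~\ref{lem:unique-horizontal-strip}), a left column of vertical stripe tiles (read bottom to top), and a block of white tiles filling the rest. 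So to exhibit a vertical domino $\left(\begin{smallmatrix}a\\b\end{smallmatrix}\right)$ in $\Lcal(\Xcal_{\omega_n})$ it suffices to name a tile $s\in\Tcal_n$ such that $a$ sits directly above $b$ somewhere inside $\omega_n(s)$ (or more generally inside $\omega_n^k(s)$ for some $k$).

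First I would handle the three families indexed by $i$. For the white-over-white vertical adjacencies that are implicit in the last four displayed sets: in the left column of $\omega_n(s)$ for a suitable $s$, the vertical stripe tiles $\widehat{b_n^{i}},\widehat{g_n^{i}},\widehat{y_n^{i}}$ appear, and directly to their right the block of white tiles $w_n^{i,j}$ stacks with $j$ ranging over the appropriate interval; reading the column of white tiles just to the right of a green vertical stripe tile $\widehat{g_n^{i-1}}$ or $\widehat{g_n^{i}}$ gives, at its bottom, a vertical domino of the form $\left(\begin{smallmatrix}g_n^{i-1}\\ w_n^{i,n}\end{smallmatrix}\right)$ — wait, more precisely one reads the dominoes by looking at how the bottom row of $\omega_n(s)$ (horizontal stripe tiles $b_n^\bullet,g_n^\bullet,y_n^\bullet$) sits below the first row of white tiles $w_n^{\bullet,n}$ or $w_n^{\bullet,n-1}$. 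From Lemma~\ref{lem:unique-horizontal-strip} the bottom row of $\omega_n(s)$, when $s$ has appropriate left label, is exactly $b_n^{0}b_n^{1}\cdots g_n^{?}y_n^{?}\cdots$ with tops $(111)^\ast(112)^\ast$, and the row of tiles immediately above it consists of white tiles whose bottom labels match those tops, i.e.\ $w_n^{i,n}$ over $g_n^{i-1}$ or $g_n^{i}$ (depending on whether the green tile is the last blue-adjacent one), $w_n^{i,n-1}$ over $b_n^{i-1}$ or $b_n^{i}$, and $w_n^{i,n}$ over $y_n^{i-1}$ or $y_n^{i}$. Matching the last-digit bookkeeping ($111$ gives height $n$ on the white column, $112$ gives height $n-1$, cf.\ Figure~\ref{fig:vertical-sequence-white-tiles}) pins down exactly the index ranges stated. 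I would pick $s=j_n^{1,1,1,1}$ (left/bottom label $01\nbar$) to see all the $g_n$ and $y_n$ dominoes and $s$ with left label $00j$ to see the $b_n$ ones; primitivity (Lemma~\ref{lem:primitivity}) guarantees such $s$ occurs in $\omega_n^k$ of anything, so these dominoes are in the language.

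For the ten dominoes of the form $\left(\begin{smallmatrix}j_n^{\ast}\\ \widehat{g_n^{\ast}}\end{smallmatrix}\right)$, $\left(\begin{smallmatrix}j_n^{\ast}\\ \widehat{y_n^{\ast}}\end{smallmatrix}\right)$, $\left(\begin{smallmatrix}j_n^{\ast}\\ \widehat{b_n^{\ast}}\end{smallmatrix}\right)$ in the first two displayed sets: these straddle the boundary between the top of one return block and the bottom of the return block directly above it. Concretely, the bottom row of a return block $\omega_n(s)$ contains the horizontal stripe tile $\widehat{g_n^{n-1}}$ etc.\ — no, rather the \emph{left column} of $\omega_n(s)$ is where vertical stripe tiles live, and the \emph{junction tile} of $\omega_n(s)$ is at its bottom-left; the vertical domino $\left(\begin{smallmatrix}j\\ \widehat{g}\end{smallmatrix}\right)$ appears where a junction tile sits directly above a vertical green stripe tile, which happens at the top of a left column: the top-most tile of the left column of one return block is $\widehat{g_n^\bullet}$ or $\widehat{y_n^\bullet}$ (by the symmetric version of Lemma~\ref{lem:unique-horizontal-strip}, reading which top-most top label is $01n$, $01\nbar$, etc.), and directly above it is the junction tile of the return block stacked on top. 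One then reads off which junction tile $j_n^{k,\ell,r,s}$ appears above which $\widehat{g_n^{n-1}}$, $\widehat{y_n^{n-1}}$, $\widehat{g_n^{n}}$, $\widehat{y_n^{n}}$, $\widehat{b_n^{n-1}}$ by matching the bottom label of the junction tile ($\in\{00n,01n,01\nbar\}$) against the top label of the stripe tile, exactly as in Figures~\ref{fig:bottom-row-if-bottom-labels-is-tau00i}--\ref{fig:bottom-row-if-bottom-labels-is-tau11i} and the Rauzy-graph analysis of Figure~\ref{fig:horizontal_rauzy_graph_n=5_junction_to_junction_inOmegan}. To make this rigorous I would point to a single larger pattern — e.g.\ $\omega_n^2(j_n^{1,1,1,1})$, or a fixed $2\times2$ arrangement of return blocks realized inside $\omega_n^k(t)$ — in which all ten of these junction-over-stripe adjacencies occur simultaneously, again invoking primitivity to guarantee realizability.

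The main obstacle I expect is purely bookkeeping: keeping straight, for each of the roughly $10 + O(n)$ dominoes, which tile $s$ to start from and which cell of $\omega_n(s)$ (or $\omega_n^k(s)$) the domino sits in, and doing the last-digit arithmetic correctly so that the stated index ranges ($1\le i\le n$ vs.\ $1\le i\le n-1$ vs.\ $2\le i\le n$) come out exactly right — the off-by-one distinctions all trace back to whether a bottom-row top label is $111$ (contributing a white column of height $n$, hence indices up to $n$) or $112$ (height $n-1$, hence one fewer), and to whether the relevant stripe tile is the first or last of its color block. There is no conceptual difficulty: everything follows mechanically from Lemma~\ref{lem:unique-horizontal-strip}, its symmetric counterpart, the return-block decomposition of Proposition~\ref{prop:return-blocks}, and primitivity. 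I would present the proof as a short case analysis organized by the three blocks of the displayed union, in each case naming the seed tile and citing the relevant figure, rather than writing out every image explicitly.
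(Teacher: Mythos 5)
Your overall strategy (exhibit each domino inside $\omega_n^k(t)$ for a single tile $t$) is the right one and is the paper's strategy too, but your execution has a genuine gap, and it is exactly at the heart of the lemma. Every domino in the statement is a \emph{straddling} domino: inside a single return block the horizontal stripe tiles occupy only the bottom row and the junction tile only the bottom-left corner, so a pattern with a horizontal stripe tile \emph{above} a white tile, or a junction tile \emph{above} a vertical stripe tile, can never occur inside $\omega_n(t)$ for one tile $t$; it only occurs across the boundary between the bottom row of one return block and the top row of the return block below it. Your treatment of the families $\left(\begin{smallmatrix}g_n^{i-1}\\ w_n^{i,n}\end{smallmatrix}\right)$, $\left(\begin{smallmatrix}b_n^{i-1}\\ w_n^{i,n-1}\end{smallmatrix}\right)$, $\left(\begin{smallmatrix}y_n^{i}\\ w_n^{i,n}\end{smallmatrix}\right)$ reads off the adjacency between the bottom row of $\omega_n(s)$ and the white tiles above it, i.e.\ dominoes of the form $\left(\begin{smallmatrix}w_n^{i,n}\\ g_n^{i-1}\end{smallmatrix}\right)$ with the white tile on top; these are different patterns (with the opposite orientation) from the ones claimed in the lemma, so that part of your argument proves nothing about the stated dominoes, and the index bookkeeping you describe is being done on the wrong adjacency.

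For the junction-over-stripe dominoes you do identify the straddling nature correctly, but then you propose to realize ``a fixed $2\times2$ arrangement of return blocks inside $\omega_n^k(t)$, invoking primitivity to guarantee realizability.'' Primitivity guarantees that every \emph{letter} occurs in $\omega_n^m(t)$; it does not guarantee that any prescribed vertical pair of letters occurs in the language, and hence does not guarantee that the two return blocks you want end up stacked on top of each other. This is precisely the pitfall of Example~\ref{ex:sub2d-on-abc}: a domino can even be preserved by the substitution without ever occurring in any $\omega^k(a)$. The missing step, which is how the paper proceeds, is to exhibit for each straddling domino an explicit chain starting from a single tile: first locate a \emph{within-block} vertical domino $D$ (vertical stripe over junction, white over white, white over horizontal stripe, \dots) inside $\omega_n(t)$ for some concrete $t$, and then check, using Lemma~\ref{lem:unique-horizontal-strip} and its symmetric version, that $\omega_n(D)$ (hence $\omega_n^2(t)$, or a further iterate) contains the desired straddling domino; for instance $j_n^{0,1,0,1}\xrightarrow{\omega_n}\left(\begin{smallmatrix}\widehat{b_n^{0}}\\ j_n^{0,0,0,0}\end{smallmatrix}\right)\xrightarrow{\omega_n}$ a $2\times(n+1)$ pattern containing $\left(\begin{smallmatrix}j_n^{1,1,0,1}\\ \widehat{g_n^{n}}\end{smallmatrix}\right)$ and all $\left(\begin{smallmatrix}y_n^{i}\\ w_n^{i,n}\end{smallmatrix}\right)$. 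Without such chains (a finite but unavoidable case analysis, some chains needing three or four applications of $\omega_n$), the inclusion in $\Lcal(\Xcal_{\omega_n})$ is not established.
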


\begin{proof}
    We show that every vertical domino listed above appear in the image of some
    tile under the application of the 2-dimensional substitution $\omega_n$.
    Below, we use the notation $p \xrightarrow{\omega_n} q$ to denote that $q$ is a
    pattern appearing in the image $\omega_n(p)$.
    We have
\begingroup
\allowdisplaybreaks
    \begin{align*}
        &j_n^{0,1,0,1}
        \xrightarrow{\omega_n}
        \left(\begin{array}{c}\widehat{b_n^{0}}\\ j_n^{0,0,0,0} \end{array}\right)
        \xrightarrow{\omega_n}
        \left(\begin{array}{ccccc}
            {\bf j_n^{1,1,0,1}}     & y_n^1 & y_n^2 & \dots & y_n^n\\
            {\bf \widehat{g_n^{n}}} & w_n^{1,n} & w_n^{2,n} & \dots & w_n^{n,n}
        \end{array}\right),
        \\
        &j_n^{0,1,0,1}
        \xrightarrow{\omega_n}
        \left(\begin{array}{c}\widehat{w_n^{1,1}}\\ b_n^0 \end{array}\right)
        \xrightarrow{\omega_n}
        \left(\begin{array}{ccccc}
            {\bf j_n^{1,1,1,1}}     & y_n^1 & y_n^2 & \dots & y_n^{n-1}\\
            {\bf \widehat{y_n^{n}}} & w_n^{2,n} & w_n^{3,n} & \dots & w_n^{n,n}
        \end{array}\right),
        \\
        &j_n^{0,1,0,1}
        \xrightarrow{\omega_n}
        \left(\begin{array}{c}\widehat{w_n^{1,1}}\\ b_n^0 \end{array}\right)
        \xrightarrow{\omega_n}
        \left(\begin{array}{c}j_n^{1,1,1,1}\\ \widehat{y_n^{n}} \end{array}\right)
        \xrightarrow{\omega_n}
        \left(\begin{array}{ccccc}
            {\bf j_n^{0,0,0,0}}       & b_n^0 & b_n^1 & \dots & b_n^{n-1}\\
            {\bf \widehat{b_n^{n-1}}} & w_n^{1,n-1} & w_n^{2,n-1} & \dots & w_n^{n,n-1}
        \end{array}\right),
        \\
        &g_n^0
        \xrightarrow{\omega_n}
        \left(\begin{array}{c}\widehat{y_n^1}\\ j_n^{0,1,1,1} \end{array}\right)
        \xrightarrow{\omega_n}
        \left(\begin{array}{c}\bf j_n^{0,0,0,1}\\ \bf \widehat{b_n^{n-1}} \end{array}\right)
        \xrightarrow{\omega_n}
        \left(\begin{array}{ccccc}
            {\bf j_n^{0,1,0,0}}       & b_n^1 & b_n^2 & \dots & b_n^{n-1}\\
            {\bf \widehat{g_n^{n-1}}} & w_n^{1,n-1} & w_n^{2,n-1} & \dots & w_n^{n-1,n-1}
        \end{array}\right).
    \end{align*}
\endgroup
    Also,
    \begin{align*}
        &g_n^1
        \xrightarrow{\omega_n}
        \left(\begin{array}{c}\widehat{g_n^1}\\ j_n^{0,1,0,1}\\ \end{array}\right)
        \xrightarrow{\omega_n}
        \left(\begin{array}{c}\bf j_n^{0,1,0,1}\\ \bf \widehat{g_n^{n-1}} \end{array}\right).
    \end{align*}
    Since $n\geq 2$, we have
\begingroup
\allowdisplaybreaks
    \begin{align*}
        &j_n^{1,1,1,1}
        \xrightarrow{\omega_n}
        \left(\begin{array}{c}\widehat{b_n^0}\\ j_n^{0,0,0,0} \end{array}\right)
        \xrightarrow{\omega_n}
        \left(\begin{array}{c}y_n^1\\ w_n^{1,n} \end{array}\right)
        \xrightarrow{\omega_n}
        \left(\begin{array}{c}\bf j_n^{0,1,0,0}\\ \bf\widehat{y_n^{n-1}} \end{array}\right),
        \\
        &w_n^{1,1}
        \xrightarrow{\omega_n}
        \left(\begin{array}{c}w_n^{2,2}\\ y_n^1 \end{array}\right)
        \xrightarrow{\omega_n}
        \left(\begin{array}{c}\bf j_n^{0,1,0,1}\\ \bf\widehat{y_n^{n-1}} \end{array}\right),
        \\
        &g_n^0
        \xrightarrow{\omega_n}
        \left(\begin{array}{c}w_n^{2,1}\\ \widehat{b_n^1} \end{array}\right)
        \xrightarrow{\omega_n}
        \left(\begin{array}{c}\bf j_n^{1,1,0,1}\\ \bf \widehat{y_n^{n}} \end{array}\right),
        \\
        &w_n^{1,1}
        \xrightarrow{\omega_n}
        \left(\begin{array}{c}\widehat{y_n^1}\\ j_n^{1,1,1,1} \end{array}\right)
        \xrightarrow{\omega_n}
        \left(\begin{array}{c}g_n^0\\ w_n^{1,n} \end{array}\right)
        \xrightarrow{\omega_n}
        \left(\begin{array}{c}\bf j_n^{0,1,1,1}\\ \bf \widehat{y_n^{n-1}} \end{array}\right).
    \end{align*}
\endgroup
\end{proof}

\begin{lemma}\label{lem:language-2x2-seeds}
    The following four $2\times2$ patterns belong
    to the language of the substitutive subshift 
    $\Lcal(\Xcal_{\omega_n})$:
    \begin{align*}
    &\left\{
        \left(\begin{array}{cc}
            b_n^{n-1}     & j_n^{0,0,0,0}\\
            w_n^{n-1,n-1} & \widehat{b_n^{n-1}}
        \end{array}\right),
        \left(\begin{array}{cc}
            g_n^{n-1}     & j_n^{0,1,0,1}\\
            w_n^{n,n} & \widehat{g_n^{n-1}}
        \end{array}\right),
        \left(\begin{array}{cc}
            b_n^{n-1}     & j_n^{0,1,0,0}\\
            w_n^{n,n-1} & \widehat{g_n^{n-1}}
        \end{array}\right),
        \left(\begin{array}{cc}
            g_n^{n-1}     & j_n^{0,0,0,1}\\
            w_n^{n-1,n} & \widehat{b_n^{n-1}}
        \end{array}\right)
    \right\}\\
    &=
    \left\{
        \raisebox{-10mm}{
        \begin{tikzpicture}
        \tikzstyle{every node}=[font=\scriptsize]
            \tilelabelinsideH{\ourColorBlue}{0}{1}{00n}{111}{00\nunder}{11n}
        \tilelabelinside{white}  {0}{0}{11n}{11n}{11\nunder}{11\nunder}
        \tileJunctionInsideOOOO  {1}{1}{000}{000}{00n}{00n}
            \tilelabelinsideV{\ourColorBlue}{1}{0}{111}{00n}{11n}{00\nunder}
        \end{tikzpicture}
        },
        \raisebox{-10mm}{
        \begin{tikzpicture}
        \tikzstyle{every node}=[font=\scriptsize]
        \tileHgreenInside{0}{1}{01n}{111}{00\nunder}{11\nbar}
        \tilelabelinside{white}  {0}{0}{11\nbar}{11\nbar}{11n}{11n}
        \tileJunctionInsideOOII  {1}{1}{001}{001}{01n}{01n}
        \tileVgreenInside{1}{0}{111}{01n}{11\nbar}{00\nunder}
        \end{tikzpicture}
        },
        \raisebox{-10mm}{
        \begin{tikzpicture}
        \tikzstyle{every node}=[font=\scriptsize]
            \tilelabelinsideH{\ourColorBlue}{0}{1}{00n}{111}{00\nunder}{11n}
        \tilelabelinside{white}  {0}{0}{11\nbar}{11n}{11n}{11\nunder}
        \tileJunctionInsideOOOI  {1}{1}{001}{000}{00n}{01n}
        \tileVgreenInside{1}{0}{111}{01n}{11\nbar}{00\nunder}
        \end{tikzpicture}
        },
        \raisebox{-10mm}{
        \begin{tikzpicture}
        \tikzstyle{every node}=[font=\scriptsize]
        \tileHgreenInside{0}{1}{01n}{111}{00\nunder}{11\nbar}
        \tilelabelinside{white}  {0}{0}{11n}{11\nbar}{11\nunder}{11n}
        \tileJunctionInsideOOIO  {1}{1}{000}{001}{01n}{00n}
        \tilelabelinsideV{\ourColorBlue}{1}{0}{111}{00n}{11n}{00\nunder}
        \end{tikzpicture}
        }
    \right\}
    \subset\Lcal_{2\times 2}(\Xcal_{\omega_n}).
    \end{align*}
\end{lemma}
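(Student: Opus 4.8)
The plan is to argue exactly as in the proof of Lemma~\ref{lem:language-vertical-dominoes}: for each of the four $2\times 2$ patterns $P$, I will exhibit a single tile $t\in\Tcal_n$ and an integer $k$ such that $P$ occurs as a subword of $\omega_n^k(t)$; since $P$ is then in the language $\Lcal_{\omega_n}$, it lies in $\Lcal(\Xcal_{\omega_n})$, and in particular in $\Lcal_{2\times 2}(\Xcal_{\omega_n})$. The desired occurrences will be produced by short chains $t=p_0\xrightarrow{\omega_n}p_1\xrightarrow{\omega_n}\cdots\xrightarrow{\omega_n}p_m$ with $P$ occurring inside $\omega_n(p_{m-1})$, where $p\xrightarrow{\omega_n}q$ means that $q$ is a pattern occurring inside $\omega_n(p)$; one may freely reuse the chains already established in Lemma~\ref{lem:language-vertical-dominoes} (e.g. $w_n^{1,1}\xrightarrow{\omega_n}\left(\begin{smallmatrix}w_n^{2,2}\\ y_n^1\end{smallmatrix}\right)$).

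The observation that makes the search systematic is that each of the four patterns has a junction tile at its top-right corner, and a junction tile occupies precisely the bottom-left corner of the image $\omega_n(s)$ of a single tile $s$, namely the corresponding return block (Figure~\ref{fig:return-block-to-junction-tile}). Consequently such a pattern can occur only straddling the four return blocks $\omega_n(e),\omega_n(f),\omega_n(g),\omega_n(h)$ associated to a legal $2\times 2$ block of tiles $\left(\begin{smallmatrix}e&f\\g&h\end{smallmatrix}\right)$, its four entries being the bottom-left corner of $\omega_n(f)$, the bottom-right corner of $\omega_n(e)$, the top-right corner of $\omega_n(g)$ and the top-left corner of $\omega_n(h)$. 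Each of these corner tiles is determined by the boundary labels of the corresponding source tile: the bottom-left junction from the first letters of $\tau_n$ applied to the left and bottom labels of $f$; the bottom-row and left-column corners from Lemma~\ref{lem:unique-horizontal-strip} and its symmetric version; and the top-right white tile of $\omega_n(g)$ equals $w_n^{W-1,H-1}$, where $W,H$ are the width and height of the return block $\omega_n(g)$, which equal $n+1$ minus the first entries of the bottom and left labels of $g$. Matching these data against $P$ pins down admissible source blocks. For instance, the first pattern occurs inside $\omega_n$ of the block $\left(\begin{smallmatrix}y_n^n & j_n^{1,1,1,1}\\ w_n^{n,n} & \widehat{y_n^n}\end{smallmatrix}\right)$, which in turn occurs inside $\omega_n$ of $\left(\begin{smallmatrix}\widehat{g_n^1} & w_n^{1,1}\\ j_n^{0,1,0,1} & g_n^1\end{smallmatrix}\right)$; this last block has the white tile $w_n^{1,1}$ at its top-right corner, hence sits entirely inside a single return block — here $\omega_n(w_n^{2,2})$ — which closes the recursion since $w_n^{2,2}$ is already reached from $w_n^{1,1}$. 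The symmetry $\widehat{\Tcal_n}=\Tcal_n$ pairs the patterns and halves the work.

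The main obstacle is precisely this pull-back. At each step the admissible source block still carries a junction at its top-right corner, so one must iterate, and a given corner tile can be produced by the images of several different source tiles; the choices therefore have to be made consistently, both to keep the edge matchings of the $2\times 2$ source block valid and to steer the recursion toward a block whose top-right tile is $w_n^{1,1}$ (the white tile immediately above and to the right of a junction), which is what lets the block fit inside one return block and terminates the process — a careless choice leads to an infinite regress. Apart from this bookkeeping, each individual step is a routine verification using Lemma~\ref{lem:unique-horizontal-strip}, its symmetric version, and the explicit description of return blocks in Figure~\ref{fig:return-block-to-junction-tile}.
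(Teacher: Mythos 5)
Your overall strategy is the paper's: exhibit each pattern inside $\omega_n^k(t)$ for a single tile $t$ (the paper does this by explicit forward chains from $j_n^{1,1,1,1}$ and $y_n^1$, recorded in a figure), and your chain for the first pattern is correct — I can confirm that $P_1$ sits at the centre of $\omega_n\bigl(\begin{smallmatrix}y_n^n & j_n^{1,1,1,1}\\ w_n^{n,n} & \widehat{y_n^n}\end{smallmatrix}\bigr)$, that this block sits at the centre of $\omega_n\bigl(\begin{smallmatrix}\widehat{g_n^1} & w_n^{1,1}\\ j_n^{0,1,0,1} & g_n^1\end{smallmatrix}\bigr)$, and that the latter occupies the bottom-left $2\times2$ corner of $\omega_n(w_n^{2,2})$. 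However, the "systematic" recipe you propose for the other three patterns has a flaw: the formula ``top-right corner of $\omega_n(g)$ equals $w_n^{W-1,H-1}$'' is false in general. The corner is the white tile whose right and top labels are the \emph{last letters} of $\tau_n(\scright(g))$ and $\tau_n(\sctop(g))$ (equivalently read off Lemma~\ref{lem:unique-horizontal-strip}); e.g.\ $\omega_n(w_n^{n-1,n-1})$ has top-right corner $w_n^{n,n}$ (since $\tau_n(11n)$ ends in $11\nbar$), not $w_n^{n-1,n-1}$, and $\omega_n(g_n^1)$ has corner $w_n^{n,n}$, not $w_n^{n-1,n}$. This matters: the correct source block for the second pattern is $P_1$ itself (so $P_2\subset\omega_n(P_1)$ and it grounds through your $P_1$-chain), but your formula forces the bottom-left source tile to be a junction and the backward search dead-ends there.

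The more serious gap is the termination issue you flag but resolve only for $P_1$. For the third and fourth patterns the pull-back you describe does not ground: one checks that $P_3$ sits at the centre of $\omega_n(P_4)$ and, by the $\widehat{\phantom{x}}$-symmetry, $P_4$ at the centre of $\omega_n(P_3)$, so the natural recursion cycles between them forever. Being preserved in such a cycle does not put a pattern in $\Lcal(\Xcal_{\omega_n})$ — this is exactly the pitfall of Example~\ref{ex:sub2d-on-abc} and the reason the minimality criterion speaks of recurrent vertices — so for $P_3,P_4$ you must exhibit a genuinely different occurrence that eventually lies inside the image of a single tile, which is what the paper's explicit chains do and what your proposal leaves as ``routine verification''. (Also, the symmetry only exchanges $P_3\leftrightarrow P_4$; $P_1$ and $P_2$ are each fixed by $\widehat{\phantom{x}}$, so it does not halve the work.) In short: right approach, correct and complete for $P_1$, but the remaining three patterns are not proved, and the tools offered to finish them (the corner formula and the backward recursion) would not do so as stated.
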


\begin{proof}
    We show that every pattern listed above appear in the image of some
    tile under some repeated application of the 2-dimensional substitution $\omega_n$.
    Below, we use the notation $p \xrightarrow{\omega_n} q$ to denote that $q$ is a
    pattern appearing in the image $\omega_n(p)$.
    The four patterns can be obtain in few steps when applying the substitution $\omega_n$
    on the tiles $j_n^{1,1,1,1}$ and $y_n^1$.
    We have
    \begin{center}
        \includegraphics[width=.8\linewidth]{Figures/proof_Lemma_10_6.pdf}
    \end{center}
\end{proof}

\begin{lemma}\label{lem:two-illegal-vertical-dominoes}
    Let $n\geq 2$ be an integer.
    The following two vertical dominoes are illegal in $\Omega_n$:
    \[
        \raisebox{-9mm}{
        \begin{tikzpicture}[auto,scale=1.2]
            \tikzstyle{every node}=[font=\footnotesize]
            \tileJunctionInsideIIII{0}{1}{011}{011}{01\nbar}{01\nbar}
            \tileVgreenInside{0}{0}{111}{01\nbar}{11\nbar}{00n}
        \end{tikzpicture}}
        =
        \left(\begin{array}{c}
            j_n^{1,1,1,1}\\
            \widehat{g_n^{n}}
        \end{array}\right)
        \notin\Lcal(\Omega_n)
        \qquad
        \text{ and }
        \qquad
        \raisebox{-9mm}{
        \begin{tikzpicture}[auto,scale=1.2]
            \tikzstyle{every node}=[font=\footnotesize]
            \tileJunctionInsideOIII{0}{1}{001}{011}{01\nbar}{01n}
            \tileVgreenInside{0}{0}{111}{01n}{11\nbar}{00\nunder}
        \end{tikzpicture}}
        =
        \left(\begin{array}{c}
            j_n^{0,1,1,1}\\
            \widehat{g_n^{n-1}}
        \end{array}\right)
        \notin\Lcal(\Omega_n).
    \]
\end{lemma}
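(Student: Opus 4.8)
I would argue by contradiction, handling both dominoes in parallel as far as possible. Suppose a configuration $c\in\Omega_n$ contains one of the two dominoes, say with $c_\bp$ equal to the vertical green stripe tile $\widehat g$ (one of $\widehat{g_n^n}$, $\widehat{g_n^{n-1}}$) and $c_{\bp+(0,1)}$ equal to the junction tile $J$ (one of $j_n^{1,1,1,1}$, $j_n^{0,1,1,1}$). Because $c_{\bp+(0,1)}$ is a junction tile, Lemma~\ref{lem:return-time-to-junction-tiles} together with Proposition~\ref{prop:return-blocks} forces $\widehat g$ to be the top-left tile of some return block $R$ and $J$ to be the bottom-left tile of the return block $R'$ lying directly above $R$. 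By Lemma~\ref{lem:return-word-description} there are unique tiles $t_R,t_{R'}\in\Tcal_n'$ with $R=\omega_n'(t_R)$ and $R'=\omega_n'(t_{R'})$, and the plan is to pin these down and then reach a contradiction after desubstituting.

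To identify $t_{R'}$, recall from the proof of Lemma~\ref{lem:Tcaln'-there-exists-rect-pattern} that the bottom-left tile of $\omega_n'(t)$ is the unique junction tile of $\Tcal_n'$ whose bottom and left labels are the first letters of $\tau_n(\scbottom(t))$ and $\tau_n(\scleft(t))$. For the first domino, $J=j_n^{1,1,1,1}$ has both of these labels equal to $01\nbar$, the only $v\in V_n$ with $\tau_n(v)$ beginning with $01\nbar$ are $v\in\{000,111\}$, and a short check of which tiles of $\Tcal_n'$ carry bottom and left labels both in $\{000,111\}$ gives $t_{R'}=w_n^{1,1}$ when $n\ge2$. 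Hence the bottom-labels word of $R'=\omega_n'(w_n^{1,1})$ is $\tau_n(111)$, of length $n$, so $R$ has width $n$; matching the top-labels word of $R$ to $\tau_n(111)$ and using injectivity of $\tau_n$ gives $\sctop(t_R)=111$. Independently, since $\widehat{g_n^n}$ has bottom label $00n$ and (for $n\ge2$) $\widehat{b_n^{n-1}}$ is the only vertical stripe tile of $\Tcal_n$ with top label $00n$, descending the left column of $R$ from $\widehat{g_n^n}$ forces it to be $\widehat{b_n^1},\widehat{b_n^2},\dots,\widehat{b_n^{n-1}},\widehat{g_n^n}$ bottom-to-top, with the junction of $R$ having top label $001$ and height $n+1$; reading the left edge then shows that the left-labels word of $R$ is $\tau_n(00n)$, so $\scleft(t_R)=00n$. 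Among the tiles of $\Tcal_n'$ with left label $00n$ only $g_n^n$ also has top label $111$, so $t_R=g_n^n$.

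By Propositions~\ref{prop:exists-unique-preimage} and~\ref{prop:Omegan'=Omegan}, the desubstitution $x$ of $c$ lies in $\Omega_n'=\Omega_n$ and contains the vertical domino $\left(\begin{smallmatrix}w_n^{1,1}\\ g_n^n\end{smallmatrix}\right)$ (as $R'=\omega_n'(w_n^{1,1})$ lies directly above $R=\omega_n'(g_n^n)$). I would now rule this out by a width argument: in any return block $S$ of $x$ containing $g_n^n$, the tile $g_n^n$ must be the rightmost tile of the bottom row, because $\scright(g_n^n)=01\nbar$ is not the left label of any horizontal stripe tile of $\Tcal_n$ (there is no $y_n^{n+1}$ and no antigreen tiles in $\Omega_n$). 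Thus the white tile $w_n^{1,1}$ directly above $g_n^n$ occupies the rightmost column of the white region of $S$; but in each row of that region the first index of the white tiles increases by one from left to right, starting at $1$ or $2$ (the right label of the left column is $111$ or $112$), so the rightmost white tile has first index at least $W-1$, where $W\in\{n,n+1\}$ is the width of $S$. Since $w_n^{1,1}$ has first index $1$ this forces $W\le2$: for $n\ge3$ this contradicts $W\in\{n,n+1\}$, and for $n=2$ it forces $W=2$, so the bottom row of $S$ is $[\text{junction},g_2^2]$ and the junction's right label would equal $\scleft(g_2^2)=002\notin\{000,001,011\}$, impossible. The second domino is treated the same way: identifying $t_R$ and $t_{R'}$ from $j_n^{0,1,1,1}$ via the formula for $\tau_n$ and the catalogue of tiles carrying a prescribed edge label, one reduces again to a white-on-green vertical domino in $\Omega_n$ (either the first domino, already disposed of, or one of the same shape, to which the same width argument applies). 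The step I expect to be the main obstacle is exactly this identification of $t_R$ and $t_{R'}$: it is a finite but fiddly case analysis that requires tracking, for each prescribed boundary label, precisely which tiles of $\Tcal_n'$ realize it; once the reduced dominoes are in hand, the contradiction via the width of the white region is short and uniform for all $n\ge2$.
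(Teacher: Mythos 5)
Your route is genuinely different from the paper's. The paper never desubstitutes: it forces the blue tile $\widehat{b_n^{n-1}}$ directly below the green tile and then compares the two return blocks occupying the columns immediately to the \emph{left} of the domino --- the block above-left must have width $n+1$ (its bottom row ends with right label $01\nbar$, by Lemma~\ref{lem:unique-horizontal-strip}), while the block below-left must have width $n$ (the white tile to the left of the blue tile has right label $11n$, cf.\ Figure~\ref{fig:vertical-sequence-white-tiles}); since the two blocks share the same columns this is a contradiction. Your argument for the \emph{first} domino --- identify the two return blocks $R,R'$ through Lemma~\ref{lem:return-word-description}, pin down $t_{R'}=w_n^{1,1}$ and $t_R=g_n^n$, desubstitute, and kill the induced domino ($w_n^{1,1}$ over $g_n^n$) by the width/first-index argument --- is essentially correct and is a legitimate alternative. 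One slip: $b_n^n\in\Tcal_n'$ also has left label $00n$ and top label $111$, so ``only $g_n^n$'' is false in $\Tcal_n'$; it is harmless because $x\in\Omega_n'=\Omega_n$ uses only tiles of $\Tcal_n$, where $g_n^n$ is indeed the unique such tile.

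The genuine gap is your treatment of the second domino. For $j_n^{0,1,1,1}$ the bottom and left labels are $01n$ and $01\nbar$, and the bottom-left junction does \emph{not} determine $t_{R'}$: the images of $w_n^{1,j}$ ($2\leq j\leq n$), of $b_n^0$ and of $g_n^0$ all have $j_n^{0,1,1,1}$ as bottom-left corner. Likewise the left column of $R$ below $\widehat{g_n^{n-1}}$ is not forced as in the first case: it may stop at a junction with top label $001$ (height $n$, left word $\tau_n(11n)$, so $\scleft(t_R)=11n$ and $t_R$ is a white tile $w_n^{n,j}$), or continue through $\widehat{b_n^0}$ to a junction with top label $000$ (height $n+1$, left word $\tau_n(01n)$, so $t_R=y_n^n$). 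Consequently the reduced dominoes your method produces are $w_n^{1,\jbar}$ over $w_n^{n,j}$, $b_n^0$ over $w_n^{n,n-1}$, $g_n^0$ over $w_n^{n,n}$, or $w_n^{1,2}$ over $y_n^n$ --- none of them ``white-on-green'', and only the last has its bottom tile sitting as the rightmost tile of the bottom row of its return block, which is exactly what your width argument needs. The white-over-white and stripe-over-white cases require a different argument (for instance that, within one return block, the blue--green--yellow order in the left column makes the first index of vertically adjacent white tiles non-decreasing upwards), so as written the second half of the lemma is not established.
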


\begin{proof}
    Let $c\in\Omega_n$ be a valid configuration.
    Let $A,B:\Z\to\Z$ be the two increasing maps
    from Lemma~\ref{lem:return-time-to-junction-tiles} 
    such that $c^{-1}(J_n)=A(\Z)\times B(\Z)$.

    Suppose that $j_n^{1,1,1,1}$ appears at position $\bl=(\ell_1,\ell_2)\in\Z^2$
    and that $\widehat{g_n^n}$ appears at position $(\ell_1,\ell_2-1)$ in $c$.
    Let $\bk=(k_1,k_2)\in\Z^2$ be such that 
    $A(k_1)\leq \ell_1<A(k_1+1)$ and
    $B(k_2)\leq \ell_2<B(k_2+1)$.
    Since $j_n^{1,1,1,1}$ is a junction tile, we must have 
    $A(k_1)=\ell_1$ and $B(k_2)=\ell_2$.
    At position $(\ell_1,\ell_2-2)$ there must be
    a blue tile $\widehat{b_n^{n-1}}$, since only this tile
    has top label $00n$ when $n\geq2$.
    The current situation is illustrated below.
    \begin{center}
        \includegraphics{Figures/proof_vertical_domino_forbidden.pdf}
    \end{center}
    Consider the return blocks with support 
    $[A(k_1-1),A(k_1))\times[B(k_2),B(k_2+1))$.
    It has label $01\nbar$ at the far right of its bottom row.
    From Lemma~\ref{lem:unique-horizontal-strip},
    the width of this return block cannot be $n$, so it
    has to be
    \[
        A(k_1)-A(k_1-1)=n+1.
    \]
    Now consider the return blocks with support 
    $[A(k_1-1),A(k_1))\times[B(k_2-1),B(k_2))$.
    The white tile at position $(A(k_1)-1,\ell_2-2)$ 
    has right label $11n$.
    From the observation made in Figure~\ref{fig:vertical-sequence-white-tiles},
    the width of this return block is
    \[
        A(k_1)-A(k_1-1)=n.
    \]
    This is a contradiction.
    Thus, $\left(\begin{array}{c}
            j_n^{1,1,1,1}\\
            \widehat{g_n^{n}}
        \end{array}\right)
        \notin\Lcal(\Omega_n)$.

    The same contradiction is obtained if we suppose that
    $j_n^{0,1,1,1}$ appears at position $\bl=(\ell_1,\ell_2)\in\Z^2$
    and that $\widehat{g_n^{n-1}}$ appears at position $(\ell_1,\ell_2-1)$ in $c$.
    Indeed a blue tile with left label $11n$ is also forced to appear at
    position $(\ell_1,\ell_2-2)$.
\end{proof}

Note that Lemma~\ref{lem:two-illegal-vertical-dominoes} 
cannot be extended to the case $n=1$.

\begin{proposition}\label{prop:minimality-when-n>=2}
    For every integer $n\geq2$, the Wang shift $\Omega_n$ is minimal
    and is equal to the substitutive subshift $\Omega_n=\Xcal_{\omega_n}$.
\end{proposition}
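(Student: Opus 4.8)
The plan is to apply the minimality criterion of Lemma~\ref{lem:criterion-for-minimality} to the self-similar subshift $\Omega_n=\overline{\omega_n(\Omega_n)}^\sigma$. By Theorem~\ref{thm:similar-to-itself} and Corollary~\ref{cor:aperiodic} the subshift $\Omega_n$ is nonempty, and by Lemma~\ref{lem:primitivity} the $2$-dimensional substitution $\omega_n$ is expansive and primitive; thus the hypotheses of Lemma~\ref{lem:criterion-for-minimality} are met, and it suffices to verify condition~(i): for each shape $s\in\{2\times2,\,2\times1,\,1\times2\}$, every $s$-pattern that is simultaneously legal in $\Omega_n$ and a recurrent vertex of $G^s_{\omega_n}$ must already lie in $\Lcal(\Xcal_{\omega_n})$.

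I would start with the vertical dominoes, i.e.\ the graph $G^{1\times2}_{\omega_n}$. Using the explicit description of $\omega_n$ via $\tau_n$ in Equation~\eqref{eq:defintion-omegan}, together with Lemma~\ref{lem:unique-horizontal-strip} and its vertical analogue, I would enumerate $\RecurrentVertices(G^{1\times2}_{\omega_n})$ uniformly in $n$. The recurrent vertical dominoes fall into three types: (a) dominoes $\left(\begin{smallmatrix}t\\t'\end{smallmatrix}\right)$ that already occur inside the image of a single tile under $\omega_n$ (e.g.\ a white tile stacked on a white tile, or a junction tile stacked on the vertical stripe tile in the left column of its return block), which lie in $\Lcal(\Xcal_{\omega_n})$ by primitivity; (b) the dominoes listed in Lemma~\ref{lem:language-vertical-dominoes}, which lie in $\Lcal(\Xcal_{\omega_n})$ by that lemma; and (c) the two dominoes $\left(\begin{smallmatrix}j_n^{1,1,1,1}\\\widehat{g_n^{n}}\end{smallmatrix}\right)$ and $\left(\begin{smallmatrix}j_n^{0,1,1,1}\\\widehat{g_n^{n-1}}\end{smallmatrix}\right)$, which are recurrent in $G^{1\times2}_{\omega_n}$ but, by Lemma~\ref{lem:two-illegal-vertical-dominoes}, are illegal in $\Omega_n$ when $n\geq2$ and are therefore discarded by the intersection with $\Lcal(\Omega_n)$ in condition~(i). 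Hence the recurrent vertices of $G^{1\times2}_{\omega_n}$ that survive legality in $\Omega_n$ are exactly those of types (a) and (b), all in $\Lcal(\Xcal_{\omega_n})$.

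The graph $G^{2\times1}_{\omega_n}$ of horizontal dominoes would then be dispatched by the diagonal symmetry: since $\widehat{\Tcal_n}=\Tcal_n$ and $\tau_n$ is applied identically to all four edge labels, one checks $\omega_n(\widehat{t})=\widehat{\omega_n(t)}$ (using the uniqueness in Lemma~\ref{lem:Tcaln'-there-exists-rect-pattern}), so the reflection $c\mapsto\widehat{c}$ conjugates $G^{2\times1}_{\omega_n}$ with $G^{1\times2}_{\omega_n}$ and preserves both $\Lcal(\Omega_n)$ and $\Lcal(\Xcal_{\omega_n})$; condition~(i) for $s=2\times1$ then follows from the vertical case. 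For $G^{2\times2}_{\omega_n}$, a recurrent $2\times2$ vertex projects in each of its two columns and two rows to a recurrent $1\times2$ or $2\times1$ domino, so the previous analysis constrains its entries; combining this with the unique return-block decomposition (Proposition~\ref{prop:return-blocks}) and the width/height bounds of Lemma~\ref{lem:distance-junction-in-a-row}, I would show that every recurrent $2\times2$ vertex legal in $\Omega_n$ either occurs inside the image of a single tile (in particular blocks of white tiles) or is one of the four seed patterns of Lemma~\ref{lem:language-2x2-seeds}, hence in $\Lcal(\Xcal_{\omega_n})$. Having verified condition~(i) for all three shapes, Lemma~\ref{lem:criterion-for-minimality} gives $\Omega_n=\Xcal_{\omega_n}$ and that $\Omega_n$ is minimal.

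The main obstacle is the analysis of $G^{1\times2}_{\omega_n}$: its recurrent-vertex set depends on $n$, so the case analysis cannot be reduced to a single finite check and must be carried out uniformly. The delicate point is that a vertical domino consisting of a junction tile sitting directly on a vertical green stripe tile is produced by $\omega_n$ as a \emph{recurrent} seed of the graph yet is globally forbidden in $\Omega_n$; ruling it out is precisely the content of Lemma~\ref{lem:two-illegal-vertical-dominoes}, which contrasts the forced widths $n$ and $n+1$ of two return blocks in consecutive rows and derives a contradiction from Lemma~\ref{lem:unique-horizontal-strip} and Figure~\ref{fig:vertical-sequence-white-tiles}. Once this is in place, everything else — types (a) and (b), the $2\times2$ analysis, and the symmetry reduction — is a bookkeeping exercise on the explicit images of $\omega_n$.
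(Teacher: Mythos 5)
Your proposal is correct and follows essentially the same route as the paper's proof: it verifies condition (i) of Lemma~\ref{lem:criterion-for-minimality} shape by shape, using Lemma~\ref{lem:language-vertical-dominoes} together with Lemma~\ref{lem:two-illegal-vertical-dominoes} for the vertical dominoes, the diagonal symmetry for the horizontal ones, and Lemma~\ref{lem:language-2x2-seeds} for the $2\times2$ patterns. The only cosmetic differences are that the paper pins down the recurrent $2\times2$ vertices directly from the very restricted corner letters of the images ($64$ candidates, only four of them legal) rather than via projections to recurrent dominoes, and that no recurrent straddling pattern actually occurs inside the image of a single tile (junction and horizontal stripe tiles only occur in the bottom row of a return block), so your type (a) is vacuous --- harmlessly so, since those patterns are covered anyway.
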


\begin{proof}
    Let $n\geq2$ be an integer.
    From Theorem~\ref{thm:primitivity},
    the 2-dimensional substitution 
    $\omega_n$ is primitive. Also $\omega_n$ is expansive.
    From Theorem~\ref{thm:similar-to-itself},
    the Wang shift $\Omega_n$ is self-similar
    satisfying $\Omega_n=\shiftclosure{\omega_n(\Omega_n)}$.
    Therefore, we may use Lemma~\ref{lem:criterion-for-minimality}
    to show that the Wang shift $\Omega_n$ is minimal
    and $\Xcal_{\omega_n}=\Omega_n$.
    From Lemma~\ref{lem:criterion-for-minimality},
    our goal is show that
    \[
        \Lcal(\Omega_n)\cap\RecurrentVertices(G^s_{\omega_n})\subset\Lcal(\Xcal_{\omega_n})
    \]
    for every size
    $s\in\{2\times2, 2\times1, 1\times2\}$.

    \textsc{Case} $s=1\times2$.
    We have
\begingroup
\allowdisplaybreaks
\begin{align*}
    &\RecurrentVertices(G^{1\times2}_{\omega_n})\\
    &\subseteq
    \left\{
        \left(\begin{array}{c}
            a\\
            c
        \end{array}\right)
        \middle|
        \begin{array}{l}
        \text{there exists $e,g\in\Acal$ such that $\width(\omega_n(g))=\width(\omega_n(e))$}\\
        \text{there exists an integer $i$ such that $0\leq i < \width(\omega_n(e))$ and}\\
        a \text{ is the letter in the $i$-th column in the bottom-most row of } \omega_n(e),\\
        c \text{ is the letter in the $i$-th column in the top-most row of } \omega_n(g)\\
        \end{array}
        \right\}\\
    &=
    \left\{
        \left(\begin{array}{c}
            a\\
            c
        \end{array}\right)
        \middle|
        \begin{array}{l}
            a\in\{
                  j_n^{0,0,0,0},
                  j_n^{0,0,0,1},
                  j_n^{0,1,0,0},
                  j_n^{0,1,0,1},
                  j_n^{1,1,0,1}
              \}\\
            c\in\{\widehat{b_n^{n-1}},\widehat{g_n^{n-1}},\widehat{g_n^{n}}\}
        \end{array}
    \right\}\\
    &\quad\cup
    \left\{
        \left(\begin{array}{c}
            a\\
            c
        \end{array}\right)
        \middle|
        \begin{array}{l}
            a\in\{
                  j_n^{0,1,0,0},
                  j_n^{0,1,0,1},
                  j_n^{0,1,1,1},
                  j_n^{1,1,0,1},
                  j_n^{1,1,1,1}
              \}\\
            c\in\{\widehat{y_n^{n-1}},\widehat{y_n^{n}},
                  \widehat{g_n^{n-1}},\widehat{g_n^{n}}\}
        \end{array}
    \right\}\\
    &\quad\cup
    \left\{
        \left(\begin{array}{c}
            g_n^{i-1}\\
            w_n^{i,n}
        \end{array}\right),
        \left(\begin{array}{c}
            g_n^{i-1}\\
            w_n^{i,n-1}
        \end{array}\right),
        \left(\begin{array}{c}
            g_n^{i}\\
            w_n^{i,n}
        \end{array}\right),
        \left(\begin{array}{c}
            g_n^{i}\\
            w_n^{i,n-1}
        \end{array}\right)
        \middle|
        \;
        1\leq i\leq n
    \right\}\\
    &\quad\cup
    \left\{
        \left(\begin{array}{c}
            b_n^{i-1}\\
            w_n^{i,n}
        \end{array}\right),
        \left(\begin{array}{c}
            b_n^{i-1}\\
            w_n^{i,n-1}
        \end{array}\right)
        \middle|
        \;
        1\leq i\leq n
    \right\}
    \cup
    \left\{
        \left(\begin{array}{c}
            b_n^{i}\\
            w_n^{i,n}
        \end{array}\right),
        \left(\begin{array}{c}
            b_n^{i}\\
            w_n^{i,n-1}
        \end{array}\right)
        \middle|
        \;
        1\leq i\leq n-1
    \right\}\\
    &\quad\cup
    \left\{
        \left(\begin{array}{c}
            y_n^{i-1}\\
            w_n^{i,n}
        \end{array}\right),
        \left(\begin{array}{c}
            y_n^{i-1}\\
            w_n^{i,n-1}
        \end{array}\right)
        \middle|
        \;
        2\leq i\leq n
    \right\}
    \cup
    \left\{
        \left(\begin{array}{c}
            y_n^{i}\\
            w_n^{i,n}
        \end{array}\right),
        \left(\begin{array}{c}
            y_n^{i}\\
            w_n^{i,n-1}
        \end{array}\right)
        \middle|
        \;
        1\leq i\leq n
    \right\}.
\end{align*}
\endgroup
On the other hand, we can estimate the set of vertical dominoes in
$\Lcal(\Omega_n)$ by the pair of tiles sharing the same label on the common
horizontal edge excluding the two illegal dominoes from
Lemma~\ref{lem:two-illegal-vertical-dominoes}:
\begingroup
\allowdisplaybreaks
\begin{align*}
    &\Lcal_{1\times 2}(\Omega_n)
    \cap 
    \left\{
        \left(\begin{array}{c}
            a\\
            c
        \end{array}\right)
        \middle|
        \; a \text{ is a junction tile or a horizontal stripe tile }
        \right\}\\
    &\subseteq
    \left\{
        \left(\begin{array}{c}
            a\\
            c
        \end{array}\right)
        \middle|
        \begin{array}{l}
            a\in\{
                  j_n^{0,1,0,0},
                  j_n^{0,1,0,1},
                  j_n^{0,1,1,1}
              \}\\
            c\in\{\widehat{g_n^{n-1}},\widehat{y_n^{n-1}}\}
        \end{array}
    \right\} 
    \setminus
    \left\{
        \left(\begin{array}{c}
            j_n^{0,1,1,1}\\
            \widehat{g_n^{n-1}}
        \end{array}\right)
    \right\}
    \hspace{10mm} \text{(tiles sharing edge label $01n$)} \\
    &\quad\cup
    \left\{
        \left(\begin{array}{c}
            a\\
            c
        \end{array}\right)
        \middle|
        \begin{array}{l}
            a\in\{
                  j_n^{1,1,0,1},
                  j_n^{1,1,1,1}
              \}\\
            c\in\{\widehat{g_n^{n}},\widehat{y_n^{n}}\}
        \end{array}
    \right\} 
    \setminus
    \left\{
        \left(\begin{array}{c}
            j_n^{1,1,1,1}\\
            \widehat{g_n^{n}}
        \end{array}\right)
    \right\}
    \hspace{10mm} \text{(tiles sharing edge label $01\nbar$)} \\
    &\quad\cup
    \left\{
        \left(\begin{array}{c}
            a\\
            c
        \end{array}\right)
        \middle|
        \begin{array}{l}
            a\in\{
                  j_n^{0,0,0,0},
                  j_n^{0,0,0,1}
              \}\\
            c\in\{\widehat{b_n^{n-1}}\}
        \end{array}
    \right\} \hspace{15mm} \text{(tiles sharing edge label $00n$)} \\
    &\quad\cup
    \left\{
        \left(\begin{array}{c}
            b_n^i\\
            w_n^{k,n-1}
        \end{array}\right)
        \middle|
        \begin{array}{l}
            0\leq i\leq n-1,\\
            1\leq k\leq n
        \end{array}
    \right\} \hspace{15mm} \text{(tiles sharing edge label $11n$)} \\
    &\quad\cup
    \left\{
        \left(\begin{array}{c}
            y_n^i\\
            w_n^{k,n}
        \end{array}\right)
        \middle|
        \begin{array}{l}
            1\leq i\leq n,\\
            1\leq k\leq n
        \end{array}
    \right\} \hspace{15mm} \text{(tiles sharing edge label $11\nbar$)} \\
    &\quad\cup
    \left\{
        \left(\begin{array}{c}
            g_n^i\\
            w_n^{k,n}
        \end{array}\right)
        \middle|
        \begin{array}{l}
            0\leq i\leq n,\\
            1\leq k\leq n
        \end{array}
    \right\} \hspace{15mm} \text{(tiles sharing edge label $11\nbar$).}
\end{align*}
\endgroup

Note that
    \[
        \RecurrentVertices(G^{1\times2}_{\omega_n})
    \subset
    \left\{
        \left(\begin{array}{c}
            a\\
            c
        \end{array}\right)
        \middle|
        \; a \text{ is a junction tile or a horizontal stripe tile }
        \right\}.\]
Thus, we can compute the intersection of the two sets
and using Lemma~\ref{lem:language-vertical-dominoes},
we obtain
\begingroup
\allowdisplaybreaks
\begin{align*}
    &\RecurrentVertices(G^{1\times 2}_{\omega_n})\cap\Lcal(\Omega_n)\\
    &=\RecurrentVertices(G^{1\times 2}_{\omega_n})\cap\Lcal_{1\times 2}(\Omega_n)\\
    &=
    \left\{
        \left(\begin{array}{c}
            a\\
            c
        \end{array}\right)
        \middle|
        \begin{array}{l}
            a\in\{
                  j_n^{0,1,0,0},
                  j_n^{0,1,0,1},
                  j_n^{0,1,1,1}
              \}\\
            c\in\{\widehat{g_n^{n-1}},\widehat{y_n^{n-1}}\}
        \end{array}
    \right\} %
    \setminus
    \left\{
        \left(\begin{array}{c}
            j_n^{0,1,1,1}\\
            \widehat{g_n^{n-1}}
        \end{array}\right)
    \right\}
    \\
    &\quad\cup
    \left\{
        \left(\begin{array}{c}
            a\\
            c
        \end{array}\right)
        \middle|
        \begin{array}{l}
            a\in\{
                  j_n^{1,1,0,1},
                  j_n^{1,1,1,1}
              \}\\
            c\in\{\widehat{g_n^{n}},\widehat{y_n^{n}}\}
        \end{array}
    \right\} %
    \setminus
    \left\{
        \left(\begin{array}{c}
            j_n^{1,1,1,1}\\
            \widehat{g_n^{n}}
        \end{array}\right)
    \right\}
    \\
    &\quad\cup
    \left\{
        \left(\begin{array}{c}
            a\\
            c
        \end{array}\right)
        \middle|
        \begin{array}{l}
            a\in\{
                  j_n^{0,0,0,0},
                  j_n^{0,0,0,1}
              \}\\
            c\in\{\widehat{b_n^{n-1}}\}
        \end{array}
    \right\} %
    \\
    &\quad\cup
    \left\{
        \left(\begin{array}{c}
            g_n^{i-1}\\
            w_n^{i,n}
        \end{array}\right),
        \left(\begin{array}{c}
            g_n^{i}\\
            w_n^{i,n}
        \end{array}\right)
        \middle|
        \;
        1\leq i\leq n
    \right\}\\
    &\quad\cup
    \left\{
        \left(\begin{array}{c}
            b_n^{i-1}\\
            w_n^{i,n-1}
        \end{array}\right)
        \middle|
        \;
        1\leq i\leq n
    \right\}
    \cup
    \left\{
        \left(\begin{array}{c}
            b_n^{i}\\
            w_n^{i,n-1}
        \end{array}\right)
        \middle|
        \;
        1\leq i\leq n-1
    \right\}\\
    &\quad\cup
    \left\{
        \left(\begin{array}{c}
            y_n^{i-1}\\
            w_n^{i,n}
        \end{array}\right)
        \middle|
        \;
        2\leq i\leq n
    \right\}
    \cup
    \left\{
        \left(\begin{array}{c}
            y_n^{i}\\
            w_n^{i,n}
        \end{array}\right)
        \middle|
        \;
        1\leq i\leq n
    \right\}\\
&\subset\Lcal_{1\times 2}(\Xcal_{\omega_n})
    \subset\Lcal(\Xcal_{\omega_n}).
\end{align*}
\endgroup

    \textsc{Case} $s=2\times1$.
    The condition is satisfied because this case is symmetric to the case $s=1\times 2$.

    \textsc{Case} $s=2\times2$.
    The tiles appearing on the corners of images of letters under $\omega_n$ is quite restricted.
    Therefore, we have the following inclusion
\begingroup
\allowdisplaybreaks
\begin{align*}
    &\RecurrentVertices(G^{2\times2}_{\omega_n})\\
    &\subseteq
    \left\{
        \left(\begin{array}{cc}
            a&b\\
            c&d
        \end{array}\right)
        \middle|
        \begin{array}{l}
        \text{there exist $e,f,g,h\in\Tcal_n$ such that }\\
        a \text{ is the bottom right letter of } \omega(e),\\
        b \text{ is the bottom left letter of }  \omega(f),\\
        c \text{ is the top right letter of }   \omega(g),\\
        d \text{ is the top left letter of }    \omega(h)\\
        \end{array}
        \right\}\\
    &=
    \left\{
        \left(\begin{array}{cc}
            a&b\\
            c&d
        \end{array}\right)
        \middle|
        \begin{array}{l}
            a \in \{ b_n^{n-1}, g_n^{n-1}\}\\
            b \in \{ j_n^{0,1,0,0}, j_n^{0,1,0,1}, j_n^{0,0,0,1}, j_n^{0,0,0,0} \}\\
            c \in \{ w_n^{n-1,n-1}, w_n^{n,n-1}, w_n^{n-1,n}, w_n^{n,n} \}\\
            d \in \{ \widehat{b_n^{n-1}}, \widehat{g_n^{n-1}}\}\\
        \end{array}
        \right\}.
\end{align*}
\endgroup
The above set has size $2\times4\times4\times2=64$.
Of those, only four belong to $\Lcal(\Omega_n)$ because the choice made for the tile $b$
imposes a unique choice for the tiles $a$, $d$ and $c$.
Thus, using Lemma~\ref{lem:language-2x2-seeds}, we obtain
\begingroup
\allowdisplaybreaks
\begin{align*}
    &\Lcal(\Omega_n)
    \cap \RecurrentVertices(G^{2\times2}_{\omega_n})\\
    &=\Lcal_{2\times 2}(\Omega_n)
    \cap \RecurrentVertices(G^{2\times2}_{\omega_n})\\
    &\subseteq
    \left\{
        \left(\begin{array}{cc}
            b_n^{n-1}     & j_n^{0,0,0,0}\\
            w_n^{n-1,n-1} & \widehat{b_n^{n-1}}
        \end{array}\right),
        \left(\begin{array}{cc}
            g_n^{n-1}     & j_n^{0,1,0,1}\\
            w_n^{n,n} & \widehat{g_n^{n-1}}
        \end{array}\right),
        \left(\begin{array}{cc}
            b_n^{n-1}     & j_n^{0,1,0,0}\\
            w_n^{n,n-1} & \widehat{g_n^{n-1}}
        \end{array}\right),
        \left(\begin{array}{cc}
            g_n^{n-1}     & j_n^{0,0,0,1}\\
            w_n^{n-1,n} & \widehat{b_n^{n-1}}
        \end{array}\right)
    \right\}\\
    &=
    \left\{
        \raisebox{-10mm}{
        \begin{tikzpicture}
        \tikzstyle{every node}=[font=\scriptsize]
        \tilelabelinsideH{\ourColorBlue}{0}{1}{00n}{111}{00\nunder}{11n}
        \tilelabelinside{white}  {0}{0}{11n}{11n}{11\nunder}{11\nunder}
        \tileJunctionInsideOOOO  {1}{1}{000}{000}{00n}{00n}
        \tilelabelinsideV{\ourColorBlue}{1}{0}{111}{00n}{11n}{00\nunder}
        \end{tikzpicture}
        },
        \raisebox{-10mm}{
        \begin{tikzpicture}
        \tikzstyle{every node}=[font=\scriptsize]
        \tileHgreenInside{0}{1}{01n}{111}{00\nunder}{11\nbar}
        \tilelabelinside{white}  {0}{0}{11\nbar}{11\nbar}{11n}{11n}
        \tileJunctionInsideOOII  {1}{1}{001}{001}{01n}{01n}
        \tileVgreenInside{1}{0}{111}{01n}{11\nbar}{00\nunder}
        \end{tikzpicture}
        },
        \raisebox{-10mm}{
        \begin{tikzpicture}
        \tikzstyle{every node}=[font=\scriptsize]
        \tilelabelinsideH{\ourColorBlue}{0}{1}{00n}{111}{00\nunder}{11n}
        \tilelabelinside{white}  {0}{0}{11\nbar}{11n}{11n}{11\nunder}
        \tileJunctionInsideOOOI  {1}{1}{001}{000}{00n}{01n}
        \tileVgreenInside{1}{0}{111}{01n}{11\nbar}{00\nunder}
        \end{tikzpicture}
        },
        \raisebox{-10mm}{
        \begin{tikzpicture}
        \tikzstyle{every node}=[font=\scriptsize]
        \tileHgreenInside{0}{1}{01n}{111}{00\nunder}{11\nbar}
        \tilelabelinside{white}  {0}{0}{11n}{11\nbar}{11\nunder}{11n}
        \tileJunctionInsideOOIO  {1}{1}{000}{001}{01n}{00n}
        \tilelabelinsideV{\ourColorBlue}{1}{0}{111}{00n}{11n}{00\nunder}
        \end{tikzpicture}
        }
    \right\}\\
    &\subset\Lcal_{2\times 2}(\Xcal_{\omega_n})
     \subset\Lcal(\Xcal_{\omega_n}).
\end{align*}
\endgroup
    From Lemma~\ref{lem:criterion-for-minimality},
    we conclude that
    the Wang shift $\Omega_n$ is minimal
    and $\Omega_n=\Xcal_{\omega_n}$.
\end{proof}

\subsection{The Wang shift $\Omega_n$ is minimal when $n=1$}

From Theorem~\ref{thm:n=1-equiv-Ammann}, $\Tcal_n$ is equivalent to the 
16 Ammann Wang tiles when $n=1$.
We know from \cite{MR857454}
that the 16 Ammann Wang tiles are self-similar and that the self-similarity
is recognizable (the decomposition of every configuration into the 16
supertiles shown in \cite[Figure~11.1.6]{MR857454} is unique).
This corresponds to the case $n=1$ of Theorem~\ref{thm:similar-to-itself} proved here.
Therefore, from Lemma~\ref{lem:substitutive-contains-self-similar-part},
we have $\Xcal_{\omega_1}\subseteq\Omega_1$. The goal of this section is to
prove that the equality holds and therefore that $\Omega_1$ is minimal.
Note that minimality of $\Omega_1$ was not proved in \cite{MR857454},
neither in the more recent works about Ammann A2 tilings
\cite{akiyama_note_2012,zbMATH07187340}.

The proof made in the previous section for $n\geq2$ does not directly work
for $n=1$ because it is not true anymore that next to a junction tile is never
a junction tile. Indeed, when $n=1$, two junction tiles can be adjacent
horizontally or vertically. This observation changes the description of
vertical and horizontal dominoes that appear in the language.

Adapting the proof made above for $n\geq2$ to the case $n=1$ is possible. 
But, instead of doing this, we have chosen
to provide a proof based on computer experiments
in order to check that the criterion provided in
Lemma~\ref{lem:criterion-for-minimality} is satisfied.
We hope that it may be useful to study other examples.

\begin{lemma}\label{lem:minimality-when-n=1}
    The Wang shift $\Omega_1$ is minimal and $\Omega_1=\Xcal_{\omega_1}$.
\end{lemma}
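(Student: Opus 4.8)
The plan is to apply the criterion of Lemma~\ref{lem:criterion-for-minimality} to the self-similar subshift $\Omega_1$ and the $2$-dimensional substitution $\omega_1$. By Theorem~\ref{thm:similar-to-itself} we have $\Omega_1=\shiftclosure{\omega_1(\Omega_1)}$ with $\omega_1$ expansive, and by Theorem~\ref{thm:primitivity} the substitution $\omega_1$ is primitive, so the hypotheses of Lemma~\ref{lem:criterion-for-minimality} are satisfied; moreover $\Xcal_{\omega_1}\subseteq\Omega_1$ by Lemma~\ref{lem:substitutive-contains-self-similar-part}. It therefore suffices to verify condition (i) of Lemma~\ref{lem:criterion-for-minimality}, namely that
\[
    \Lcal(\Omega_1)\cap\RecurrentVertices(G^s_{\omega_1})\subseteq\Lcal(\Xcal_{\omega_1})
\]
for each of the three pattern shapes $s\in\{2\times2,\,2\times1,\,1\times2\}$. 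Once this is in place, the implications (i)$\Rightarrow$(ii)$\Rightarrow$(iii) of that lemma yield $\Omega_1=\Xcal_{\omega_1}$ and the minimality of $\Omega_1$.

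First I would, for each shape $s$, build the directed graph $G^s_{\omega_1}$ from the images $\omega_1(t)$ with $t\in\Tcal_1$, exactly as in the definitions preceding Lemma~\ref{lem:criterion-for-minimality}, and extract $\RecurrentVertices(G^s_{\omega_1})$, the vertices lying on a cycle. Second I would compute a finite over-approximation $L^s\supseteq\Lcal_s(\Omega_1)$: since $\Omega_1$ is an SFT given by edge-adjacency rules, one may take $L^s$ to be the set of all patterns of shape $s$ in which every pair of edge-adjacent tiles matches; replacing $\Lcal_s(\Omega_1)$ by the larger set $L^s$ on the left-hand side only makes the inclusion harder to prove, so it is enough. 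Third, I would compute a lower approximation of $\Lcal(\Xcal_{\omega_1})$ by iterating $\omega_1$ a few times on each letter of $\Tcal_1$ and collecting all subwords of shape $s$ of the resulting patterns; by primitivity and expansiveness it suffices to iterate until the image of every letter exceeds the size of the candidate patterns. Finally I would check, shape by shape, that $L^s\cap\RecurrentVertices(G^s_{\omega_1})$ is contained in that collection. These steps are carried out with SageMath and the \texttt{slabbe} package inside a \texttt{sageexample} block so that they become part of the article's doctest suite, paralleling the by-hand treatment of $n\geq2$ in Proposition~\ref{prop:minimality-when-n>=2}. The one structural novelty for $n=1$ is that horizontally or vertically adjacent junction tiles are now legal, which enlarges the vertex sets $V^{2\times1}_{\omega_1}$, $V^{1\times2}_{\omega_1}$, $V^{2\times2}_{\omega_1}$ but does not affect the method.

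The main obstacle will be the $1\times2$ and $2\times1$ cases. There, some recurrent vertices of $G^{1\times2}_{\omega_1}$ are not globally legal in $\Omega_1$ — the $n=1$ analogues of the dominoes excluded in Lemma~\ref{lem:two-illegal-vertical-dominoes} — so the naive over-approximation $L^{1\times2}$ by locally admissible dominoes must be tightened before its intersection with $\RecurrentVertices(G^{1\times2}_{\omega_1})$ lands inside $\Lcal(\Xcal_{\omega_1})$. Concretely, I expect to need the return-block structure of Proposition~\ref{prop:return-blocks}: the tile forced below a junction tile whose top-left markings start a green or yellow vertical stripe constrains the widths of the adjacent return blocks, which rules out the offending dominoes just as in the proof of Lemma~\ref{lem:two-illegal-vertical-dominoes}. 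After discarding this short finite list of dominoes, the remaining verification of the inclusion for all three shapes is a finite (machine-checked) computation, and the conclusion $\Omega_1=\Xcal_{\omega_1}$ together with minimality follows.
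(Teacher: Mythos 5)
Your proposal is correct and follows the same route as the paper: both establish the lemma by checking condition (i) of Lemma~\ref{lem:criterion-for-minimality} for the shapes $2\times2$, $2\times1$ and $1\times2$ with a SageMath/\texttt{slabbe} computation, after invoking Theorem~\ref{thm:similar-to-itself}, Theorem~\ref{thm:primitivity} and Lemma~\ref{lem:substitutive-contains-self-similar-part} for the hypotheses. The implementation differs in one respect worth noting. The paper never computes $\RecurrentVertices(G^s_{\omega_1})$: it computes $\Lcal_s(\Omega_1)$ via patterns admitting a surrounding (radius $1$ for dominoes, radius $3$ for $2\times2$ blocks), which is still a valid over-approximation of the language, computes $\Lcal_s(\Xcal_{\omega_1})$ from the substitution, observes that the two sets are \emph{equal} ($30$, $30$ and $51$ patterns respectively), and concludes $\Lcal(\Omega_1)\cap\RecurrentVertices(G^s_{\omega_1})\subset\Lcal_s(\Omega_1)=\Lcal_s(\Xcal_{\omega_1})\subset\Lcal(\Xcal_{\omega_1})$ without ever building the graphs. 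In particular the obstacle you anticipate in the domino cases does not materialize: once a modest surrounding is imposed, no locally admissible domino has to be excluded by a hand argument, and the fix you sketch via an $n=1$ analogue of Lemma~\ref{lem:two-illegal-vertical-dominoes} would in any case be unavailable, since the paper notes explicitly that this lemma does not extend to $n=1$ (junction tiles may be vertically or horizontally adjacent when $n=1$, which is precisely why the $n\geq2$ argument breaks). So the only adjustment to your plan is to replace the naive edge-matching over-approximation of $\Lcal_s(\Omega_1)$ by the patterns-with-surrounding computation; with that, your verification is exactly the paper's, and the conclusion $\Omega_1=\Xcal_{\omega_1}$ and minimality follow as you state.
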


\begin{proof}
    From Theorem~\ref{thm:primitivity},
    the 2-dimensional substitution 
    $\omega_1$ is primitive. Also $\omega_1$ is expansive.
    From Theorem~\ref{thm:similar-to-itself},
    the Wang shift $\Omega_1$ is self-similar
    satisfying $\Omega_1=\shiftclosure{\omega_1(\Omega_1)}$.
    Therefore, we may use Lemma~\ref{lem:criterion-for-minimality}
    to show the minimality of $\Omega_1$.

    We compute below the patterns in 
    $\Lcal_s(\Omega_1)$ and $\Lcal_s(\Xcal_{\omega_1})$
    for every size $s\in\{2\times2, 2\times1, 1\times2\}$.
    As we observe below, these sets are equal.
    Therefore, it is not necessary to compute $\RecurrentVertices(G^s_{\omega_1})$.
    We define $\omega_1$ as a $2$-dimensional substitution
    over the alphabet $\{0,1,2,\dots,15\}$ according to the labeling of the tiles
    shown in Figure~\ref{fig:W1_sub}.
We compute the patterns of size $s\in\{2\times2, 2\times1, 1\times2\}$ in the
substitutive subshift $\Xcal_{\omega_1}$:
\begin{sagecommandline}
sage: from slabbe import Substitution2d
sage: omega1 = Substitution2d({0: [[9], [15]], 1: [[6], [7]], 2: [[13], [14]], 3: [[6]], 4: [[5], [7]], 5: [[12, 4], [11, 3]], 6: [[12, 1], [11, 3]], 7: [[8, 4]], 8: [[13, 0], [14, 3]], 9: [[12, 4], [14, 3]], 10: [[12, 1], [14, 3]], 11: [[6, 2]], 12: [[9, 0], [15, 3]], 13: [[8, 4], [15, 3]], 14: [[10, 2]], 15: [[9, 0]]})
sage: patterns_1x2_in_subst_shift = set((a,b) for [[a,b]] in omega1.list_dominoes(direction="vertical", output_format="list_of_lists"))
sage: len(patterns_1x2_in_subst_shift)
30
sage: min(patterns_1x2_in_subst_shift)   # show some minimal element
(0, 5)
sage: patterns_2x1_in_subst_shift = set((a,b) for [[a],[b]] in omega1.list_dominoes(direction="horizontal", output_format="list_of_lists"))
sage: len(patterns_2x1_in_subst_shift)
30
sage: min(patterns_2x1_in_subst_shift)   # show some minimal element
(0, 1)
sage: patterns_2x2_in_subst_shift = sorted(omega1.list_2x2_factors())
sage: len(patterns_2x2_in_subst_shift)
51
sage: min(patterns_2x2_in_subst_shift)   # show some minimal element
[[0, 5], [3, 7]]
\end{sagecommandline}

We choose a solver to compute the dominoes and $2\times 2$ patterns below.
Three reductions are available: to a mixed-integer linear program, 
to a SAT instance or
to an exact cover problem solved with Knuth's dancing links algorithm
\cite{knuth_dancing_2000}. We use Knuth's algorithm because it performs well
and it is in SageMath by default.

\begin{sagecommandline}
sage: solver = "dancing_links" # other options are: solver="gurobi" or solver="kissat"
\end{sagecommandline}

    We define the set $\Tcal_1$ of Wang tiles in an order consistent with
    the labeling of the tiles with the indices in the set $\{0,1,2,\dots,15\}$
    as shown in Figure~\ref{fig:W1_sub}.
We compute the patterns of size $s\in\{2\times2, 2\times1, 1\times2\}$ in the
Wang shift $\Omega_1$:
\begin{sagecommandline}
sage: from slabbe import WangTileSet
sage: tiles = [("111", "012", "112", "001"), ("111", "001", "111", "000"), ("112", "012", "112", "011"), ("112", "112", "111", "111"), ("111", "011", "112", "000"), ("011", "001", "011", "012"), ("011", "011", "012", "012"), ("012", "112", "011", "112"), ("001", "000", "001", "011"), ("001", "001", "011", "011"), ("001", "011", "012", "011"), ("001", "111", "000", "111"), ("000", "000", "001", "001"), ("000", "001", "011", "001"), ("011", "111", "000", "112"), ("012", "111", "001", "112")]
sage: T1 = WangTileSet(tiles)
sage: T1
Wang tile set of cardinality 16
sage: patterns_1x2_in_sft = T1.dominoes_with_surrounding(i=2, radius=1, solver=solver)
sage: len(patterns_1x2_in_sft)
30
sage: min(patterns_1x2_in_sft)    # show some minimal element
(0, 5)
sage: patterns_2x1_in_sft = T1.dominoes_with_surrounding(i=1, radius=1, solver=solver)
sage: len(patterns_2x1_in_sft)
30
sage: min(patterns_2x1_in_sft)    # show some minimal element
(0, 1)
sage: patterns_2x2_in_sft = T1.tilings_with_surrounding(2,2, radius=3, solver=solver)
sage: patterns_2x2_in_sft = sorted(pattern.table() for pattern in patterns_2x2_in_sft)
sage: len(patterns_2x2_in_sft)
51
sage: min(patterns_2x2_in_sft)    # show some minimal element
[[0, 5], [3, 7]]
\end{sagecommandline}

We compare the sets of horizontal dominoes, vertical dominoes and $2\times 2$ patterns
computed above within the language of the substitutive subshift $\Xcal_{\omega_1}$ and 
within the language of the Wang shift $\Omega_1$.
We observe their equality:
\begin{sagecommandline}
sage: patterns_1x2_in_subst_shift == patterns_1x2_in_sft
True
sage: patterns_2x1_in_subst_shift == patterns_2x1_in_sft
True
sage: patterns_2x2_in_subst_shift == patterns_2x2_in_sft
True
\end{sagecommandline}
Therefore, the above computations prove that we have the following equality
\[
    \Lcal_s(\Omega_1) = \Lcal_s(\Xcal_{\omega_1})
\]
for every size $s\in\{2\times2, 2\times1, 1\times2\}$.
Thus, for every size
$s\in\{2\times2, 2\times1, 1\times2\}$, we have
    \[
        \Lcal(\Omega_1)\cap\RecurrentVertices(G^s_{\omega_1})
        \subset\Lcal_s(\Omega_1)
        = \Lcal_s(\Xcal_{\omega_1})
        \subset\Lcal(\Xcal_{\omega_1}).
    \]
    From Lemma~\ref{lem:criterion-for-minimality}, we conclude that
    $\Omega_1$ is minimal and $\Omega_1=\Xcal_{\omega_1}$.
\end{proof}

\subsection{Proof of Theorem~\ref{thm:minimal}}

\begin{THEOREMIII}
    \MainTheoremIII
\end{THEOREMIII}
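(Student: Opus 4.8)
The plan is simply to partition the integers $n\geq1$ into the two ranges already treated and quote the corresponding results. For $n\geq2$, Proposition~\ref{prop:minimality-when-n>=2} asserts directly that $\Omega_n$ is minimal and that $\Omega_n=\Xcal_{\omega_n}$. For $n=1$, Lemma~\ref{lem:minimality-when-n=1} gives exactly the same two conclusions. Since $\{1\}\cup\{n\in\Z:n\geq2\}$ exhausts all integers $n\geq1$, the statement of Theorem~\ref{thm:minimal} follows at once. So the final assembly is a one-line case split and carries no real difficulty of its own.

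The reason the proof must be organized this way, and where the actual work lies, is upstream. Both ranges are handled through the single criterion of Lemma~\ref{lem:criterion-for-minimality}: since $\omega_n$ is primitive (Theorem~\ref{thm:primitivity}), expansive, and satisfies $\Omega_n=\shiftclosure{\omega_n(\Omega_n)}$ (Theorem~\ref{thm:similar-to-itself}), it suffices to check that $\Lcal(\Omega_n)\cap\RecurrentVertices(G^s_{\omega_n})\subseteq\Lcal(\Xcal_{\omega_n})$ for each size $s\in\{2\times2,\,2\times1,\,1\times2\}$. For $n\geq2$ this is done by hand: one enumerates the recurrent vertices of the three domino/square graphs (using the explicit corner behaviour of $\omega_n$), and one produces enough patterns inside $\Lcal(\Xcal_{\omega_n})$ — via Lemma~\ref{lem:language-vertical-dominoes} and Lemma~\ref{lem:language-2x2-seeds} — to cover what survives after intersecting with $\Lcal(\Omega_n)$. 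For $n=1$, the same three comparisons of pattern sets are carried out by computer (SageMath / the \texttt{slabbe} package), which is convenient because when $n=1$ two junction tiles can be adjacent, so the $1\times2$, $2\times1$ and $2\times2$ languages differ from the $n\geq2$ case and, in particular, Lemma~\ref{lem:two-illegal-vertical-dominoes} is no longer available.

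The main obstacle is therefore the $n\geq2$ analysis of the $1\times2$ (equivalently $2\times1$) graph. A priori $\RecurrentVertices(G^{1\times2}_{\omega_n})$ contains vertical dominoes such as $\left(\begin{smallmatrix} j_n^{1,1,1,1}\\ \widehat{g_n^{n}}\end{smallmatrix}\right)$ and $\left(\begin{smallmatrix} j_n^{0,1,1,1}\\ \widehat{g_n^{n-1}}\end{smallmatrix}\right)$ which are \emph{not} in $\Lcal(\Omega_n)$ — this is precisely the pathology illustrated by Example~\ref{ex:sub2d-on-abc}, where a domino can be fixed by the substitution yet absent from the substitutive subshift. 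Hence one cannot simply argue "recurrent implies in the substitutive language"; one must first show, via the return-block width argument of Lemma~\ref{lem:two-illegal-vertical-dominoes} (comparing the widths $A(k_1)-A(k_1-1)$ forced from above and from below such a domino), that these offending dominoes are illegal in $\Omega_n$, so that they drop out upon intersecting with $\Lcal(\Omega_n)$. Once they are removed, every remaining recurrent vertex in all three sizes is exhibited in $\Lcal(\Xcal_{\omega_n})$, and Lemma~\ref{lem:criterion-for-minimality} closes the $n\geq2$ case; the $n=1$ case being the computer check. Combining, Theorem~\ref{thm:minimal} holds for every $n\geq1$.

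\begin{proof}
    For $n\geq2$, the conclusion is Proposition~\ref{prop:minimality-when-n>=2}.
    For $n=1$, the conclusion is Lemma~\ref{lem:minimality-when-n=1}.
    Since every integer $n\geq1$ falls in one of these two cases,
    the Wang shift $\Omega_n$ is minimal and $\Omega_n=\Xcal_{\omega_n}$ for every $n\geq1$.
\end{proof}
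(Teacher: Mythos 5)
Your proof is correct and is essentially identical to the paper's: both simply split into the cases $n=1$ (Lemma~\ref{lem:minimality-when-n=1}) and $n\geq2$ (Proposition~\ref{prop:minimality-when-n>=2}) and quote those results. Your surrounding discussion of the upstream work, in particular the role of Lemma~\ref{lem:criterion-for-minimality} and Lemma~\ref{lem:two-illegal-vertical-dominoes}, also accurately reflects how those two results are established in the paper.
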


\begin{proof}%
    If $n=1$, then $\Omega_1$ is minimal 
    and $\Omega_1=\Xcal_{\omega_1}$
    from Lemma~\ref{lem:minimality-when-n=1}.
    If $n\geq2$, then $\Omega_n$ is minimal 
    and $\Omega_n=\Xcal_{\omega_n}$
    from Proposition~\ref{prop:minimality-when-n>=2}.
\end{proof}

\section{Open questions}
\label{sec:question}

Note that the $n^{th}$ metallic mean is a quadratic Pisot unit, that is, 
it is an algebraic unit of degree two and all its
algebraic conjugates have modulus strictly less than one.
The other quadratic Pisot units are the positive roots of $x^2-nx+1$ for
$n\geq3$. The family of quadratic Pisot units has nice properties
\cite{MR1934431,MR1906191,MR3304612}; see also \cite{MR2994362}.
The continued fraction expansion of the positive root of
$x^2-nx+1$ is $[n-1; (1, n-2)^\infty]$. In particular, it is not purely
periodic. 

\begin{mainquestion}
    Let $\beta$ be a positive quadratic Pisot unit which is not a metallic mean.
    Can we construct a self-similar set of Wang tiles
    whose inflation factor is $\beta$?
\end{mainquestion}

An alternative question is about those quadratic integers whose
continued fraction expansion is purely periodic.

\begin{mainquestion}
    Let $\beta$ be a positive quadratic integer whose continued fraction
    expansion is purely periodic. Does there exist a set of Wang tiles such
    that the shift is self-similar with inflation factor equal to
    $\beta$?
\end{mainquestion}

The procedure explained in \cite[p.594--598]{MR857454} starts from 
the Ammann A2 shapes shown in Figure~\ref{fig:Ammann-A2-with-bars} and constructs
a set of 16 Wang tiles which we show in Theorem~\ref{thm:n=1-equiv-Ammann} 
to be equivalent to the set $\Tcal_1$.
A question we can ask is whether this construction can be inverted.
More precisely, starting from the Ammann set of 16 Wang tiles, can we recover the
two Ammann shapes shown in Figure~\ref{fig:Ammann-A2-with-bars} with their Ammann bars?
In general, we ask the following question.

\begin{mainquestion}\label{question:Ammann-bars}
    For every integer $n\geq1$,
    can we find geometrical shapes 
    with Ammann bars on them such that
    encoding their tilings by rhombi along a pair of Ammann bars is equivalent
    to the tiles $\Tcal_n$?
\end{mainquestion}

Theorem~\ref{thm:n=1-equiv-Ammann} together with the discussion
\cite[p.594--598]{MR857454} is an answer
to Question~\ref{question:Ammann-bars} when $n=1$.
An answer to Question~\ref{question:Ammann-bars} 
would shed light on Mr.~Ammann's remarkable insights \cite{MR2104463}.

\subsection*{Relation to the work of Mozes}

Let $n\geq1$ be an integer and recall the 1-dimensional substitution
\[
\rho_n = 
\begin{cases}
    \texttt{a} \mapsto \texttt{ab}^{n}\\
    \texttt{b} \mapsto \texttt{ab}^{n-1}
\end{cases}
\]
over alphabet $\{\texttt{a},\texttt{b}\}$
defined in the proof of Lemma~\ref{lem:inflation-factor}.
The incidence matrix of $\rho_n$ is
$\left(\begin{smallmatrix} 1&1\\ n&n-1 \end{smallmatrix}\right)$
whose characteristic polynomial is $x^2-nx-1$,
and whose Perron--Frobenius dominant eigenvalue is the $n^{th}$ metallic mean.
A right dominant eigenvector is 
$\left(\begin{smallmatrix} 1\\ \beta_n-1 \end{smallmatrix}\right)$
    and a left dominant eigenvector is 
$\left(\begin{smallmatrix} n & \beta_n-1 \end{smallmatrix}\right)$.
    Following the theory on inflation tilings \cite[\S~6]{MR3136260},
    a stone inflation associated with the substitution $\rho_n$ gives a volume of $n$ to the
    letter $\texttt{a}$ and a volume of $\beta_n-1$ to the letter $\texttt{b}$.
The stone inflation induced by the direct product $\rho_n\times\rho_n$ 
of the substitution $\rho_n$ with itself
in the sense of 
\cite[\S~6]{MR1014984}
is shown in Figure~\ref{fig:global-stone-inflation};
see also \cite[Example~5.9]{MR3136260}.
Note that another substitution with same inflation factor 
and often used in examples illustrating metallic means is 
$\texttt{a}\mapsto \texttt{a}^n\texttt{b}, \texttt{b}\mapsto \texttt{a}$
\cite[Remark~4.7]{MR3136260}.

\begin{figure}[h]
\begin{center}
    \includegraphics[width=.6\linewidth]{Figures/stone_inflation.pdf}
\end{center}
\caption{Stone inflation associated with the direct product of the substitution
$\rho_n$ with itself with inflation factor equal to $\beta_n$, the $n^{th}$ metallic mean.
The size of the rectangles are given by the entries of a Perron--Frobenius dominant left-eigenvector
of the incidence matrix of $\rho_n$. The figure is drawn with parameter $n=4$.
Color is added to the tiles to differentiate them and visually link them to the
tiles in $\Tcal_n$.}
\label{fig:global-stone-inflation}
\end{figure}

From the work of Mozes \cite{MR1014984}, we know that there exists a 
tiling system given by a finite set of tiles and a finite set of matching rules
such that the tiling system is a symbolic extension
of the substitutive dynamical system generated by the 2-dimensional substitution
$\rho_n\times\rho_n$ over a four-letter alphabet. Since the substitution
$\rho_n\times\rho_n$ is recognizable (or has ``unique derivation'', using the
vocabulary of Mozes), the tiling system constructed by Mozes is even
measure-theoretically isomorphic to the substitutive dynamical system.
Note that the construction of an equivalent tiling system out of a substitution
was extended to geometric substitutions \cite{MR1609510}.

In this contribution, we provide an explicit construction of a tiling system
$\Omega_n$ which is a symbolic extension
of the 2-dimensional substitutive subshift defined by $\rho_n\times\rho_n$.
The set of Wang tiles deduced from \cite{MR1014984} 
when applied on $\rho_n\times\rho_n$ would be much larger than $(n+3)^2$.
This raises a question about the optimality of a tiling system
for 2-dimensional substitutions.

\begin{mainquestion}
    Is the size of $\Tcal_n$ optimal? 
    In other words, does there exist a set $\Tcal$ of Wang tiles of cardinality $\#\Tcal<(n+3)^2$
    such that the Wang shift $\Omega_{\Tcal}$ is isomorphic to
    the 2-dimensional substitutive subshift
    $\Xcal_{\rho_n\times\rho_n}$?
\end{mainquestion}

\section{Appendix A: The substitutions $\omega_n$ for $1\leq n\leq5$}
\label{sec:Appendix-substitutions-1ot5}

\begin{figure}[h]
\begin{center}
\includegraphics[scale=.88]{SAGEOUTPUT/W1_sub.pdf}
\end{center}
    \caption{Substitution $\omega_1$}
    \label{fig:W1_sub}
\end{figure}

\begin{figure}[h]
\begin{center}
    \includegraphics[width=.99\linewidth]{SAGEOUTPUT/W2_sub.pdf}
\end{center}
    \caption{Substitution $\omega_2$}
    \label{fig:W2_sub}
\end{figure}

\begin{figure}[h]
\begin{center}
\includegraphics[width=22cm,angle=90]{SAGEOUTPUT/W3_sub.pdf}
\end{center}
    \caption{Substitution $\omega_3$ (rotated 90 degrees counterclockwise)}
    \label{fig:W3_sub}
\end{figure}

\begin{figure}[h]
\begin{center}
\includegraphics[width=22cm,angle=90]{SAGEOUTPUT/W4_sub.pdf}
\end{center}
    \caption{Substitution $\omega_4$ (rotated 90 degrees counterclockwise)}
    \label{fig:W4_sub}
\end{figure}

\begin{figure}[h]
\begin{center}
\includegraphics[width=22cm,angle=90]{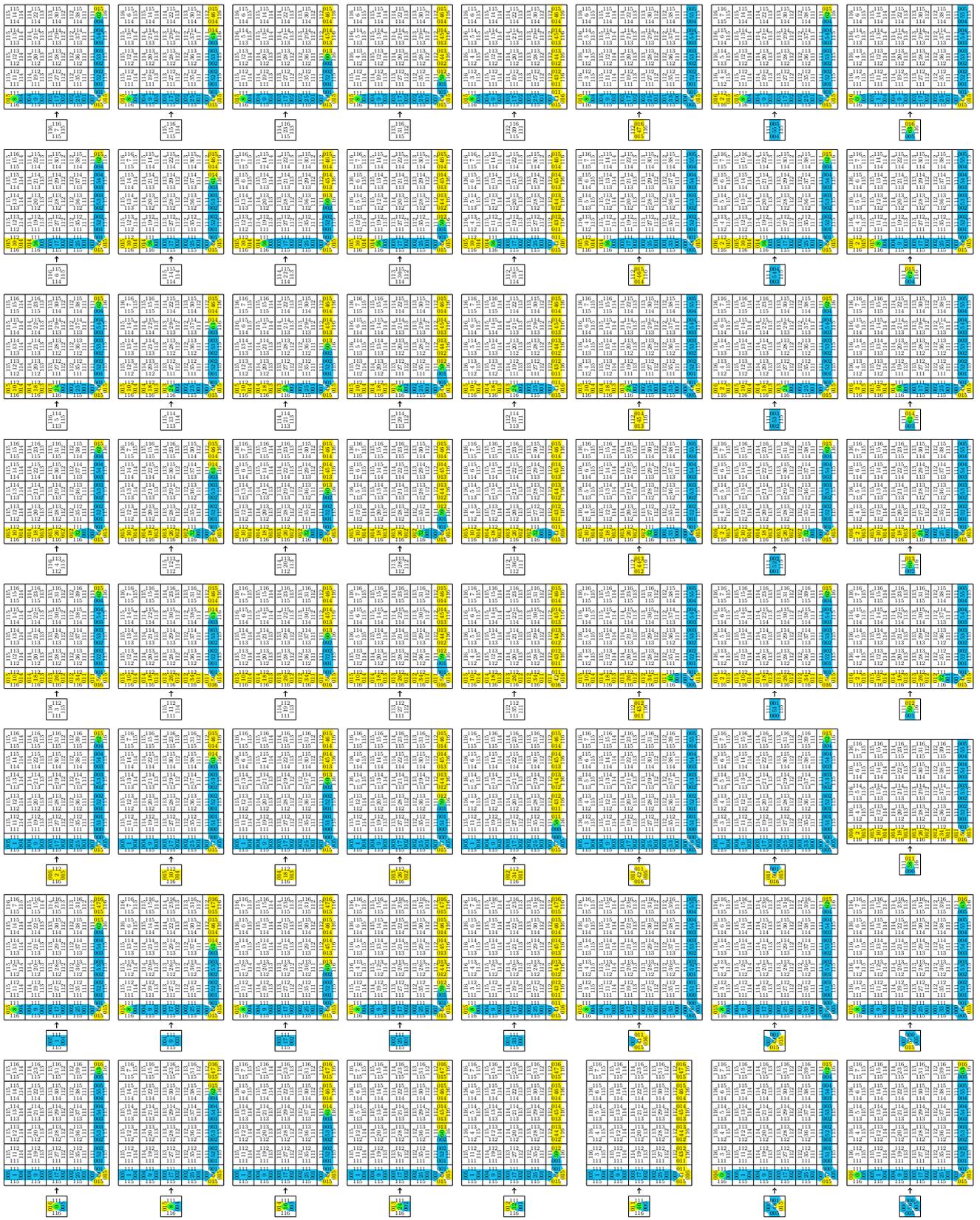}
\end{center}
    \caption{Substitution $\omega_5$ (rotated 90 degrees counterclockwise)}
    \label{fig:W5_sub}
\end{figure}

\clearpage

\section{Appendix B: Proving the self-similarity of $\Omega_2$ in SageMath}
\label{sec:Appendix-self-similarity-Omega2}

In this section, we illustrate how
Theorem~\ref{thm:similar-to-itself} can be proved in SageMath for a specific
but not too big integer $n\geq1$. 
Since the proof of Theorem~\ref{thm:similar-to-itself} given in
this article was deduced from such computer experiments
performed for small values of $n$,
we hope that the approach shown below can be used to study and show the
self-similarity of other aperiodic set of Wang tiles.

We use here a method proposed in \cite{MR4226493} to study the substitutive
structure of the Jeandel--Rao Wang shift \cite{zbMATH07421483}. 
The method is based on the notion of marker tiles (not to be confused with the
notion of marker used in Lemma 10.1.8 from \cite{MR1369092}). A nonempty subset
$M\subset\Acal$ is called \defn{markers for the direction $\be_2$} within a
subshift $X\subset\Acal^{\Z^2}$ if for every configuration $x\in X$ the
positions of the markers are nonadjacent rows, that is, $x^{-1}(M)=\Z\times P$
for some set $P\subset\Z$ such that $1\notin P-P$. A symmetric definition holds
for \defn{markers for the direction $\be_1$}. It was proved
that the existence of marker tiles allow to decompose uniquely a Wang shift.
Informally, marker tiles are merged with the tiles that appear just on top of
(or just below) them. Remaining tiles are kept unchanged.
The search for markers and the construction of the substitution is performed by
two algorithms \textsc{FindMarkers} and \textsc{FindSubstitution}. 
Their pseudocode can be found in \cite{MR4226493}; see also the chapter
\cite{labbe_three_2020} where a simpler example is considered.

Below we prove the self-similarity of $\Omega_n$ when $n=2$ using SageMath
\cite{sagemathv10.5} with optional package \texttt{slabbe} \cite{labbe_slabbe_0_7_7_2024}.
The algorithms
\textsc{FindMarkers} and \textsc{FindSubstitution} are used twice horizontally
and then twice vertically. 
The computations show that every configuration in $\Omega_2$ can be decomposed
uniquely into 25 supertiles.
The 25 supertiles are equivalent to the original set of 25 tiles. 
Thus, the Wang shift $\Omega_2$ is self-similar and we compute the self-similarity.

We choose a solver to search for markers and desubstitutions below.
\begin{sagecommandline}
sage: solver = "dancing_links" # other options are: solver="gurobi" or solver="kissat"
\end{sagecommandline}

First, we define the set $\Tcal_2$ of Wang tiles.
\begin{sagecommandline}
sage: from slabbe import WangTileSet
sage: tiles = [("111", "013", "113", "002"), ("111", "002", "112", "001"), ("112", "013", "113", "012"), ("112", "113", "111", "112"), ("113", "113", "112", "112"), ("111", "012", "113", "001"), ("111", "001", "112", "000"), ("112", "012", "113", "011"), ("112", "112", "111", "111"), ("113", "112", "112", "111"), ("111", "011", "113", "000"), ("011", "001", "012", "013"), ("011", "011", "013", "013"), ("012", "112", "011", "113"), ("013", "112", "012", "113"), ("001", "000", "002", "012"), ("001", "001", "012", "012"), ("001", "011", "013", "012"), ("001", "111", "000", "112"), ("002", "111", "001", "112"), ("000", "000", "002", "002"), ("000", "001", "012", "002"), ("011", "111", "000", "113"), ("012", "111", "001", "113"), ("013", "111", "002", "113")]
sage: T2 = WangTileSet(tiles)
sage: T2_tikz = T2.tikz(ncolumns=10, scale=1.2, label_shift=.15)
\end{sagecommandline}

\begin{center}
    \sageplot[width=.75\linewidth][pdf]{T2_tikz}
\end{center}

Then, we search for markers for the direction $\be_1$ (such markers appear on
nonadjacent columns). We fusion the markers with the possible tiles appearing
on their right (thus the marker appear on the left side of each pair).
\begin{sagecommandline}
sage: T2.find_markers(i=1, radius=1, solver=solver)
[[0, 1, 2, 5, 6, 7, 10, 11, 12, 15, 16, 17, 20, 21]]
sage: M = [0, 1, 2, 5, 6, 7, 10, 11, 12, 15, 16, 17, 20, 21]
sage: U1, s1 = T2.find_substitution(M=M, i=1, radius=2, solver=solver, side="left")
sage: s1_tikz = s1.wang_tikz(domain_tiles=U1, codomain_tiles=T2, ncolumns=5, scale=1.2, label_shift=.15, direction="left", extra_space=1.2)
\end{sagecommandline}

\begin{center}
    \sageplot[width=.95\linewidth][pdf]{s1_tikz}
\end{center}

The resulting set of Wang tiles (shown above at the source of the arrows) is
obtained by concatenating the top and bottom labels of the merged pairs:
\begin{sagecommandline}
sage: U1_tikz = U1.tikz(scale=1.4, label_shift=0.15)
\end{sagecommandline}

\begin{center}
    \sageplot[][pdf]{U1_tikz}
\end{center}

\begin{sagecommandline}
sage: U1.find_markers(i=1, radius=1, solver=solver)
[[0, 1, 2, 3, 4, 5, 6]]
sage: M = [0, 1, 2, 3, 4, 5, 6]
sage: U2, s2 = U1.find_substitution(M=M, i=1, radius=1, solver=solver)
sage: U2_tikz = U2.tikz(scale=1.7, label_shift=0.15, ncolumns=12)
\end{sagecommandline}

\begin{center}
    \sageplot[width=.95\linewidth][pdf]{U2_tikz}
\end{center}

\begin{sagecommandline}
sage: U2.find_markers(i=2, radius=1, solver=solver)
[[9, 10, 11, 12, 13, 14, 15, 16, 24, 25, 27, 28, 29, 30, 31, 32, 33]]
sage: M = [9, 10, 11, 12, 13, 14, 15, 16, 24, 25, 27, 28, 29, 30, 31, 32, 33]
sage: U3, s3 = U2.find_substitution(M=M, i=2, radius=1, solver=solver, side="left")
sage: U3_tikz = U3.tikz(scale=1.9, label_shift=0.1)
\end{sagecommandline}

\begin{center}
    \sageplot[width=.85\linewidth][pdf]{U3_tikz}
\end{center}

\begin{sagecommandline}
sage: U3.find_markers(i=2, radius=1, solver=solver)
[[0, 1, 2, 3, 4, 5, 6]]
sage: M = [0, 1, 2, 3, 4, 5, 6]
sage: U4, s4 = U3.find_substitution(M=M, i=2, radius=1, solver=solver)
sage: U4_tikz = U4.tikz(scale=2.2, label_shift=.1)
\end{sagecommandline}

\begin{center}
    \sageplot[width=.85\linewidth][pdf]{U4_tikz}
\end{center}

It turns out that tiles with indices 11, 14, 20, 27 are not needed within 
the above set of tiles as they do not have a surrounding of radius 2
as confirmed by the following computation.
Thus, they cannot appear in any tiling. In fact, they correspond to
antigreen tiles and other tiles proved to be illegal in Section~\ref{sec:Omegan'subseteqOmegan}.
We compute the remaining twenty five tiles below.

\begin{sagecommandline}
sage: U5 = U4.tiles_allowing_surrounding(radius=2, solver=solver)
sage: U5_tikz = U5.tikz(scale=2.1, label_shift=.1)
\end{sagecommandline}

\begin{center}
    \sageplot[width=.85\linewidth][pdf]{U5_tikz}
\end{center}

\begin{sagecommandline}
sage: U4_tiles = U4.tiles()
sage: U5_tiles = U5.tiles()
sage: d = {i:U4_tiles.index(U5_tiles[i]) for i in range(len(U5))}
sage: from slabbe import Substitution2d
sage: s5 = Substitution2d.from_permutation(d)
\end{sagecommandline}

We confirm that the set $U_5$ is equivalent to the set $\Tcal_n$ of Wang tiles we started with.
We extract the bijection \texttt{s6} between the indices of the tiles.
Also, it gives a bijection for the horizontal edge labels and vertical edge
labels. Both are equal. This bijection corresponds to the map $\tau_n$ when $n=2$ defined
in Section~\ref{sec:return-blocks}.

\begin{sagecommandline}
sage: T2.is_equivalent(U5)
True
sage: _,vert_bijection,horiz_bijection,s6 = T2.is_equivalent(U5, certificate=True)
sage: vert_bijection == horiz_bijection
True
sage: vert_bijection #@\label{vert_bijection}
{'000': '013113113',
 '001': '012113113',
 '002': '012112113',
 '011': '002113113',
 '012': '002112113',
 '013': '002112112',
 '111': '013113',
 '112': '012113',
 '113': '012112'}
\end{sagecommandline}

One may compare the bijection computed above with the map $\tau_n$
defined in Section~\ref{sec:substitution}.
The only difference is that the image of the label $003$ does not appear in the
computed bijection above because it is does not appear as an edge label in the
set $\Tcal_2$.

The self-similarity is: 
\renewcommand{\sagecommandlinetextoutput}{False} %
\begin{sagecommandline}
sage: self_similarity = s1*s2*s3*s4*s5*s6
sage: self_similarity
Substitution 2d: {0: [[16, 1], [19, 8], [24, 9]], 1: [[16, 5], [23, 8], [14, 4]], 2: [[21, 1], [18, 8], [23, 9]], 3: [[17, 7], [23, 9]], 4: [[16, 5], [23, 8]], 5: [[16, 1], [23, 8], [14, 4]], 6: [[11, 5], [13, 3], [14, 4]], 7: [[21, 1], [22, 8], [13, 4]], 8: [[12, 7], [13, 4]], 9: [[11, 5], [13, 3]], 10: [[11, 1], [13, 3], [14, 4]], 11: [[20, 6, 5], [18, 8, 3], [19, 9, 4]], 12: [[20, 6, 1], [18, 8, 3], [19, 9, 4]], 13: [[15, 10, 7], [19, 8, 4]], 14: [[15, 6, 5], [19, 8, 3]], 15: [[21, 1, 0], [18, 8, 3], [23, 9, 4]], 16: [[20, 6, 5], [18, 8, 3], [23, 9, 4]], 17: [[20, 6, 1], [18, 8, 3], [23, 9, 4]], 18: [[17, 7, 2], [23, 9, 4]], 19: [[16, 5, 2], [23, 8, 4]], 20: [[16, 1, 0], [19, 8, 3], [24, 9, 4]], 21: [[15, 6, 5], [19, 8, 3], [24, 9, 4]], 22: [[17, 7, 2], [19, 9, 4]], 23: [[16, 5, 2], [19, 8, 4]], 24: [[16, 1, 0], [19, 8, 3]]}
\end{sagecommandline}

The characteristic polynomial of the incidence matrix of the self-similarity is:
\begin{sagecommandline}
sage: matrix(self_similarity).charpoly().factor()
(x - 1)^3 * (x + 1)^5 * x^11 * (x^2 - 6*x + 1) * (x^2 + 2*x - 1)^2
\end{sagecommandline}
\renewcommand{\sagecommandlinetextoutput}{True} %

The self-similarity shown with the associated Wang tiles:
\begin{sagecommandline}
sage: sim_tikz = self_similarity.wang_tikz(domain_tiles=T2, codomain_tiles=T2, ncolumns=5, scale=1.2, label_shift=.15)
\end{sagecommandline}

\begin{center}
    \sageplot[width=.95\linewidth][pdf]{sim_tikz}
\end{center}

\bibliographystyle{myalpha} %
\bibliography{biblio}

\end{document}